\newcommand{\spt}{\mathrm{spt}\,}
\newcommand{\divergence}{\mathrm{div}}
\newcommand{\dist}{\mathrm{dist}}
\newcommand{\boldv}{\mathbf{v}}
\newcommand{\boldH}{\mathbf{H}}
\newcommand{\boldN}{\mathbf{N}}
\newcommand{\boldnu}{\mathbf{\nu}}
\newcommand{\boldgam}{\mathbf{\gamma}}
\newcommand{\boldg}{\mathbf{g}}
\newcommand{\boldten}{\mathbf{\otimes}}
\newcommand{\boldA}{\mathbf{A}}
\newcommand{\boldB}{\mathbf{B}}
\let\OLDthebibliography\thebibliography
\renewcommand\thebibliography[1]{
	\OLDthebibliography{#1}
	\setlength{\parskip}{1pt}
	\setlength{\itemsep}{1pt plus 0.3ex}
}
\def\blfootnote{\gdef\@thefnmark{}\@footnotetext}
\theoremstyle{mystyle}
\newtheorem{theorem}{Theorem}[section]
\newtheorem*{theorem*}{Theorem}
\newtheorem{lemma}[theorem]{Lemma}
\newtheorem{proposition}[theorem]{Proposition}
\theoremstyle{definition}
\newtheorem{definition}[theorem]{Definition}
\theoremstyle{remark}
\newtheorem{remark}[theorem]{Remark}
\title{A Varifold Formulation of Mean Curvature Flow with Dirichlet or Dynamic Boundary Conditions}
\author[1]{Yoshikazu Giga}
\author[1, 2]{Fumihiko Onoue}
\author[3]{Keisuke Takasao}
\affil[1]{Graduate School of Mathematical Sciences, The University of Tokyo, Komaba 3-8-1 Meguro Tokyo 153-8914, Japan. E-mail: (Y. Giga)\,{\tt labgiga@ms.u-tokyo.ac.jp} and (F. Onoue)\,{\tt fumihiko@ms.u-tokyo.ac.jp}}
\affil[2]{Scuola Normale Superiore, Piazza dei Cavalieri 7, Pisa 56126, Italy}
\affil[3]{Department of Mathematics/Hakubi Center, Kyoto University, Kitashirakawa-Oiwakecho Sakyo Kyoto 606-8502,  Japan. E-mail: {\tt k.takasao@math.kyoto-u.ac.jp}}
\date{}
\begin{document}
\blfootnote{\textup{2000} \textit{Mathematics Subject Classification}: 35A15, 35K20, 49Q20, 35D30}

\maketitle　　　　　　　　　　　　　　

\begin{abstract}
We consider the sharp interface limit of the Allen-Cahn equation with Dirichlet or dynamic boundary conditions and give a varifold characterization of its limit which is formally a mean curvature flow with Dirichlet or dynamic boundary conditions. In order to show the existence of the limit, we apply the phase field method under the vanishing on the boundary and the boundedness of the discrepancy measure. For this purpose, we extend the usual Brakke flow under these boundary conditions by the first variations for varifolds on the boundary.
\end{abstract}

\section{Introduction}\label{introduction}
Let $\Omega\subset \mathbb{R}^n$ be an open and bounded set with a smooth boundary $\partial \Omega$. For a parameter $\sigma \in (0,\,\infty)$, let $\{M^{\sigma}_t\}_{t\geq 0}$ be a family of hypersurfaces in $\Omega$ such that $\partial M^{\sigma}_t \subset \partial \Omega$ and $\partial M^{\sigma}_t$ is oriented. Let $\boldH^{\sigma}$ denote the mean curvature vector of a hypersurface $M^{\sigma}_t$ and $\boldN^{\sigma}_b$ denote the unit normal vector of $ \partial M^{\sigma}_t$ on $\partial \Omega$. We consider one of the generalized solutions to mean curvature flow with the following boundary condition:
\begin{align}\label{1.1.2}
\begin{cases}
\textbf{v}^{\sigma}=\textbf{H}^{\sigma}&\quad {\rm on}\ M^{\sigma}_t,\, t>0,\\
\displaystyle 
{\boldv}^{\sigma}_b=\frac{\sigma}{\tan\theta}\,\boldN^{\sigma}_b&\quad {\rm on }\ \partial M^{\sigma}_t,\, t>0, 
\end{cases}
\end{align}
where $\boldv^{\sigma}$ and ${\boldv}^{\sigma}_b$ are the velocity vector of $M_t^{\sigma}$ and $\partial M^{\sigma}_t$, respectively and $\theta$ is the contact angle formed by $M_t^{\sigma}$ and $\boldN^{\sigma}_b$. The motivation to investigate the $\sigma$-parametrized boundary condition given in \eqref{1.1.2} derives from the formal observation that one can study the three boundary conditions at the same time: Dirichlet, dynamic, and Neumann boundary conditions by choosing $\sigma$ as 0, a finite and positive number, and $\infty$, respectively. The dynamic boundary condition of various types has been studied from another point of view such as the theory of viscosity solutions or the semilinear elliptic problems (see, for instance, \cite{AnGu}, \cite{EGG}, \cite{GiHa}, \cite{FiIsKa01}, \cite{FiIsKa02}, \cite{FiIsKaLa01}, or \cite{FiIsKaLa02}). Moreover, the motion by mean curvature of the graph or level-set with Dirichlet boundary conditions has been also investigated in \cite{LT, SZ, Priwitzer}. 

Our goals in this paper are the following two observations: the first goal is that we consider the singular limit of the Allen-Cahn equation by applying the phase field method under the assumptions that the discrepancy measure vanishes on the boundary $\partial \Omega$ and the discrepancy function (see \eqref{discrepancyFunc} for the definition) is uniformly bounded from above in $\Omega$. The second one is that we formulate a Brakke flow with Dirichlet or dynamic boundary conditions, which can be regarded as a generalization of the motion in \eqref{1.1.2}. As we mentioned, formally speaking, we have that the boundary condition in \eqref{1.1.2} corresponds to Dirichlet, dynamic and right-angle Neumann boundary conditions as $\sigma = 0$, $\sigma$ is finite and positive, or $\sigma=\infty$, respectively. Motivated by this formal argument, we try to consider the formulation of a Brakke flow with Dirichlet or dynamic boundary conditions in the following way: first, we will formulate a generalized solution to the mean curvature flow described by \eqref{1.1.2} in Brakke's sense. Secondly, we will take the limit of $\sigma$ to 0 or finite positive to obtain the definition of Dirichlet or dynamic boundary conditions, respectively in some weak sense. Note that this argument is not rigorous but just a formal argument and we will state the rigorous analysis for such limiting procedure in Section \ref{existence}.

Here we briefly recall the mean curvature flow of closed hypersurfaces. We say that a family of hypersurfaces $\{M_t\}_{t\geq 0}$ in $\mathbb{R}^{n}$ moves by its mean curvature if the following equation holds:
\begin{equation}\label{1.1.0}
\boldv(\cdot,\,t)=\boldH(\cdot,\,t),\quad \text{on $M_t,\,t>0$},
\end{equation}
where $\boldv(\cdot,\,t)$ is the velocity vector of a hypersurface $M_t$ and $\boldH(\cdot,\,t)$ is the mean curvature vector of $M_t$. In this case, the hypersurface $M_t$ evolves to minimize its area. The notion of mean curvature flow was proposed by Mullins \cite{Mullins} to describe the motion of grain boundaries. Generally, it is known that singularities such as cusps or vanishing may occur in finite time and the motion of $M_t$ after the appearance of singularities cannot be analysed in the classical sense. However, from the point of revealing the phenomena in the nature, we would like to study the motions of evolving surfaces even though some singularity occurs.

Several generalized flows by mean curvature have been introduced so far. One is the level set flow proposed by Ohta, Jasnow and Kawasaki \cite{OJK} or Osher and Sethian \cite{OS}; the latter introduced the level-set equation to study the motion by mean curvature numerically. Later, Chen, Giga, and Goto \cite{CCG} and, independently, Evans and Spruck \cite{EvSp} rigorously introduced the generalized solutions to mean curvature flow in the viscosity sense and proved the existence and uniqueness of the viscosity solutions. We refer to the book \cite{Giga} for this approach.

Another approach is the Brakke flow that Brakke in \cite{Brakke} proposed by using geometric measure theory, especially the theory of varifolds. This approach made it possible to deal with the motion of hypersurfaces with a variety of singularities such as triple junctions. He proved the global-in-time existence of a Brakke flow in $\mathbb{R}^n$ with an approximation scheme and compactness-type theorems on varifolds if a general integral varifold defined on $\mathbb{R}^n$ is given as an initial data. One problem on Brakke's results in \cite{Brakke} was that the construction of the approximation scheme and the proof of existence theorem he obtained does not preclude the possibility of a trivial flow, for instance, the one that has a sudden loss of its mass for all time except an initial time. This problem remained open for a long time. Fortunately, Kim and Tonegawa \cite{KiTo} recently have succeeded proving, for the first time, a global-in-time existence theorem of the nontrivial mean curvature flow of grain boundaries by reformulated and modified approximation scheme. Moreover, Stuvard and Tonegawa in \cite{StTo} studied the existence of the non-trivial Brakke flow with fixed boundary conditions.

As a different view of a Brakke flow, by applying the phase field method via Allen-Cahn equations, Ilmanen \cite{Ilmanen01} considered the nontrivial global-in-time solutions to a Brakke flow without boundaries. The phase field method is the method that we make an approximation of a hypersurface by the transition layer with a small width of an order $\varepsilon>0$ and characterize it by considering the singular limit ($\varepsilon \downarrow 0$) of the transition layer. With the same method, Mizuno and Tonegawa in \cite{MiTo} firstly formulated the mean curvature flow with right-angle Neumann boundary conditions in the sense of Brakke. They consider the singular limit of the Allen-Cahn equations with Neumann boundary conditions when the concerned domain is strictly convex and bounded. Later, Kagaya \cite{Kagaya} extended their results to a non-convex bounded domain. However, as far as we know, a weak formulation of a Brakke flow with other boundary conditions has not been considered. In connection with the phase field method for the boundary-value problems, the authors in \cite{KKR} studied the singular limit of the Allen-Cahn equations with right-angle Neumann boundary conditions on the convex domain and rigorously proved that the separating front moves by its mean curvature in the sense of viscosity solutions, not Brakke's sense. Motivated by these works, we aim to consider the singular limit of the Allen-Cahn equations and formulate a Brakke flow with Dirichlet or dynamic boundary conditions. To attain this goal, as our first attempt, we study the singular limit under an assumption that the discrepancy measure vanishes on the closure of the domain and characterize its limit.

For the problem how to define a Brakke flow with Dirichlet or dynamic boundary conditions, we need to obtain at least the following two sufficient conditions: \textcircled{\scriptsize 1} \textbf{Brakke's inequality describing the motion} and \textcircled{\scriptsize 2} \textbf{boundary conditions in some sense corresponding to Dirichlet or dynamic boundary conditions.} 

\textcircled{\scriptsize 1} \textbf{Brakke's inequality}. First of all, we recall that Brakke's inequality, which Brakke introduced in \cite{Brakke}, can be regarded as a weak formulation of the motion of evolving surfaces (see, for instance, \cite{Tonegawa03}) and this inequality is motivated by the transport equation for a family of smooth hypersurfaces with boundaries as we show in the following; let $\{M_t\}_{t\in [0,\,\infty)}$ be a family of smooth hypersurfaces on $\Omega$ with a smooth boundary $\partial M_t \subset \partial \Omega$. If $M_t$ evolves by its mean curvature vector $\boldH$, then, calculating the quantity that corresponds to the time derivative of the surface area, we can have 
\begin{equation}\label{1.1.3.1}
\frac{d}{dt}\int_{M_t}\phi \,d\mathcal{H}^{n-1} = \int_{M_t}\left(-\phi |\boldH|^2+ \nabla \phi \cdot \boldH +\partial_t \phi \right)\, d\mathcal{H}^{n-1}+ \int_{\partial M_t} \phi\, {\boldv}_b \cdot \textbf{$\gamma$} \,d\mathcal{H}^{n-2}
\end{equation}
for any test function $\phi$ with $\phi\geq 0$ and $t>0$, where $\boldv_b$ is the velocity vector of $\partial M_t$ on $\partial \Omega$ and $\boldgam$ is the unit co-normal vector of $\partial M_t$ (see  Figure \ref{fig.2}). It is seen by a simple calculation that we may not need to assume any boundary conditions on $M_t$ to derive \eqref{1.1.3.1} and thus we should emphasize that, in order to consider the surface evolution with boundary conditions, Brakke's inequality is not sufficient for the formulations. Note that Brakke considered in \cite{Brakke} the case that $\partial M_t=\emptyset$ and hence Brakke's original inequality was introduced from \eqref{1.1.3.1} without the last term.  

As an analogy of this identity, by considering the singular limit of the Allen-Cahn equations \eqref{1.1.1}, we will define the following inequality as a Brakke's inequality for a Brakke flow with Dirichlet or dynamic conditions (see Section \ref{formulation} for precise Brakke's inequality); let $\{V_t\}_{t\geq 0}$, $\alpha$, and $\tilde{\boldv}_b$ be a family of $(n-1)$-varifolds on $\overline{\Omega} \subset \mathbb{R}^n$, a non-zero Radon measure on $\partial \Omega \times [0,\,\infty)$, and a vector-valued function on $\partial \Omega \times [0,\,\infty)$, respectively. Then we will define a Brakke's inequality for the triplet $(V_t,\,\alpha,\,\tilde{\boldv}_b)$ by the following inequality:
\begin{equation}\label{1.1.4}
\int_{\overline{\Omega}}\phi\,d\|V_t\|\Big|_{t=t_1}^{t_2}\leq \int_{t_1}^{t_2}\int_{\overline{\Omega}}(-\phi |\widetilde{\boldH}_V|^2+\nabla \phi \cdot \widetilde{\boldH}_V+\partial_t \phi)\,d\|V_t\|\,dt- \frac{1}{\sigma}\int_{\partial \Omega\times [t_1,\,t_2]} \phi |\tilde{\boldv}_b|^2\, d\alpha,
\end{equation}
for any non-negative test function $\phi$ with some conditions, where $\|V_t\|$ is the mass measure of $V_t$, and $\widetilde{\boldH}_V$ is the modified generalized mean curvature vector (see Definition \ref{def.4.1} or \ref{def.4.2} in Section \ref{formulation} for more detail). Note that the inequality \eqref{1.1.4} is for the case of dynamic boundary conditions (in the case when $\sigma$ is finite and positive). For the case of Dirichlet boundary conditions ($\sigma\to0$), if we consider the singular limit of Allen-Cahn equations under some assumptions, especially the one \eqref{3.1.9.2} (see Subsection \ref{exis.assmp.diri} of Section \ref{existence}), then we may have that the second term of the right-hand side in \eqref{1.1.4} vanishes as $\sigma\to0$. In this analogy, we may see that the mass $\|V_t\|$ of $V_t$ and the measure $\alpha$, roughly speaking, correspond to the measure $\mathcal{H}^{n-1}\lfloor_{M_t}$ and the measure  $(\sin\theta)\mathcal{H}^{n-2}\lfloor_{\partial M_t}\otimes\mathcal{L}^1_t$, respectively, where $\otimes$ is the product of measures which is exactly defined in Definition \ref{def.prod} of Section \ref{prelim} and $\mathcal{L}^1_t$ is the 1D Lebesgue measure on $\mathbb{R}$. In addition, $\tilde{\boldv}_b$ can be seen as the velocity vector of $\partial M_t$ on $\partial \Omega$. As we mentioned before, only to have the Brakke's inequality \eqref{1.1.4} is not sufficient for us to give the notion of a Brakke flow with Dirichlet or dynamic boundary conditions. As for the final comment of this paragraph, we refer to the work of Kasai and Tonegawa \cite{KaTo}. They proved the local-in-time regularity results for the varifold solutions in $\mathbb{R}^n$ satisfying the inequality similar to \eqref{1.1.4} and, hence it makes sense to consider \eqref{1.1.4} as a weak formulation of mean curvature flow. 

\textcircled{\scriptsize 2} \textbf{Boundary conditions corresponding to Dirichlet or dynamic boundary conditions}. Secondly, we need to determine boundary motions of varifolds in such a way that these motions represent Dirichlet or dynamic boundary conditions. To do this, we first define the following two linear functionals: a \textit{boundary functional} $\mathcal{S}_{\alpha,\,{\boldv}_b}$ on $(C_c(\partial \Omega \times [0,\,\infty)))^n$ for a Radon measure $\alpha$ on $\partial\Omega\times[0,\,\infty)$ and a vector-valued function ${\boldv}_b\in(L^2(\alpha))^n$. Roughly speaking, the total variations of these functionals are regarded as the $L^2$-norm of ${\boldv}_b$ with respect to $\alpha$ (see Definition \ref{def3.1.2} in Section \ref{prelim}). Then, as the boundary condition for varifolds, we, roughly speaking, define the absolute continuities by using the total variations of $\mathcal{S}_{\alpha,\,\boldv_b}$, the mass measures and the total variation measures for varifolds as follows: 
\begin{itemize}
	\item (Dirichlet boundary condition)
	\begin{equation}\label{1.1.5.1}
	\left\| \mathcal{S}_{\alpha,\,{\boldv}_b}\right\| \ll \|V_t\|\boldten\mathcal{L}^1_t\quad \text{on $\partial \Omega\times [0,\,\infty)$}.
	\end{equation}
	\item (Dynamic boundary condition for $\sigma \in (0,\,\infty)$)
	\begin{equation}\label{1.1.5.2}
	\left\|\int_{0}^{\infty}\delta V_t\lfloor_{\partial \Omega}^T\,dt+\mathcal{S}_{\alpha,\,{\boldv}_b}\right\| \ll \|V_t\|\boldten\mathcal{L}^1_t\quad \text{on $\partial \Omega\times [0,\,\infty)$}.
	\end{equation}
\end{itemize}
Here $\ll$ means the absolute continuity for measures and $\delta V_t\lfloor^T_{\partial \Omega}$ is the tangential first variation restricted to $\partial \Omega$ (see Definition \ref{def.3.3} for more detail). In addition to the absolute continuity \eqref{1.1.5.1}, we require one more condition for the definition of Brakke flow only \textbf{in the case of Dirichlet boundary conditions}. Precisely, the measure $\alpha$ and the function $\boldv_b$ satisfy the condition that there exists a sequence of solutions to Allen-Cahn equations with proper boundary conditions and the convergence
\begin{equation}
	\lim_{j\to\infty}\int_{\partial \Omega \times [0,\,\infty)} \boldv_b^{j} \cdot \boldg \,d\alpha^{j} = -\mathcal{S}_{\alpha,\,\boldv_b}(\boldg)
\end{equation}
where the measures $\{\alpha^{j}\}_{j\in\mathbb{N}}$ and the functions $\{\boldv_b^{j}\}_{j\in\mathbb{N}}$ are defined properly by the solutions of the Allen-Cahn equations (see what we call ``Approximation property on the boundary" in Section \ref{formudiri} for the precise definition). The approximation property only in the case of Dirichlet boundary conditions may be necessary in our formulation because without this approximation property, it may be possible that the classical mean curvature flow with the right-angle Neumann boundary conditions can be also our Brakke flow with Dirichlet boundary conditions (see Remark \ref{remarkDefBrakkeDiri} for more detail). Let us remark that the conditions \eqref{1.1.5.1} and \eqref{1.1.5.2} are natural as a result of considering the limit of $\sigma$-parametrized boundary condition \eqref{1.1.2} by taking $\sigma\downarrow0$ or $\sigma\to1$. However, since the assumption that we call ``Uniform upper bound on the boundary $\partial\Omega$" (see Section \ref{existence} for more detail) seems to be strong, we can actually obtain a stronger result than \eqref{1.1.5.1} (see Theorem \ref{thm3.4} in Section \ref{existence}). So far, we are not able to eliminate or relax this assumption because of the technical issue in deriving a priori estimates. Moreover, we are able to construct a counterexample of curves moving along the motion along \eqref{1.1.2} shown in Remark \ref{examplecurvatureflow} of Section \ref{existence}. This counterexample implies that without this assumption we might fail to obtain a singular limit of the Allen-Cahn equations.

Now let us give a formal explanation of these boundary conditions \eqref{1.1.5.1} and \eqref{1.1.5.2} in the case that hypersurfaces $M_t$ satisfying our definitions are sufficiently smooth (see Remark \ref{rem.2.1} and \ref{rem.3} in Section \ref{formulation}). When the hypersurfaces $\{M_t\}_{t}$ moving by their mean curvatures are smooth, we may regard the mass measure of a varifold associated with $M_t$, the Radon measure $\alpha$, and the vector field $\boldv$ as the following quantities:
\begin{equation}\label{1.1.5.3}
\|V_t\|\thickapprox \sigma_0 \mathcal{H}^{n-1}\lfloor_{M_t},\quad\alpha\thickapprox \sigma_0 (\sin\theta)\mathcal{H}^{n-2}\lfloor_{\partial M_t}\boldten\mathcal{L}^1_t,\quad {\boldv}_b\thickapprox(\text{the velocity of $\partial M_t$ on $\partial \Omega$}),
\end{equation}
where $\sigma_0 \coloneqq \int_{-1}^{1}\sqrt{2W(s)}\,ds$ (see also Remark \ref{rem.1} for this constant $\sigma_0$) and $W$ is a given function defined later (see also \eqref{1.1.1}). In the case of Dirichlet boundary conditions, if we assume that $\|V_t\|(\partial \Omega)=0$ for all $t>0$ which means that the geometric interior of $M_t$, that is, $M_t\setminus\partial M_t$ does not exist on $\partial \Omega$ for all the time, then we obtain, from \eqref{1.1.5.1},  $\|\mathcal{S}_{\alpha,\,{\boldv}_b}\|(\partial \Omega\times[0,\,\infty))=0$. Moreover, if we assume that the contact angle $\theta$ is not identically equal to zero, then we have that $\sin\theta$ is not identically equal to zero on $\partial \Omega$ for $t>0$. Since we may regard the total variation of the functional $\mathcal{S}_{\alpha,\,{\boldv}_b}$ as the $L^2$-norm of ${\boldv}_b$ with respect to the measure $\alpha = (\sin\theta)\mathcal{H}^{n-2}\lfloor_{\partial M_t}\boldten\mathcal{L}^1_t$, we have that ${\boldv}_b=0$ in $L^2$ on $\partial \Omega \times [0,\,\infty)$. Hence it is natural to consider \eqref{1.1.5.1} as Dirichlet boundary conditions (see Remark \ref{rem.2.1} in Section \ref{formulation} for more detail). Similarly, in the case of dynamic boundary conditions, if we again assume that $\|V_t\|(\partial \Omega)=0$ for all $t>0$, then we obtain, from \eqref{1.1.5.2}, $\|\int_{0}^{\infty}\delta V_t\lfloor_{\partial \Omega}^T \,dt + \mathcal{S}_{\alpha,\,{\boldv}_b}\|(\partial \Omega \times [0,\,\infty))=0$. Here, from the analogy between the classical and the measure theoretic first variation, which are explained later, we may regard the total variation of $\delta V_t\lfloor_{\partial \Omega}^T$ as the $L^2$-norm of $\boldgam^T$ with respect to $\mathcal{H}^{n-2}\lfloor_{\partial M_t}\boldten\mathcal{L}^1_t$, where $\boldgam$ is the outer unit conormal of $\partial M_t$ and $\boldgam^T$ is the tangential projection of $\boldgam$ onto $\partial \Omega$. Therefore, from \eqref{1.1.5.3} and $\|\int_{0}^{\infty}\delta V_t\lfloor_{\partial \Omega}^T \,dt + \mathcal{S}_{\alpha,\,{\boldv}_b}\|\equiv 0$, we obtain $(\sin\theta){\boldv}_b+\boldgam^T=0$ in $(L^2(\alpha))^n$ on $\partial\Omega$ and thus we conclude that ${\boldv}_b\cdot \boldN_b$ is equal to $(\tan\theta)^{-1}$ on $\partial\Omega$, where $\boldN_b$ is the outer unit normal vector of $\partial M_t$ on $\partial \Omega$. Hence, it is reasonable to consider the condition \eqref{1.1.5.2} as dynamic boundary conditions (see Remark \ref{rem.3} in Section \ref{formulation} for more detail).

These ideas of the formulation of Brakke flows are considered as one generalization of the results obtained by Mizuno and Tonegawa in \cite{MiTo}. They proved that the associated varifold with the limit measure of $\mu^{\varepsilon}_t$, which is defined later, and its first variation satisfies a proper absolute continuity on $\overline{\Omega}$, as a result of the singular limit of the Allen-Cahn equations with right-angle Neumann boundary conditions. Moreover, it is shown in their paper that the absolute continuity represents right-angle Neumann boundary conditions in a weak sense. The key idea is the analogy between the first variation for a hypersurface $M$ and a varifold $V$ with a locally bounded first variation $\delta V$ in the following; let $\{\Psi^g_t\}$ be the one-parameter group of diffeomorphism generated by the vector fields $\boldg\in (C^{\infty}_c(\overline{\Omega}))^{n}$. Suppose that $V$ has the locally bounded first variation. Then, from the definitions of the first variations, we have 
\begin{align}
\frac{d}{dt}\mathcal{H}^{n-1}(M_t)\Big|_{t=0}&
=\int_{M} \boldg\cdot (-\boldH)\,d\mathcal{H}^{n-1} + \int_{\partial M} \boldg\cdot \boldgam \,d\mathcal{H}^{n-2}, \label{1.1.4.1}\\
\delta V(\boldg)&= \int_{\overline{\Omega}}\boldg\cdot \eta\,\frac{d\|\delta V\|_{ac}}{d\|V\|} \,d\|V\|+ \int_{\overline{\Omega}}\boldg\cdot \eta\,d\|\delta V\|_{sing},\label{1.1.4.2}
\end{align}
where $M_t\coloneqq\Psi^{\boldg}_t(M)$, and $\boldgam$ is the unit co-normal vector of $\partial M$. Here $\eta$ is a $\|\delta V\|$-measurable vector-valued function such that $|\eta|=1$ $\|\delta V\|$-a.e. in $\overline{\Omega}$, and $\|\delta V\|_{ac}$ and $\|\delta V\|_{sing}$ are the absolute continuous and singular part of the measure $\|\delta V\|$ with respect to $\|V\|$, respectively. The existence of these quantity is derived by Riesz representation and Radon-Nikodym theorem. We refer to Remark \ref{rem.0} for more detail. 

To show the existence of the singular limit for our Brakke flow with Dirichlet or dynamic boundary conditions, we will apply the phase field method as we mentioned before. In the phase field method, the singular limit of the following Allen-Cahn equations corresponding to the equations \eqref{1.1.2} should be studied:
\begin{align}\label{1.1.1}
\begin{cases}
\partial_t u^{\varepsilon,\,\sigma} = \Delta u^{\varepsilon,\,\sigma}- \varepsilon^{-2}W'(u^{\varepsilon,\,\sigma})&\quad \text{in $\Omega \times (0,\,\infty)$}, \\
\partial_t u^{\varepsilon,\,\sigma}+\sigma \nabla u^{\varepsilon,\,\sigma} \cdot \boldnu=0& \quad \text{on  $\partial \Omega \times (0,\,\infty)$},\\
u^{\varepsilon,\,\sigma}(\cdot,\,0)=u^{\varepsilon,\,\sigma}_0(\cdot)&\quad \text{in $\Omega$},
\end{cases}
\end{align}
where $\varepsilon \in (0,\,1)$, $\sigma \in (0,\,\infty)$, $\nu$ is the outer unit normal vector and $W(s)\coloneqq\frac{1}{2}(1-s^2)^2$ is the double-well potential. The Allen-Cahn equation without boundary conditions in \eqref{1.1.1} was first proposed by Allen and Cahn \cite{AlCa} in order to study the phase separation in alloys. They introduced the free energy functional
\begin{equation}\label{1.1.3}
E[u^{\varepsilon}(\cdot,\,t)]= \int_\Omega \left(\frac{\varepsilon|\nabla u^{\varepsilon}(x,\,t)|^2}{2} + \frac{W(u^{\varepsilon}(x,\,t))}{\varepsilon}\right) \ dx
\end{equation}
for an order parameter $u^{\varepsilon}$. The Allen-Cahn equation is a $L^2$-gradient flow of the energy functional \eqref{1.1.3} and, by considering this equation, Allen and Cahn also formally established the mean curvature flow \eqref{1.1.0} as the correct limiting law of motion for antiphase boundaries. Later, their analysis was justified rigorously by, for instance, Bronsard and Kohn \cite{BrKo}. They proved that the solution of the Allen-Cahn equation converges to a piecewise constant function whose surfaces of discontinuities move along \eqref{1.1.0}. With these formal and rigorous analyses and by setting the Radon measure $\mu^{\varepsilon}_t$ as
\begin{equation}\label{1.1.5}
d\mu^{\varepsilon}_t\coloneqq\left(\frac{\varepsilon|\nabla u^{\varepsilon}(\cdot,\,t)|^2}{2} + \frac{W(u^{\varepsilon}(\cdot,\,t))}{\varepsilon}\right)\,dx,
\end{equation} 
one may expect that the measure $\mu^{\varepsilon}_t$ behaves like surface measures of moving phase boundaries under the finiteness assumption for $E[u^{\varepsilon}(\cdot,\,t)]$ for sufficiently small $\varepsilon>0$. For our problems, we consider the limit measure of $\mu^{\varepsilon,\,\sigma}_t$ defined by the solution $u^{\varepsilon,\,\sigma}$ to the equation \eqref{1.1.1}, which is a slight modification of $\mu^{\varepsilon}_t$. Then, by using the limiting measure of $\mu^{\varepsilon,\,\sigma}_t$, we characterize the motion by mean curvature in \eqref{1.1.2} in the sense of Brakke.

One of the interesting observations on the Allen-Cahn equations \eqref{1.1.1} is that the boundary condition in \eqref{1.1.2} may be obtained by considering the asymptotic analysis of the boundary condition in \eqref{1.1.1} as $\varepsilon \to 0$. From the asymptotic analysis, we may have that, if $\varepsilon$ is sufficiently close to 0, the following approximations hold:
\begin{equation}\label{1.1.3.0}
\frac{-\partial_t u^{\varepsilon,\,\sigma}}{|\nabla_{\partial \Omega} u^{\varepsilon,\,\sigma}|} \frac{\nabla_{\partial \Omega} u^{\varepsilon,\,\sigma}}{|\nabla_{\partial \Omega} u^{\varepsilon,\,\sigma}|} \thickapprox {\boldv}^{\sigma}_b,\quad \frac{|\nabla_{\partial \Omega} u^{\varepsilon,\,\sigma}|}{|\nabla u^{\varepsilon,\,\sigma}|} \thickapprox \sin\theta^{\sigma},\quad \frac{\nabla u^{\varepsilon,\,\sigma}}{|\nabla u^{\varepsilon,\,\sigma}|} \cdot \boldnu \thickapprox \cos\theta^{\sigma}\quad \text{on $\partial \Omega$}, 
\end{equation}
where ${\boldv}^{\sigma}_b$ is the velocity vector of $\partial M^{\sigma}_t$ on $\partial \Omega$ and $\theta^{\sigma}$ is the contact angle formed by $M_t^{\sigma}$ and $\partial \Omega$ (see Remark \ref{rem.1} and Figure \ref{fig.4} in Section \ref{existence}). From these approximations, it may hold that the boundary condition of \eqref{1.1.2} is obtained by taking the limit ($\varepsilon \to 0$) in the boundary condition of \eqref{1.1.1}. Another one is that, as we mentioned before, the boundary condition in \eqref{1.1.2} may be regarded formally as Dirichlet and dynamic boundary conditions when $\sigma\to 0$ and $\sigma>0$ is finite, respectively and as in the boundary condition in \eqref{1.1.1}. Thus, when we consider a Brakke flow with Dirichlet or dynamic boundary conditions, it is natural to consider the singular limit ($\varepsilon\to 0$) of \eqref{1.1.1} first and then take the limit of the parameter $\sigma$ which goes to 0 or positive finite $\sigma'$, respectively. However, for technical reasons, we need to take the limit of both $\varepsilon$ and $\sigma$ simultaneously to characterize the limit in the case of Dirichlet boundary conditions. Moreover, for the purpose of simplifying arguments, the parameter $\sigma$ is fixed with 1 when we consider the formulation of a Brakke flow with dynamic boundary conditions. 

In the above situation, we intend to characterize the limit of the Allen-Cahn equations and to obtain a Brakke flow with Dirichlet or dynamic boundary conditions which satisfies Brakke's inequality as in \eqref{1.1.4} and the condition as in \eqref{1.1.5.1} or \eqref{1.1.5.2}. One of the features to study the characterization is that we, for the first time, introduce a proper Radon measure $\alpha^{\varepsilon,\,\sigma}$ and a proper vector-valued function $\boldv^{\varepsilon,\,\sigma}_b(x,\,t)$ on $\partial \Omega \times [0,\,\infty)$ for any solutions $u^{\varepsilon,\,\sigma}$ to the Allen-Cahn equations \eqref{1.1.1}, any $\varepsilon>0$ and $\sigma>0$. Moreover, we newly define a proper linear functional $\mathcal{S}_{\alpha,\,{\boldv}_b}$ defined on $\partial \Omega \times [0,\,\infty)$ for a Radon measure $\alpha$ which is the proper limit of $\alpha^{\varepsilon,\,\sigma}$ and a vector-valued function ${\boldv}_b$ as we mentioned in the above. Roughly speaking, $\alpha^{\varepsilon,\,\sigma}$ and $\boldv^{\varepsilon,\,\sigma}_b$ approximates the product measure of the weighted area measure of $\partial M_t$ on $\partial \Omega$ and the Lebesgue measure on $[0,\,\infty)$ and the velocity vector of $\partial M_t$ on $\partial \Omega$, respectively. Those quantities make it possible to control the boundary terms of integrals which do not appear in the case of right-angle Neumann boundary conditions and then obtain the proper singular limits. Another feature is that we apply the convergence theorem for measure-function pairs which is introduced by Hutchinson \cite{Hutchinson} in order to consider the convergence of $\alpha^{\varepsilon,\,\sigma}$ and $\boldv^{\varepsilon,\,\sigma}_b$ and to show that the limits satisfy the definition of our Brakke flow with Dirichlet or dynamic boundary conditions. Note that it is necessary to show that the limit measure $\alpha$ is not identically equal to zero to obtain our Brakke flow as the singular limit. Fortunately, we may prove the local positivity of $\alpha$ in the case that the boundary of $\Omega$ is connected and we impose some assumption on $u^{\varepsilon,\,\sigma}$ (see Lemma \ref{thm3.2.1} or Lemma \ref{thm3.7.1}).

Generally there are two main difficulties in our problems in order to attain desirable singular limits for the Allen-Cahn equations with boundary conditions. One is to show the vanishing of the discrepancy measure parametrized by $\varepsilon$ up to the boundary of the domain $\Omega$ as $\varepsilon \to 0$. We define the \textit{discrepancy function} $\xi^{\varepsilon,\,\sigma}$ associated with the solutions $u^{\varepsilon,\,\sigma}$ to \eqref{1.1.1} as
\begin{equation}\label{discrepancyFunc}
	\xi^{\varepsilon,\,\sigma}(x,\,t) \coloneqq \frac{\varepsilon|\nabla u^{\varepsilon,\,\sigma}|^2(x,t)}{2} - \frac{W(u^{\varepsilon,\,\sigma})(x,t)}{\varepsilon}
\end{equation}
for any $(x,\,t)\in \Omega \times (0,\,\infty)$ and set the \textit{discrepancy measure}, denoted by $\xi^{\varepsilon,\,\sigma}_t$, as $\xi^{\varepsilon,\,\sigma}_t \coloneqq \xi^{\varepsilon,\,\sigma}(x,\,t) \mathcal{L}^n(x)$ for each $t>0$ where $\mathcal{L}^n$ is the $n$-dimensional Lebesgue measure. The vanishing in $\Omega$, not up to its boundary, was proved by several authors. Precisely they showed that, if the support of the limit measure of $\mu^{\varepsilon,\,\sigma}_t$ does not exist on the boundary of $\Omega$, then it follows that the limit measure of $\xi^{\varepsilon,\,\sigma}_t$ with respect to $\varepsilon$ is identically equal to zero in $\Omega$. For instance, Ilmanen \cite{Ilmanen01} proved the vanishing of the discrepancy measure in the case that $\Omega=\mathbb{R}^n$ by constructing, what we call, \textit{the monotonicity formula} for a measure $\mu^{\varepsilon}_t$ with respect to $t$. In the context of De Giorgi conjecture, R\"{o}ger and Sch\"{a}tzle \cite{RoSc} proved the vanishing of the discrepancy measure for the elliptic Allen-Cahn equations if $\Omega$ is an open subset of $\mathbb{R}^2$ or $\mathbb{R}^3$. Similarly, Tonegawa \cite{Tonegawa01} also proved the vanishing of the discrepancy measure for the elliptic Allen-Cahn equations in the context of van der Waals-Cahn-Hilliard theory of phase transitions. Moreover, in the higher dimensions, Takasao and Tonegawa \cite{TaTo} also constructed a localized version of the monotonicity formula and showed the vanishing of the discrepancy measure in the context of the mean curvature flow with transport term. In the case of right-angle Neumann boundary conditions, Mizuno and Tonegawa \cite{MiTo} and Kagaya \cite{Kagaya} proved the vanishing of the discrepancy measure. To do this, they constructed the monotonicity formula by using the reflection arguments. This monotonicity formula is based on the one that Ilmanen showed in \cite{Ilmanen01}. However, the vanishing up to the boundary remains to be solved in the case of other boundary conditions or even the monotonicity formula up to the boundary is not known in the case of other boundary conditions. The other difficulty is, in the case of Dirichlet boundary conditions, to obtain the uniform upper bound of the Dirichlet energy of $u^{\varepsilon,\,\sigma}$ along $\boldnu$ on $\partial \Omega$ in $\varepsilon$ and $\sigma$. One possible problem is that the Dirichlet energy of $u^{\varepsilon,\,\sigma}$ on the boundary $\partial \Omega$ can blow up as $\sigma \to 0$ due to the form of the boundary condition in \eqref{1.1.1}. Thus, after we consider the limit ($\varepsilon \to 0$), the boundary condition in \eqref{1.1.2} may not approximate Dirichlet boundary condition as $\sigma\to 0$. To see this, we succeed in constructing a family of the curves moving along the motion \eqref{1.1.2} and this construction indicates that the boundary condition in \eqref{1.1.2} may not approximate Dirichlet boundary condition if $\sigma$ converges to 0 (see Subsection \ref{exis.assmp.diri} of Section \ref{existence}). Because of this example, it is reasonable to put some assumption on the Dirichlet energy of $u^{\varepsilon,\,\sigma}$ along to $\boldnu$ and this assumption may prevent the occurrence of irregular motions.

Concerning the first difficulty, we are unfortunately unable to prove the vanishing of the discrepancy measure up to the boundary. However, one progress that we made in this paper is that, under some conditions such as the convexity of the domain and some estimate for the Dirichlet energy of $u^{\varepsilon,\,\sigma}$ along the unit normal vector of the boundary, we can show the boundedness of the discrepancy measure associated with the equation \eqref{1.1.1} (see Proposition \ref{appendAProp2} in Appendix A for the proof). This property was also proved by Mizuno and Tonegawa in \cite{MiTo}, who dealt with the case of Neumann boundary conditions. In addition to this, by using this boundedness property and employing the methods shown in \cite{MiTo}, we can prove the vanishing of the discrepancy measure only in the interior of the domain (see Proposition \ref{vanishingProp} in Appendix B for more detail). In the case of $\Omega=\mathbb{R}^n$, Ilmanen in \cite{Ilmanen01} proved the non-positivity result by using the maximal principle for the quotient of the two terms in the energy density. Regarding to the second difficulty, we still don't know the way to avoid assuming the uniform upper bound of the Dirichlet energy of $u^{\varepsilon,\,\sigma}$ on $\partial \Omega$. Hence, in this paper, we assume these two properties to obtain the main results. We emphasize that, in the case of dynamic boundary conditions, we may obtain the main results without assuming the uniform upper bound of the Dirichlet energy of $u^{\varepsilon,\,\sigma}$ on $\partial \Omega$. Thus this upper-bound assumption is necessary only in the case of Dirichlet boundary conditions. 

The organization of this paper is as follows; in Section \ref{prelim}, we give several notations and definitions of varifolds to make a formulation of our Brakke flow and derive its approximation results. Moreover, we introduce two important linear functionals; the one is defined on $(C_c(\partial \Omega \times [0,\,\infty)))^n$ and the other is defined on $(C_c(\Omega\times[0,\,\infty)))^n$. We also introduce two important Radon measures for the solutions of the Allen-Cahn equations \eqref{1.1.1}; the one is defined on $\partial \Omega \times [0,\,\infty)$ and the other is defined on $\Omega\times[0,\,\infty)$. These quantities play an important role to consider our problems especially when we formulate the boundary conditions for varifolds. We also describe the intuitive geometric meaning of this measure. In Section \ref{formulation}, we state the formulation of a Brakke flow with Dirichlet or dynamic boundary conditions and then we give the motivation of these formulations. In Section \ref{existence}, we state a sequence of the main lemmas and theorem, that is, the results of the singular limit of the Allen-Cahn equations and its characterization. Before we mention the main results, we give several assumptions and an important hypothesis in each case. Besides, in the case of Dirichlet boundary conditions, we also show the example which implies that the motion in \eqref{1.1.2} may not necessarily approximate the motion of mean curvature flow with Dirichlet boundary conditions as  $\sigma \to 0$. In Section \ref{apriori} and Section \ref{chara.lim}, we prove that the singular limit of the Allen-Cahn equations actually satisfies the definition of our Brakke flow with Dirichlet or dynamic boundary conditions. In Section 5, we derive a priori estimates and then, in Section 6, we calculate the first variation of the associated varifolds with $\mu^{\varepsilon,\,\sigma}$ and finally, consider the limit of the varifolds. In Section \ref{appendix}, we prove the uniform estimate of both a solution $u^{\varepsilon,\,\sigma}$ for the Allen-Cahn equations \eqref{1.1.1} and the discrepancy measure $\xi^{\varepsilon,,\sigma}_t$ under several assumptions. Moreover, we also prove the vanishing of the discrepancy measure $\xi^{\varepsilon,,\sigma}_t$ only in the interior of the domain under suitable assumptions. At the last, we recall Poincar\'e inequality on hypersurfaces.

\section{Preliminaries}\label{prelim}
We recall several definitions and notations related to varifolds and geometric measure theory to fix the notations. See for instance \cite{Allard} and \cite{Simon} for more detail.

Let $X \subset \mathbb{R}^n$ be an open or compact subset. Let $\mathbb{G}(n\,,k)$ be the set of $n$-dimensional subspaces of $\mathbb{R}^n$ equipped with the metric $d(S,\,T)\coloneqq\|S-T\|_{*}$ where let $\| \cdot\|_{*}$ denote the operator norm on the space of linear endomorphism of $\mathbb{R}^n$. We set $G_{k}(X)\coloneqq X\times \mathbb{G}(n\,,k)$ for $n,\,k\in\mathbb{N}$ with $n>k\geq 1$. For any $S\in \mathbb{G}(n\,,k)$, we can identify $S$ with the corresponding orthogonal projection of $\mathbb{R}^n$ onto $S$ and its matrix representation.

We define $\boldA:\boldB$ for $(n\times n)$-matrices $\boldA=(A_{ij})$ and $\boldB=(B_{ij})$ by 
\begin{equation}\label{2.1.0}
\boldA:\boldB\coloneqq \sum_{i,\,j=1}^{n} A_{ij}B_{ij}.
\end{equation}
Now we define the support of a measure $\mu$ on $X$ by
\begin{equation}\label{2.1.1}
\spt \mu\coloneqq\{x\in X \, | \, \mu(B_r(x))>0 \quad{\rm for \; all} \; r>0 \},
\end{equation}
where $B_r(x)$ is an open ball of a centre $x$ with a radius $r$. One may easily show that the set defined by the right-hand-side of \eqref{2.1.1} is a closed subset of $X$. 

In the following, we state several definitions of function spaces we use in the present paper. Let $m\in \mathbb{N}$ with $m\geq 1$ and $p\in [1,\infty]$ and let $\mu$ be a measure on $X$. First we say that $f$ belongs to $(L^p(\mu,\,X))^m$ for $p\in[1,\,\infty)$ if $f$ is defined $\mu$-a.e. on $X$ with the values on $\mathbb{R}^m$ and 
\begin{equation}\label{2.1.2}
\|f\|_{L^p(\mu,\,X)}\coloneqq\left(\int_{X} |f|^p\, d\mu \right)^{\frac{1}{p}} <\infty
\end{equation} 
holds. Moreover, we say that $f$ belongs to $(L^{\infty}(\mu,\,X))^m$ if 
\begin{equation}\label{2.1.2.1}
\|f\|_{L^{\infty}(\mu,\,X)}\coloneqq\inf\{\lambda\in \mathbb{R} \mid |f|\leq\lambda\quad \text{$\mu$-a.e. on $X$}\} <\infty.
\end{equation}
In particular, we write $L^p(\mu,\,X)$ when $m=1$.

Secondly, we say that $f$ belongs to $(C^k(X))^m$ for any $k\in \mathbb{N}\cup \{\infty\}$ and $m\in \mathbb{N}$ if $f$ is a $C^k$-function defined on $X$ taking the values on $\mathbb{R}^m$. Finally we say $f\in (C^k_c(X))^m$ if $f\in (C^k(X))^m$ with a compact support in $X$.
\begin{definition}[Convergence of Radon measures]
	Let $\{\mu_k\}_{k\in\mathbb{N}}$ be a family of Radon measures on $X$. We say that $\{\mu_k\}_{k\in \mathbb{N}} $ converges to a Radon measure $\mu$ on $X$ as Radon measures if and only if
	\begin{equation}\label{2.1.3}
	\lim_{k \to \infty} \int_{X} \phi \,d\mu_k=\int_{X} \phi \,d\mu
	\end{equation}  
	holds for all $\phi \in C_c(X)$. We write $\mu_k \rightharpoonup \mu$ as $k \to \infty$ if $\{\mu_k\}_{k\in \mathbb{N}}$  converges to $\mu$ in Radon measure and also often write $\mu_k(\phi) \to \mu(\phi)$ as $k \to \infty$ for any $\phi \in C_c(X)$, where we set $\mu(\phi)$ by 
	\begin{equation}\label{2.1.3.1}
	\int_{X}\phi\,d\mu.
	\end{equation}
	for any Radon measure $\mu$ on $X$ and any function $\phi\in C_c(X)$. 
\end{definition}

\begin{definition}[Product of measures]\label{def.prod}
	Let $\{\mu_t\}_{t\in[0,\,\infty)}$, $f(x,\,t)$, and $\mathcal{L}^1_t$ be a family of Radon measures on $X$ parametrized by $t\in[0,\,\infty)$, a function on $X\times[0,\,\infty)$ such that $f(\cdot,\,t)$ is $\mu_t$-integrable on $X$ for a.e. $t$, and the one-dimensional Lebesgue measure on $\mathbb{R}$, respectively. Then, throughout this paper, we define $\mu_t\otimes\mathcal{L}^1_t$ by
	\begin{equation}\label{2.1.4.1}
	(f\,\mu_t\otimes\mathcal{L}^1_t)(A\times[a,\,b))\coloneqq\int_{a}^{b}
	\!\!\int_{A}f(x,\,t)\,d\mu_t(x)\,dt,
	\end{equation}
	for any $A\subset X$ and any $0\leq a< b\leq\infty$.	
\end{definition}

\begin{definition}[$k$-rectifiable and $k$-integral Radon measure]
	We say that a Radon measure $\mu$ on $X$ is $k$-rectifiable if there exists $\mathcal{H}^k$-measurable countably $k$-rectifiable set $M\subset X$ and a locally $\mathcal{H}^k$-integrable positive function $\theta$ defined on $M$ such that
	\begin{equation}\label{2.1.5}
	\mu(A)= \int_{M\cap A} \theta \, d\mathcal{H}^{k}
	\end{equation}
	for any Borel set $A \subset X$.
	Moreover $\mu$ is $k$-integral if $\theta$ takes the values on $\mathbb{Z}_{>0}$ $\mathcal{H}^{k}$-a.e. in $M$.
\end{definition}

\begin{definition}[General $k$-varifold]
	A general $k$-varifold $V$ in $X$ is a Radon measure on $G_{k}(X)$. Let $\mathbb{V}_{k}(X)$ denote the set of all $k$-varifolds.
\end{definition}

\begin{definition} [Rectifiable $k$-varifold]
	Let $V\in \mathbb{V}_{k}(X)$. We say that $V$ is a rectifiable $k$-varifold if there exist a $\mathcal{H}^{k}$-measurable countably $k$-rectifiable set $M\subset X$ and a locally $\mathcal{H}^{k}$-integrable function $\theta$ defined on $M$ such that  
	\begin{equation}\label{2.1.6}
	V(\phi)\coloneqq \int_{X}\phi(x,\,T_x M)\theta(x) \,d\mathcal{H}^{k}\lfloor_{M},
	\end{equation}
	for any $\phi\in C_c(G_k(X))$, where $T_x M$ is the approximate tangent plane of $M$ at $x$ which exists $\mathcal{H}^k$-a.e. on $M$. The existence of the approximate tangent plane is due to the rectifiability of $M$.
\end{definition}

\begin{definition} [Mass measure]
	For any $V\in \mathbb{V}_{k}(X)$, we define the mass measure $\|V\|$ of $V$  on $X$ as the push-forward of $V$ by the projection $\pi: G_{k}(X)\to X$. In particular, if $V$ is a rectifiable $k$-varifold, its mass measure is expressed by $\|V\|= \theta \mathcal{H}^k\lfloor_M$.
\end{definition}

\begin{remark}
	The rectfiable $k$-varifold is uniquely determined by its mass measure through the identity \eqref{2.1.6}. Due to this, we say that a $k$-rectifiable varifold $V$ associated with a $k$-rectifiable Radon measure $\mu$ is a varifold such that the mass measure of $V$ is equal to $\mu$. 
\end{remark}

\begin{definition} [First variation of a varifold]
	For $V\in \mathbb{V}_{k}(X)$, we define the first variation $\delta V$ of $V$ by
	\begin{equation}\label{2.1.7}
	\delta V(\boldg)\coloneqq \int_{G_{k}(X)} \divergence_{S} \boldg(x) \,dV(x,\,S)
	\end{equation} 
	for any $\boldg\in (C^1_c(X))^n$, where $\divergence_S \boldg(x)$ is defined by 
	\begin{equation}\label{2.1.8}
	\divergence_S \boldg(x) \coloneqq \sum_{j=1}^n (\textbf{S}(\nabla g_j(x))\cdot e_j)= \sum_{i,j=1}^{n}S_{ij} \partial_{x_i} g_j(x)= \nabla g(x): \textbf{S},
	\end{equation}
	for any $\boldg\in (C^1_c(X))^n$ and any $S\in \mathbb{G}(n,\,k)$ and $\{e_j\}_{j=1}^n$ is the canonical basis of $\mathbb{R}^{n}$.
\end{definition}
\begin{remark}\label{rem.0}
	If a varifold $V$ has a locally bounded first variation, then we may extend the linear functional $\delta V$ into a locally bounded linear functional on $(C_c(X))^n$. Thus, from Riesz representation theorem, we have that the total variation $\|\delta V\|$ is a Radon measure on $X$ and there exists a $\|\delta V\|$-measurable function $\eta: X\to \mathbb{R}^n$ such that  $|\eta|=1$ $\|\delta V\|$-a.e. in $X$ and 
	\begin{equation}\label{2.1.8.1}
	\delta V(\boldg)= \int_{X}\boldg\cdot \eta \,d\|\delta V\|
	\end{equation} 
	for every $\boldg\in (C_c(X))^n$. Then, from Lebesgue decomposition theorem, we may decompose $\|\delta V\|$ into the absolutely continuous part $\|\delta V\|_{ac}$ and the singular part $\|\delta V\|_{sing}$ with respect to $\|V\|$. Therefore, from Radon-Nikodym theorem, we obtain
	\begin{equation}\label{2.1.9}
	\delta V(\boldg)= \int_{X}\boldg\cdot \eta\,\frac{d\|\delta V\|_{ac}}{d\|V\|}\,d\|V\|+ \int_{X} \boldg\cdot \eta \,d\|\delta V\|_{sing}
	\end{equation}
	for any $\boldg\in (C_c(X))^n$, where $\frac{d\|\delta V\|_{ac}}{d\|V\|}$ is the Radon-Nikodym derivative. If we set $\boldH_V\coloneqq- \, \frac{d\|\delta V\|_{ac}}{d\|V\|}\eta$, $\boldH_V$ is called the \textit{generalized mean curvature vector} of $V$. This definition is the analogy of the classical version: if $M \subset \mathbb{R}^n$ is a $k$-dimensional smooth embedded manifold, then, from the divergence theorem and the first variation of $M$ with a vector field $\boldg$, we have 
	\begin{equation}\label{2.1.10}
	\frac{d}{d\varepsilon}\mathcal{H}^{k}(\Psi^{\boldg}_{\varepsilon}(M))\Big|_{\varepsilon=0}=\int_{M}\divergence_{M}\boldg\,d\mathcal{H}^k= -\int_{M} \boldg\cdot \boldH\,d\mathcal{H}^{k}+ \int_{\partial M} \boldg\cdot \boldgam\,d\mathcal{H}^{k-1}
	\end{equation}
	for all $\boldg\in (C^{\infty}_c(X))^n$, where $\boldH$ is the mean curvature vector of $M$, $\boldgam$ is the outer unit normal vector of $\partial M$, tangential to $M$. Here a map $\Psi^{\boldg}_{\varepsilon}:M\to \mathbb{R}^n$ is defined by  $\Psi^{\boldg}_{\varepsilon}(x)\coloneqq x+\varepsilon \boldg(x)$ for all $x\in M$ and $\varepsilon \in (-1,\,1)$.
\end{remark}
\begin{definition}[First variation of a varifold with time integral]
	Let $\{V_t\}_{t\in [0,\,\infty)}\subset\mathbb{V}_{k}(X)$ be a family of varifolds. We define $(\int_{0}^{\infty}\delta V_t\,dt)(\boldg)$ for every $\boldg\in (C^1_c(X\times[0,\,\infty)))^n$ by
	\begin{equation}\label{2.1.10.1}
	\left(\int_{0}^{\infty}\delta V_t\,dt\right)(\boldg)\coloneqq \int_{0}^{\infty} \delta V_t(\boldg)\,dt.
	\end{equation}
\end{definition}

\begin{remark}
	Since the first variation of a varifold with time integral is a linear functional, we can also define \textit{the generalized mean curvature vector} in the following way: first, we assume that there exists a constant $C>0$ such that
	\begin{equation}\label{2.2.1}
	\left|\int_{0}^{\infty}\!\!\delta V_t\,dt(\boldg)\right|\leq C\,\sup_{X\times[0,\,\infty)}|\boldg|,
	\end{equation}
	for any $\boldg\in (C^1_c(X\times[0,\,\infty)))^n$. Then, we may extend the domain of $\int_{0}^{\infty}\!\!\delta V_t\,dt$ into $(C_c(X\times[0,\,\infty)))^n$. Thus, from Riesz representation theorem, we have that the total variation of $\int_{0}^{\infty}\!\!\delta V_t\,dt$ is a Radon measure on $X\times[0,\,\infty)$ and there exists a $\|\int_{0}^{\infty}\!\!\delta V_t\,dt\|$-measurable function $\boldsymbol{\eta}$ with $|\eta|=1$ $\|\int_{0}^{\infty}\!\!\delta V_t\,dt\|$-a.e. in $X\times[0,\,\infty)$ such that
	\begin{equation}\label{2.2.2}
	\int_{0}^{\infty}\!\!\delta V_t\,dt(\boldg)=\int_{X\times[0,\,\infty)}\boldg\cdot\boldsymbol{\eta}\,d\left\|\int_{0}^{\infty}\!\!\delta V_t\,dt\right\|,
	\end{equation}
	for any $\boldg\in (C_c(X\times[0,\,\infty)))^n$. By decomposing  $\|\int_{0}^{\infty}\!\!\delta V_t\,dt\|$ into the absolute and singular part with respect to $\|V_t\|\otimes\mathcal{L}^1_t$ and applying the Radon-Nikodym theorem, we can also define \textit{the generalized mean curvature vector in space-time} $\boldH_V$, as we did in Remark \ref{rem.0}, by
	\begin{equation}\label{2.2.3}
	\boldH_V\coloneqq-\frac{d\|\int_{0}^{\infty}\!\!\delta V_t\,dt\|_{ac}}{d(\|V_t\|\otimes\mathcal{L}^1_t)}\boldsymbol{\eta},
	\end{equation}
	where $\frac{d\|\int_{0}^{\infty}\!\!\delta V_t\,dt\|_{ac}}{d(\|V_t\|\otimes\mathcal{L}^1_t)}$ is a Radon-Nikodym derivative. Note that, in this paper, we will use the notation $\boldH_V$ in the sense of \textit{the generalized mean curvature vector in space-time}, which we define in this remark.
\end{remark}

In the following, we assume that $\Omega$ is an open subset of $\mathbb{R}^n$ with smooth boundary $\partial \Omega$.

\begin{definition}\label{def.3.3}
	Let $V \in \mathbb{V}_{n-1}(\overline{\Omega})$ be a varifold with a locally bounded first variation on $\overline{\Omega}$. We define the first variation of varifold tangential to $\partial \Omega$, denoting $\delta V \lfloor_{\partial \Omega}^T$, by
	\begin{equation}\label{3.2.5}
	\delta V\lfloor^{T}_{\partial \Omega}(\boldg)\coloneqq\delta V \lfloor_{\partial \Omega}(\boldg-(\boldg\cdot \boldnu)\boldnu)
	\end{equation}
	for any $\boldg\in (C^1(\partial \Omega))^n$, where $\boldnu$ is the outer unit normal vector of $\partial \Omega$.
\end{definition}

Now we define two linear functionals, which we name \textit{boundary functional} and \textit{interior functional} defined on $(C_c(\partial \Omega \times [0,\,\infty)))^n$ and $(C_c(\Omega\times[0,\,\infty)))^n$, respectively. The boundary functional is one of the keys to do the weak formulation of Dirichlet or dynamic boundary conditions of a Brakke flow and to prove the existence of the singular limits of the Allen-Cahn equations \eqref{1.1.1}. On the other hand, the interior functional is one of the keys to do the weak formulation of only Dirichlet boundary conditions.

\begin{definition}[Boundary functional]\label{def3.1.2}
	Let $\omega$ be a Radon measure on $\partial \Omega \times [0,\infty)$ and $\textbf{p}$ be in $(L^2(\omega,\,\partial \Omega \times [0,\infty)))^n$. Then we define the boundary linear functional $\mathcal{S}_{\omega,\,\textbf{p}}$ by
	\begin{equation}\label{3.1.15}
	\mathcal{S}_{\omega,\,\textbf{p}}(\boldg)\coloneqq \int_{\partial \Omega \times [0,\infty)} \boldg\cdot \textbf{p}\, d\omega,
	\end{equation}
	for any $\boldg\in (C_c(\partial \Omega \times [0,\infty)))^n$. Let $\|\mathcal{S}_{\omega,\,\textbf{p}}\|$ denote the total variation of $\mathcal{S}_{\omega,\,\textbf{p}}$.
\end{definition}

\begin{remark}\label{rem3.1.2}
	From its definition, the total variation $\|\mathcal{S}_{\omega,\,\textbf{p}}\|$ is in fact a Radon measure on $\partial \Omega \times [0,\,\infty)$. Indeed, let $K \subset \partial \Omega \times [0,\,\infty)$ be a compact set and we take any $\boldg \in (C_c(\partial \Omega \times [0,\,\infty)))^n$ such that $\spt \boldg \subset K$ holds. Then, by using Cauchy-Schwarz inequality, we have $|\mathcal{S}_{\omega,\,\textbf{p}}(\boldg)| \leq \|\textbf{p}\|_{L^2(\omega,\,\partial \Omega\times [0,\,\infty))} (\omega(K))^{\frac{1}{2}} \,\|\boldg\|_{L^\infty} <\infty $. This estimate allows us to apply Riesz representation theorem to $\mathcal{S}_{\omega,\,\textbf{p}}$ and then we obtain the conclusion. 
\end{remark}

Now we define \textit{the weighted boundary area measure} defined on $\partial \Omega$ for a solution $u^{\varepsilon,\,\sigma}$ of Allen-Cahn equations. This measure plays an important role when we formulate a Brakke flow with Dirichlet or dynamic boundary condition and prove its existence theorem. The reason we name the measure ``weighted boundary area" is stated in Remark \ref{rem.1} right after the definition.
\begin{definition}[Weighted boundary area measures]\mbox{} \label{def.2.3}
	Let $u^{\varepsilon,\,\sigma}$ be a solution to the equation \eqref{1.1.1}. Then, for all $\varepsilon>0$, $\sigma>0$ and all $t>0$, we define a weighted boundary area measure $\alpha^{\varepsilon,\,\sigma}_t$ on $\partial \Omega$ by 
	\begin{equation}\label{2.1.13}
	\alpha^{\varepsilon,\,\sigma}_t\coloneqq \varepsilon |\nabla_{\partial \Omega} u^{\varepsilon,\,\sigma}(\cdot,\,t)|^2\, \mathcal{H}^{n-1}\lfloor_{\partial \Omega},
	\end{equation}
	where $\mathcal{H}^{n-1}$ is the $(n-1)$-dimensional Hausdorff measure and $\nabla_{\partial \Omega}$ is a tangential gradient on $\partial \Omega$. Moreover, we set $\alpha^{\varepsilon,\,\sigma}\coloneqq \alpha^{\varepsilon,\,\sigma}_t \otimes \mathcal{L}^1_t$.
\end{definition}
\begin{remark}\label{rem.1}
	We briefly give the geometric interpretation of the measure $\alpha^{\varepsilon,\,\sigma}_t$. For simplicity, we do not write the index $\sigma$ in the following. Let $\{M_t\}_{t\geq0}$ be a family of smooth  hypersurfaces on $\overline{\Omega} \subset \mathbb{R}^{n}$ with a boundary $\partial M_t \subset \partial \Omega$ and we assume that $M_t$ moves by mean curvature under certain boundary condition. Let us recall that the one-dimensional stationary solution to Allen-Cahn equation
	\begin{equation}\label{1dAllenCahneq}
		(q^{\varepsilon})''(s) - \varepsilon^{-2} W'(q^{\varepsilon}(s)) = 0
	\end{equation}
	with $q^{\varepsilon}(\pm \infty) = \pm 1$, $q'>0$, and $q^{\varepsilon}(0)=0$ has the form of $q^{\varepsilon}(s) = \tanh(\varepsilon^{-1}s)$ and by simple computations we obtain $2^{-1}\varepsilon |(q^{\varepsilon})^{\prime}(s)|^2 = \varepsilon^{-1} W(q^{\varepsilon}(s))$. Note that for any function $q: \mathbb{R} \to \mathbb{R}$ with $q(\pm \infty) = \pm 1$ and $q'>0$, we have
	\begin{align}
		E^{\varepsilon}[q] &\coloneqq \int_{\mathbb{R}}\left( \frac{\varepsilon(q'(s))^2}{2} + \frac{W(q(s))}{\varepsilon} \right)\,ds \nonumber\\
		&= \int_{\mathbb{R}}\frac{1}{2} \left(\sqrt{\varepsilon}|q'(s)| - \frac{\sqrt{2W(q(s))}}{\sqrt{\varepsilon}} \right)^2 \,ds + \int_{\mathbb{R}}\sqrt{2W(q(s))}|q'(s)|\,ds \nonumber\\
		&= \int_{\mathbb{R}}\frac{1}{2} \left(\sqrt{\varepsilon}|q'(s)| - \frac{\sqrt{2W(q(s))}}{\sqrt{\varepsilon}} \right)^2 \,ds + \int_{-1}^{1} \sqrt{2W(\tilde{s})}\,d\tilde{s}
	\end{align}
	where we used the change of variables $\tilde{s} = q(s)$ in the last equality. Therefore, $q^{\varepsilon}$ is in fact a minimizer of $E^{\varepsilon}$ subject to its boundary conditions and the minimum value is equal to $\sigma_0 \coloneqq \int_{-1}^{1}\sqrt{2W(s)}\,ds$. Thus we may have the approximation $\varepsilon |(q^{\varepsilon})^{\prime}(s)|^2 \,\mathcal{L}^1(s) \thickapprox \sigma_0\mathcal{H}^{0}(\{0\})$ if $\varepsilon > 0$ is sufficiently small. Moreover, we can see that the solution $u^{\varepsilon}$ in the general dimensions has the asymptotic form $u^{\varepsilon} \thickapprox q^{\varepsilon}(r^{\varepsilon})$, where $r^{\varepsilon}$ is the signed distance function to the front. Thus, we may expect that  $2^{-1}\varepsilon |\nabla u^{\varepsilon}|^2 \thickapprox \varepsilon^{-1}W(u^{\varepsilon})$ and, moreover, $\varepsilon |\nabla u^{\varepsilon}|^2 \,\mathcal{L}^n \thickapprox \sigma_0\mathcal{H}^{n-1}\lfloor_{M_t}$ if $\varepsilon> 0$ is sufficiently small. Then the measure $\mu^{\varepsilon}_t$ may be regarded as $\mathcal{H}^{n-1}\lfloor_{M_t}$ up to constants. To see this, we refer to, for example, the formal analysis done by Rubinstein, Sternberg, and Keller \cite{RSK} or the rigorous analysis done by Soner in \cite{Soner}. They also show that, as $\varepsilon \to 0$, $\Omega$ separates into two regions $P_t$ and $N_t$ where $u^{\varepsilon}\thickapprox+1$ and $u^{\varepsilon}\thickapprox -1$ respectively and the separating front corresponds to $M_t$. This means that, for sufficiently small $\varepsilon>0$, we may consider the hypersurface $M_t$ as the zero level set of $u^{\varepsilon}(\cdot,\,t)$. Therefore, in the phase field method, evolving surfaces $\{M_t\}_{t\geq 0}$ are approximated by the thin transition layers of an order $\varepsilon$. As an analogue of this, we also have that the transition layer on $\partial \Omega$ which approximates  $\partial M_t$ is supposed to have the width of an order $\varepsilon(\sin\theta)^{-1}$ (see Figure \ref{fig.1}).
	\begin{figure}[H]
		\begin{center}
			\includegraphics[keepaspectratio,scale=0.50,angle=0]{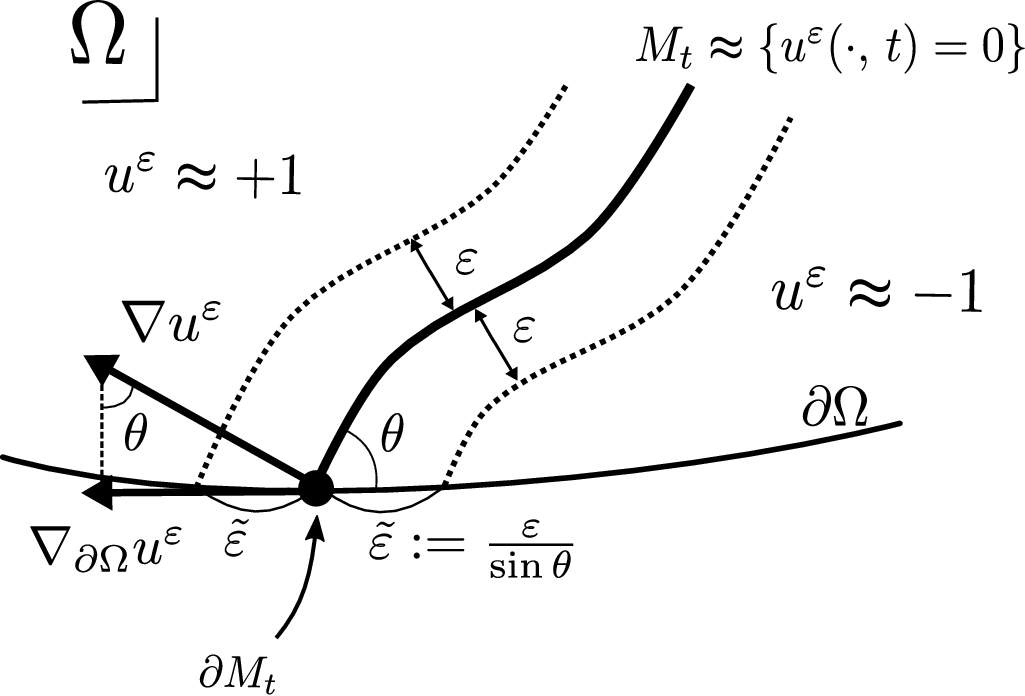}
			\caption{Approximated moving surfaces on $\partial \Omega$ with an order $\tilde{\varepsilon}$}
			\label{fig.1}
		\end{center}
	\end{figure}
	As an analogy of the asymptotic analysis for the measure $\mu^{\varepsilon}_t$, it is reasonable to expect that the measure $\tilde{\varepsilon} |\nabla_{\partial \Omega}u^{\varepsilon}|^2\, \mathcal{H}^{n-1}$ approximates the area measure of $\partial M_t$ on $\partial \Omega$, that is, $\mathcal{H}^{n-2}\lfloor_{\partial M_t\cap \partial \Omega}$, where we set $\tilde{\varepsilon}\coloneqq\varepsilon(\sin\theta)^{-1}$. Moreover, from Figure \ref{fig.1}, we may compute as follows:
	\begin{equation}\label{2.1.13.1}
	\sin \theta(x,\,t)= \frac{|\nabla_{\partial \Omega} u^{\varepsilon}(x,\,t)|}{|\nabla u^{\varepsilon}(x,\,t)|}.
	\end{equation} 
	Therefore, we obtain, as $\varepsilon \to 0$,
	\begin{equation}
	\alpha^{\varepsilon}_t = \varepsilon|\nabla_{\partial \Omega} u^{\varepsilon}|^2 \mathcal{H}^{n-1}\lfloor_{\partial \Omega} = (\sin\theta)\tilde{\varepsilon}|\nabla_{\partial \Omega} u^{\varepsilon}|^2\,\mathcal{H}^{n-1}\lfloor_{\partial \Omega} \thickapprox \sigma_0\,(\sin\theta) \mathcal{H}^{n-2}\lfloor_{\partial M_t \cap \partial \Omega}. \label{2.1.14}
	\end{equation}
\end{remark}

\section{Formulation of Brakke flow}\label{formulation}
In this section, we will give the definition of a Brakke flow with Dirichlet or dynamic boundary conditions in each subsection. Before stating the formulation of a Brakke flow, we will give the assumptions on the initial hypersurface $M_0\subset\overline{\Omega}$ in the case of both Dirichlet and dynamic boundary conditions. Note that this initial condition allows us to have a variety of singularities on the initial hypersurface and to consider a wide range of mean curvature flow such as the flow of grain boundaries. The idea is based on the idea by Ilmanen (see \cite{Ilmanen01}).
\subsection{Initial hypersurface}\label{initialdata}
We choose the initial hypersurface $M_0$ in the following manner; let $E_0$ be an open set in $\mathbb{R}^n$ with $E_0\cap (\mathbb{R}^{n}\setminus \overline{\Omega}) \neq \emptyset$ and $E_0\cap\Omega\neq\emptyset$. Defining $M_0\coloneqq\partial E_0 \cap \Omega(\neq\emptyset)$ with $\partial M_0=\partial E_0\cap \partial \Omega$, we assume that the density bound of $M_0$ and the pair $(E_0,\,M_0)$ can be approximated by smooth family of pairs $\{(E^l_0,\,M_0^l)\}_{l\in\mathbb{N}}$ (see \cite{Ilmanen01}). More precisely, we assume that
\begin{enumerate}
	\item There exists a constant $C>0$ such that, for any $R>0$ and $x\in\overline{\Omega}$,
	\begin{equation}\label{3.1.1.0}
	\frac{\mathcal{H}^{n-1}(M_0\cap B_R(x))}{\omega_{n-1}R^{n-1}}\leq C,
	\end{equation}
	where $\omega_{n-1}$ is the area of $(n-1)$-dimensional sphere.
	\item  There exists a family of pairs $\{(E^l_0,\,M_0^l)\}_{l\in\mathbb{N}}$ such that $E_0^l$ is open, $M_0^l\coloneqq\partial E_0^l$ is a smooth hypersurface and the convergences
	\begin{align}
	\chi_{E^l_0} \xrightarrow[l\to\infty]{} \chi_{E_0}&\quad\text{in $BV(\Omega)$}, \label{3.1.1.1} \\
	\mathcal{H}^{n-1}\lfloor_{M_0^l} \xrightarrow[l\to\infty]{} \mathcal{H}^{n-1}\lfloor_{M_0}&\quad \text{as Radon measures} \label{3.1.1.2}
	\end{align}
	hold.
\end{enumerate}

In the following, we state the formulation of a Brakke flow with Dirichlet or dynamic boundary conditions starting from the initial hypersurface $M_0$ given in Subsection \ref{initialdata}.

\subsection{Dirichlet boundary condition}\label{formudiri}
We now state the definition of a Brakke flow with Dirichlet boundary conditions and then explain our motivation to introduce such a flow. Specifically, we state the reason why our definition is reasonable for a weak formulation of Dirichlet boundary conditions.

Recall that we try to consider a weak solution of the following mean curvature flow with Dirichlet boundary conditions in the sense of Brakke:
\begin{align}
\begin{cases}
\boldv(\cdot,\,t)=\boldH(\cdot,\,t)&\quad \text{on $M_t,\,t>0$}, \\
{\boldv}_b(\cdot,\,t)=0&\quad \text{on $\partial M_t,\,t>0$},
\end{cases}
\end{align}
where ${\boldv}_b$ is the velocity vector of the boundary of a hypersurface $M_t$ on $\partial \Omega$.
\begin{definition}\label{def.4.1}
	Let $\{V_t\}_{t\geq 0}$ be a family of varifolds on $\overline{\Omega}$ with locally bounded first variations and be $(n-1)$-rectifiable for a.e. $t\geq 0$. Let $\alpha\not\equiv0$ and ${\boldv}_b$ be a Radon measure on $\partial \Omega \times [0,\,\infty)$ and a function in $(L^2(\alpha))^n$, respectively. In addition, we take a sequence of solutions $\{u^{\varepsilon,\,\sigma}\}_{\varepsilon,\,\sigma>0}$ to the Allen-Cahn equations \eqref{1.1.1}. Then we say that the quartet $(\{V_t\}_{t\geq 0},\,\alpha,\,{\boldv}_b,\,\{u^{\varepsilon,\,\sigma}\}_{\varepsilon,\,\sigma>0})$ moves along \textit{Brakke flow with Dirichlet boundary conditions} starting from $V_0$ with $\|V_0\|=\sigma_0\mathcal{H}^{n-1}\lfloor_{M_0}$, where $M_0$ is as in Subsection \ref{initialdata}, if the following conditions hold:
	\begin{enumerate}
		\item \textbf{Approximation property on the boundary}. If we define measures $\alpha_t^{\varepsilon,\,\sigma}$ and vector-valued functions $\boldv_b^{\varepsilon,\,\sigma}$ for any $\varepsilon,\,\sigma \in (0,\,1)$ as
		\begin{equation}
			\alpha_t^{\varepsilon,\,\sigma} \coloneqq \varepsilon|\nabla_{\partial \Omega} u^{\varepsilon,\,\sigma}|^2\mathcal{H}^{n-1}\lfloor_{\partial \Omega}, \quad \boldv_b^{\varepsilon,\,\sigma} \coloneqq - \frac{\partial_t u^{\varepsilon,\,\sigma}}{|\nabla_{\partial \Omega} u^{\varepsilon,\,\sigma}|}\frac{\nabla_{\partial \Omega} u^{\varepsilon,\,\sigma}}{|\nabla_{\partial \Omega} u^{\varepsilon,\,\sigma}|},
		\end{equation}
		there exists a subsequence $\{(\varepsilon_j,\sigma_j)\}_{j\in\mathbb{N}}$ such that, for any $\boldg \in (C_c(\partial \Omega \times [0,\,\infty)))^n$,
		\begin{equation}
			\lim_{j \to \infty}\int_{\partial \Omega \times [0,\,\infty)} \boldv_b^{\varepsilon_j,\,\sigma_j} \cdot \boldg \,d\alpha_t^{\varepsilon_j,\,\sigma_j}\,dt = - \mathcal{S}_{\alpha,\,\boldv_b}(\boldg) 
		\end{equation}
		where $\mathcal{S}_{\alpha,\,{\boldv}_b}$ is as in Definition \ref{def3.1.2} (see also Remark \ref{remarkDefBrakkeDiri} below).
		\item \textbf{Absolute continuity with Dirichlet boundary conditions}. There exists the generalized mean curvature vector $\boldH_V^{\Omega}$ in $\Omega\times[0,\,\infty)$ such that $\delta V_t\lfloor_{\Omega}=-\boldH_V^{\Omega}(\cdot,\,t)\|V_t\|$ holds in $\Omega$ for a.e. $t\in[0,\,\infty)$. In addition, the following absolute continuity is valid:
		\begin{equation}\label{4.1.1.1}
		\left\|\mathcal{S}_{\alpha,\,{\boldv}_b}+ \int_{0}^{\infty}\delta V_t\lfloor_{\Omega}\,dt\right\| \ll \|V_t\|\otimes\mathcal{L}^1_t\quad \text{on $\overline{\Omega} \times [0,\,\infty)$}, 
		\end{equation}
		where $\tilde{\mu}\coloneqq\|V_t\|\otimes\mathcal{L}^1_t$. 
		\item \textbf{Modified generalized mean curvature vector}. There exists a vector field $\widetilde{\boldH}_V$ such that
		\begin{equation}\label{4.1.1.4}
		\mathcal{S}_{\alpha,\,{\boldv}_b} + \int_{0}^{\infty}\delta V_t\lfloor_{\Omega}\,dt =-\widetilde{\boldH}_V\|V_t\|\otimes\mathcal{L}^1_t\quad\text{on $\overline{\Omega}\times[0,\,\infty)$},\quad\widetilde{\boldH}_V\lfloor_{\Omega}=\boldH_V^{\Omega}\quad\text{in $\Omega\times[0,\,\infty)$,}
		\end{equation}
		and $\widetilde{\boldH}_V$ belongs to $(L^2(\|V_t\|\otimes \mathcal{L}^1_t,\,\overline{\Omega}\times [0,\,\infty)))^n$. We call $\widetilde{\boldH}_V$ the \textit{modified generalized mean curvature vector} throughout this paper.
		\item \textbf{Brakke's inequality}. The inequality
		\begin{equation}\label{4.1.1.5}
		\int_{\overline{\Omega}} \phi \,d\|V_t\| \Big|_{t=t_1}^{t_2} \leq \int_{t_1}^{t_2}\int_{\overline{\Omega}} (-\phi|\widetilde{\boldH}_V|^2+(\nabla \phi \cdot \widetilde{\boldH}_V)+ \partial_t \phi)\,d\|V_t\|\,dt
		\end{equation}
		holds for any $\phi \in C^1(\overline{\Omega}\times [0,\infty))$ such that $\phi\geq 0$ and $\phi(\cdot,\,t)\in C^1_c(\overline{\Omega})$, and for any $0<t_1\leq t_2<\infty$.
	\end{enumerate}
\end{definition}

\begin{remark}\label{remarkDefBrakkeDiri}
	Compared to the definition of Brakke flow with dynamic boundary conditions given in the next subsection (see Section \ref{formudyna}) or Neumann boundary conditions given by Mizuno and Tonegawa in \cite{MiTo}, the definition of Brakke flow with Dirichlet boundary conditions requires us to have a sequence of solutions to Allen-Cahn equations which, roughly speaking, approximates the measure $\alpha$ and the function $\boldv_b$. Without adding the sequence to our definition of Brakke flow, we might have the possibility that the classical mean curvature flow with Neumann boundary conditions can be also our Brakke flow with Dirichlet boundary conditions. Indeed, let $\Omega$ be a upper half-space in $\mathbb{R}^2$ and $\{M_t\}_{t\geq0}$ denote a family of curves described by $\{(x_1(t),\,x_2(t)) \in \mathbb{R}^2 \mid x_2(t) \geq 0, \, x_1^2(t)+x_2^2(t)= r^2(t)\}$ where $r:[0,\,1/2) \to [0,\,\infty)$ is a smooth function with $r(0)=1$. Let $a(t) \coloneqq (r(t),\,0)$ be one of the points at which $M_t$ contacts $\partial \Omega$. Then, we let the curves flow by their curvatures and the curves will shrink to the origin with the contact angle between $M_t$ and $\partial \Omega$ always equal to $\pi/2$. Since the curvature of $M_t$ is equal to $-r(t)^{-1}$, we easily obtain $r(t) = \sqrt{1-2t}$ and thus the norm of the velocity vector $\boldv_b(t)$ of $M_t$ at $\pm a(t)$ is equal to $r'(t) = -r^{-1}(t) = -(1-2t)^{-\frac{1}{2}} \not\equiv 0$. From this, we have the classical curvature flow $\{M_t\}_{t\geq0}$ with Neumann boundary conditions where the velocity of $M_t$ at $\{\pm a(t)\}$ is not zero. However, when we set the triplet $(\{V_t\}_t,\,\alpha,\,\boldv_b)$ as $(\{M_t\}_t,\,\mathcal{H}^0\lfloor_{\{\pm a(t)\}},\,0)$ from the above, this triplet satisfies three conditions from 2 to 4 in Definition \ref{def.4.1}. This implies that the classical mean curvature flow with Neumann boundary conditions could be also our Brakke flow with Dirichlet boundary conditions without ``\textbf{Approximation property on the boundary}".  
\end{remark}

\begin{remark}\label{rem.2}
	We first remark that the existence of the modified generalized mean curvature vector $\widetilde{\boldH}_V$ can be obtained from \eqref{4.1.1.1}, however, the important thing is that $\widetilde{\boldH}_V$ needs to belong to $(L^2(\|V_t\|\otimes\mathcal{L}^1_t,\,\overline{\Omega}\times[0,\,\infty))) ^n$ and it coincides with the generalized mean curvature vector $\boldH_V^{\Omega}$ in the interior of the domain. Secondly, the absolute continuity \eqref{4.1.1.1} is a stronger condition than \eqref{1.1.5.1}. Thus, in order to see how the boundary conditions look like, it is sufficient to consider the condition \eqref{4.1.1.1} whose domain is restricted to $\partial\Omega\times[0,\,\infty)$. 
\end{remark}
\begin{remark}\label{rem.2.1}
	Now we show that the absolutely continuity in \eqref{4.1.1.1} corresponds to a formulation of Dirichlet boundary conditions in measure theoretic sense if we focus on $\partial\Omega\times[0,\,\infty)$. Indeed, let $M_t$ be a smooth hypersurface corresponding to a varifold $V_t$ for all $t\geq0$. Assume that $M_t$ moves along the motion describing in \eqref{4.1.1.1} and \eqref{4.1.1.5}. Moreover, we impose the following assumptions:
	\begin{description}
		\item[(A1)] $\|V_t\|=\mathcal{H}^{n-1}\lfloor_{M_t}$ on $\overline{\Omega}$ and  $\alpha=(\sin \theta)(\mathcal{H}^{n-2}\lfloor_{\partial M_t} \otimes \mathcal{L}^1_t )$ on $\partial \Omega \times [0,\,\infty)$.
		\item[(A2)] $\partial M_t \subset \partial \Omega$ for all $t\geq0$.
		\item[(A3)] $\mathcal{H}^{n-1}(\partial \Omega \cap M_t)=0$ for all $t\geq0$, meaning that the geometric interior of $M_t$ is not on $\partial \Omega$ for all $t\geq0$.
	\end{description}
	Here $\theta \in [0,\,\pi]$ in (A1) is the contact angle formed by the unit normal vector $N_b$ of $\partial M_t$ on $\partial \Omega$ and $-\boldgam$, where $\boldgam$ is the outer unit normal vector of $\partial M_t$ on $\partial \Omega$ and is tangential to $M_t$ and thus we have $\cos\theta= -\boldgam\cdot N_b$ (see Figure \ref{fig.2}). The assumption (A1) comes from Remark \ref{rem.1} saying that the measure $\alpha^{\varepsilon,\,\sigma}$ defined in Definition \ref{def.2.3} should be considered as $(\sin \theta)(\mathcal{H}^{n-2}\lfloor_{\partial M_t} \otimes \mathcal{L}^1_t )$ as $\varepsilon \to 0$.
	\begin{figure}[H]
		\begin{center}
			\includegraphics[keepaspectratio,scale=0.50,angle=0]{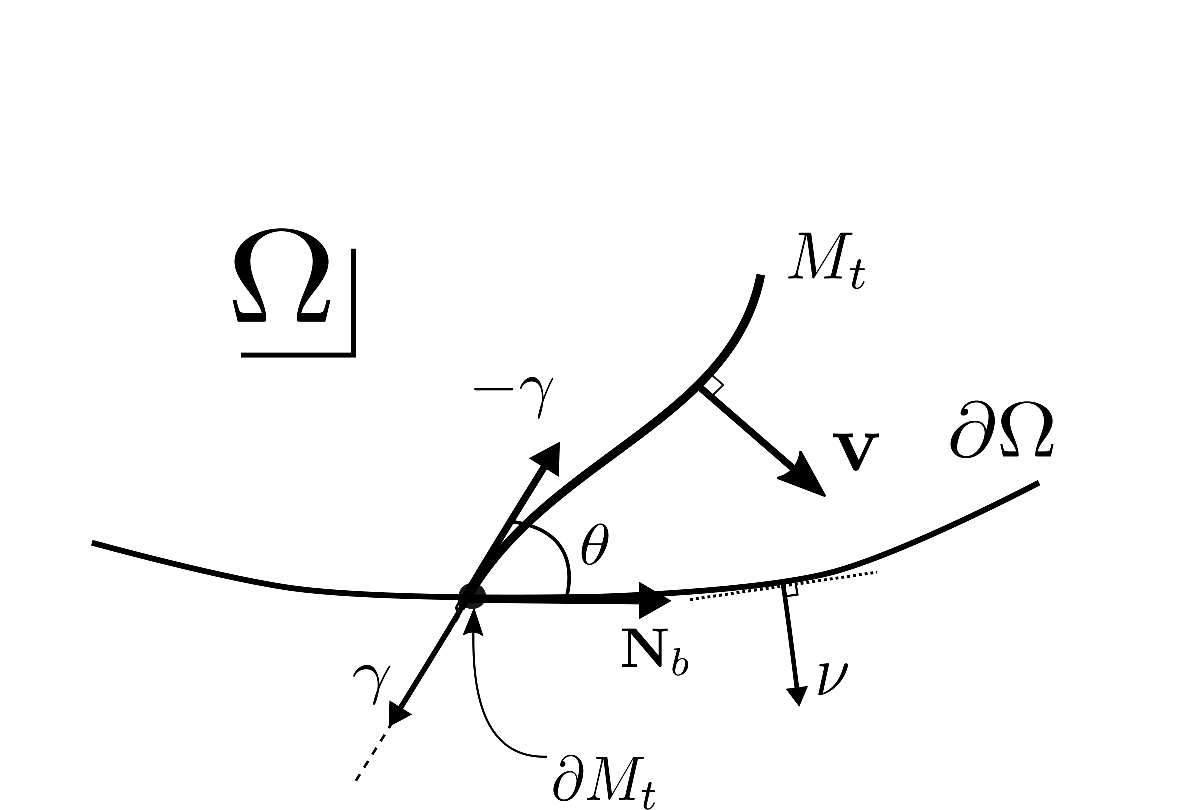}
			\caption{Definition of $\theta$}
			\label{fig.2}
		\end{center}
	\end{figure}
	Since (A1) and (A3) imply $(\|V_t\|\otimes \mathcal{L}^1_t)(\partial \Omega \times [0,\,\infty))=0$, we have from the absolute continuity \eqref{4.1.1.1}
	\begin{equation}\label{3.1.20}
	\left\|\mathcal{S}_{\alpha,\,{\boldv}_b}+\int_{0}^{\infty}\delta V_t\lfloor_{\Omega}\,dt\right\|(\partial \Omega\times [0,\,\infty))=0.
	\end{equation}
	Thus, for any $\boldg\in (C_c(\partial \Omega\times [0,\infty)))^n$ with $|\boldg|\leq1$, from the fact that $\spt\|\int_{0}^{\infty}\delta V_t\lfloor_{\Omega}\,dt\|\subset\Omega\times[0,\,\infty)$, we have
	\begin{align}
	0 = \mathcal{S}_{\alpha,\,{\boldv}_b}(\boldg) = \int_{0}^{\infty}\int_ {\partial \Omega} \boldg\cdot {\boldv}_b\, d\alpha= \int_{0}^{\infty}\int_{\partial M_t} \boldg\cdot {\boldv}_b \,\sin\theta\,d\mathcal{H}^{n-2}\,dt. \label{3.1.21}
	\end{align}
	Moreover, if $\theta(x,\,t)$ is not identically equal to zero and $\pi$, then $\sin\theta(x,\,t)$ is not identically equal to zero on $\partial \Omega$ for $t \in [0,\,\infty)$. Hence we obtain ${\boldv}_b = 0$ on $\partial \Omega$ for a.e. $t\in[0,\,\infty)$. From the proof of Lemma \ref{thm3.2} (see also the next section \ref{mainresults.diri}), we may regard ${\boldv}_b$ as the generalized velocity vector of $\partial M_t$ on $\partial \Omega$ and thus we can say that this implies Dirichlet boundary conditions.
\end{remark}

\subsection{Dynamic boundary condition}\label{formudyna}
We now state the definition of a Brakke flow with dynamic boundary conditions and then we show the reason why our boundary condition is reasonable for the weak formulation of dynamic boundary conditions. In the sequel, we assume that $\sigma \in (0,\,\infty)$.

Recall that we try to consider a weak solution of the following mean curvature flow with dynamic boundary conditions in the sense of Brakke:
\begin{align}\label{3.2.0.0}
\begin{cases}
\displaystyle
\boldv(\cdot,\,t)=\boldH(\cdot,\,t)&\quad \text{on $M_t,\,t>0$},\\
\displaystyle {\boldv}_b(\cdot,\,t)=\sigma(\tan\theta(\cdot,\,t))^{-1}\boldN_b(\cdot,\,t)&\quad \text{on $\partial M_t,\,t>0$},
\end{cases}
\end{align}
where $\boldN_b$ is the unit normal vector of $\partial M_t$ on $\partial \Omega$, and $\theta$ is the contact angle formed by a hypersurface $M_t$ and $\boldN_b$ on $\partial \Omega$.
\begin{definition}\label{def.4.2}
	Let $\{V_t\}_{t\geq 0}$ be a family of varifolds on $\overline{\Omega}$ with locally bounded first variations and be $(n-1)$-rectifiable for a.e. $t\geq 0$. Let $\alpha\not\equiv0$ and ${\boldv}_b$ be a Radon measure on $\partial \Omega \times [0,\,\infty)$ and a function in $(L^2(\alpha))^n$, respectively. Then we say that the triplet $(\{V_t\}_{t\geq 0},\,\alpha,\,{\boldv}_b)$ moves along a \textit{Brakke flow with dynamic boundary conditions} starting from $V_0$ with $\|V_0\| = \sigma_0 \mathcal{H}^{n-1}\lfloor_{M_0}$, where $M_0$ is as in Subsection \ref{initialdata}, if the following conditions hold:
	\begin{enumerate}
		\item \textbf{Absolute continuity with dynamic boundary condition}. The following absolute continuity holds.
		\begin{equation}\label{4.1.1.6}
		\left\|\int_{0}^{\infty}\delta V_t\lfloor_{\Omega}\,dt+\int_{0}^{\infty}\delta V_t\lfloor_{\partial \Omega}^T\,dt+  \sigma^{-1}\mathcal{S}_{\alpha,\,{\boldv}_b}\right\| \ll \|V_t\|\otimes\mathcal{L}^1_t\quad \text{on $\overline{\Omega}\times [0,\,\infty)$},
		\end{equation}
		where $\delta V_t\lfloor_{\partial\Omega}^T$ and $\mathcal{S}_{\alpha,\,{\boldv}_b}$ are as in Definition \ref{def.3.3} and Definition \ref{def3.1.2}, respectively.
		\item \textbf{Modified generalized mean curvature vector}. There exists a vector field $\widetilde{\boldH}_V$ such that
		\begin{equation}\label{4.1.1.9}
		\int_{0}^{\infty}\delta V_t\lfloor_{\Omega}\,dt+\int_{0}^{\infty}\delta V_t\lfloor_{\partial \Omega}^T\,dt+ \sigma^{-1} \mathcal{S}_{\alpha,\,{\boldv}_b}=-\widetilde{\boldH}_V\|V_t\|\otimes\mathcal{L}^1_t\quad\text{on $\overline{\Omega}\times[0,\,\infty)$}
		\end{equation}
		holds and $\widetilde{\boldH}_V$ belongs to $(L^2(\|V_t\|\otimes \mathcal{L}^1_t,\,\overline{\Omega}\times [0,\,\infty)))^n$. We call $\widetilde{\boldH}_V$ the \textit{modified generalized mean curvature vector} throughout this paper.
		\item \textbf{Brakke's inequality}. The inequality
		\begin{align}\label{4.1.1.10}
		\int_{\overline{\Omega}} \phi\, d\|V_t\| \Big|_{t=t_1}^{t_2}  \leq \int_{t_1} ^{t_2} \int_{\overline{\Omega}} \left(-\phi |\widetilde{\boldH}_V|^2 + (\nabla \phi \cdot \widetilde{\boldH}_V ) + \partial_t \phi \right) \, d\|V_t\|\,dt-\frac{1}{\sigma}\int_{\partial \Omega \times [t_1,\,t_2]} \phi |{\boldv}_b|^2 \, d\alpha 
		\end{align}
		holds for any $\phi \in C^1(\overline{\Omega}\times [0,\infty))$ such that $\phi\geq 0$ and $\nabla \phi(\cdot,\,t) \cdot \boldnu=0$ on $\partial \Omega$ and $\phi(\cdot,\,t)\in C^1_c(\overline{\Omega})$, and for any $0<t_1\leq t_2 <\infty$.
	\end{enumerate}
\end{definition}

\begin{remark}\label{rem.3.0}
	First of all, we remark that the existence of the modified generalized mean curvature vector $\widetilde{\boldH}_V$ can be obtained from \eqref{4.1.1.6}, however, the important thing is that $\widetilde{\boldH}_V$ needs to belong to $(L^2(\|V_t\|\otimes\mathcal{L}^1_t,\,\overline{\Omega}\times[0,\,\infty)))^n$. Secondly, we can also define, from \eqref{4.1.1.6}, the generalized mean curvature vector $\boldH_V^{\Omega}$ defined in Remark \ref{rem.0} such that 
	\begin{equation}\label{4.1.2.1}
	\delta V_t\lfloor_{\Omega}=-\boldH_V^{\Omega}(\cdot,\,t)\,\|V_t\|\quad\text{in $\Omega$ for a.e. $t\in[0,\,\infty)$.}
	\end{equation}
	In addition, if we restrict the domain of the modified generalized mean curvature vector $\widetilde{\boldH}_V$ in \eqref{4.1.1.9} into $\Omega\times[0,\,\infty)$ (denoted by $\widetilde{\boldH}_V\lfloor_{\Omega}$), then we can show that $\widetilde{\boldH}_V\lfloor_{\Omega}$ coincides with $\boldH_V^{\Omega}$ in $\Omega\times[0,\,\infty)$. Thirdly, as we also mentioned in the case of Dirichlet boundary conditions, the absolutely continuity \eqref{4.1.1.6} is a stronger condition than \eqref{1.1.5.2}. Thus, in order to see how the boundary conditions look like, it is sufficient to consider the condition \eqref{4.1.1.6} whose domain is restricted to $\partial\Omega\times[0,\,\infty)$.
\end{remark}

\begin{remark}\label{rem.3}
	Now we show that the absolute continuity \eqref{4.1.1.6} corresponds to a formulation of dynamic boundary conditions in the measure theoretic sense if we focus on $\partial\Omega\times[0,\,\infty)$. Indeed, let $M_t$ be a smooth hypersurface corresponding to a varifold $V_t$ for all $t\geq0$ and evolve by the motion described in \eqref{4.1.1.6} and \eqref{4.1.1.10}. Note that we do not write the index $\sigma$ for simplicity. Furthermore, we impose the same assumptions (A1), (A2) and (A3) in Remark \ref{rem.2.1} (see also Figure \ref{fig.2}). Since (A1) and (A3) implies $\|V_t\|\otimes\mathcal{L}^1_t(\partial \Omega \times[0,\,\infty))=0$, from the absolute continuity \eqref{4.1.1.6}, we have
	\begin{equation}\label{3.2.12}
	\left\| \int_{0}^{\infty}\delta V_t\lfloor_{\partial \Omega}^T\,dt+\int_{0}^{\infty}\delta V_t\lfloor_{\Omega}\,dt+ \sigma^{-1}\mathcal{S}_{\alpha,\,{\boldv}_b} \right\|(\partial \Omega \times [0,\,\infty))=0.
	\end{equation}
	Then, for any $\boldg\in (C_c(\partial \Omega\times [0,\infty)))^n$ with $|\boldg|\leq1$, we have, from the divergence theorem, (A1), (A3) and \eqref{3.2.12},
	\begin{align}
	0 &= \int_{0}^{\infty}\delta V_t \lfloor_{\partial \Omega}^T(\boldg)\,dt + \mathcal{S}_{\alpha,\,{\boldv}_b}(\boldg) \nonumber \\
	&=\int_{0}^{\infty}\int_{\partial M_t\cap \partial \Omega}(\boldg-(\boldg\cdot \boldnu)\boldnu)\cdot \boldgam \, d\mathcal{H}^{n-2}\,dt+ \sigma^{-1}\int_{0}^{\infty}\int_ {\partial \Omega} \boldg\cdot {\boldv}_b\, d\alpha, \label{3.2.14}
	\end{align}
	where $\boldnu$ is the outer unit normal vector of $\partial \Omega$. From the relation $\cos\theta=-\boldgam\cdot N_b$, we may deduce $-\boldgam^{T}\cdot N_b= -(\boldgam-(\boldgam\cdot\boldnu)\boldnu) \cdot N_b=-\boldgam \cdot N_b=\cos \theta$, where $\boldgam^T$ is the orthogonal projection of $\boldgam$ onto $\partial \Omega$. Hence, from $(A1)$ and $(A2)$, we obtain
	\begin{align}
	0 &= \int_{0}^{\infty}\int_{\partial M_t} \boldg\cdot (\boldgam-(\boldgam \cdot \boldnu)\boldnu)\, d\mathcal{H}^{n-2}\,dt + \sigma^{-1} \int_{0}^{\infty}\int_{\partial M_t} \boldg\cdot {\boldv}_b(\sin \theta)\,d\mathcal{H}^{n-2}\,dt \nonumber\\
	&= \int_{0}^{\infty}\int_{\partial M_t} \boldg\cdot (\boldgam^T+ \sigma^{-1}{\boldv}_b(\sin \theta)) \, d\mathcal{H}^{n-2}\,dt \label{3.2.15}
	\end{align}
	for all $\boldg\in (C^1_c(\overline{\Omega}\times [0,\infty)))^n$. This implies that $\boldgam^T+ \sigma^{-1}{\boldv}_b(\sin \theta)= 0$ on $\partial M_t$ for all $t\geq0$. Here it holds that $\sin\theta$ is not equal to 0, otherwise we have $\gamma^T=0$ and this implies $\theta=\frac{\pi}{2}$ which contradicts $\sin\theta=0$. Therefore we obtain
	\begin{equation}\label{3.2.16}
	{\boldv}_b \cdot \boldN_b = \left(- \frac{\boldgam^T}{\sigma^{-1}\sin \theta} \right) \cdot \boldN_b = \sigma\,\frac{\cos \theta}{\sin \theta}= \frac{\sigma}{\tan \theta}
	\end{equation}
	on $\partial M_t$ and we can say that this implies dynamic boundary condition.
\end{remark}

\begin{remark}\label{rem.4}
	Now we briefly refer to the formulation of right-angle Neumann boundary conditions, comparing to our formulation. We recall that the boundary condition of Allen-Cahn equations \eqref{1.1.1} corresponds to dynamic and right-angle Neumann boundary conditions in the case that $\sigma$ is in $(0,\,\infty)$ and $\sigma=\infty$, respectively. In Subsection \ref{mainresults.dyna}, we give the conditions for a Brakke flow with dynamic boundary conditions as the absolute continuity \eqref{4.1.1.6} and Brakke's inequality \eqref{4.1.1.10} with a parameter $\sigma$. Hence, if we formally substitute $\sigma=\infty$ for \eqref{4.1.1.6} and \eqref{4.1.1.10}, then we have
	\begin{equation}\label{4.1.1.11}
	\left\|\int_{0}^{\infty}\delta V_t\lfloor_{\partial \Omega}^T\,dt+ \int_{0}^{\infty}\delta V_t\lfloor_{\Omega}\,dt\right\| \ll \|V_t\|\otimes\mathcal{L}^1_t\quad \text{on $\overline{\Omega}\times [0,\,\infty)$},
	\end{equation}
	and 
	\begin{equation}\label{4.1.1.12}
	\int_{\overline{\Omega}} \phi\, d\|V_t\| \Big|_{t=t_1}^{t_2} \leq \int_{t_1} ^{t_2} \int_{\overline{\Omega}} \left(-\phi |\widetilde{\boldH}_V|^2 + (\nabla \phi \cdot \widetilde{\boldH}_V ) + \partial_t \phi \right) \, d\|V_t\|\,dt
	\end{equation}
	for all $\phi \in C^1(\overline{\Omega}\times [0,\,\infty))$ with some conditions. Actually, these results essentially corresponds to a slightly weaker version of the solutions studied by Mizuno and Tonegawa in \cite{MiTo}. We refer to \cite{MiTo} for more detail.
\end{remark}

\section{Existence results of sharp interface limit}\label{existence}
Now we state the results of the sharp interface limits of our Brakke flow with Dirichlet or dynamic boundary conditions which we defined in the previous section. We emphasize that we applied the phase field method to show the approximation of our Brakke flow although one may obtain the similar results with ours by applying another method. In each subsection, we first state several assumptions and then we give a sequence of main lemmas and the main theorem.

\subsection{Dirichlet boundary condition}\label{exisdiri}
All the conditions described in ``General assumptions" in the following are mainly based on Ilmanen's work in \cite{Ilmanen01}. Remark that we may be able to weaken these assumptions since, in the case of $\Omega=\mathbb{R}^n$, Soner \cite{Soner} later removed the technical assumptions imposed by Ilmanen in \cite{Ilmanen01}.

\subsubsection{Assumptions, hypothesis and example}\label{exis.assmp.diri}
In this subsection, we will state three important assumptions in our study which consists of ``\textit{General assumptions}", ``\textit{Vanishing hypothesis for the discrepancy measure}", and ``\textit{Uniform uppr bound for the solution of Allen-Cahn equations}". Moreover, we will state one example which gives us the validity to impose the assumption ``Uniform upper bound on the boundary $\partial \Omega$".

\begin{itemize}
	\item \textbf{General assumptions.}\label{generalassmp}\\
	\quad Suppose that $n\geq2$, $\Omega$ is a bounded domain with smooth boundaries and we define the potential function $W: \mathbb{R} \to \mathbb{R}$ by $W(s)=\frac{1}{2}(1-s^2)^2$. Note that $W$ is said to be a double-well potential and we may also apply the generalized $W$, which is defined in, for instance, \cite{MiTo} or \cite{Kagaya}.
	
	\quad Next we give the assumptions on the initial data of the solutions of the Allen-Cahn equations \eqref{1.1.1}. We assume that there exists a subsequence $\{u^{\varepsilon_l,\,\sigma_l}_0\}_{l\in\mathbb{N}}$ such that
	\begin{align}
	u^{\varepsilon_l,\,\sigma_l}_0 \xrightarrow[l \rightarrow \infty]{}  2\chi_{E_0\cap \Omega}-1 & \quad \text{in  $BV(\Omega)$},\label{3.1.1}\\
	\mu^{\varepsilon_l,\,\sigma_l}_0 \xrightarrow[l \rightarrow \infty]{} \sigma_0\,\mathcal{H}^{n-1}\lfloor_{\partial E_0} & \quad  \text{as Radon measures},\label{3.1.2}
	\end{align}
	where $E_0$ is as in Subsection \ref{initialdata}, $\sigma_0\coloneqq\int_{-1}^{1}\sqrt{2W(u)}\,du = 4/3$, and $\mu^{\varepsilon,\,\sigma}_0$ is defined by
	\begin{equation}\label{3.1.2.0}
	\mu^{\varepsilon,\,\sigma}_0\coloneqq \left(\frac{\varepsilon|\nabla u^{\varepsilon,\,\sigma}_0|^2}{2}+\frac{W(u^{\varepsilon,\,\sigma}_0)}{\varepsilon}\right)\,\mathcal{L}^n.
	\end{equation}
	Now we suppose that the initial data $\{u^{\varepsilon,\,\sigma}_0\}_{\varepsilon,\,\sigma>0}$ satisfy
	\begin{equation}\label{3.1.7}
	\sup_{\Omega}|u^{\varepsilon,\,\sigma}_0| \leq 1,
	\end{equation}
	for any $\varepsilon,\,\sigma>0$ and there exists $D>0$ such that 
	\begin{equation}\label{3.1.6}
	\sup_{\varepsilon>0,\,\sigma>0} E^{\varepsilon,\,\sigma}[u^{\varepsilon,\,\sigma}_0]= \sup_{\varepsilon>0,\,\sigma>0} \mu^{\varepsilon,\,\sigma}_0(\Omega) \leq D,
	\end{equation}
	and this indicates that the surface area of the initial hypersurface cannot blow up as $\varepsilon \downarrow 0$ or $\sigma \downarrow 0$. In addition, from \eqref{3.1.7}, we can show that
	\begin{equation}\label{3.1.8}
	\sup_{\Omega\times [0,\,\infty)}|u^{\varepsilon,\,\sigma}| \leq 1.
	\end{equation}
	for any $\varepsilon,\,\sigma>0$ (see Appendix A in Section \ref{appendix} for the proof).
	
	\quad Next we observe the solutions to the equations \eqref{1.1.1} with $\sigma \in (0,\,1)$. Since our aim in the present paper is to study a formulation of the singular limit of the Allen-Cahn equations, we assume that the desired regularity of the global-in-time solution to \eqref{1.1.1} is obtained. Regarding the existence and regularity of these kinds of equations, we refer to, for instance, a paper by Escher \cite{Escher}. Hence, in this paper, we may assume that a solution $u^{\varepsilon,\,\sigma}$ to \eqref{1.1.1} exists, is not a constant function and it holds that
	\begin{equation}
		u^{\varepsilon,\,\sigma} \in L^2_{loc}\left([0,\,\infty);\,H^{2}(\Omega)\right)\cap L^2_{loc}\left([0,\,\infty);\,H^{1}(\partial \Omega,\,\mathcal{H}^{n-1})\right)
	\end{equation}
	and
	\begin{equation}
		\partial_t u^{\varepsilon,\,\sigma} \in L^2_{loc}\left([0,\,\infty);\,L^2(\Omega)\right)\cap L^2_{loc}\left([0,\,\infty);\,L^2(\partial \Omega,\,\mathcal{H}^{n-1})\right).
	\end{equation}
	As far as we know on the existence of the classical solutions, Guidetti in \cite{Guidetti01} proved the locally-in-time existence and uniqueness of the classical solutions to the parabolic equations under a compatibility condition. The equations the author studied is considered as generalized equations of \eqref{1.1.1} (see also \cite{Guidetti02}). 
	
	\item \textbf{Vanishing hypothesis of the discrepancy measure.}\label{hypothesis}
	
	\quad Let $\xi^{\varepsilon,\,\sigma}(x,\,t)$ be the discrepancy function defined in \eqref{discrepancyFunc} for any $(x,\,t) \in \Omega \times (0,\,\infty)$. Recall that the measure $\xi^{\varepsilon,\,\sigma}_t = \xi^{\varepsilon,\,\sigma}(x,\,t)\mathcal{L}^n(x)$ for each $t>0$ is called the \textit{discrepancy measure}. Then, in our study, we assume the following two conditions: 
	\begin{itemize}
		\item there exists a subsequence  $\{(\varepsilon_j,\,\sigma_j)\}_{j\in\mathbb{N}}$ with $\lim_{j \to \infty}\varepsilon_j = \lim_{j\to\infty}\sigma_j = 0$ such that
		\begin{equation}\label{vanishingDisDirichlet}
			|\xi^{\varepsilon_j,\,\sigma_j}_t| \xrightarrow[j\to\infty]{} 0 \quad \text{on } \overline{\Omega} \quad \text{as Radonn measures}
		\end{equation}
		for a.e. $t\in[0,\,\infty)$.
		\item there exists a constant $c_0>0$ such that
		\begin{equation}\label{boundednessDisDirichlet}
			\sup_{x\in\Omega}\xi^{\varepsilon,\,\sigma}(x,\,t) \leq c_0
		\end{equation}
		for any $\varepsilon>0$, $\sigma>0$, and $t>0$.
	\end{itemize}
	Throughout this paper, we call the first assumption the \textit{vanishing} of the discrepancy measure.
	
	\quad Let us remark the second assumption. The boundedness of the discrepancy measure will be applied only to obtain the integrality of the limit measure of $\{\sigma_0^{-1}\mu^{\varepsilon,\,\sigma}_t\}_{\varepsilon>0}$ in the interior of the domain (see Proposition \ref{prop.5.6} in Seciton \ref{chara.lim} for more detail). However we emphasize that, as it is shown in \cite{Ilmanen01, MiTo}, we can actually prove the boundedness of the discrepancy measure $\xi^{\varepsilon,\,\sigma}_t$ under more conditions than we assume in this section. Namely, we obtain that if $\Omega$ is convex, some gradient estimate on the boundary $\partial \Omega$ (see \eqref{assumptionOnBoundary} in Proposition \ref{appendAProp2} for this estimate), and the boundedness of the discrepancy measure at the initial time, then we have \eqref{boundednessDisDirichlet} for some constant $c_0>0$. For the proof, we refer to Proposition \ref{appendAProp2} in Section \ref{appendix}.
	
	\quad Let us give more comments on the vanishing of the discrepancy measure. In the case of $\Omega=\mathbb{R}^n$, Ilmanen proved in \cite{Ilmanen01} the vanishing of the discrepancy measure in $\Omega$ by showing the non-positivity of the discrepancy measure and constructing the monotonicity formula for the measure $\mu^{\varepsilon}_t$ in \eqref{1.1.5}. Moreover, in the case that $n=2,\,3$ and $\Omega$ is an open subset, R\"oger and Sch\"atzle proved in \cite{RoSc} the vanishing of the discrepancy measure in the interior $\Omega$ via the estimates of non-negative part of the discrepancy measure for elliptic Allen-Cahn equations in the context of De Giorgi conjecture. In higher dimensions, we might be able to show the vanishing of the discrepancy measure in the interior $\Omega$ if we apply the monotonicity formula studied by, for instance, Ilmanen in \cite{Ilmanen01} and Takasao and Tonegawa in \cite[Proposition 4.1]{TaTo}. We should remark that, if we employ the monotonicity formula studied in \cite{Ilmanen01} or \cite{TaTo}, we need to have that formula localized by using proper cut-off functions, in order to consider the effects only from the interior $\Omega$. On the other hand, to obtain the vanishing of the discrepancy measure up to the boundary $\partial\Omega$, we may need to construct the monotonicity formula for $\mu^{\varepsilon,\,\sigma}_t$, taking into account the effects from the boundary $\partial\Omega$. Indeed, in the case of right-angle Neumann boundary conditions, Mizuno and Tonegawa in \cite{MiTo} proved the vanishing of the discrepancy measure up to the boundary by constructing the monotonicity formula when $\Omega$ is strictly convex. They employ the reflection argument with respect to the boundary to obtain that formula. After that, Kagaya in \cite{Kagaya} extended the result by Mizuno and Tonegawa \cite{MiTo} into the case that $\Omega$ is not necessarily convex.
	
	\quad Finally, we remark that, even in our case, it is possible to show the vanishing of the discrepancy measure only in the interior $\Omega$, namely, we can show that, for given $T>0$, there exist subsequences $\{\varepsilon_j\}_{j}$ and $\{\sigma_j\}_{j}$ such that
	\begin{equation}
		|\xi^{\varepsilon_j,\,\sigma_j}_t|\otimes \mathcal{L}^1_t \xrightarrow[j\to\infty]{} 0 \quad \text{ in } \Omega \times (0,\,T) \quad \text{as Radon measures}.
	\end{equation}
	One could prove this statement by using \eqref{boundednessDisDirichlet} and combining the argument in \cite{Ilmanen01} and \cite{MiTo} with proper cut-off functions. Indeed, for instance, the authors in \cite{MiTo} constructed the monotonicity formula up to the boundary, which is a key estimate to prove the vanishing of discrepancy measures, by using the reflection argument with respect to the boundary. However, if we use proper cut-off functions whose supports lie in the interior, then there is no need to deal with the effects from the boundary. Thus we could employ the same argument shown as in \cite{MiTo} to prove the vanishing of the discrepancy measure only in the interior of the domain.
	
	\item \textbf{Uniform upper bound on the boundary $\partial\Omega$.}\label{uniformupperbound}
	
	\quad We next assume the local-in-time uniform upper bound of the Dirichlet energy of the Allen-Cahn equations \eqref{1.1.1} on $\partial \Omega$ along $\boldnu$. More precisely, we suppose that, for any $0\leq t_1<t_2<\infty$, there exists a constant $C_0(t_1,\,t_2)>0$ such that it holds that
	\begin{equation}\label{3.1.9.2}
	\sup_{\varepsilon,\,\sigma>0} \int_{t_1}^{t_2}\int_{\partial \Omega} \frac{\varepsilon}{2} \left(\frac{\partial u^{\varepsilon,\,\sigma}}{\partial \boldnu}\right)^2 \,d\mathcal{H}^{n-1}\,dt \leq C_0(t_1,\,t_2).
	\end{equation}
	Note that this assumption could not be removed due to our construction of the following example in Remark \ref{examplecurvatureflow} in the sense of classical curvature flow.
	
	\textbf{Construction of curves moving by motion descibed in \eqref{1.1.2}.}
	\begin{remark}\label{examplecurvatureflow}
	We now give a reason why it is reasonable to assume the uniform upper bound on $\partial\Omega$ for $u^{\varepsilon,\,\sigma}$ to show the existence of the singular limit in the case of Dirichlet boundary conditions and study a Brakke flow which we defined in Section \ref{formulation}.
	Generally speaking, it is necessary for us to make it clear which class of solutions to some equation is proper as the definition of the solutions. In our case, we have to clarify what kind of the solutions are suitable for the ones to a Brakke flow with Dirichlet boundary conditions. To see this, we consider the following example; first of all, we assume that $\Omega$ is a half space in $\mathbb{R}^2$. For each $\sigma>0$ sufficiently small, we take an initial curve $M^{\sigma}_0$ as a part of the $(1,\,-\sigma^{-1})$-centered circle with radius $R\coloneqq\sqrt{1+\sigma^{-2}}$ and we set two points $x_0(0)$ and $x_1(0)$ on $\partial \Omega$ as $x_0(0)\coloneqq0$ and $x_1(0)\coloneqq2$ (see Figure \ref{fig.3}).
	\begin{figure}[H]
		\begin{center}
			\includegraphics[keepaspectratio, scale=0.65,angle=0]{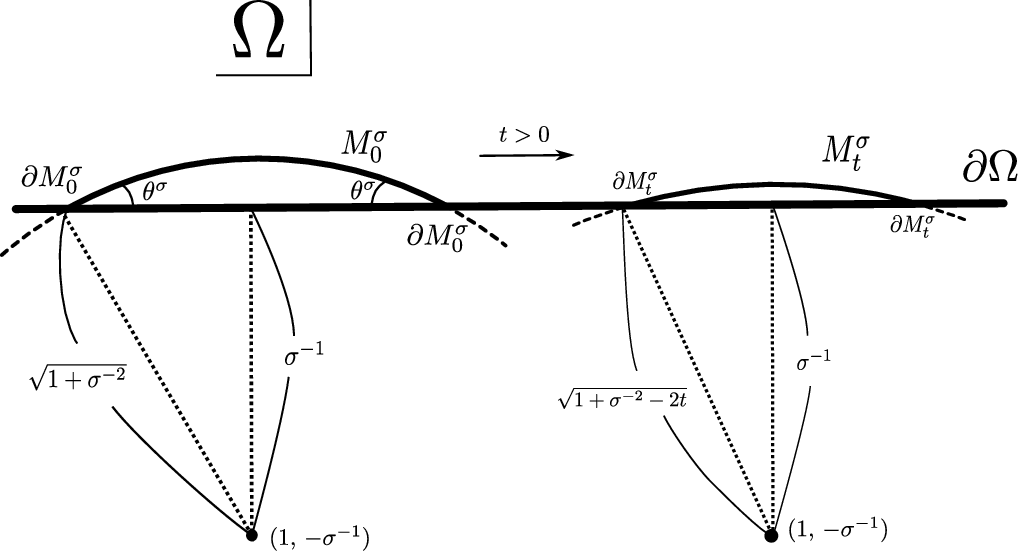}
		\end{center}
		\caption{An initial curve and its motion by its curvature}
		\label{fig.3}
	\end{figure}
	In this setting, we may easily show that the curvature of a curve $M^{\sigma}_t$ is $r^{-1}(t)$ where $r(t)\coloneqq\sqrt{R^2-2t}$ is the radius of a circle, a part of which is $M^{\sigma}_t$. Then we see that a family of curves $\{M^{\sigma}_t\}_{t\geq 0}$ moves by their curvatures and moreover, we can easily show that the boundaries $\{\partial M_t\}_{t\geq 0}$ evolve under dynamical boundary conditions. Indeed, we can calculate the explicit forms of the velocities of $\partial M^{\sigma}_t\coloneqq\{x^{\sigma}_0(t),\,x^{\sigma}_1(t)\}$ as follows: since $M^{\sigma}_t$ is a part of the circle defined by $\{(x,\,y)\mid(x-1)^2+(y+\sigma^{-1})^2=r(t)^2\}$ for each $t$, we can have the explicit forms of $x^{\sigma}_0(t)$ and $x^{\sigma}_1(t)$ by $-\sqrt{r(t)^2-\sigma^{-2}}+1$ and $\sqrt{r(t)^2-\sigma^{-2}}+1$, respectively. Hence we have that, for example, the velocity ${v}^{\sigma}_b$ of $\partial M_t$ at $x^{\sigma}_0(t)$ is 
	\begin{align}
	{v}^{\sigma}_b(x^{\sigma}_0,\,t)&=(x^{\sigma}_0)'(t)= \frac{1}{\sqrt{r^2(t)-\sigma^{-2}}}= \frac{1}{\sqrt{1-2t}}\quad \text{on $\partial \Omega$ for $0\leq t<\frac{1}{2}$}\label{3.1.9.2.1}\\
	{v}^{\sigma}_b(x^{\sigma}_1,\,t)&=-(x^{\sigma}_1)'(t)=-\frac{-1}{\sqrt{r^2(t)-\sigma^{-2}}}=\frac{1}{\sqrt{1-2t}}\quad \text{on $\partial \Omega$ for $0\leq t<\frac{1}{2}$},\label{3.1.9.2.3}
	\end{align}
	and 
	\begin{equation}
	\frac{\sigma}{\tan\theta^{\sigma}}= \sigma \,\frac{\sigma^{-1}}{\sqrt{r^2(t)-\sigma^{-2}}}= \frac{1}{\sqrt{r^2(t)-\sigma^{-2}}}=\frac{1}{\sqrt{1-2t}}\quad \text{on $\partial \Omega \cap \partial M_t$}, \label{3.1.9.2.2}
	\end{equation}
	Thus, we obtain the equality that ${v}^{\sigma}_b=\sigma(\tan\theta)^{-1}$ on $\partial \Omega$ and this is exactly the same boundary condition as \eqref{1.1.2}. From \eqref{3.1.9.2.1} and \eqref{3.1.9.2.3}, we see that the velocities $(x^{\sigma}_0)'(t)$ and $(x^{\sigma}_1)'(t)$ on $\partial M^{\sigma}_t$ are independent of $\sigma$. Hence, if the boundary condition of \eqref{1.1.2} corresponds to Dirichlet boundary conditions as $\sigma \to 0$, then the velocity of $\partial M_t^{\sigma}$ should converge to 0 as $\sigma \to 0$, however, this contradicts the fact that the velocity of $\partial M_t^{\sigma}$ is independent of $\sigma$. Therefore, this implies that, in the above example, the curvature flow with Dirichlet boundary conditions cannot be characterized by the one with boundary conditions \eqref{1.1.2} as $\sigma\to0$. Indeed, the curve $M^0_t$ is actually the interval $[0,\,2]$ for all $t\geq 0$, which does not move for all the time. \\
	\quad Now we focus on how we can interpret this example on the level of the phase field method. To see this, we focus on the Dirichlet energy, one of the characteristic quantities in the phase field method. In the phase field method, a curve $M_t^{\sigma}$ moving by its curvature can be expected to be the zero level set of $u^{\varepsilon,\,\sigma}$ satisfying the equation \eqref{1.1.1} and we may have that $\Omega$ is separated into the region that $u^{\varepsilon,\,\sigma}$ is almost +1 and the region that $u^{\varepsilon,\,\sigma}$ is almost -1 (see Figure \ref{fig.4}).
	\begin{figure}[H]
		\begin{center}
			\includegraphics[keepaspectratio,scale=0.50]{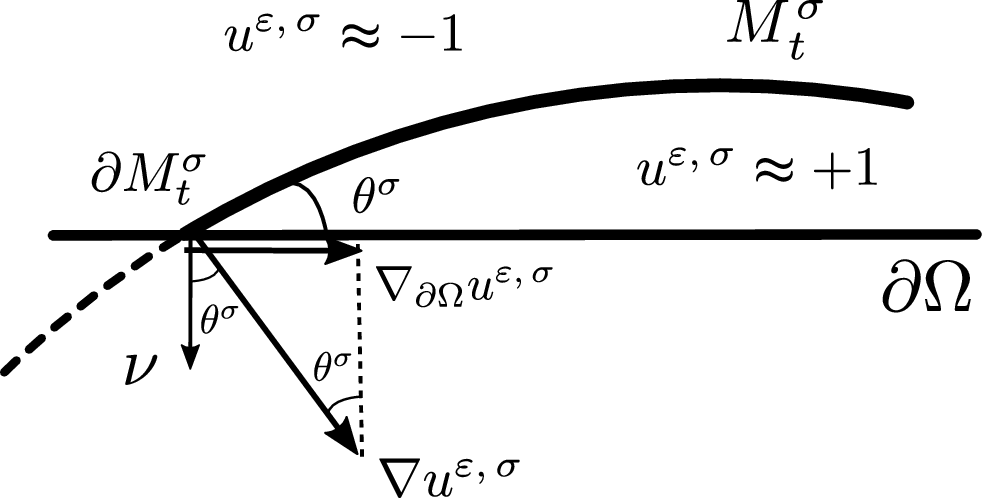}
			\caption{Interpretation in the phase field method}
			\label{fig.4}
		\end{center}
	\end{figure}
	Thus we may calculate the Dirichlet energy on $\partial \Omega$ along $\boldnu$ as follows:
	\begin{align}
	\int_{\partial\Omega}\varepsilon |\nabla u^{\varepsilon,\,\sigma}\cdot\boldnu|^2\,d\mathcal{H}^{1}&= \int_{\partial\Omega} \varepsilon |\nabla u^{\varepsilon,\,\sigma}|^2 \,(\cos\theta^{\sigma})^2\,d\mathcal{H}^{1}\nonumber\\
	&= \int_{\partial\Omega}(\cos\theta^{\sigma})^2\frac{|\nabla u^{\varepsilon,\,\sigma}|^2}{|\nabla_{\partial \Omega}u^{\varepsilon,\,\sigma}|^2} \,\varepsilon|\nabla_{\partial \Omega}u^{\varepsilon,\,\sigma}|^2\,d\mathcal{H}^{1}\nonumber\\
	&= \int_{\partial\Omega} \frac{(\cos\theta^{\sigma})^2}{\sin\theta^{\sigma}}\, \frac{\varepsilon}{\sin\theta^{\sigma}}|\nabla_{\partial \Omega}u^{\varepsilon,\,\sigma}|^2\,d\mathcal{H}^{1},\label{eq.5.1} 
	\end{align}
		for each $0\leq t<\frac{1}{2}$. Recalling the phase field method and \eqref{2.1.14}, we may have the following approximation;
		\begin{equation}\label{eq.5.2}
		\int_{\partial\Omega}\frac{\varepsilon}{\sin\theta^{\sigma}}|\nabla_{\partial \Omega}u^{\varepsilon,\,\sigma}|^2\,d\mathcal{H}^{1}\thickapprox \int_{\partial\Omega}d\mathcal{H}^{0}\lfloor_{\partial M_t^{\sigma}}=(\text{the number of elements of $\partial M_t^{\sigma}$})=2
		\end{equation}
		as $\varepsilon \to 0$. Since we have  $\sin\theta^{\sigma}=(\sqrt{1-2t+\sigma^{-2}})^{-1}\sqrt{1-2t}$ and $\cos\theta^{\sigma} = (\sqrt{1-2t+\sigma^{-2}})^{-1} \sigma^{-1}$, it holds that 
		\begin{equation}\label{eq.5.3}
		\frac{(\cos\theta^{\sigma})^2}{\sin\theta^{\sigma}}=\frac{\sigma^{-1}}{\sqrt{1-2t}}\frac{\sigma^{-1}}{\sqrt{1-2t+\sigma^{-2}}}\geq \frac{\sigma^{-1}}{\sqrt{2}}.
		\end{equation}
		Therefore, from \eqref{eq.5.2} and \eqref{eq.5.3}, we may obtain the following estimate:
		\begin{align}
		\int_{\partial\Omega}\varepsilon |\nabla u^{\varepsilon,\,\sigma}\cdot\boldnu|^2\,d\mathcal{H}^{1}&\geq \frac{1}{\sqrt{2}\sigma}  \int_{\partial\Omega}\frac{\varepsilon}{\sin\theta^{\sigma}}|\nabla_{\partial \Omega}u^{\varepsilon,\,\sigma}|^2\,d\mathcal{H}^{1}\nonumber\\
		&\thickapprox\frac{1}{\sqrt{2}\sigma}\mathcal{H}^0(\partial M_t^{\sigma})=\frac{\sqrt{2}}{\sigma},
		\end{align}
		for each $t\in[0,\,\frac{1}{2})$. This implies that the Dirichlet energy on $\partial\Omega$ along $\nu$ goes to infinity as $\sigma$ goes to zero for each $t$. Therefore, we may conclude that it is reasonable to assume \eqref{3.1.9.2} or this kind of the energy estimate for the Dirichlet energy in order to consider our definition of a Brakke flow.
	\end{remark}
\end{itemize}

\subsubsection{Main theorem and important lemmas}\label{mainresults.diri}
Now we state a sequence of one definition, several lemmas and the main theorem.
\begin{lemma}\label{thm3.1}
	Suppose that ``General assumptions", ``Vanishing hypothesis of the discrepancy measure", and ``Uniform upper bound on the boundary $\partial\Omega$" in Subsection \ref{generalassmp} hold and $\{\varepsilon_i\}_{i\in\mathbb{N}} \subset (0,\,1)$ and $\{\sigma_j\}_{j\in\mathbb{N}} \subset (0,\,1)$ are two families of parameters with $\varepsilon_i \to 0$ and $\sigma_j \to 0$. Let $\{u^{\varepsilon_i,\,\sigma_j}\}_{i,\,j\in\mathbb{N}}$ be the solutions of \eqref{1.1.1} and $\mu^{\varepsilon_i,\,\sigma_j}_t$ be as in \eqref{1.1.5}. Then there exist a subsequence $\{\mu^{\varepsilon'_j,\,\sigma'_j}_t\}_{j\in\mathbb{N}}$ and a family of Radon measures $\{\mu_t\}_{t\geq 0}$ on $\overline{\Omega}$ such that for all $t\geq 0$, $\mu^{\varepsilon'_j,\,\sigma'_j}_t \rightharpoonup \mu_t$ as $j \to \infty$ on $\overline{\Omega}$. Moreover, for a.e. $t\geq 0$ and for all $j\in\mathbb{N}$, $\mu_t$ are $(n-1)$-rectifiable on $\overline{\Omega}$. 
\end{lemma}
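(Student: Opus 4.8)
\emph{Proof plan.} The plan is to prove the statement in three steps: a uniform mass bound, compactness at every time via a Helly-type argument, and rectifiability at almost every time through the associated diffuse varifolds. First I would use that \eqref{1.1.1} is the $L^2$-gradient flow of the energy \eqref{1.1.3} and that the boundary condition reads $\partial_t u^{\varepsilon,\sigma}=-\sigma\,\nabla u^{\varepsilon,\sigma}\cdot\boldnu$ on $\partial\Omega$; differentiating $t\mapsto\mu^{\varepsilon,\sigma}_t(\Omega)=E^{\varepsilon,\sigma}[u^{\varepsilon,\sigma}(\cdot,t)]$ and integrating by parts yields
\[
\frac{d}{dt}\,\mu^{\varepsilon,\sigma}_t(\Omega)=-\varepsilon\int_\Omega|\partial_t u^{\varepsilon,\sigma}|^2\,dx-\frac{\varepsilon}{\sigma}\int_{\partial\Omega}|\partial_t u^{\varepsilon,\sigma}|^2\,d\mathcal{H}^{n-1}\le 0 ,
\]
so by \eqref{3.1.6}, $\mu^{\varepsilon,\sigma}_t(\overline\Omega)=\mu^{\varepsilon,\sigma}_t(\Omega)\le\mu^{\varepsilon,\sigma}_0(\Omega)\le D$ for all $t\ge0$, and also $\int_0^\infty\varepsilon\int_\Omega|\partial_t u^{\varepsilon,\sigma}|^2\,dx\,dt\le D$; these a priori bounds (justified in integrated form from the regularity assumed in the ``General assumptions'') are the ones recorded in Section \ref{apriori}.

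For compactness at \emph{every} $t$, I would repeat the computation with a weight $\phi\in C^2_c(\overline\Omega)$, $\phi\ge0$: Young's inequality together with the elementary bound $|\nabla\phi|^2\le 2\phi\,\|\phi\|_{C^2}$ bounds the interior contribution by $2\|\phi\|_{C^2}\,\mu^{\varepsilon,\sigma}_t(\Omega)\le 2D\|\phi\|_{C^2}=:c_\phi$, while the boundary contribution $-\tfrac{\varepsilon}{\sigma}\int_{\partial\Omega}\phi\,|\partial_t u^{\varepsilon,\sigma}|^2\,d\mathcal{H}^{n-1}$ is $\le0$. Hence $t\mapsto\int_{\overline\Omega}\phi\,d\mu^{\varepsilon,\sigma}_t-c_\phi t$ is non-increasing and, by Step 1, uniformly bounded on bounded time-intervals. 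Fixing a countable family $\{\phi_m\}\subset C^2_c(\overline\Omega)$ of nonnegative functions dense in $C_c(\overline\Omega)$ for the uniform norm, Helly's selection theorem applied to each $m$ followed by a diagonal extraction (over $m$ and over $[0,N]$, $N\to\infty$) produces a single subsequence $\{(\varepsilon'_j,\sigma'_j)\}$ along which $\int\phi_m\,d\mu^{\varepsilon'_j,\sigma'_j}_t$ converges for every $m$ and every $t\ge0$. The uniform mass bound $\le D$ and density then upgrade this to convergence of $\int\phi\,d\mu^{\varepsilon'_j,\sigma'_j}_t$ for all $\phi\in C_c(\overline\Omega)$ and all $t$, and the limit, being a nonnegative bounded linear functional on $C_c(\overline\Omega)$, is $\phi\mapsto\int\phi\,d\mu_t$ for a Radon measure $\mu_t$ on $\overline\Omega$ by the Riesz representation theorem. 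This gives $\mu^{\varepsilon'_j,\sigma'_j}_t\rightharpoonup\mu_t$ for all $t\ge0$.

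For the rectifiability, by Step 1 and Fatou's lemma there is, for a.e. $t$, a further $t$-dependent subsequence along which $\varepsilon'_j\int_\Omega|\partial_t u^{\varepsilon'_j,\sigma'_j}(\cdot,t)|^2\,dx$ stays bounded; I would fix such a $t$ and associate with $u^{\varepsilon,\sigma}(\cdot,t)$ the varifold $V^{\varepsilon,\sigma}_t$ of weight $\varepsilon|\nabla u^{\varepsilon,\sigma}|^2$ with tangent planes orthogonal to $\nabla u^{\varepsilon,\sigma}$. The stress--energy identity for \eqref{1.1.1}, combined with the equation, gives an identity of the form $\delta V^{\varepsilon,\sigma}_t(\boldg)=\int_\Omega(\divergence\boldg)\,d\xi^{\varepsilon,\sigma}_t-\int_\Omega\varepsilon\,\partial_t u^{\varepsilon,\sigma}\,(\nabla u^{\varepsilon,\sigma}\cdot\boldg)\,dx$ for $\boldg\in(C^1_c(\Omega))^n$, with $\xi^{\varepsilon,\sigma}_t$ the discrepancy \eqref{1.1.5.0}; the last term is $O(\|\boldg\|_\infty)$ by Cauchy--Schwarz and Step 1, and in the interior the discrepancy vanishes in the limit by the classical argument (Ilmanen \cite{ilmanen0}, R\"oger--Sch\"atzle \cite{rog.schz}), so $\mu_t=\|V_t\|$ on $\Omega$, $V_t\lfloor_\Omega$ has locally bounded first variation, a clearing-out argument gives positive lower density $\mathcal{H}^{n-1}$-a.e., and Allard's rectifiability theorem makes $\mu_t\lfloor_\Omega$ $(n-1)$-rectifiable. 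Testing the same identity against $\boldg\in(C^1(\overline\Omega))^n$ produces an extra boundary term which, after substituting the dynamic boundary condition, is controlled by $\int_{\partial\Omega}\varepsilon|\partial_\nu u^{\varepsilon,\sigma}|^2\,d\mathcal{H}^{n-1}$ --- precisely the quantity bounded by the ``Uniform upper bound'' assumption \eqref{3.1.9.2} (for a.e. $t$ along a subsequence, by Fatou again); this yields $(n-1)$-dimensional density-ratio upper bounds for $\mu_t$ at points of $\partial\Omega$, and since $\mu_t\lfloor_{\partial\Omega}$ is carried by the smooth hypersurface $\partial\Omega$ this forces $\mu_t\lfloor_{\partial\Omega}\ll\mathcal{H}^{n-1}\lfloor_{\partial\Omega}$, hence $(n-1)$-rectifiable. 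Combining the two parts, $\mu_t$ is $(n-1)$-rectifiable on $\overline\Omega$ for a.e. $t$.

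Steps 1 and 2 are routine. The hard part will be the boundary half of Step 3: in the absence of a Huisken-type monotonicity formula valid up to $\partial\Omega$, one must extract from \eqref{3.1.9.2} both the density-ratio estimate near $\partial\Omega$ and the identification of the limit of $\mu^{\varepsilon,\sigma}_t$ there with the mass of a varifold having locally bounded first variation, carefully enough to apply Allard's rectifiability theorem across the boundary.
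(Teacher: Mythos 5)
Your Steps 1 and 2 are correct, and Step 2 is a genuinely different route from the paper's: you use the semi-decreasing property (for $\phi\geq 0$, $t\mapsto\mu^{\varepsilon,\sigma}_t(\phi)-c_\phi t$ is non-increasing, with the boundary term discarded by sign) plus Helly's selection theorem, in the spirit of Ilmanen's original construction, whereas the paper bounds $\int_0^T\big|\tfrac{d}{dt}\mu^{\varepsilon,\sigma}_t(\phi)\big|\,dt$ (Lemma \ref{lem.5.1}, following Mugnai--R\"oger) and runs a BV-in-time compactness argument, passing through the countable set of discontinuity points and several diagonal extractions. Both give convergence at \emph{every} $t$; your version needs only a one-sided derivative bound, the paper's needs the two-sided $L^1_t$ bound but avoids the weighted Young/$|\nabla\phi|^2\leq 2\phi\sup|\nabla^2\phi|$ trick. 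One small repair: nonnegative $C^2_c$ functions are not dense in $C_c(\overline{\Omega})$; take a countable family of nonnegative functions whose \emph{linear span} is dense (possible since $\overline{\Omega}$ is compact, so constants are available), apply Helly to each, and extend by linearity and the uniform mass bound.

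The genuine gap is in the boundary half of Step 3. First, the extra boundary terms in the first-variation identity \eqref{5.3.2} are $\int_{\partial\Omega}(\boldg\cdot\boldnu)\big(\tfrac{\varepsilon|\nabla u^{\varepsilon,\sigma}|^2}{2}+\tfrac{W(u^{\varepsilon,\sigma})}{\varepsilon}\big)\,d\mathcal{H}^{n-1}$ and $-\int_{\partial\Omega}\varepsilon(\boldg\cdot\nabla u^{\varepsilon,\sigma})\,\partial u^{\varepsilon,\sigma}/\partial\boldnu\,d\mathcal{H}^{n-1}$; these are \emph{not} controlled by $\int_{\partial\Omega}\varepsilon|\partial u^{\varepsilon,\sigma}/\partial\boldnu|^2$ alone --- you need the full boundary energy, including the tangential gradient and the potential term. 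That is a separate, nontrivial estimate: Proposition \ref{prop.4.2}, i.e.\ \eqref{4.1.5}, proved by a weighted computation with (an extension of) the signed distance function, using \eqref{3.1.9.2} together with \eqref{4.1.1}; your plan never produces it. Second, bounded first variation up to $\partial\Omega$ does not yield pointwise $(n-1)$-density-ratio \emph{upper} bounds at boundary points; for that one would need a monotonicity formula valid up to the boundary, which is exactly what is unavailable for these boundary conditions. Hence your deduction $\mu_t\lfloor_{\partial\Omega}\ll\mathcal{H}^{n-1}\lfloor_{\partial\Omega}$ is unsupported (and ``carried by the smooth hypersurface $\partial\Omega$'' is not by itself enough: a Dirac mass on $\partial\Omega$ is carried by $\partial\Omega$ but is not $(n-1)$-rectifiable). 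Third, once you test with fields not compactly supported in $\Omega$, the discrepancy term $\int\nabla\boldg:(\textbf{a}\otimes\textbf{a})\,d\xi^{\varepsilon,\sigma}_t$ near $\partial\Omega$ is only controlled by $\sup|\nabla\boldg|$; to kill it you need the vanishing of the discrepancy up to the boundary, which in this paper is the standing hypothesis \eqref{3.1.9.1}, not a consequence of the interior results of Ilmanen or R\"oger--Sch\"atzle.

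The paper's route avoids boundary density upper bounds altogether: interior integrality (Tonegawa, Takasao--Tonegawa) plus smoothness and compactness of $\partial\Omega$ give $\mathcal{H}^{n-1}(\spt\mu_t)<\infty$, whence $\mu_t$ is concentrated where its $(n-1)$-dimensional upper density is positive, \eqref{5.3.19.3}; the bound $|\delta \tilde V_t(\boldg)|\leq c(t)\sup|\boldg|$ on all of $\overline{\Omega}$ with $c\in L^1_{loc}$ is obtained in Propositions \ref{prop.5.5} and \ref{prop.5.6} from \eqref{4.1.1}, \eqref{4.1.5}, \eqref{3.1.9.2} and the discrepancy-vanishing hypothesis; and then Allard's rectifiability theorem is applied once, treating interior and boundary points of $\overline{\Omega}$ simultaneously. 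You should replace your boundary argument (density-ratio bounds plus absolute continuity on $\partial\Omega$) by this scheme, or else supply a boundary monotonicity formula, which is precisely what the paper says it cannot do.
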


\begin{remark}
	As we mention in the case of Dirichlet boundary conditions, the integrality of the Radon measure only in the interior $\Omega$ follows from the interior argument of Tonegawa  \cite{Tonegawa02} or Takasao and Tonegawa \cite{TaTo} by using the local monotonicity formula. Thus we may expect that $\sigma^{-1}_0\mu_t$ is a $(n-1)$-integral Radon measure in $\Omega$ for a.e. $t\geq 0$, where we recall that $\sigma_0$ is defined as $\sigma_0 \coloneqq \int_{-1}^{1}\sqrt{2W(u)}\,du = 4/3$.
\end{remark}

\begin{lemma}\label{thm3.2}
	Suppose that ``General assumptions" and ``Uniform upper bound on the boundary $\partial\Omega$" in Subsection \ref{exis.assmp.diri} hold and  $\{\varepsilon'_i\}_{i\in\mathbb{N}}$ and $\{\sigma'_j\}_{j\in\mathbb{N}}$ are as in Lemma \ref{thm3.1}. Let $\{\alpha^{\varepsilon'_i,\,\sigma'_j}\}_{i,\,j\in\mathbb{N}}$ be as in Definition \ref{def.2.3}. Then there exist a subsequence $\{\alpha^{\varepsilon'_j,\,\sigma'_j}\}_{j\in\mathbb{N}}$ (labelled with the same index) and a Radon measure $\alpha$ on $\partial \Omega \times [0,\infty)$ such that $\alpha^{\varepsilon'_j,\,\sigma'_j} \rightharpoonup \alpha$ as $j \to \infty$ on $\partial \Omega \times [0,\infty)$. In addition, setting a vector-valued function $\boldv^{\varepsilon'_j,\,\sigma'_j}_b:\partial\Omega\rightarrow \mathbb{R}^n$ by
	\begin{equation}
	\boldv^{\varepsilon'_j,\,\sigma'_j}_b \coloneqq \begin{cases}
	\displaystyle -\frac{\partial_t u^{\varepsilon'_j,\,\sigma'_j}}{|\nabla_{\partial \Omega} u^{\varepsilon'_j,\,\sigma'_j}|}\frac{\nabla_{\partial \Omega} u^{\varepsilon'_j,\,\sigma'_j}}{|\nabla_{\partial \Omega} u^{\varepsilon'_j,\,\sigma'_j}|}&\quad \text{if $|\nabla_{\partial \Omega} u^{\varepsilon'_j,\,\sigma'_j}| \neq 0$}, \\
	0 &\quad  \text{otherwise}, \label{5.2.3}
	\end{cases} 
	\end{equation}
	then $\boldv_b^{\varepsilon'_j,\,\sigma'_j}\xrightarrow[j\to\infty]{}0$ in the sense that
	\begin{equation}\label{3.1.10}
	\lim_{j \to \infty} \int_{\partial\Omega\times[0,\,\infty)} \boldg\cdot\boldv^{\varepsilon'_j,\,\sigma'_j}_b\, d\alpha^{\varepsilon'_j,\,\sigma'_j}=0
	\end{equation}
	for all $\boldg\in (C_c(\partial \Omega \times [0,\,\infty)))^n$.
\end{lemma}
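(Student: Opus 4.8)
The plan is to prove the two assertions in turn; both rely on an a priori bound for the weighted boundary area measure, and the second one additionally uses the $L^{2}$-gradient-flow structure of the Allen--Cahn equations \eqref{1.1.1}. For the compactness, the first step is to record, for every $T>0$, the uniform bound
\begin{equation*}
\Lambda(T):=\sup_{\varepsilon,\,\sigma>0}\alpha^{\varepsilon,\,\sigma}(\partial\Omega\times[0,\,T])=\sup_{\varepsilon,\,\sigma>0}\int_{0}^{T}\!\!\int_{\partial\Omega}\varepsilon\,|\nabla_{\partial\Omega}u^{\varepsilon,\,\sigma}|^{2}\,d\mathcal{H}^{n-1}\,dt<\infty,
\end{equation*}
which is part of the a priori estimates carried out in Section~\ref{apriori}. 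Granting this, $\{\alpha^{\varepsilon'_i,\,\sigma'_j}\}$ is a family of Radon measures on $\partial\Omega\times[0,\,\infty)$ with mass uniformly bounded on each $\partial\Omega\times[0,\,k]$, $k\in\mathbb{N}$, so extracting weakly convergent subsequences on $\partial\Omega\times[0,\,1]\subset\partial\Omega\times[0,\,2]\subset\cdots$ and passing to a diagonal subsequence yields a (relabelled) subsequence $\{\alpha^{\varepsilon'_j,\,\sigma'_j}\}_{j\in\mathbb{N}}$ and a Radon measure $\alpha$ on $\partial\Omega\times[0,\,\infty)$ with $\alpha^{\varepsilon'_j,\,\sigma'_j}\rightharpoonup\alpha$.

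For the vanishing of the boundary velocity, the starting point is the pointwise identity, valid $\mathcal{H}^{n-1}\lfloor_{\partial\Omega}\otimes\mathcal{L}^1_t$-a.e.,
\begin{equation*}
(\boldg\cdot\boldv^{\varepsilon,\,\sigma}_b)\,\varepsilon\,|\nabla_{\partial\Omega}u^{\varepsilon,\,\sigma}|^{2}=-\,\varepsilon\,\partial_{t}u^{\varepsilon,\,\sigma}\,(\boldg\cdot\nabla_{\partial\Omega}u^{\varepsilon,\,\sigma}),
\end{equation*}
which follows from the definitions of $\boldv^{\varepsilon,\,\sigma}_b$ in \eqref{5.2.3} and of $\alpha^{\varepsilon,\,\sigma}_t$ in Definition~\ref{def.2.3} on $\{|\nabla_{\partial\Omega}u^{\varepsilon,\,\sigma}|\neq0\}$, and is trivially true on $\{|\nabla_{\partial\Omega}u^{\varepsilon,\,\sigma}|=0\}$ (both sides vanish there). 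Hence, for $\boldg\in(C_{c}(\partial\Omega\times[0,\,\infty)))^{n}$ with $\spt\boldg\subset\partial\Omega\times[0,\,T]$, integrating and applying the Cauchy--Schwarz inequality gives
\begin{equation*}
\left|\int_{\partial\Omega\times[0,\,\infty)}\!\boldg\cdot\boldv^{\varepsilon,\,\sigma}_b\,d\alpha^{\varepsilon,\,\sigma}\right|=\left|\int_{0}^{T}\!\!\int_{\partial\Omega}\varepsilon\,\partial_{t}u^{\varepsilon,\,\sigma}\,(\boldg\cdot\nabla_{\partial\Omega}u^{\varepsilon,\,\sigma})\,d\mathcal{H}^{n-1}\,dt\right|\leq\|\boldg\|_{L^{\infty}}\,\Lambda(T)^{1/2}\,\mathcal{D}^{\varepsilon,\,\sigma}(T)^{1/2},
\end{equation*}
where $\mathcal{D}^{\varepsilon,\,\sigma}(T):=\int_{0}^{T}\!\int_{\partial\Omega}\varepsilon\,|\partial_{t}u^{\varepsilon,\,\sigma}|^{2}\,d\mathcal{H}^{n-1}\,dt$; applying the same identity with $\boldg$ replaced by $\boldv^{\varepsilon,\,\sigma}_b$ times cut-offs shows also that $\boldv^{\varepsilon,\,\sigma}_b\in(L^{2}(\alpha^{\varepsilon,\,\sigma}))^{n}$ with $\int|\boldv^{\varepsilon,\,\sigma}_b|^{2}\,d\alpha^{\varepsilon,\,\sigma}=\sup_{T>0}\mathcal{D}^{\varepsilon,\,\sigma}(T)$, so that in particular the integral in \eqref{3.1.10} is well defined.

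It then remains to estimate $\mathcal{D}^{\varepsilon,\,\sigma}(T)$. Differentiating the energy $E^{\varepsilon,\,\sigma}[u^{\varepsilon,\,\sigma}(\cdot,\,t)]$ in $t$, integrating by parts over $\Omega$, and substituting the interior equation in \eqref{1.1.1} together with its boundary condition $\partial_{t}u^{\varepsilon,\,\sigma}=-\sigma\,\nabla u^{\varepsilon,\,\sigma}\cdot\boldnu$ on $\partial\Omega$ yields the dissipation identity
\begin{equation*}
\frac{d}{dt}E^{\varepsilon,\,\sigma}[u^{\varepsilon,\,\sigma}(\cdot,\,t)]=-\int_{\Omega}\varepsilon\,|\partial_{t}u^{\varepsilon,\,\sigma}|^{2}\,dx-\frac{1}{\sigma}\int_{\partial\Omega}\varepsilon\,|\partial_{t}u^{\varepsilon,\,\sigma}|^{2}\,d\mathcal{H}^{n-1},
\end{equation*}
and integrating this over $[0,\,T]$, using $E^{\varepsilon,\,\sigma}\geq0$ and the energy bound \eqref{3.1.6}, gives $\mathcal{D}^{\varepsilon,\,\sigma}(T)\leq\sigma\,E^{\varepsilon,\,\sigma}[u^{\varepsilon,\,\sigma}_{0}]\leq\sigma D$. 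Specialising the estimate of the previous paragraph to the sequence $(\varepsilon'_j,\,\sigma'_j)$, for which $\sigma'_j\to0$, we conclude
\begin{equation*}
\left|\int_{\partial\Omega\times[0,\,\infty)}\!\boldg\cdot\boldv^{\varepsilon'_j,\,\sigma'_j}_b\,d\alpha^{\varepsilon'_j,\,\sigma'_j}\right|\leq\|\boldg\|_{L^{\infty}}\,\Lambda(T)^{1/2}\,(\sigma'_j D)^{1/2}\xrightarrow[\;j\to\infty\;]{}0,
\end{equation*}
which is precisely \eqref{3.1.10}.

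I expect the main obstacle to be the uniform mass bound $\Lambda(T)<\infty$: this is the only genuinely nontrivial ingredient, and it has to be supplied by the a priori analysis of Section~\ref{apriori}; once it and the dissipation identity above are in hand, everything else is the elementary Cauchy--Schwarz computation, the decisive point being that the boundary dissipation $\mathcal{D}^{\varepsilon,\,\sigma}(T)$ carries a factor $\sigma$ (coming from the form of the boundary condition in \eqref{1.1.1}), hence tends to $0$ along $\sigma'_j\to0$ although the energy itself does not decay.
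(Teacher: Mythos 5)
Your proof is correct, and on the second assertion it takes a genuinely different (more elementary) route than the paper. Both arguments rest on the same two a priori facts: the boundary dissipation bound $\int_0^\infty\!\int_{\partial\Omega}\frac{\varepsilon}{\sigma}(\partial_t u^{\varepsilon,\sigma})^2\,d\mathcal{H}^{n-1}dt\le D$ from Proposition \ref{prop.4.1} (your dissipation identity is exactly \eqref{4.1.3}, giving $\mathcal{D}^{\varepsilon,\sigma}(T)\le\sigma D$), and the uniform mass bound on $\alpha^{\varepsilon,\sigma}$ from Proposition \ref{prop.4.2}, which is where the ``Uniform upper bound'' assumption \eqref{3.1.9.2} enters; the compactness/diagonal extraction for $\alpha^{\varepsilon'_j,\sigma'_j}$ is identical to the paper's. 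The difference is in how \eqref{3.1.10} is concluded: the paper first applies Hutchinson's measure--function pair compactness theorem \cite[Theorem 4.4.2]{hutchinson} to the pairs $(\alpha^{\varepsilon'_j,\sigma'_j},\boldv^{\varepsilon'_j,\sigma'_j}_b)$, obtaining a limit $\boldv_b\in(L^2(\alpha))^n$, and then shows $\boldv_b=0$ via lower semicontinuity of the $L^2$ norm together with $\limsup_j\int\!\int\varepsilon'_j(\partial_t u^{\varepsilon'_j,\sigma'_j})^2\le(\lim_j\sigma'_j)D=0$; you bypass Hutchinson entirely with an upfront Cauchy--Schwarz estimate on the prelimit integrals, getting the quantitative bound $\|\boldg\|_{L^\infty}\Lambda(T)^{1/2}(\sigma'_jD)^{1/2}$, i.e.\ an $O(\sqrt{\sigma'_j})$ rate. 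Your version is shorter and suffices for the lemma as stated (and for Remark \ref{rem.thm3.2} one may then simply take $\boldv_b\equiv0$, so nothing used later is lost); the paper's version is the one that generalizes to the dynamic case $\sigma=1$, where the limit velocity is nontrivial and the measure--function pair machinery is genuinely needed. Two cosmetic slips: your parenthetical identity $\int|\boldv^{\varepsilon,\sigma}_b|^2\,d\alpha^{\varepsilon,\sigma}=\sup_{T>0}\mathcal{D}^{\varepsilon,\sigma}(T)$ should be an inequality ``$\le$'', since $\partial_t u^{\varepsilon,\sigma}$ need not vanish on $\{|\nabla_{\partial\Omega}u^{\varepsilon,\sigma}|=0\}$ (this is only used for well-definedness, so it is harmless), and the supremum defining $\Lambda(T)$ should be taken over $\sigma\in(0,1)$, consistent with Proposition \ref{prop.4.2} and with the sequence $\{\sigma'_j\}\subset(0,1)$ actually at hand.
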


\begin{remark}\label{rem.thm3.2}
	In Lemma \ref{thm3.2}, the convergence of $\boldv_b^{\varepsilon'_j,\,\sigma'_j}$ means that there exists $\boldv_b\in(L^2(\alpha,\,\partial\Omega\times[0,\,\infty)))^n$ such that
	\begin{equation}\label{3.1.11.1}
	\lim_{j \to \infty} \int_{\partial\Omega\times[0,\,\infty)} \boldg\cdot\boldv^{\varepsilon'_j,\,\sigma'_j}_b\, d\alpha^{\varepsilon'_j,\,\sigma'_j}=-\int_{\partial \Omega \times [0,\infty)} \boldg\cdot {\boldv}_b \, d\alpha
	\end{equation}
	for any $\boldg\in (C_c(\partial \Omega \times [0,\,\infty)))^n$, and $\boldv_b=0$ in $(L^2(\alpha))^n$.
\end{remark}

\begin{lemma}\label{thm3.2.1}
	Suppose that ``General assumptions" and ``Uniform upper bound on the boundary $\partial\Omega$" in Subsection \ref{exis.assmp.diri} hold and the space-dimension $n$ is larger than 2. Let a subsequence $\{\alpha^{\varepsilon'_j,\,\sigma'_j}\}_{j\in\mathbb{N}}$ and $\alpha$ be as in Lemma \ref{thm3.2}. Moreover, setting
	\begin{equation}\label{3.1.11.2.0}
	w^{j}(x,\,t)\coloneqq\Phi\circ u^{\varepsilon'_j,\,\sigma'_j}\coloneqq\int_{0}^{u^{\varepsilon'_j,\,\sigma'_j}(x,\,t)
	}\sqrt{2W(s)}\,ds= u^{\varepsilon'_j,\,\sigma'_j}(x,\,t)-\frac{1}{3}(u^{\varepsilon'_j,\,\sigma'_j}(x,\,t))^3
	\end{equation}
	for any $j\in\mathbb{N}$ and $(x,\,t)\in\overline{\Omega}\times[0,\,\infty)$, we assume that there exist $0\leq t_1<t_2<\infty$ and a non-empty connected component $\Gamma_1$ of $\partial\Omega$ such that 
	\begin{equation}\label{3.1.11.2}
	\liminf_{j\to\infty}\int_{t_1}^{t_2}\!\left|\int_{\Gamma_1}w^{j}\,d\mathcal{H}^{n-1}\right|\,dt <\frac{2}{3}\mathcal{H}^{n-1}(\Gamma_1)\,(t_2-t_1)<\infty.
	\end{equation}
	holds. Then there exists a positive constant $0<\tilde{C}(t_1,\,t_2)<\infty$ such that $\alpha(\Gamma_1\times[t_1,\,t_2])\geq \tilde{C}(t_1,\,t_2)$, which means that $\alpha$ is not identically zero.
\end{lemma}

\begin{remark}\label{uniformPosiBoundaryMeasDiri}
	In Lemma \ref{thm3.2.1}, the assumption \eqref{3.1.11.2} means that there always exists a phase boundary on $\partial \Omega$, that is, the boundary $\partial M_t$ of the hypersurface $M_t$ always lies on some part of $\partial \Omega$; otherwise we may derive the fact that $u^{j}$ is equal to either $+1$ or $-1$ almost everywhere on $\partial \Omega$ as $j\to\infty$ and thus this implies, from the definition of $w^{j}$, 
	\begin{equation}\label{3.1.11.3}
	\lim_{i\to \infty}\int_{t_1}^{t_2}\!\left|\int_{\Gamma}w^{j}\,d\mathcal{H}^{n-1}\right|\,dt=\frac{2}{3}\mathcal{H}^{n-1}(\Gamma)\,(t_2-t_1)
	\end{equation}
	for any time $0<t_1<t_2<\infty$ and any connected component $\Gamma\subset\partial\Omega$, which contradicts \eqref{3.1.11.2}.
\end{remark}

Due to Lemma \ref{thm3.1}, we may define the unique rectifiable varifolds as follows:

\begin{definition}\label{def.3.1}
	We define a rectifiable varifold $V_t$ associated with $\mu_t$ as follows: for $t\geq 0$ where $\mu_t$ is $(n-1)$-rectifiable on $\overline{\Omega}$,
	\begin{equation}\label{3.1.12}
	V_t(\phi)\coloneqq \int_{\overline{\Omega}} \phi(x,\,T_x\mu_t)\,d\mu_t(x)
	\end{equation}
	for every $\phi\in C_c(G_{n-1}(\overline{\Omega}))$. For $t\geq 0$ where $\mu_t$ is not $(n-1)$-rectifiable, we define $V_t$ by $V_t(\phi)\coloneqq \int_{\overline{\Omega}}\phi(x,\,\mathbb{R}^{n-1}\times \{0\})\,d\mu_t(x)$ for every $\phi\in C_c(G_{n-1}(\overline{\Omega}))$. Here $T_x\mu_t$ is an approximate tangent space at $x$, which exists for $\mu_t$-a.e. $x\in \overline{\Omega}$ because of the rectifiability of $\mu_t$.
\end{definition}

\begin{lemma}\label{thm3.3}
	Suppose that ``General assumptions", ``Vanishing hypothesis for the discrepancy measure", and ``Uniform upper bound on the boundary $\partial\Omega$" in Subsection \ref{exis.assmp.diri} hold and let $\{V_t\}_{t\geq 0}$ be as in Definition \ref{def.3.1}. Then the following properties hold:
	\begin{equation}\label{3.1.13}
	\|\delta V_t\|(\overline{\Omega})<\infty,\qquad \int_{0}^{T} \|\delta V_t\|(\overline{\Omega})\,dt<\infty
	\end{equation} 
	for a.e. $t\geq 0$ and all $T>0$.
\end{lemma}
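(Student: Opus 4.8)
\emph{Proof idea.} Write $\varepsilon_j,\sigma_j$ for the subsequence of Lemmas~\ref{thm3.1}--\ref{thm3.2} (refined so that in addition $|\xi^{\varepsilon_j,\sigma_j}_t|\rightharpoonup\xi_t$ as in the ``Vanishing hypothesis''), and abbreviate $u_j:=u^{\varepsilon_j,\sigma_j}$, $\mu^j_t:=\mu^{\varepsilon_j,\sigma_j}_t$, $\xi^j_t:=\xi^{\varepsilon_j,\sigma_j}_t$, $\alpha^j:=\alpha^{\varepsilon_j,\sigma_j}$. The plan is to pass through the Allen--Cahn approximation: introduce the diffuse varifold $V^j_t$ with weight $\mu^j_t$ whose $(n-1)$-plane at a point where $\nabla u_j\neq0$ is $(\nabla u_j)^{\perp}$ (fixed arbitrarily where $\nabla u_j=0$). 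Integrating $\delta V^j_t(\boldg)=\int\divergence_S\boldg\,dV^j_t$ by parts and using \eqref{1.1.1} (to replace $\varepsilon_j\Delta u_j-\varepsilon_j^{-1}W'(u_j)$ by $\varepsilon_j\partial_t u_j$) together with the boundary condition $\partial_t u_j=-\sigma_j\,\nabla u_j\cdot\boldnu$ on $\partial\Omega$, one obtains, for every $\boldg\in(C^1(\overline{\Omega}))^n$, an identity of the form
\[
\delta V^j_t(\boldg)=\int_{\Omega}\varepsilon_j\,\partial_t u_j\,(\boldg\cdot\nabla u_j)\,dx+\mathcal{R}^j_t(\boldg)+\mathcal{B}^j_t(\boldg),
\]
where $\mathcal{R}^j_t(\boldg)$ collects the discrepancy terms, with $|\mathcal{R}^j_t(\boldg)|\le C\,\|\boldg\|_{C^1(\overline{\Omega})}\,|\xi^j_t|(\overline{\Omega})$, and $\mathcal{B}^j_t(\boldg)$ is a boundary term, supported on $\partial\Omega$, built from $\varepsilon_j(\partial u_j/\partial\boldnu)\nabla u_j$ and the boundary energy density $\varepsilon_j|\nabla u_j|^2/2+W(u_j)/\varepsilon_j$. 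The statement will follow by estimating the three terms uniformly in $j$ after integrating in $t$, and then letting $j\to\infty$.

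\emph{Estimates.} For the bulk term, Cauchy--Schwarz together with $\int_{\Omega}\varepsilon_j|\nabla u_j|^2\,dx\le 2\mu^j_t(\Omega)\le 2D$ (by energy monotonicity, Section~\ref{apriori}, and \eqref{3.1.6}) gives the bound $\sqrt{2D}\,\big(\int_{\Omega}\varepsilon_j(\partial_t u_j)^2\,dx\big)^{1/2}\|\boldg\|_{C^0(\overline{\Omega})}$, and the energy--dissipation identity for \eqref{1.1.1} (a priori estimates of Section~\ref{apriori}) yields $\int_0^T\!\int_{\Omega}\varepsilon_j(\partial_t u_j)^2\,dx\,dt\le\mu^{\varepsilon_j,\sigma_j}_0(\Omega)\le D$; hence the $t$-integral over $[0,T]$ of this term is $\le\sqrt{2D}\,\sqrt{TD}\,\|\boldg\|_{C^0}$. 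For $\mathcal{R}^j_t$, the uniform bound $|\xi^j_t|(\overline{\Omega})\le\mu^j_t(\overline{\Omega})\le D$, weak convergence on the compact set $\overline{\Omega}$, and the vanishing hypothesis $\xi_t\equiv0$ for a.e.\ $t$ give $|\xi^j_t|(\overline{\Omega})\to0$ for a.e.\ $t$, whence $\int_0^T|\mathcal{R}^j_t(\boldg)|\,dt\to0$ by dominated convergence. For the boundary term, Cauchy--Schwarz on $\partial\Omega$ and in $t$ bound $\int_0^T|\mathcal{B}^j_t(\boldg)|\,dt$ by $\|\boldg\|_{C^0}$ times an expression involving $\int_0^T\!\int_{\partial\Omega}\varepsilon_j(\partial u_j/\partial\boldnu)^2\,d\mathcal{H}^{n-1}\,dt$, $\int_0^T\!\int_{\partial\Omega}\varepsilon_j|\nabla_{\partial\Omega}u_j|^2\,d\mathcal{H}^{n-1}\,dt=\alpha^j(\partial\Omega\times[0,T])$, and $\int_0^T\!\int_{\partial\Omega}W(u_j)/\varepsilon_j\,d\mathcal{H}^{n-1}\,dt$; the first is bounded by the ``Uniform upper bound'' assumption \eqref{3.1.9.2}, the second is bounded because $\alpha^j\rightharpoonup\alpha$ on the compact set $\partial\Omega\times[0,T]$ (Lemma~\ref{thm3.2}), and the third is controlled by the boundary a priori estimates of Section~\ref{apriori}. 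Thus $\int_0^T|\mathcal{B}^j_t(\boldg)|\,dt\le C(T)\|\boldg\|_{C^0}$ uniformly in $j$.

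\emph{Conclusion and main obstacle.} Combining, $\int_0^T|\delta V^j_t(\boldg)|\,dt\le C(T)\|\boldg\|_{C^0(\overline{\Omega})}+o_j(1)\,\|\boldg\|_{C^1(\overline{\Omega})}$. Since $\divergence_S\boldg\in C(G_{n-1}(\overline{\Omega}))$ and $V^j_t\rightharpoonup V_t$ as varifolds on $\overline{\Omega}$ for a.e.\ $t$ (this follows from the vanishing of the discrepancy together with Lemma~\ref{thm3.1}; cf.\ the interior analysis of Tonegawa and Section~\ref{chara.lim}), we have $\delta V^j_t(\boldg)\to\delta V_t(\boldg)$ for a.e.\ $t$; letting $j\to\infty$ and using $\mathcal{R}^j_t(\boldg)\to0$, we get $|\delta V_t(\boldg)|\le\kappa(t)\,\|\boldg\|_{C^0}$ for all $\boldg\in(C^1(\overline{\Omega}))^n$, where $\kappa(t):=\liminf_j\big(\sqrt{2D}\,(\int_{\Omega}\varepsilon_j(\partial_t u_j)^2\,dx)^{1/2}+b_j(t)\big)$ and $b_j(t)$ denotes the (integrand) coefficient of $\mathcal{B}^j_t$. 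Hence $\delta V_t$ extends to a Radon measure on $\overline{\Omega}$ with $\|\delta V_t\|(\overline{\Omega})\le\kappa(t)$; by Fatou and the estimates above, $\int_0^T\kappa(t)\,dt\le\sqrt{2D}\,\sqrt{TD}+C(T)<\infty$, which gives $\int_0^T\|\delta V_t\|(\overline{\Omega})\,dt<\infty$ and, a fortiori, $\|\delta V_t\|(\overline{\Omega})<\infty$ for a.e.\ $t\geq0$. The interior part of the argument is routine (essentially Ilmanen's); the delicate point---and the reason for assuming \eqref{3.1.9.2} and establishing boundary energy estimates---is the control of $\mathcal{B}^j_t$, since the normal Dirichlet energy on $\partial\Omega$ can blow up as $\sigma\to0$, as the example in Remark~\ref{examplecurvatureflow} shows. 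Equally essential is that the discrepancy vanishes \emph{up to} $\partial\Omega$ (hypothesis \eqref{3.1.9.1}), both to make $\mathcal{R}^j_t$ disappear in the limit and to ensure that the diffuse varifolds converge as varifolds on the closed set $\overline{\Omega}$.
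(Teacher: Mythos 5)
Your route is in substance the paper's own. The identity you postulate for $\delta V^{j}_t(\boldg)$ is exactly \eqref{5.3.2} of Lemma \ref{lem.5.4} once one substitutes the equation $\varepsilon_j\Delta u_j-\varepsilon_j^{-1}W'(u_j)=\varepsilon_j\partial_t u_j$ (the two discrepancy integrals there are your $\mathcal{R}^j_t$, cf. Remark \ref{rem.5.5.0}); your bulk, discrepancy and boundary estimates reproduce the quantities $I^{\varepsilon,\,\sigma}_1$, $I^{\varepsilon,\,\sigma}_2$, $I^{\varepsilon,\,\sigma}_3$ of Proposition \ref{prop.5.5} with the same ingredients: \eqref{4.1.1}--\eqref{4.1.2}, the boundary a priori estimate \eqref{4.1.5} (which already yields the bound on $\alpha^{\varepsilon_j,\,\sigma_j}(\partial\Omega\times[0,T])$ that you obtain more indirectly from the weak convergence of Lemma \ref{thm3.2}), the assumption \eqref{3.1.9.2}, Cauchy--Schwarz and Fatou, producing the $L^1_{loc}$ majorant (the paper's $c(t)$, your $\kappa(t)$); the conclusion via Riesz representation and Fatou is then Proposition \ref{prop.5.6}.

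The one step you assert rather than prove is, however, precisely the delicate one: that $V^{j}_t\rightharpoonup V_t$ on $\overline{\Omega}$, i.e.\ that every subsequential varifold limit of the diffuse varifolds is the rectifiable varifold of Definition \ref{def.3.1}, which is what legitimizes $\delta V^{j}_t(\boldg)\to\delta V_t(\boldg)$ for test fields supported up to $\partial\Omega$. Vanishing of the discrepancy together with Lemma \ref{thm3.1} only gives that any subsequential limit $\tilde V_t$ satisfies $\|\tilde V_t\|=\mu_t$; a priori $\tilde V_t$ could carry non-tangential planes over $\mu_t$ (in particular over whatever mass $\mu_t$ has on $\partial\Omega$), in which case $\delta\tilde V_t$ need not coincide with $\delta V_t$, and the interior analysis of Tonegawa that you cite does not control this up to the boundary --- which is the whole point of the lemma. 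The paper closes this in Proposition \ref{prop.5.6}: the uniform bound $|\delta\tilde V_t(\boldg)|\leq c(t)\sup_{\overline{\Omega}}|\boldg|$ (your $\kappa(t)$) extends $\delta\tilde V_t$ to $(C_c(\overline{\Omega}))^n$, the estimate $\mathcal{H}^{n-1}(\spt\mu_t)<\infty$ yields the density property \eqref{5.3.19.3}, and Allard's rectifiability theorem then shows that $\tilde V_t$ is rectifiable, hence uniquely determined by its mass measure $\mu_t$, hence equal to $V_t$; only after this identification do the bounds \eqref{3.1.13} follow for $V_t$ itself. Since you already have the first-variation bound, the gap is fillable by exactly this argument, but as written the identification of the limit varifold on $\overline{\Omega}$ is missing.
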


Now we state the main theorem of this subsection, that is, the approximation result of our Brakke flow with Dirichlet boundary conditions, which we defined in the previous section.

\begin{theorem}\label{thm3.4}
	Suppose that ``General assumptions", ``Vanishing hypothesis for the discrepancy measure", and ``Uniform upper bound on the boundary $\partial\Omega$" in Subsection \ref{exis.assmp.diri} hold. Let $V_t$, $\alpha$, and ${\boldv}_b$ be the quantities in Lemma \ref{thm3.1} and \ref{thm3.3}, Lemma \ref{thm3.2}, and Remark \ref{rem.thm3.2} (see also Definition \ref{def.3.1}), which are obtained from the singular limits of the Allen-Cahn equations \eqref{1.1.1} by taking $\varepsilon,\,\sigma\to0$. Then the quartet $(\{V_t\}_{t\geq 0},\,\alpha,\,{\boldv}_b,\,\{u^{\varepsilon,\,\sigma}\}_{\varepsilon,\,\sigma\in(0,\,1)})$ is a Brakke flow with Dirichlet boundary conditions in Definition \ref{def.4.1} with $\|V_0\|=\sigma_0\,\mathcal{H}^{n-1}\lfloor_{M_0}$ where $M_0$ is as in Subsection \ref{exis.assmp.diri}. In addition, we have the estimate that
	\begin{equation}\label{3.1.18}
	\int_{0}^{\infty} \int_{\overline{\Omega}}|\widetilde{\boldH}_V|^2\,d\|V_t\|\,dt \leq D,
	\end{equation}
	where $\widetilde{\boldH}_V$ is the modified generalized mean curvature vector and $D$ is the constant given in \eqref{3.1.6}.
	
	Moreover, the assumption ``Uniform upper bound on the boundary $\partial\Omega$" in Subsection \ref{exis.assmp.diri} actually leads us to obtain the stronger result that the total variation measure $\|\mathcal{S}_{\alpha,\,\boldv_b}\|$ is identically equal to zero on $\partial\Omega\times[0,\,\infty)$ (this is equivalent to the claim that $\boldv_b=0$ in $(L^2(\alpha))^n$ on $\partial\Omega\times[0,\,\infty)$ as in Lemma \ref{thm3.2}).
\end{theorem}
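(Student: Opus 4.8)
The plan is to verify the three conditions of Definition~\ref{def.4.1}, then the bound~\eqref{3.1.18}, and finally the strengthening $\|\mathcal{S}_{\alpha,\,\boldv_b}\|\equiv0$, by passing to the singular limit in identities for the diffuse varifolds. The structural requirements in Definition~\ref{def.4.1} on $\{V_t\}_{t\ge0}$ are already at hand: $(n-1)$-rectifiability for a.e.\ $t$ is Lemma~\ref{thm3.1}, local boundedness of $\delta V_t$ is Lemma~\ref{thm3.3}, and $\|V_0\|=\sigma_0\mathcal{H}^{n-1}\lfloor_{M_0}$ follows from \eqref{3.1.2} and $\mathcal{H}^{n-1}(\partial E_0\cap\partial\Omega)=0$. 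Let $V^{\varepsilon,\,\sigma}_t$ be the absolutely continuous varifold with weight $\mu^{\varepsilon,\,\sigma}_t$ of \eqref{1.1.5} and tangent plane $I-\boldnu^{\varepsilon,\,\sigma}\otimes\boldnu^{\varepsilon,\,\sigma}$, where $\boldnu^{\varepsilon,\,\sigma}:=\nabla u^{\varepsilon,\,\sigma}/|\nabla u^{\varepsilon,\,\sigma}|$ on $\{\nabla u^{\varepsilon,\,\sigma}\ne0\}$. Testing the bulk equation of \eqref{1.1.1} with $\nabla u^{\varepsilon,\,\sigma}\cdot\boldg$, integrating over $\Omega$ and integrating by parts gives the standard diffuse first-variation identity
\[
\delta V^{\varepsilon,\,\sigma}_t(\boldg)=\int_{\Omega}\varepsilon\,(\partial_t u^{\varepsilon,\,\sigma})(\nabla u^{\varepsilon,\,\sigma}\cdot\boldg)\,dx+\mathcal{E}^{\varepsilon,\,\sigma}_t(\boldg)+\mathcal{B}^{\varepsilon,\,\sigma}_t(\boldg),
\]
where $\mathcal{E}^{\varepsilon,\,\sigma}_t$ gathers the terms carrying the discrepancy density~\eqref{1.1.5.0} and $\mathcal{B}^{\varepsilon,\,\sigma}_t$ is the boundary integral over $\partial\Omega$; inserting the Robin condition $\partial_t u^{\varepsilon,\,\sigma}+\sigma\,\nabla u^{\varepsilon,\,\sigma}\cdot\boldnu=0$, $\mathcal{B}^{\varepsilon,\,\sigma}_t$ is rewritten through $\alpha^{\varepsilon,\,\sigma}_t$ and $\boldv^{\varepsilon,\,\sigma}_b$ of Definition~\ref{def.2.3} and Lemma~\ref{thm3.2}. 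This is the computation of Section~\ref{chara.lim}.

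To pass to the limit I would use the a priori estimates of Section~\ref{apriori}: the energy dissipation $\int_0^\infty\!\!\int_\Omega\varepsilon|\partial_t u^{\varepsilon,\,\sigma}|^2\,dx\,dt\le E^{\varepsilon,\,\sigma}[u^{\varepsilon,\,\sigma}_0]\le D$ by \eqref{3.1.6}, energy monotonicity giving $\int_0^T\!\!\int_\Omega\varepsilon|\nabla u^{\varepsilon,\,\sigma}|^2\,dx\,dt\le2DT$, the uniform normal--energy bound \eqref{3.1.9.2}, and the uniform mass bound on $\alpha^{\varepsilon,\,\sigma}$ from Lemma~\ref{thm3.2}. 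Hutchinson's convergence theorem for measure--function pairs \cite{hutchinson}, applied to $(\mu^{\varepsilon,\,\sigma}_t\otimes\mathcal{L}^1_t,\,-\varepsilon\,\partial_t u^{\varepsilon,\,\sigma}\nabla u^{\varepsilon,\,\sigma})$ and to $(\alpha^{\varepsilon,\,\sigma},\,\boldv^{\varepsilon,\,\sigma}_b)$, yields square-integrable limit densities. The vanishing hypothesis \eqref{3.1.9.1} kills $\mathcal{E}^{\varepsilon,\,\sigma}_t$ and, via the interior equipartition argument of Tonegawa \cite{tone}, R\"oger--Sch\"atzle \cite{rog.schz} and Ilmanen \cite{ilmanen0}, makes $V^{\varepsilon,\,\sigma}_t\otimes\mathcal{L}^1_t$ converge as varifolds to $V_t\otimes\mathcal{L}^1_t$ and gives $\delta V_t\lfloor_\Omega=-\boldH^\Omega_V(\cdot,\,t)\|V_t\|$ with $\boldH^\Omega_V\in(L^2(\|V_t\|\otimes\mathcal{L}^1_t))^n$. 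Now Lemma~\ref{thm3.2} and Remark~\ref{rem.thm3.2} already give $\boldv_b=0$ in $(L^2(\alpha))^n$, so $\mathcal{S}_{\alpha,\,\boldv_b}\equiv0$; hence $\mathcal{S}_{\alpha,\,\boldv_b}+\mathcal{S}^\Omega_{\tilde{\mu},\,\boldH^\Omega_V}$ reduces to $-\boldH^\Omega_V\chi_{\Omega}\,(\|V_t\|\otimes\mathcal{L}^1_t)$, so setting $\widetilde{\boldH}_V:=\boldH^\Omega_V\chi_{\Omega\times[0,\,\infty)}$ (which lies in $(L^2(\|V_t\|\otimes\mathcal{L}^1_t))^n$ and agrees with $\boldH^\Omega_V$ on $\Omega\times[0,\,\infty)$) at once gives conditions~1--2 --- i.e.\ \eqref{4.1.1.1} and \eqref{4.1.1.4} --- and $\|\mathcal{S}_{\alpha,\,\boldv_b}\|\equiv0$ is the last assertion. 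The bound \eqref{3.1.18} follows from lower semicontinuity of the $L^2$ norm under the measure--function convergence, using energy equipartition (a consequence of \eqref{3.1.9.1}).

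There remains Brakke's inequality \eqref{4.1.1.5}. For $\phi\in C^1(\overline\Omega\times[0,\,\infty))$ with $\phi\ge0$ and $\phi(\cdot,\,t)\in C^1_c(\overline\Omega)$, differentiating $\int_{\overline\Omega}\phi\,d\mu^{\varepsilon,\,\sigma}_t$ and integrating by parts with the bulk equation gives
\[
\frac{d}{dt}\int_{\overline\Omega}\phi\,d\mu^{\varepsilon,\,\sigma}_t=\int_{\overline\Omega}\partial_t\phi\,d\mu^{\varepsilon,\,\sigma}_t+\int_\Omega\nabla\phi\cdot\big(-\varepsilon\,\partial_t u^{\varepsilon,\,\sigma}\nabla u^{\varepsilon,\,\sigma}\big)\,dx-\int_\Omega\phi\,\varepsilon|\partial_t u^{\varepsilon,\,\sigma}|^2\,dx-\frac1\sigma\int_{\partial\Omega}\phi\,\varepsilon|\partial_t u^{\varepsilon,\,\sigma}|^2\,d\mathcal{H}^{n-1},
\]
and by the Robin condition the last term equals $-\sigma\int_{\partial\Omega}\phi\,\varepsilon|\nabla u^{\varepsilon,\,\sigma}\cdot\boldnu|^2\,d\mathcal{H}^{n-1}\le0$, which by \eqref{3.1.9.2} tends to $0$ after integration in $t$ (this explains why \eqref{4.1.1.5} carries no boundary term, unlike the dynamic case \eqref{4.1.1.10}). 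Integrating over $[t_1,\,t_2]$, dropping the nonpositive boundary term, and letting $\varepsilon,\,\sigma\to0$: the left-hand side converges by the pointwise-in-$t$ convergence $\mu^{\varepsilon,\,\sigma}_t\rightharpoonup\mu_t$ of Lemma~\ref{thm3.1}; the $\partial_t\phi$ and $\nabla\phi$ terms converge to $\int_{t_1}^{t_2}\!\!\int_{\overline\Omega}(\partial_t\phi+\nabla\phi\cdot\widetilde{\boldH}_V)\,d\|V_t\|\,dt$ by the measure--function convergence and \eqref{3.1.9.1}; and lower semicontinuity gives $\liminf\int_{t_1}^{t_2}\!\!\int_\Omega\phi\,\varepsilon|\partial_t u^{\varepsilon,\,\sigma}|^2\,dx\,dt\ge\int_{t_1}^{t_2}\!\!\int_{\overline\Omega}\phi\,|\widetilde{\boldH}_V|^2\,d\|V_t\|\,dt$. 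This yields \eqref{4.1.1.5}.

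The main obstacle is the control of the boundary contributions in the Dirichlet regime $\sigma\to0$, a difficulty that already enters through Lemmas~\ref{thm3.2} and~\ref{thm3.3}: the terms in $\mathcal{B}^{\varepsilon,\,\sigma}_t$ carry, a priori, factors $\sigma^{-1}$ and $\sigma^{-2}$, but after substituting $\nabla u^{\varepsilon,\,\sigma}\cdot\boldnu=-\sigma^{-1}\partial_t u^{\varepsilon,\,\sigma}$ they rearrange into $\varepsilon|\nabla u^{\varepsilon,\,\sigma}\cdot\boldnu|^2$- and $\varepsilon(\nabla u^{\varepsilon,\,\sigma}\cdot\boldnu)\nabla_{\partial\Omega}u^{\varepsilon,\,\sigma}$-type quantities whose time integrals are bounded \emph{precisely} by the ``Uniform upper bound'' assumption \eqref{3.1.9.2} together with the $\alpha^{\varepsilon,\,\sigma}$-mass bound; without \eqref{3.1.9.2} they may genuinely blow up, as the family of curves built in Remark~\ref{examplecurvatureflow} shows. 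The other delicate point, inherited from the interior theory and only assumed here, is propagating the vanishing of the discrepancy up to $\partial\Omega$ via \eqref{3.1.9.1}, which is what makes the diffuse tangent planes converge to the rectifiable ones, gives the sharp lower-semicontinuity estimate for the $|\widetilde{\boldH}_V|^2$ term, and lets the identification $\boldv_b=0$ carry over to the stated strengthening.
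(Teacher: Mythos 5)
Your proposal is correct and follows essentially the same route as the paper's Sections \ref{apriori}--\ref{chara.lim}: the diffuse first-variation identity of Lemma \ref{lem.5.4}, the a priori estimates of Propositions \ref{prop.4.1}--\ref{prop.4.2}, Hutchinson's measure--function pair convergence as in Proposition \ref{thm3.5} and Lemma \ref{thm3.2}, the vanishing discrepancy to identify $\boldH_V^{\Omega}$ and get lower semicontinuity of the $|\widetilde{\boldH}_V|^2$ term, and Brakke's inequality by dropping the nonpositive boundary dissipation in the energy identity \eqref{5.4.17}. Your only deviation is cosmetic: you define $\widetilde{\boldH}_V:=\boldH_V^{\Omega}\chi_{\Omega\times[0,\infty)}$ directly from $\mathcal{S}_{\alpha,\,\boldv_b}\equiv0$, whereas the paper obtains it by Radon--Nikodym from \eqref{5.4.6} and then checks $\widetilde{\boldH}_V\lfloor_{\Omega}=\boldH_V^{\Omega}$, which amounts to the same object.
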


\subsection{Dynamic boundary condition}\label{exisdyna}
In this subsection, we will first give an assumption named ``General assumptions" and a working hypothesis named ``Vanishing hypothesis for the discrepancy measure"  and then will state a sequence of the lemmas and the main theorem of the sharp interface limits of Allen-Cahn equations \eqref{1.1.1}. One feature of the results in the case of dynamic boundary conditions is that we do not need to assume the uniform upper bound which we state in the assumption in the case of Dirichlet boundary conditions and it is described in Subsection \ref{exis.assmp.diri}.
\subsubsection{Assumptions and hypothesis}\label{exis.assmp.dyna}
\begin{itemize}
	\item \textbf{General assumptions.}\label{generalassmp2}
	
	\quad We impose the general assumptions essentially same as those in Subsection \ref{generalassmp}. Note that, in the case of dynamic boundary conditions, the parameter $\sigma\in(0,\,\infty)$ is given and fixed. 
	\item \textbf{Vanishing hypothesis for the discrepancy measure.}\label{hypothesis2}
	
	\quad Let $\sigma>0$ be a given constant and $\xi^{\varepsilon,\,\sigma}(x,\,t)$ be the discrepancy function given in \eqref{discrepancyFunc} for any $(x,\,t) \in \Omega \times (0,\,\infty)$. Recall that the measure $\xi^{\varepsilon,\,\sigma}_t = \xi^{\varepsilon,\,\sigma}(x,\,t)\mathcal{L}^n(x)$ is called the \textit{discrepancy measure}. Then, as is the case of Dirichlet boundary condition, we also assume the following two conditions: 
	\begin{itemize}
		\item there exists a subsequence  $\{\varepsilon_j\}_{j\in\mathbb{N}}$ with $\lim_{j \to \infty}\varepsilon_j = 0$ such that
		\begin{equation}\label{assmpVaniDisDynamic}
			|\xi^{\varepsilon_j,\,\sigma}_t| \xrightarrow[j\to\infty]{} 0 \quad \text{on } \overline{\Omega} \quad \text{as Radonn measures}
		\end{equation}
		for a.e. $t\in[0,\,\infty)$.
		\item there exists a constant $c_0>0$ such that
		\begin{equation}\label{boundednessDisDynamic}
			\sup_{x\in\Omega}\xi^{\varepsilon,\,\sigma}(x,\,t) \leq c_0
		\end{equation}
		for any $\varepsilon>0$, $\sigma>0$, and $t>0$.
	\end{itemize}
 	We emphasize that, as it is stated in the case of Dirichlet boundary conditions (see Subsection \ref{exis.assmp.diri} for more detail), we can actually show the boundedness of the discrepancy measure $\xi^{\varepsilon,\,\sigma}_t$ in $\Omega$ uniformly in $t > 0$ and $\varepsilon,\,\sigma>0$ under some assumptions. Namely, if $\Omega$ is convex, some gradient estimate on the boundary $\partial \Omega$ (see \eqref{assumptionOnBoundary} in Proposition \ref{appendAProp2} for this estimate), and the boundedness of the discrepancy measure at the initial time, then we have \eqref{boundednessDisDynamic} for some constant $c_0>0$ (see Proposition \ref{appendAProp2} in Appendix A). 
 	
 	\quad Finally, we should mention that, as it is stated in the case of Dirichlet boundary conditions (see Subsection \ref{exis.assmp.diri}), we can show the vanishing of the discrepancy measure in $\Omega$ under some assumptions. Precisely, we can show that, for given $T>0$, there exists a subsequence $\{\varepsilon_j\}_{j\in\mathbb{N}}$ with $\lim_{j\to\infty}\varepsilon_j = 0$ such that
 	\begin{equation}
 		|\xi^{\varepsilon_j,\,\sigma}_t| \otimes \mathcal{L}^1_t \xrightarrow[j\to \infty]{} 0 \quad \text{ in }\Omega \times (0,\,T) \quad \text{as Radon measures}.
 	\end{equation}
 	This can be done by applying the methods investigated in, for instance, \cite{Ilmanen01} and \cite{MiTo} (see Proposition \ref{vanishingProp} in Appendix B of Section \ref{appendix} for more detail).
\end{itemize}

\subsubsection{Main theorem and important lemmas}\label{mainresults.dyna}
Now we state a sequence of several lemmas and the main theorem in the case of dynamic boundary conditions. We remark that, although we can show the proofs of the statements with the parameter $\sigma$ given as $0< \sigma <\infty$, only in Lemma \ref{thm3.7.1}, we need to restrict ourselves to consider $\sigma$ as the parameter more than or equal to 1. This is because of some technical issue, however, one of our main goals is to investigate how different the formulations in Brakke sense between dynamic and right-angle Neumann boundary conditions (the case of $\sigma = \infty$) are. Thus this restriction does not seem to be essential here. 

First of all, we give two important lemmas, which describe the convergence of the measures $\mu^{\varepsilon,\,\sigma}_t$ and $\alpha^{\varepsilon,\,\sigma}$. These lemmas are essentially the same results as Lemma \ref{thm3.1} and \ref{thm3.2} in Subsection \ref{mainresults.diri}.
\begin{lemma}\label{thm.3.6}
	Suppose that ``General assumptions" and ``Vanishing hypothesis of the discrepancy measure" in Subsection \ref{generalassmp2} holds and $\{\varepsilon_j\}_{j\in\mathbb{N}} \subset (0,\,1)$ is a family of parameters with $\varepsilon_j \to 0$ as $j\to\infty$. Let $\{u^{\varepsilon_j,\,\sigma}\}_{j\in\mathbb{N}}$ be a family of the solutions of \eqref{1.1.1} and $\mu^{\varepsilon_j,\,\sigma}_t$ be as in \eqref{1.1.5}. Then there exist a subsequence $\{\mu^{\varepsilon'_j,\,\sigma}_t\}_{j\in\mathbb{N}}$ and a family of Radon measures $\{\mu^{\sigma}_t\}_{t\geq0}$ on $\overline{\Omega}$ such that for all $t\geq 0$, $\mu^{\varepsilon'_j,\,\sigma}_t \rightharpoonup \mu^{\sigma}_t$ as $j\to\infty$ on $\overline{\Omega}$. Moreover, for a.e. $t\geq 0$, $\sigma_0^{-1}]\mu^{\sigma}_t$ is $(n-1)$-rectifiable on $\overline{\Omega}$. 
\end{lemma}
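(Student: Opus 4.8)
The plan is to adapt the by-now standard compactness argument for Allen--Cahn energies (Ilmanen \cite{ilmanen0}, Tonegawa \cite{tone}) to the present bounded domain with the dynamic boundary condition, using the a priori estimates of Section \ref{apriori}; the proof runs in close parallel to that of Lemma \ref{thm3.1}, the only structural novelty being the extra boundary dissipation produced by the second equation in \eqref{1.1.1}.

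First I would record a mass bound uniform in $t$ and $j$. Multiplying the interior equation of \eqref{1.1.1} by $\partial_t u^{\varepsilon_j,\sigma}$, integrating over $\Omega$, integrating by parts and inserting the boundary condition $\partial_t u^{\varepsilon_j,\sigma}=-\sigma\,\nabla u^{\varepsilon_j,\sigma}\cdot\boldnu$ on $\partial\Omega$, one obtains the dissipation identity
\begin{equation*}
\frac{d}{dt}E^{\varepsilon_j,\sigma}[u^{\varepsilon_j,\sigma}(\cdot,t)]
=-\int_{\Omega}\varepsilon_j\,(\partial_t u^{\varepsilon_j,\sigma})^2\,dx
-\frac{1}{\sigma}\int_{\partial\Omega}\varepsilon_j\,(\partial_t u^{\varepsilon_j,\sigma})^2\,d\mathcal H^{n-1}\le 0 ,
\end{equation*}
whence $\mu^{\varepsilon_j,\sigma}_t(\overline\Omega)=E^{\varepsilon_j,\sigma}[u^{\varepsilon_j,\sigma}(\cdot,t)]\le E^{\varepsilon_j,\sigma}[u^{\varepsilon_j,\sigma}_0]\le D$ for all $t\ge0$ and $j\in\mathbb N$ by \eqref{3.1.6}. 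Next, from the Allen--Cahn analogue of Brakke's inequality (obtained as in Section \ref{apriori}) one sees that for each nonnegative $\phi\in C^2(\overline\Omega)$ with $\nabla\phi\cdot\boldnu=0$ on $\partial\Omega$ the map $t\mapsto\int_{\overline\Omega}\phi\,d\mu^{\varepsilon_j,\sigma}_t$ equals a monotone function plus a function whose total variation on every $[0,T]$ is bounded by a constant depending only on $D$, $T$ and $\phi$. Applying Helly's selection theorem along a countable family $\{\phi_k\}$ dense in $C_c(\overline\Omega)$, together with a diagonal extraction and the uniform mass bound above, I obtain a subsequence $\{\varepsilon'_j\}$ and a family of Radon measures $\{\mu^\sigma_t\}_{t\ge0}$ on $\overline\Omega$ such that $\mu^{\varepsilon'_j,\sigma}_t\rightharpoonup\mu^\sigma_t$ for every $t\ge0$; density of $\{\phi_k\}$ then upgrades the convergence to testing against all of $C_c(\overline\Omega)$.

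For the rectifiability claim I would use the global dissipation bound
\begin{equation*}
\int_{0}^{\infty}\!\!\int_{\Omega}\frac{1}{\varepsilon_j}\Bigl(\varepsilon_j\Delta u^{\varepsilon_j,\sigma}-\varepsilon_j^{-1}W'(u^{\varepsilon_j,\sigma})\Bigr)^2\,dx\,dt
=\int_{0}^{\infty}\!\!\int_{\Omega}\varepsilon_j\,(\partial_t u^{\varepsilon_j,\sigma})^2\,dx\,dt\le D ,
\end{equation*}
from Section \ref{apriori}. By Fatou's lemma, for a.e.\ $t\ge0$ the inner integral stays bounded along a further ($t$-dependent) subsequence; combined with the uniform mass bound and the trivial estimate $|\xi^{\varepsilon_j,\sigma}_t|\le\mu^{\varepsilon_j,\sigma}_t$, a Cauchy--Schwarz argument then controls the first variations of the varifolds associated with $\mu^{\varepsilon'_j,\sigma}_t$, so that the limit has locally bounded first variation on $\overline\Omega$. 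Combining this with a lower density bound $\Theta^{n-1}(\mu^\sigma_t,x)\ge c_0>0$ for $\mu^\sigma_t$-a.e.\ $x$ — coming in the interior from Ilmanen's monotonicity formula \cite{ilmanen0} and, near $\partial\Omega$, from a localized boundary version of it as in the proof of Lemma \ref{thm3.1} — Allard's rectifiability theorem yields that $\mu^\sigma_t$ (equivalently $\sigma_0^{-1}\mu^\sigma_t$) is $(n-1)$-rectifiable on $\overline\Omega$ for a.e.\ $t$.

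I expect the one genuinely delicate point to be the analysis near $\partial\Omega$: establishing both the density-ratio \emph{upper} bound needed to apply Allard's theorem and the density \emph{lower} bound up to the boundary. In contrast to the right-angle Neumann problem of Mizuno--Tonegawa \cite{mizu.tone} and Kagaya \cite{kagaya}, the dynamic boundary condition is not reflection symmetric, so an even extension of $u^{\varepsilon_j,\sigma}$ across $\partial\Omega$ solves \eqref{1.1.1} only up to an error which must be absorbed into the boundary dissipation term $\tfrac{1}{\sigma}\int_{\partial\Omega}\varepsilon_j(\partial_t u^{\varepsilon_j,\sigma})^2\,d\mathcal H^{n-1}$ appearing in the identity above. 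Producing a clean monotonicity-type inequality from this — or, alternatively, running a direct half-ball computation using only the smoothness of $\partial\Omega$ — is the main obstacle, and is precisely where the technical work of Section \ref{apriori} is concentrated.
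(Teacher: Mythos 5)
Your first half (the existence of a subsequence converging for every $t\geq0$) is essentially the paper's own argument: the dissipation identity of Proposition \ref{prop.4.1} gives a uniform bound on $\int_0^T\bigl|\tfrac{d}{dt}\mu^{\varepsilon_j,\sigma}_t(\phi)\bigr|\,dt$ for $\phi\in C^1_c(\overline\Omega)$ (Lemma \ref{lem.5.1}), and then BV/Helly compactness along a countable dense family of test functions plus diagonal arguments, exactly as in the proof of Lemma \ref{thm3.1}, which the paper reuses verbatim for the dynamic case. Your variant (monotone plus bounded-variation via a Brakke-type inequality, with Neumann-compatible test functions) is an immaterial repackaging of the same estimate; note only that no boundary condition on $\phi$ is needed, since the boundary term in $\tfrac{d}{dt}\mu^{\varepsilon_j,\sigma}_t(\phi)$ is itself controlled by the dissipation.

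The rectifiability half has a genuine gap. You base the application of Allard's theorem on a uniform lower density bound $\Theta^{n-1}(\mu^\sigma_t,x)\geq c_0$ up to $\partial\Omega$, obtained ``from a localized boundary version'' of the monotonicity formula ``as in the proof of Lemma \ref{thm3.1}''. No such boundary monotonicity formula exists in this paper, and none is available for the dynamic boundary condition: the reflection construction only works for the right-angle Neumann problem (Mizuno--Tonegawa, Kagaya), which is precisely why the authors must \emph{assume} the vanishing of the discrepancy measure up to $\partial\Omega$; Section \ref{apriori} contains only energy dissipation and boundary energy estimates, not a monotonicity inequality. The paper's actual route (Proposition \ref{prop.5.6}, reused in Proposition \ref{prop.6.2}) avoids any density lower bound: interior integrality (Tonegawa, Takasao--Tonegawa) plus $\mathcal{H}^{n-1}(\partial\Omega)<\infty$ give $\mathcal{H}^{n-1}(\spt\mu^\sigma_t)<\infty$, and the elementary comparison \eqref{5.3.19.1} then shows the upper density is positive $\mu^\sigma_t$-a.e., which together with locally bounded first variation is all Allard's rectifiability theorem needs (no density-ratio upper bound either). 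Your control of the first variation is also incomplete: the estimate $|\xi^{\varepsilon_j,\sigma}_t|\leq\mu^{\varepsilon_j,\sigma}_t$ bounds the discrepancy term of $\delta V^{\varepsilon_j,\sigma}_t(\boldg)$ only by $D\,\sup|\nabla\boldg|$, which is not of the form $C\sup|\boldg|$ and hence does not yield locally bounded first variation of the limit; the paper drops this term using the assumed vanishing of the discrepancy (in force in Subsection \ref{chara.8}), and it controls the boundary terms $\int_{\partial\Omega}(\boldg\cdot\boldnu)\bigl(\tfrac{\varepsilon|\nabla u|^2}{2}+\tfrac{W(u)}{\varepsilon}\bigr)\,d\mathcal{H}^{n-1}$ and $\int_{\partial\Omega}\varepsilon(\boldg\cdot\nabla u)\partial_{\boldnu}u\,d\mathcal{H}^{n-1}$ by the boundary energy estimate of Proposition \ref{prop.4.4} (via Fatou, giving the $L^1_{loc}$ function $c(t)$ of Proposition \ref{prop.6.1}); the interior dissipation and mass bounds you invoke do not reach these terms.
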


\begin{remark}
	The integrality of the Radon measure $\mu^{\sigma}_t$ only in the interior $\Omega$ follows from the interior argument of Tonegawa \cite{Tonegawa02} or Takasao and Tonegawa \cite{TaTo} by using the local monotonicity formula. Thus, we may expect that $\sigma^{-1}_0\mu^{\sigma}_t$ is a $(n-1)$-integral Radon measure in $\Omega$ for a.e. $t\geq 0$, where $\sigma_0\coloneqq\int_{-1}^{1}\sqrt{2W(u)}\,du$.
\end{remark}

\begin{lemma}\label{thm3.7}
	Suppose that ``General assumptions" in Subsection \ref{generalassmp2} holds and $\{\varepsilon_j\}_{j\in\mathbb{N}}\subset(0,\,1)$ is as in Lemma \ref{thm.3.6}. Let $\{\alpha^{\varepsilon_j,\,\sigma}\}_{\varepsilon>0}$ be as in \eqref{2.1.13}. Then there exist a subsequence $\{\alpha^{\varepsilon'_j,\,\sigma}\}_{j\in\mathbb{N}}$ and a Radon measure $\alpha^{\sigma}$ on $\partial \Omega \times [0,\infty)$ such that $\alpha^{\varepsilon'_j,\,\sigma} \rightharpoonup \alpha^{\sigma}$ as $j\to\infty$ on $\partial \Omega \times [0,\infty)$. In addition, there exists a function ${\boldv}^{\sigma}_b \in (L^2(\alpha^{\sigma},\,\partial \Omega \times [0,\infty)))^n$ such that
	\begin{equation}\label{3.2.1}
	\lim_{j\to\infty} \int_{0}^{\infty}\int_{\partial \Omega} \boldg\cdot\boldv^{\varepsilon'_j,\,\sigma}_b\, d\alpha^{\varepsilon'_j,\,\sigma}=- \int\int_{\partial \Omega \times [0,\infty)} \boldg\cdot {\boldv}^{\sigma}_b \, d\alpha^{\sigma}
	\end{equation}
	for all $g \in (C_c(\partial \Omega \times [0,\infty)))^n$ where $\boldv^{\varepsilon'_j,\,\sigma}_b$ is as in \eqref{5.2.3}, and such that
	\begin{equation}\label{3.2.2}
	\int\int_{\partial \Omega \times[0,\,\infty)} |{\boldv}^{\sigma}_b|^2\,d\alpha^{\sigma} \leq \liminf_{j\to\infty} \int_{0}^{\infty}\int_{\partial \Omega}\varepsilon'_j (\partial_t u^{\varepsilon'_j,\,\sigma})^2 \,d\mathcal{H}^{n-1}\,dt.
	\end{equation}
\end{lemma}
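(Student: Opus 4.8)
The plan is to combine sequential weak-$*$ compactness of Radon measures with the compactness and lower-semicontinuity theory of Hutchinson \cite{hutchinson} for measure-function pairs, drawing on the a priori estimates of Section \ref{apriori}. Throughout, any extracted subsequence is a subsequence of the one in Lemma \ref{thm.3.6} (so that lemma's conclusions persist) and is not relabelled. For the compactness of the measures, note that for every $T>0$ the total mass of $\alpha^{\varepsilon_j,\,\sigma}$ on $\partial\Omega\times[0,\,T]$ is $\int_0^T\!\int_{\partial\Omega}\varepsilon_j|\nabla_{\partial\Omega}u^{\varepsilon_j,\,\sigma}|^2\,d\mathcal{H}^{n-1}\,dt$, and the a priori estimate \eqref{4.1.2} bounds this uniformly in $j$; since $\partial\Omega\times[0,\,\infty)$ is locally compact, there are a subsequence $\{\alpha^{\varepsilon'_j,\,\sigma}\}_j$ and a Radon measure $\alpha^{\sigma}$ on $\partial\Omega\times[0,\,\infty)$ with $\alpha^{\varepsilon'_j,\,\sigma}\rightharpoonup\alpha^{\sigma}$.

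Next I would treat the boundary velocities via the measure-function pairs $(\alpha^{\varepsilon_j,\,\sigma},\,\boldv^{\varepsilon_j,\,\sigma}_b)$. Because $d\alpha^{\varepsilon_j,\,\sigma}_t=\varepsilon_j|\nabla_{\partial\Omega}u^{\varepsilon_j,\,\sigma}|^2\,d\mathcal{H}^{n-1}$ vanishes on $\{\nabla_{\partial\Omega}u^{\varepsilon_j,\,\sigma}=0\}$, while $|\boldv^{\varepsilon_j,\,\sigma}_b|^2=(\partial_t u^{\varepsilon_j,\,\sigma})^2/|\nabla_{\partial\Omega}u^{\varepsilon_j,\,\sigma}|^2$ off that set, one has
\begin{equation*}
\int_{\partial\Omega\times[0,\,\infty)}|\boldv^{\varepsilon_j,\,\sigma}_b|^2\,d\alpha^{\varepsilon_j,\,\sigma}=\int_0^{\infty}\!\!\int_{\partial\Omega}\varepsilon_j\,(\partial_t u^{\varepsilon_j,\,\sigma})^2\,d\mathcal{H}^{n-1}\,dt .
\end{equation*}
Testing \eqref{1.1.1} with $\partial_t u^{\varepsilon_j,\,\sigma}$ and using the boundary condition $\partial_t u^{\varepsilon_j,\,\sigma}=-\sigma\,\nabla u^{\varepsilon_j,\,\sigma}\cdot\boldnu$ on $\partial\Omega$ yields the dissipation identity $\frac{d}{dt}E^{\varepsilon_j,\,\sigma}[u^{\varepsilon_j,\,\sigma}(\cdot,t)]=-\int_\Omega\varepsilon_j(\partial_t u^{\varepsilon_j,\,\sigma})^2\,dx-\sigma^{-1}\int_{\partial\Omega}\varepsilon_j(\partial_t u^{\varepsilon_j,\,\sigma})^2\,d\mathcal{H}^{n-1}$, so that after integrating in time and using \eqref{3.1.6} the right-hand side of the above display is at most $\sigma\,E^{\varepsilon_j,\,\sigma}[u^{\varepsilon_j,\,\sigma}_0]\leq\sigma D$, uniformly in $j$ (this is part of \eqref{4.1.2}). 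Hence the pairs $(\alpha^{\varepsilon_j,\,\sigma},\,\boldv^{\varepsilon_j,\,\sigma}_b)$ are bounded in $L^2$ uniformly in $j$; since $2>1$ and $\alpha^{\varepsilon'_j,\,\sigma}\rightharpoonup\alpha^{\sigma}$, Hutchinson's compactness theorem produces, after a further subsequence, a map $\boldw\in(L^2(\alpha^{\sigma},\,\partial\Omega\times[0,\,\infty)))^n$ such that $\int\boldg\cdot\boldv^{\varepsilon'_j,\,\sigma}_b\,d\alpha^{\varepsilon'_j,\,\sigma}\to\int\boldg\cdot\boldw\,d\alpha^{\sigma}$ for all $\boldg\in(C_c(\partial\Omega\times[0,\,\infty)))^n$; putting ${\boldv}^{\sigma}_b:=-\boldw$ gives \eqref{3.2.1}. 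Finally, the lower-semicontinuity part of the measure-function pair theory (Reshetnyak-type lower semicontinuity for the convex integrand $p\mapsto|p|^2$), applied with a time cutoff on $\partial\Omega\times[0,\,T]$ and then letting $T\uparrow\infty$ by monotone convergence, gives
\begin{equation*}
\int_{\partial\Omega\times[0,\,\infty)}|{\boldv}^{\sigma}_b|^2\,d\alpha^{\sigma}\leq\liminf_{j\to\infty}\int_{\partial\Omega\times[0,\,\infty)}|\boldv^{\varepsilon'_j,\,\sigma}_b|^2\,d\alpha^{\varepsilon'_j,\,\sigma}=\liminf_{j\to\infty}\int_0^{\infty}\!\!\int_{\partial\Omega}\varepsilon'_j\,(\partial_t u^{\varepsilon'_j,\,\sigma})^2\,d\mathcal{H}^{n-1}\,dt ,
\end{equation*}
which is \eqref{3.2.2}.

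I expect the main obstacle to lie not in the soft compactness arguments above but in the a priori input they rely on --- chiefly the uniform bound on the tangential boundary Dirichlet energy $\int_0^T\!\int_{\partial\Omega}\varepsilon_j|\nabla_{\partial\Omega}u^{\varepsilon_j,\,\sigma}|^2$, which is what makes $\{\alpha^{\varepsilon_j,\,\sigma}\}_j$ compact. Unlike the normal-direction estimate \eqref{3.1.9.2} that must be assumed outright in the Dirichlet case, here this bound should come from the a priori analysis of Section \ref{apriori}; establishing it, together with checking that the degenerate set $\{\nabla_{\partial\Omega}u^{\varepsilon_j,\,\sigma}=0\}$ does not interfere with the identification of ${\boldv}^{\sigma}_b$, is the delicate point.
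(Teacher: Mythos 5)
Your overall route --- weak-$*$ compactness of the measures plus Hutchinson's measure-function pair compactness and lower semicontinuity, with the sign absorbed into the definition of $\boldv^{\sigma}_b$ --- is exactly the paper's proof (the paper simply reruns the argument of Lemma \ref{thm3.2} from Subsection \ref{chara.2}, with Proposition \ref{prop.4.4} in place of Proposition \ref{prop.4.2}). However, there is a genuine gap at the one step that carries the real content: the uniform bound $\sup_{j}\alpha^{\varepsilon_j,\,\sigma}(\partial\Omega\times[0,\,T])<\infty$. You attribute it to \eqref{4.1.2}, but \eqref{4.1.2} bounds the bulk energy $\mu^{\varepsilon,\,\sigma}_t(\overline{\Omega})=\int_{\Omega}\bigl(\tfrac{\varepsilon|\nabla u^{\varepsilon,\,\sigma}|^2}{2}+\tfrac{W(u^{\varepsilon,\,\sigma})}{\varepsilon}\bigr)\,dx$, and a bulk bound cannot control the codimension-one integral $\int_0^T\!\int_{\partial\Omega}\varepsilon_j|\nabla_{\partial\Omega}u^{\varepsilon_j,\,\sigma}|^2\,d\mathcal{H}^{n-1}\,dt$ without a trace-type argument. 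The correct input is the boundary energy estimate \eqref{4.2.5} of Proposition \ref{prop.4.4}, obtained from the localized identity \eqref{4.1.8} with the test function $\phi=d+1$ and the dissipation bound \eqref{4.1.1}, where $\sigma\geq1$ gives the boundary terms a favorable sign; this is precisely where the dynamic case differs from the Dirichlet case, in which the analogous control must be assumed through \eqref{3.1.9.2}. Your closing paragraph correctly identifies this tangential boundary bound as the crux, but leaves it as an unestablished ``delicate point'' instead of proving it or invoking Proposition \ref{prop.4.4}; as written, the compactness of $\{\alpha^{\varepsilon_j,\,\sigma}\}_{j}$ is therefore not justified.

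Two minor points. Your claimed identity $\int|\boldv^{\varepsilon_j,\,\sigma}_b|^2\,d\alpha^{\varepsilon_j,\,\sigma}=\int_0^{\infty}\!\int_{\partial\Omega}\varepsilon_j(\partial_t u^{\varepsilon_j,\,\sigma})^2\,d\mathcal{H}^{n-1}\,dt$ should be ``$\leq$'': on $\{\nabla_{\partial\Omega}u^{\varepsilon_j,\,\sigma}=0\}$ the measure $\alpha^{\varepsilon_j,\,\sigma}$ vanishes while $\partial_t u^{\varepsilon_j,\,\sigma}$ need not, so the degenerate set contributes nothing on the left but possibly something on the right; the inequality is all that is needed for the uniform $L^2$ bound and for \eqref{3.2.2}, and it also disposes of your worry that the degenerate set could interfere with identifying $\boldv^{\sigma}_b$. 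Also, the bound $\int_0^{\infty}\!\int_{\partial\Omega}\varepsilon_j(\partial_t u^{\varepsilon_j,\,\sigma})^2\,d\mathcal{H}^{n-1}\,dt\leq\sigma D$ that you derive is \eqref{4.1.1} of Proposition \ref{prop.4.1}, not ``part of \eqref{4.1.2}''.
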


\begin{lemma}\label{thm3.7.1}
	Suppose that ``General assumptions" in Subsection \ref{exis.assmp.diri} holds and the space-dimension $n$ is larger than 2. Let $\sigma$ be in $[1,\,\infty)$ and let a subsequence $\{\alpha^{\varepsilon'_j,\,\sigma}\}_{j\in\mathbb{N}}$ and $\alpha$ be as in Lemma \ref{thm3.7}. We set
	\begin{equation}\label{3.2.11.1}
	w^{j,\,\sigma}(x,\,t)\coloneqq\Phi\circ u^{\varepsilon'_j,\,\sigma}\coloneqq\int_{0}^{u^{\varepsilon'_j,\,\sigma}(x,\,t)
	}\sqrt{2W(s)}\,ds= u^{\varepsilon'_j,\,\sigma}(x,\,t)-\frac{1}{3}(u^{\varepsilon'_j,\,\sigma}(x,\,t))^3
	\end{equation}
	and $w^{j,\,\sigma}_0(x)\coloneqq w^{j,\,\sigma}(x,\,0)$ for any $j\in\mathbb{N}$ and $(x,\,t)\in\overline{\Omega}\times[0,\,\infty)$. Moreover, we assume the following two assumptions on the initial data; 
	\begin{enumerate}
		\item There exists a non-empty connected component $\Gamma_2
		$ of $\partial\Omega$ such that 
		\begin{equation}\label{3.2.11.2}
		\liminf_{j\in\mathbb{N}}\left|\int_{\Gamma_2}w^{j,\,\sigma}_0\,d\mathcal{H}^{n-1}\right| <\frac{2}{3}\mathcal{H}^{n-1}(\Gamma_2)
		\end{equation}
		holds. 
		\item $\lim_{\gamma_0\downarrow0}\,\sup_{j\in\mathbb{N}}\mu^{j}_0(\Omega\cap\{x\mid\dist(x,\,\partial \Omega)<\gamma_0\})=0$. 
	\end{enumerate}
	Then, there exists a time $s\in(0,\,\infty)$ such that, for any $0<t_1<t_2<s$, there exists a positive constant $0<\tilde{C}(t_1,\,t_2)<\infty$ such that $\alpha^{\sigma}(\Gamma_2\times[t_1,\,t_2])\geq \tilde{C}(t_1,\,t_2)$, which means that $\alpha$ is not identically zero.
\end{lemma}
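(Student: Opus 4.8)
The overall strategy parallels that of Lemma~\ref{thm3.2.1}: the initial condition \eqref{3.2.11.2} says that a genuine phase transition sits on $\Gamma_2$ at $t=0$, assumption~(2) prevents concentration of energy near $\partial\Omega$, and the plan is to propagate both for a short time and to show that the presence of such a transition on $\Gamma_2$ forces a definite amount of $\alpha^\sigma$-mass onto $\Gamma_2\times[t_1,t_2]$. Throughout I abbreviate $u^j:=u^{\varepsilon'_j,\sigma}$, $w^j:=w^{j,\sigma}$, and $W^j(t):=\int_{\Gamma_2}w^j(\cdot,t)\,d\mathcal{H}^{n-1}$, using that $\Phi$ is strictly increasing on $[-1,1]$ with $\Phi(\pm1)=\pm\tfrac23$ while $|u^j|\le1$. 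First I would record the quantitative form of \eqref{3.2.11.2}: it yields $\delta>0$ and $j_0\in\mathbb{N}$ with $|W^j(0)|\le\tfrac23\mathcal{H}^{n-1}(\Gamma_2)-\delta$ for all $j\ge j_0$; by a layer-cake computation this means that at $t=0$ both pure phases $\{w^j\approx\tfrac23\}$ and $\{w^j\approx-\tfrac23\}$ occupy a definite portion of $\Gamma_2$, in the weighted sense that $\int_{-2/3}^{2/3}\mathcal{H}^{n-1}(\{w^j(\cdot,0)>m\}\cap\Gamma_2)\,dm$ and $\int_{-2/3}^{2/3}\mathcal{H}^{n-1}(\{w^j(\cdot,0)<m\}\cap\Gamma_2)\,dm$ are each $\ge\tfrac{\delta}{2}$.

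Second, I would prove short-time persistence of this non-triviality. Differentiating $W^j$ and using the boundary condition $\partial_t u^j=-\sigma\,\nabla u^j\cdot\boldnu$ on $\partial\Omega$ together with $\nabla_{\partial\Omega}w^j=\sqrt{2W(u^j)}\,\nabla_{\partial\Omega}u^j$ gives
\[
\Big|\frac{d}{dt}W^j(t)\Big|=\Big|\int_{\Gamma_2}\sqrt{2W(u^j)}\,\partial_t u^j\,d\mathcal{H}^{n-1}\Big|\le\Big(\int_{\Gamma_2}\frac{2W(u^j)}{\varepsilon'_j}\,d\mathcal{H}^{n-1}\Big)^{1/2}\Big(\int_{\Gamma_2}\varepsilon'_j(\partial_t u^j)^2\,d\mathcal{H}^{n-1}\Big)^{1/2}.
\]
The second factor is controlled in $L^2$ in time by the energy-dissipation estimate of Section~\ref{apriori} (cf.\ \eqref{4.1.2}), which bounds $\int_0^\infty\!\int_{\partial\Omega}\varepsilon'_j(\partial_t u^j)^2\,d\mathcal{H}^{n-1}\,dt$ uniformly; for the first factor I would use the a priori control of the boundary energy established in Section~\ref{apriori}, combined with assumption~(2)---propagated to a short time interval by a localized energy estimate---to bound $\int_0^{s_0}\!\int_{\partial\Omega}\frac{W(u^j)}{\varepsilon'_j}\,d\mathcal{H}^{n-1}\,dt$ uniformly in $j$ and to make it tend to $0$ as $s_0\downarrow0$. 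Integrating in time and using Cauchy--Schwarz once more yields $\sup_{j\ge j_0}|W^j(t)-W^j(0)|\to0$ as $t\downarrow0$, so there is $s\in(0,\infty)$ with $|W^j(t)|\le\tfrac23\mathcal{H}^{n-1}(\Gamma_2)-\tfrac{\delta}{2}$ for all $j\ge j_0$ and $t\in[0,s]$.

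Third, I would fix $0<t_1<t_2<s$ and extract a lower bound on $\int_{t_1}^{t_2}\alpha^{\varepsilon'_j,\sigma}_t(\Gamma_2)\,dt$, where $\alpha^{\varepsilon'_j,\sigma}_t(\Gamma_2)=\int_{\Gamma_2}\varepsilon'_j|\nabla_{\partial\Omega}u^j(\cdot,t)|^2\,d\mathcal{H}^{n-1}$. Let $G^j\subset[t_1,t_2]$ be the set of times at which $\int_{\Gamma_2}\frac{W(u^j)}{\varepsilon'_j}\,d\mathcal{H}^{n-1}$ is at most twice its average over $[t_1,t_2]$; by the boundary-energy bound, $|G^j|\ge\tfrac{t_2-t_1}{2}$ and $\int_{\Gamma_2}\frac{W(u^j)}{\varepsilon'_j}\,d\mathcal{H}^{n-1}\le C_1(t_1,t_2)$ on $G^j$. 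For $t\in G^j$ this bound, together with $|W^j(t)|\le\tfrac23\mathcal{H}^{n-1}(\Gamma_2)-\tfrac{\delta}{2}$, rules out $u^j(\cdot,t)$ being nearly constant at an intermediate value on $\Gamma_2$, so the distribution function of $w^j(\cdot,t)$ must descend over an interval of levels of definite length on which $\{w^j(\cdot,t)>m\}\cap\Gamma_2$ and its complement both have $\mathcal{H}^{n-1}$-measure bounded below; the relative isoperimetric inequality on the connected closed manifold $\Gamma_2$ and the coarea formula then give $\int_{\Gamma_2}|\nabla_{\partial\Omega}w^j(\cdot,t)|\,d\mathcal{H}^{n-1}\ge c_1>0$ with $c_1$ independent of $j$ and $t$. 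Applying Cauchy--Schwarz in the other direction,
\[
c_1\le\int_{\Gamma_2}\sqrt{2W(u^j)}\,|\nabla_{\partial\Omega}u^j|\,d\mathcal{H}^{n-1}\le\Big(\int_{\Gamma_2}\frac{2W(u^j)}{\varepsilon'_j}\,d\mathcal{H}^{n-1}\Big)^{1/2}\big(\alpha^{\varepsilon'_j,\sigma}_t(\Gamma_2)\big)^{1/2},
\]
hence $\alpha^{\varepsilon'_j,\sigma}_t(\Gamma_2)\ge c_1^2/(2C_1(t_1,t_2))$ on $G^j$, and $\int_{t_1}^{t_2}\alpha^{\varepsilon'_j,\sigma}_t(\Gamma_2)\,dt\ge \tfrac{c_1^2(t_2-t_1)}{4C_1(t_1,t_2)}=:c_2(t_1,t_2)>0$ for all $j$ large. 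Finally, since $\Gamma_2$ is a compact component of $\partial\Omega$, the set $\Gamma_2\times[t_1,t_2]$ is compact in $\partial\Omega\times[0,\infty)$, so upper semicontinuity of mass under $\alpha^{\varepsilon'_j,\sigma}\rightharpoonup\alpha^\sigma$ gives
\[
\alpha^\sigma(\Gamma_2\times[t_1,t_2])\ge\limsup_{j\to\infty}\,\alpha^{\varepsilon'_j,\sigma}(\Gamma_2\times[t_1,t_2])=\limsup_{j\to\infty}\int_{t_1}^{t_2}\alpha^{\varepsilon'_j,\sigma}_t(\Gamma_2)\,dt\ge c_2(t_1,t_2)=:\widetilde{C}(t_1,t_2)>0,
\]
which is the assertion, and in particular shows $\alpha^\sigma\not\equiv0$.

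The main obstacle is the implication ``non-trivial phase on $\Gamma_2$ at time $t$ $\Rightarrow$ $\alpha^{\varepsilon'_j,\sigma}_t(\Gamma_2)$ bounded below''. The naive Modica--Mortola lower bound on $\Gamma_2$ loses a factor $\varepsilon'_j$: a transition layer meeting $\partial\Omega$ almost tangentially spreads over a width $\gg\varepsilon'_j$ along $\partial\Omega$ and thus carries arbitrarily little tangential energy, while still producing a non-trivial boundary phase. Excluding this degeneracy is precisely what forces one to keep $\int_{\Gamma_2}\frac{W(u^j)}{\varepsilon'_j}\,d\mathcal{H}^{n-1}$ bounded on a short time interval, which is where the a priori boundary estimates of Section~\ref{apriori} and assumption~(2) enter in an essential way; it is also the reason the conclusion is only local in time, since assumptions~(1) and~(2) are imposed on the initial data and propagate only for a short time. (The same phenomenon, but without the need for a time restriction, underlies Lemma~\ref{thm3.2.1}, where the corresponding non-degeneracy is instead hypothesized directly on $[t_1,t_2]$ in \eqref{3.1.11.2}.)
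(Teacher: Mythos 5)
Your proposal matches the paper's proof in its first half and then takes a genuinely different, but valid, route in the second half. Like the paper, you turn assumption (1) into a quantitative initial gap as in \eqref{6.2.0.2}, and you propagate it for a short time via $\frac{d}{dt}\int_{\Gamma_2}w^j=\int_{\Gamma_2}\sqrt{2W(u^j)}\,\partial_t u^j$ and Cauchy--Schwarz, which is exactly \eqref{6.2.0.7}--\eqref{6.2.0.8}. After that the paper only establishes the time-integrated smallness \eqref{6.2.0.1} and then reruns the proof of Lemma \ref{thm3.2.1} (BV bounds for $w^j$, compactness and a.e.\ convergence to $w^{\infty}=\pm\tfrac23$, the Poincar\'e--Wirtinger inequality of Appendix B, and the $\delta$-splitting \eqref{5.2.13}), whereas you argue slice-wise: on a set of good times you combine a pointwise bound on $\int_{\Gamma_2}W(u^j)/\varepsilon'_j$ with the persistent gap, the coarea formula and the relative isoperimetric inequality on the closed connected manifold $\Gamma_2$ to force $\int_{\Gamma_2}|\nabla_{\partial\Omega}w^j|\geq c_1>0$, hence a pointwise lower bound on $\alpha^{\varepsilon'_j,\sigma}_t(\Gamma_2)$, and you conclude with the semicontinuity $\alpha^{\sigma}(K)\geq\limsup_j\alpha^{\varepsilon'_j,\sigma}(K)$ for compact $K$, which is correct and is also how the paper concludes in \eqref{5.2.16}. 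Your route avoids the BV-compactness step but needs $\varepsilon'_j$ small so that $\{|u^j|\leq1-\beta\}\cap\Gamma_2$ has small $\mathcal{H}^{n-1}$-measure, and the isoperimetric constant of $\Gamma_2$ plays the role of the paper's Poincar\'e inequality; the paper's route avoids any pointwise-in-time selection.

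The one place where you lean on an unproven black box is precisely where the paper does its main technical work: the localized boundary energy estimate. The global estimate of Proposition \ref{prop.4.4} only gives $C_1(t_2-t_1+1)$, which does \emph{not} become small on short time intervals, so one must actually prove an estimate of the form \eqref{6.2.0.3}, namely $\sup_j\int_0^{s_0}\!\int_{\Gamma_2}\bigl(\tfrac{\varepsilon'_j|\nabla u^j|^2}{2}+\tfrac{W(u^j)}{\varepsilon'_j}\bigr)\,d\mathcal{H}^{n-1}dt\leq C_3(\gamma_0)s_0+2\delta_0$. This requires constructing the cut-off $\phi_{\gamma_0}$ from the signed distance function, supported in a $\gamma_0$-neighbourhood of $\Gamma_2$, and checking that the boundary terms generated in \eqref{4.1.8} have a favourable sign for $\sigma=1$; assumption (2) is what makes the initial local energy $2\delta_0$ small. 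Relatedly, your claim that the short-time boundary potential energy ``tends to $0$ as $s_0\downarrow0$'' is inaccurate: for fixed $\gamma_0$ it tends to $2\delta_0$, so the quantifiers must be ordered as in the paper, i.e.\ first choose $\delta_0$ (hence $\gamma_0$) small compared with the initial gap (and, in your Cauchy--Schwarz with the global dissipation bound \eqref{4.1.1}, small compared with $\delta^2/D$), and only then choose $s_0$. With the localized estimate supplied and that ordering fixed, your argument goes through.
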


\begin{remark}
	Note that only in this lemma, we assume that $\sigma$ is larger than or equal to 1 due to the technical issue. In the other claims, we basically assume that $\sigma$ is positive constant.
\end{remark}

\begin{remark}	
	The explanation of the first assumption in Lemma \ref{thm3.7.1} is also shown in Remark \ref{uniformPosiBoundaryMeasDiri} in the case of Dirichlet boundary conditions.
	
	Regarding to the second assumption of the initial data, it can be interpreted that the geometric interior of $M_0$ does not exist on the boundary $\partial \Omega$. If this is not true, we have that there exists a constant $\delta>0$ such that, for any $\gamma>0$, $\mathcal{H}^{n-1}\lfloor_{M_0}(\partial\Omega\cap \{x\mid\dist(x,\,\partial \Omega)<\gamma\})\geq\delta$. From the convergence of $\mu^j_0$ to $\mathcal{H}^{n-1}\lfloor_{M_0}$, up to constants, as $j\to\infty$, we obtain the approximation
	\begin{align}\label{3.1.11.4}
	\mu^{j}_0(\Omega\cap\{x\mid\dist(x,\,\partial \Omega)<\gamma\})&\thickapprox \mu_0(\overline{\Omega}\cap\{x\mid\dist(x,\,\partial \Omega)<\gamma\})\nonumber\\
	&\geq \mathcal{H}^{n-1}\lfloor_{M_0}(\partial\Omega\cap \{x\mid\dist(x,\,\partial \Omega)<\gamma\}) \geq \delta
	\end{align}
	for $j\in\mathbb{N}$ large enough.
\end{remark}

Due to Lemma \ref{thm.3.6}, we may define the unique rectifiable varifolds as follows:

\begin{lemma}\label{thm3.8}
	Suppose that ``General assumptions" and ``Vanishing hypothesis for the discrepancy measure" in Subsection \ref{exis.assmp.dyna} hold and let $\{V^{\sigma}_t\}_{t\geq 0}$ be the associated varifold with $\mu^{\sigma}_t$ (see the definition stated in Definition \ref{def.3.1}). Then the following properties hold:
	\begin{equation}\label{3.2.4}
	\|\delta V^{\sigma}_t\|(\overline{\Omega})<\infty, \qquad \int_{0}^{T}  \|\delta V^{\sigma}_t\|(\overline{\Omega})\,dt<\infty
	\end{equation} 
	for a.e. $t\geq 0$ and all $T\geq 0$ respectively.
\end{lemma}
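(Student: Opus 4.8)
The plan is to reduce the two assertions to uniform estimates for the first variations of the diffuse varifolds attached to the Allen--Cahn solutions, and then to pass to the limit $\varepsilon\to 0$ along the subsequence of Lemma \ref{thm.3.6}, using the vanishing hypothesis for the discrepancy measure. For each $\varepsilon>0$ and a.e.\ $t$ let $V^{\varepsilon,\sigma}_t$ be the varifold whose mass is $\mu^{\varepsilon,\sigma}_t$ and whose plane at $x$ is $I-\nu^{\varepsilon,\sigma}\otimes\nu^{\varepsilon,\sigma}$ with $\nu^{\varepsilon,\sigma}:=\nabla u^{\varepsilon,\sigma}/|\nabla u^{\varepsilon,\sigma}|$ where $\nabla u^{\varepsilon,\sigma}\neq 0$ (and $\mathbb{R}^{n-1}\times\{0\}$ otherwise), so that $\delta V^{\varepsilon,\sigma}_t(\boldg)=\int_{\Omega}(\divergence\boldg-\nu^{\varepsilon,\sigma}\otimes\nu^{\varepsilon,\sigma}:\nabla\boldg)\,d\mu^{\varepsilon,\sigma}_t$. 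First I would rewrite this through the stress tensor $T^{\varepsilon,\sigma}:=\varepsilon\,\nabla u^{\varepsilon,\sigma}\otimes\nabla u^{\varepsilon,\sigma}-(\tfrac{\varepsilon}{2}|\nabla u^{\varepsilon,\sigma}|^{2}+\tfrac{1}{\varepsilon}W(u^{\varepsilon,\sigma}))I$, which by \eqref{1.1.1} satisfies $\divergence T^{\varepsilon,\sigma}=\varepsilon\,\partial_t u^{\varepsilon,\sigma}\,\nabla u^{\varepsilon,\sigma}$. Integrating by parts on $\Omega$ and inserting the boundary condition $\nabla u^{\varepsilon,\sigma}\cdot\boldnu=-\sigma^{-1}\partial_t u^{\varepsilon,\sigma}$ on $\partial\Omega$ then produces an identity of the schematic form
\begin{equation*}
\delta V^{\varepsilon,\sigma}_t(\boldg)=\int_{\Omega}\varepsilon\,\partial_t u^{\varepsilon,\sigma}\,(\boldg\cdot\nabla u^{\varepsilon,\sigma})\,dx+\int_{\Omega}\bigl(\nu^{\varepsilon,\sigma}\otimes\nu^{\varepsilon,\sigma}:\nabla\boldg\bigr)\,d\xi^{\varepsilon,\sigma}_t+\mathcal{B}^{\varepsilon,\sigma}_t(\boldg),
\end{equation*}
where $\mathcal{B}^{\varepsilon,\sigma}_t(\boldg)$ gathers the $\partial\Omega$-integrals, namely a multiple of $\int_{\partial\Omega}\varepsilon\,\partial_t u^{\varepsilon,\sigma}\,(\boldg\cdot\nabla u^{\varepsilon,\sigma})\,d\mathcal{H}^{n-1}$ and the term $\int_{\partial\Omega}(\boldg\cdot\boldnu)\,d\mathcal{H}^{n-1}$ weighted by the energy density of $\mu^{\varepsilon,\sigma}_t$.

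Next I would bound each contribution with the a priori estimates of Section \ref{apriori}. The energy identity \eqref{4.1.2} gives, uniformly in $\varepsilon$ (with $\sigma$ fixed), $\sup_{t}\mu^{\varepsilon,\sigma}_t(\overline{\Omega})\le D$, $\int_{0}^{\infty}\!\int_{\Omega}\varepsilon|\partial_t u^{\varepsilon,\sigma}|^{2}\,dx\,dt\le D$ and $\int_{0}^{\infty}\!\int_{\partial\Omega}\varepsilon|\partial_t u^{\varepsilon,\sigma}|^{2}\,d\mathcal{H}^{n-1}\,dt\le\sigma D$, together with the boundary-energy bound $\int_{0}^{T}\!\int_{\partial\Omega}\bigl(\tfrac{\varepsilon}{2}|\nabla u^{\varepsilon,\sigma}|^{2}+\tfrac{1}{\varepsilon}W(u^{\varepsilon,\sigma})\bigr)\,d\mathcal{H}^{n-1}\,dt\le C(T)$ and the trivial estimate $|\xi^{\varepsilon,\sigma}_t|(\overline{\Omega})\le\mu^{\varepsilon,\sigma}_t(\overline{\Omega})\le D$. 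Cauchy--Schwarz applied to the bulk term and to $\mathcal{B}^{\varepsilon,\sigma}_t$ then yields a decomposition $\delta V^{\varepsilon,\sigma}_t=L^{\varepsilon,\sigma}_t+R^{\varepsilon,\sigma}_t$ in which $|L^{\varepsilon,\sigma}_t(\boldg)|\le A^{\varepsilon,\sigma}(t)\,\sup_{\overline{\Omega}}|\boldg|$ with $\int_{0}^{T}A^{\varepsilon,\sigma}(t)^{2}\,dt\le C(T)$ uniformly in $\varepsilon$, while $|R^{\varepsilon,\sigma}_t(\boldg)|\le\bigl(\sup_{\overline{\Omega}}|\nabla\boldg|\bigr)\,|\xi^{\varepsilon,\sigma}_t|(\overline{\Omega})$.

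Finally I would let $\varepsilon=\varepsilon_j'\to0$. For a.e.\ $t$, the vanishing hypothesis \eqref{3.2.0} forces $|\xi^{\varepsilon_j',\sigma}_t|(\overline{\Omega})\to0$, hence $R^{\varepsilon_j',\sigma}_t\to0$; since the discrepancy vanishes and $\mu^{\sigma}_t$ is $(n-1)$-rectifiable for a.e.\ $t$ by Lemma \ref{thm.3.6}, the equipartition-of-energy argument also shows $V^{\varepsilon_j',\sigma}_t\rightharpoonup V^{\sigma}_t$ as varifolds, with $V^{\sigma}_t$ as in Definition \ref{def.3.1}, so that $\delta V^{\varepsilon_j',\sigma}_t(\boldg)\to\delta V^{\sigma}_t(\boldg)$ for every $\boldg\in(C^{1}_c(\overline{\Omega}))^{n}$. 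Passing to the limit in the decomposition gives $|\delta V^{\sigma}_t(\boldg)|\le\bigl(\liminf_{j}A^{\varepsilon_j',\sigma}(t)\bigr)\sup_{\overline{\Omega}}|\boldg|$, and by Fatou's lemma $\int_{0}^{T}\liminf_{j}A^{\varepsilon_j',\sigma}(t)^{2}\,dt\le C(T)<\infty$; hence $\liminf_{j}A^{\varepsilon_j',\sigma}(t)<\infty$ for a.e.\ $t$, which yields $\|\delta V^{\sigma}_t\|(\overline{\Omega})<\infty$ for a.e.\ $t$, and, by Cauchy--Schwarz in time, $\int_{0}^{T}\|\delta V^{\sigma}_t\|(\overline{\Omega})\,dt\le\int_{0}^{T}\liminf_{j}A^{\varepsilon_j',\sigma}(t)\,dt\le T^{1/2}C(T)^{1/2}<\infty$.

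The delicate point is exactly this last step: since the parameter-uniform control on $\partial_t u^{\varepsilon,\sigma}$ and on the boundary energy is only integrated in time, the $\varepsilon$-level first variations $\delta V^{\varepsilon,\sigma}_t$ are not uniformly locally bounded, so one cannot simply quote lower semicontinuity of the first variation under varifold convergence; the bound must be obtained after removing the discrepancy remainder $R^{\varepsilon,\sigma}_t$ (which vanishes in the limit) and then invoking Fatou's lemma. I also expect the boundary-energy estimate to be the technically heaviest ingredient borrowed from Section \ref{apriori}; it is available here because $\sigma$ is fixed and bounded away from $0$, which is precisely why --- unlike in Lemma \ref{thm3.3} --- the present lemma needs no analogue of the ``Uniform upper bound for the solution of Allen--Cahn equations'' assumption.
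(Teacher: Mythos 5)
Your route is the same as the paper's: the first--variation identity you derive from the stress tensor is exactly \eqref{6.2.2.1} of Lemma \ref{lem.5.5} (the bulk term $\int_\Omega\varepsilon\,\partial_t u\,(\boldg\cdot\nabla u)\,dx$, the discrepancy terms, and the two boundary integrals), the estimates you invoke are those of Propositions \ref{prop.4.1} and \ref{prop.4.4}, the discrepancy contribution is removed by the vanishing hypothesis, and the finiteness in time comes from Fatou --- this is precisely the content of Propositions \ref{prop.6.1} and \ref{prop.6.2}. Two points, however, need repair. First, your claim that $\int_0^T A^{\varepsilon,\sigma}(t)^2\,dt\le C(T)$ uniformly in $\varepsilon$ is not available: the boundary contributions to $A^{\varepsilon,\sigma}(t)$, namely $\int_{\partial\Omega}\bigl(\tfrac{\varepsilon}{2}|\nabla u^{\varepsilon,\sigma}|^2+\tfrac{W(u^{\varepsilon,\sigma})}{\varepsilon}\bigr)\,d\mathcal{H}^{n-1}$ and $\int_{\partial\Omega}\varepsilon|\nabla u^{\varepsilon,\sigma}|\,|\partial_\nu u^{\varepsilon,\sigma}|\,d\mathcal{H}^{n-1}$, are controlled by \eqref{4.2.5} and \eqref{4.1.1} only after integration in time (there is no uniform-in-$t$ bound on the boundary Dirichlet energy), so only $\int_0^T A^{\varepsilon,\sigma}(t)\,dt\le C(T)$ can be asserted. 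This is harmless for the conclusion --- the $L^1_t$ bound is exactly what the paper's quantity $c(t)$ in Proposition \ref{prop.6.1} encodes, and Fatou applied to it already gives $\|\delta V^{\sigma}_t\|(\overline{\Omega})\le\liminf_j A^{\varepsilon'_j,\sigma}(t)<\infty$ a.e.\ and $\int_0^T\|\delta V^{\sigma}_t\|(\overline{\Omega})\,dt\le C(T)$ --- but as written your final chain (Fatou for $A^2$, then Cauchy--Schwarz in time) rests on an estimate that fails.

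Second, the step ``the equipartition-of-energy argument shows $V^{\varepsilon'_j,\sigma}_t\rightharpoonup V^{\sigma}_t$'' is asserted rather than proved, and it is not a consequence of the vanishing of $\xi_t$ plus rectifiability of $\mu^{\sigma}_t$ alone: a subsequential varifold limit $\tilde V_t$ (which exists by mass bounds) has $\|\tilde V_t\|=\mu^{\sigma}_t$, but to conclude that its plane distribution is the tangent-plane one of Definition \ref{def.3.1} you must show $\tilde V_t$ is itself rectifiable, which the paper does in Proposition \ref{prop.6.2} via Allard's rectifiability theorem, using exactly the first-variation bound $|\delta\tilde V_t(\boldg)|\le c(t)\sup|\boldg|$ (after the discrepancy term has been discarded) together with a positive lower-density property of $\mu^{\sigma}_t$; rectifiable varifolds being determined by their mass then gives uniqueness of the limit and full convergence. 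You have all the ingredients for this (your $L^{\varepsilon,\sigma}_t+R^{\varepsilon,\sigma}_t$ decomposition is the needed bound), so the gap is one of rigor rather than of strategy, but without this identification the passage from the $\varepsilon$-level estimate to $\|\delta V^{\sigma}_t\|(\overline{\Omega})$ is incomplete. Your closing observation --- that the fixed $\sigma\ge1$ makes the boundary-energy estimate of Proposition \ref{prop.4.4} available without the ``Uniform upper bound'' assumption --- is correct and is indeed why this lemma, unlike Lemma \ref{thm3.3}, needs no such hypothesis.
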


Now we state the main theorem of this subsection, that is, the approximation result of our Brakke flow with dynamic boundary conditions which we defined in the previous section.
\begin{theorem}\label{thm3.9}
	Suppose that ``General assumptions" and ``Vanishing hypothesis for the discrepancy measure" in Subsection \ref{exis.assmp.dyna} hold. Let $V^{\sigma}_t$, $\alpha^{\sigma}$, and ${\boldv}^{\sigma}_b$ be the quantities in Lemma \ref{thm.3.6}, \ref{thm3.7}, and \ref{thm3.8}, which are obtained from the singular limits of the Allen-Cahn equations \eqref{1.1.1} by taking $\varepsilon\to0$. Then the triplet $(\{V^{\sigma}_t\}_{t \geq 0},\,\alpha^{\sigma},\,{\boldv}^{\sigma}_b)$ is a Brakke flow with dynamic boundary conditions in Definition \ref{def.4.2} with $\|V^{\sigma}_0\|=\sigma_0\,\mathcal{H}^{n-1}\lfloor_{M_0}$ where $M_0$ is as in Subsection \ref{exis.assmp.dyna}. Moreover, we have the estimate
	\begin{equation}\label{3.2.10}
	\int_{0}^{\infty} \int_{\overline{\Omega}}|\widetilde{\boldH}^{\sigma}_V|^2\,d\|V^{\sigma}_t\|\,dt \leq D,
	\end{equation}
	where $\widetilde{\boldH}^{\sigma}_V$ is the modified generalized mean curvature vector and $D$ is the constant given in \eqref{3.1.6}. 
\end{theorem}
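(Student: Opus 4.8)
The plan is to obtain $(\{V^{\sigma}_t\}_{t\ge0},\alpha^{\sigma},\boldv^{\sigma}_b)$ as the $\varepsilon\to0$ limit, along the subsequence produced by Lemmas \ref{thm.3.6}--\ref{thm3.8}, of the diffuse objects attached to the solutions $u^{\varepsilon_j,\sigma}$ of \eqref{1.1.1}, and then to verify the three items of Definition \ref{def.4.2} in turn, following the phase-field scheme of \cite{ilmanen0} and \cite{mizu.tone} but keeping track of the new boundary quantities $\alpha^{\varepsilon,\sigma}$ and $\boldv^{\varepsilon,\sigma}_b$. First I would record the a priori estimates of Section \ref{apriori}. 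Testing \eqref{1.1.1} by $\partial_t u^{\varepsilon,\sigma}$ and using the boundary condition $\partial_t u^{\varepsilon,\sigma}=-\sigma\,\nabla u^{\varepsilon,\sigma}\cdot\boldnu$ on $\partial\Omega$ yields the dissipation identity
\begin{equation*}
\mu^{\varepsilon,\sigma}_{t_1}(\Omega)-\mu^{\varepsilon,\sigma}_{t_2}(\Omega)=\int_{t_1}^{t_2}\!\!\int_{\Omega}\varepsilon(\partial_t u^{\varepsilon,\sigma})^2\,dx\,dt+\frac1\sigma\int_{t_1}^{t_2}\!\!\int_{\partial\Omega}\varepsilon(\partial_t u^{\varepsilon,\sigma})^2\,d\mathcal H^{n-1}\,dt,
\end{equation*}
which together with \eqref{3.1.6} gives uniform bounds on $\mu^{\varepsilon,\sigma}_t(\Omega)$, on both dissipation integrals, and, via the local monotonicity formula and the discrepancy estimate \eqref{4.1.2}, on $|\xi^{\varepsilon,\sigma}_t|$. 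The essential difference from the Dirichlet case is that here the boundary Dirichlet energy $\int\!\!\int_{\partial\Omega}\varepsilon|\partial u^{\varepsilon,\sigma}/\partial\boldnu|^2$ equals $\sigma^{2}\int\!\!\int_{\partial\Omega}\varepsilon(\partial_t u^{\varepsilon,\sigma})^2$ and is therefore already controlled, so that no analogue of the ``Uniform upper bound'' assumption is needed.

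Next I would carry out the first-variation computation of Section \ref{chara.lim}. For the diffuse $(n-1)$-varifold $V^{\varepsilon,\sigma}_t$ with mass $\mu^{\varepsilon,\sigma}_t$ and tangent plane $(\nabla u^{\varepsilon,\sigma})^{\perp}$, integration by parts of the diffuse stress identity, together with the boundary condition, expresses $\int_0^{\infty}\delta V^{\varepsilon,\sigma}_t(\boldg)\,dt$, for $\boldg\in(C^1_c(\overline\Omega\times[0,\infty)))^n$, as the interior term $\int_0^{\infty}\!\!\int_{\Omega}\boldg\cdot\nabla u^{\varepsilon,\sigma}\,\varepsilon\,\partial_t u^{\varepsilon,\sigma}\,dx\,dt$, plus a $\partial\Omega$-contribution which, modulo the discrepancy, splits into the tangential conormal part $\int_0^{\infty}\delta V^{\varepsilon,\sigma}_t\lfloor^{T}_{\partial\Omega}(\boldg)\,dt$ and the velocity part $-\sigma^{-1}\int_{\partial\Omega\times[0,\infty)}\boldg\cdot\boldv^{\varepsilon,\sigma}_b\,d\alpha^{\varepsilon,\sigma}$, plus a remainder bounded by $\|\boldg\|_{C^1}\int\|\xi^{\varepsilon,\sigma}_t\|\,dt$. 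Passing to the limit along the chosen subsequence: the interior integrand converges, as a measure--function pair on $\Omega\times[0,\infty)$ in Hutchinson's sense \cite{hutchinson}, to a vector $\widetilde{\boldH}^{\sigma}_V\in(L^2(\|V^{\sigma}_t\|\otimes\mathcal L^1_t))^n$; the boundary pair $(\alpha^{\varepsilon_j,\sigma},\boldv^{\varepsilon_j,\sigma}_b)$ converges by \eqref{3.2.1}; and the remainder vanishes by the hypothesis \eqref{3.2.0}. This produces
\begin{equation*}
\int_0^{\infty}\!\!\delta V^{\sigma}_t\lfloor_{\Omega}\,dt+\int_0^{\infty}\!\!\delta V^{\sigma}_t\lfloor^{T}_{\partial\Omega}\,dt+\frac1\sigma\,\mathcal S_{\alpha^{\sigma},\,\boldv^{\sigma}_b}=-\widetilde{\boldH}^{\sigma}_V\,\|V^{\sigma}_t\|\otimes\mathcal L^1_t\quad\text{on }\overline\Omega\times[0,\infty),
\end{equation*}
which is at once the absolute continuity \eqref{4.1.1.6}, the identity \eqref{4.1.1.9}, and, after restriction to $\Omega$, the coincidence $\widetilde{\boldH}^{\sigma}_V\lfloor_{\Omega}=\boldH^{\Omega}_V$; lower semicontinuity of the $L^2$-norm under measure--function pair convergence, combined with the dissipation identity and \eqref{3.2.2}, then gives $\int_0^{\infty}\!\int_{\overline\Omega}|\widetilde{\boldH}^{\sigma}_V|^2\,d\|V^{\sigma}_t\|\,dt\le D$, i.e. \eqref{3.2.10}, and in particular $\widetilde{\boldH}^{\sigma}_V\in(L^2)^n$.

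For Brakke's inequality \eqref{4.1.1.10} I would differentiate $t\mapsto\int_{\overline\Omega}\phi\,d\mu^{\varepsilon,\sigma}_t$ for admissible $\phi$ ($\phi\ge0$, $\nabla\phi\cdot\boldnu=0$ on $\partial\Omega$, $\phi(\cdot,t)\in C^1_c(\overline\Omega)$) and use \eqref{1.1.1} together with the boundary condition to get
\begin{align*}
\frac{d}{dt}\int_{\overline\Omega}\phi\,d\mu^{\varepsilon,\sigma}_t &=\int_{\overline\Omega}\partial_t\phi\,d\mu^{\varepsilon,\sigma}_t-\int_{\Omega}\big(\phi\,\varepsilon(\partial_t u^{\varepsilon,\sigma})^2+\varepsilon\,\partial_t u^{\varepsilon,\sigma}\,\nabla\phi\cdot\nabla u^{\varepsilon,\sigma}\big)\,dx\\
&\qquad-\frac1\sigma\int_{\partial\Omega}\phi\,\varepsilon(\partial_t u^{\varepsilon,\sigma})^2\,d\mathcal H^{n-1},
\end{align*}
the condition $\nabla\phi\cdot\boldnu=0$ being used to discard the extra $\partial\Omega$-terms exactly as in \cite{mizu.tone}. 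Integrating in $t$, rewriting $-\phi\,\varepsilon(\partial_t u^{\varepsilon,\sigma})^2-\varepsilon\,\partial_t u^{\varepsilon,\sigma}\,\nabla\phi\cdot\nabla u^{\varepsilon,\sigma}$ in the standard diffuse way so that the leftover is controlled by $\|\xi^{\varepsilon,\sigma}_t\|$, and passing to the limit along the subsequence --- using \eqref{3.2.0} to identify the $-\varepsilon(\partial_t u^{\varepsilon,\sigma})^2\mathcal L^n$-contribution with $-|\widetilde{\boldH}^{\sigma}_V|^2\|V^{\sigma}_t\|$, lower semicontinuity for the curvature term, and the $\phi$-weighted version of \eqref{3.2.2} for the boundary dissipation --- yields \eqref{4.1.1.10}. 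The endpoints $t_1,t_2$ need the familiar care: $t\mapsto\mu^{\varepsilon,\sigma}_t(\phi)$ has uniformly bounded variation, so a further subsequence can be extracted with $\mu^{\varepsilon_j,\sigma}_t\rightharpoonup\mu^{\sigma}_t$ for every $t$, and Fatou handles the time integral. Finally $\|V^{\sigma}_0\|=\sigma_0\mathcal H^{n-1}\lfloor_{M_0}$ is inherited from \eqref{3.1.2} and \eqref{3.1.1.2}. I expect the main obstacle to be the boundary analysis of the second paragraph: getting the integration by parts on $\partial\Omega$ to produce precisely $\delta V^{\sigma}_t\lfloor^{T}_{\partial\Omega}$ together with $\sigma^{-1}\mathcal S_{\alpha^{\sigma},\,\boldv^{\sigma}_b}$ hinges on the discrepancy vanishing up to $\partial\Omega$ and on a careful Hutchinson-type convergence of $(\alpha^{\varepsilon,\sigma},\boldv^{\varepsilon,\sigma}_b)$ together with \eqref{3.2.2}; the remaining steps are the now-standard phase-field arguments adapted to the boundary.
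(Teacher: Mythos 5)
Your overall scheme is the paper's: a priori estimates from the dissipation identity, Hutchinson measure--function pair limits for the interior and boundary pairs, the vanishing-discrepancy hypothesis, lower semicontinuity for the curvature and boundary dissipation terms, and Brakke's inequality from the weighted energy identity with $\nabla\phi\cdot\boldnu=0$ (the analogues of Propositions \ref{prop.6.5}, \ref{prop.6.6} and the proof of \eqref{4.1.1.10}), with the initial datum inherited from the convergence of $\mu^{\varepsilon,\sigma}_0$. The gap is in your second paragraph, precisely the step you flag as the main obstacle, and it is not just a matter of "careful Hutchinson convergence". For a general $\boldg\in(C^1_c(\overline\Omega\times[0,\infty)))^n$ the boundary contribution to $\delta V^{\varepsilon,\sigma}_t(\boldg)$ in \eqref{6.2.2.1} consists of $\int_{\partial\Omega}(\boldg\cdot\boldnu)\bigl(\tfrac{\varepsilon|\nabla u|^2}{2}+\tfrac{W(u)}{\varepsilon}\bigr)\,d\mathcal H^{n-1}$ and $-\int_{\partial\Omega}\varepsilon(\boldg\cdot\nabla u)\partial_{\boldnu} u\,d\mathcal H^{n-1}$. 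Inserting $\partial_{\boldnu}u=-\sigma^{-1}\partial_t u$, the tangential part of the second term is exactly $-\sigma^{-1}\int\boldg^T\cdot\boldv^{\varepsilon,\sigma}_b\,d\alpha^{\varepsilon,\sigma}$ --- so your "tangential conormal part" and "velocity part" are one and the same object at the diffuse level, not two additive pieces --- while the normal parts, namely the $(\boldg\cdot\boldnu)$-weighted boundary energy and $-\sigma^{-2}\int_{\partial\Omega}(\boldg\cdot\boldnu)\,\varepsilon(\partial_t u)^2\,d\mathcal H^{n-1}$, are neither zero nor controlled by the discrepancy, and their limits are not characterized. Hence $\lim_j\int_0^\infty\delta V^{\varepsilon_j,\sigma}_t(\boldg)\,dt$ for arbitrary $\boldg$ does not split as you assert, and the identity \eqref{4.1.1.9} and absolute continuity \eqref{4.1.1.6} on all of $\overline\Omega\times[0,\infty)$ do not come out "at once" from the limit passage.

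What is missing is the paper's device in Proposition \ref{prop.6.4}: the combination in Definition \ref{def.4.2} involves only $\delta V^\sigma_t\lfloor^T_{\partial\Omega}$, so one never needs the limit of the full boundary first variation. Given $\boldg$, introduce an extension $\boldnu^\delta$ of the outer normal supported in a $\delta$-collar and test with $\boldg^\delta:=\boldg-(\boldg\cdot\boldnu^\delta)\boldnu^\delta$, which satisfies $\boldg^\delta\cdot\boldnu=0$ on $\partial\Omega$; for such fields the two uncharacterized boundary terms vanish identically, the remaining boundary term cancels against $\sigma^{-1}\mathcal S_{\alpha^\sigma,\boldv^\sigma_b}$ through the pair convergence \eqref{3.2.1}, and the surviving interior term is bounded by $D^{\frac12}\bigl(\int_0^\infty\int_{\overline\Omega}|\boldg|^2\,d\|V^\sigma_t\|\,dt\bigr)^{\frac12}$ via Cauchy--Schwarz and \eqref{4.1.1}. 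The mismatch $\int_0^\infty\delta V^\sigma_t\lfloor_\Omega(\boldg-\boldg^\delta)\,dt$ is then sent to zero as $\delta\to0$ using the interior absolute continuity $\|\delta V^\sigma_t\lfloor_\Omega\|\ll\|V^\sigma_t\|$ with square-integrable interior curvature and dominated convergence; this is \eqref{6.2.14.2}--\eqref{6.2.14.5}. Only after that does Riesz/Radon--Nikodym produce $\widetilde{\boldH}^\sigma_V$ on $\overline\Omega\times[0,\infty)$ --- note it is not simply the Hutchinson limit of the interior pair, which only yields $\boldH^\Omega_V$ in $\Omega$ --- and the duality characterization \eqref{5.4.12} of the $L^2$-norm gives \eqref{3.2.10}, while \eqref{6.2.14} still requires Ilmanen's approximation argument based on the a.e.\ rectifiability of $V^\sigma_t$. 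Without the $\boldg^\delta$ construction and the $\delta\to0$ error control, item 1 of Definition \ref{def.4.2} is not established. (Minor: $\varepsilon|\partial_{\boldnu}u|^2=\sigma^{-2}\varepsilon(\partial_t u)^2$, not $\sigma^{2}\varepsilon(\partial_t u)^2$; harmless since $\sigma$ is fixed, and note the paper also needs the full boundary energy bound of Proposition \ref{prop.4.4}, not only the normal-derivative part, to run Proposition \ref{prop.6.1}.)
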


\section{A priori estimates for Allen-Cahn equations}\label{apriori}
In this section, we derive a priori estimate of Allen-Cahn equations \eqref{1.1.1} in the case that $\sigma$ is in positive and finite. This estimate is important to consider the characterization of the singular limit in the case of both Dirichlet and dynamic boundary conditions.

\begin{proposition}\label{prop.4.1}
	It holds that
	\begin{align}\label{4.1.1}
	\sup_{\varepsilon>0,\,\sigma>0} \left( E^{\varepsilon,\,\sigma}[u^{\varepsilon,\,\sigma}(\cdot \,,T)]+ \int_{0}^{T} \left(\int_{\Omega} \varepsilon (\partial_t u^{\varepsilon,\,\sigma})^2\,dx + \int_{\partial \Omega} \frac{\varepsilon}{\sigma} (\partial_t u^{\varepsilon,\,\sigma})^2\, d\mathcal{H}^{n-1} \right) \,dt \right) \leq D,
	\end{align}
	for all $T>0$. Here $D$ is as in \eqref{3.1.6}. Moreover we have
	\begin{equation}\label{4.1.2}
	\sup_{\varepsilon>0,\,\sigma>0} \mu^{\varepsilon,\,\sigma}_t(\Omega)\leq D,
	\end{equation}
	for all $t\geq 0$.
\end{proposition}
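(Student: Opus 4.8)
The plan is to derive the standard energy--dissipation identity for the $L^2$-gradient flow \eqref{1.1.1}. Fix $\varepsilon,\,\sigma>0$ and abbreviate $u:=u^{\varepsilon,\,\sigma}$. First I would test the bulk equation in \eqref{1.1.1} with $\varepsilon\,\partial_t u$, i.e. multiply $\partial_t u=\Delta u-\varepsilon^{-2}W'(u)$ by $\varepsilon\,\partial_t u$ and integrate over $\Omega$, obtaining
\[
\int_{\Omega}\varepsilon\,(\partial_t u)^2\,dx=\int_{\Omega}\varepsilon\,(\partial_t u)\,\Delta u\,dx-\int_{\Omega}\frac{1}{\varepsilon}\,W'(u)\,(\partial_t u)\,dx .
\]
Integrating the Laplacian term by parts and using the dynamic boundary condition $\nabla u\cdot\boldnu=-\sigma^{-1}\partial_t u$ on $\partial\Omega$ produces the boundary contribution $-\int_{\partial\Omega}\frac{\varepsilon}{\sigma}(\partial_t u)^2\,d\mathcal{H}^{n-1}$, while the interior terms are recognised as time derivatives, $\int_{\Omega}\varepsilon\,\nabla(\partial_t u)\cdot\nabla u\,dx=\frac{d}{dt}\int_{\Omega}\tfrac{\varepsilon}{2}|\nabla u|^2\,dx$ and $\int_{\Omega}\varepsilon^{-1}W'(u)\,\partial_t u\,dx=\frac{d}{dt}\int_{\Omega}\varepsilon^{-1}W(u)\,dx$. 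Altogether this gives, for a.e. $t>0$,
\[
\frac{d}{dt}E^{\varepsilon,\,\sigma}[u(\cdot,\,t)]+\int_{\Omega}\varepsilon\,(\partial_t u)^2\,dx+\int_{\partial\Omega}\frac{\varepsilon}{\sigma}\,(\partial_t u)^2\,d\mathcal{H}^{n-1}=0 .
\]

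Next I would integrate this identity over $[0,\,T]$ and use the uniform bound on the initial energy \eqref{3.1.6}, namely $E^{\varepsilon,\,\sigma}[u^{\varepsilon,\,\sigma}_0]\le D$, to conclude \eqref{4.1.1} --- in fact with equality between the bracketed quantity and $E^{\varepsilon,\,\sigma}[u^{\varepsilon,\,\sigma}_0]$ before the supremum is taken, since every dissipation term is nonnegative. For \eqref{4.1.2} it suffices to note that $\mu^{\varepsilon,\,\sigma}_t(\overline{\Omega})=\mu^{\varepsilon,\,\sigma}_t(\Omega)=E^{\varepsilon,\,\sigma}[u(\cdot,\,t)]$ because $\mathcal{L}^n(\partial\Omega)=0$, and that $t\mapsto E^{\varepsilon,\,\sigma}[u(\cdot,\,t)]$ is non-increasing by the identity above, hence bounded by $E^{\varepsilon,\,\sigma}[u^{\varepsilon,\,\sigma}_0]\le D$ for every $t\ge0$.

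The main obstacle is making this formal computation rigorous given only the regularity assumed in ``General assumptions'': $u\in L^2_{loc}([0,\,\infty);W^{2,\,2}(\Omega))\cap L^2_{loc}([0,\,\infty);W^{1,\,2}(\partial\Omega,\,\mathcal{H}^{n-1}))$ and $\partial_t u\in L^2_{loc}([0,\,\infty);L^2(\Omega))\cap L^2_{loc}([0,\,\infty);L^2(\partial\Omega,\,\mathcal{H}^{n-1}))$. With this regularity $\varepsilon\,\partial_t u$ is an admissible test function only in the weak sense, so one has to justify (i) that $t\mapsto E^{\varepsilon,\,\sigma}[u(\cdot,\,t)]$ is absolutely continuous on $[0,\,T]$ with the derivative computed above --- e.g. by a mollification-in-time argument or directly from the weak formulation of \eqref{1.1.1} --- and (ii) that the conormal trace $\nabla u\cdot\boldnu|_{\partial\Omega}$ is well defined, so that the boundary integration by parts and the substitution of the boundary condition are legitimate; here the integrability of $W(u)$ and $W'(u)$ is guaranteed by $|u^{\varepsilon,\,\sigma}|\le1$ from \eqref{3.1.8}. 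Once these points are settled, integrating the identity over $[0,\,T]$ and taking $\sup_{\varepsilon,\,\sigma>0}$ closes both estimates.
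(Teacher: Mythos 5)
Your proposal is correct and follows essentially the same route as the paper: the paper likewise derives the identity $\frac{d}{dt}E^{\varepsilon,\,\sigma}[u^{\varepsilon,\,\sigma}]=-\int_{\Omega}\varepsilon(\partial_t u^{\varepsilon,\,\sigma})^2\,dx-\int_{\partial\Omega}\frac{\varepsilon}{\sigma}(\partial_t u^{\varepsilon,\,\sigma})^2\,d\mathcal{H}^{n-1}$ by integration by parts together with the equation and the boundary condition, then integrates over $[0,\,T]$ and invokes \eqref{3.1.6}. Your additional remarks on justifying the formal computation under the stated regularity (absolute continuity of the energy, well-defined conormal trace) are a reasonable extra layer of care that the paper's proof leaves implicit.
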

\begin{proof}
	By integration by parts, we can calculate in the following manner.
	\begin{align}
	\frac{d}{dt}E^{\varepsilon,\,\sigma}[u^{\varepsilon,\,\sigma}] &=\int _{\Omega}\left(-\varepsilon \Delta u^{\varepsilon,\,\sigma}  +\frac{W'(u^{\varepsilon,\,\sigma})}{\varepsilon} \right) \partial_t u^{\varepsilon,\,\sigma}\, dx+ \int _{\partial \Omega} \varepsilon \frac{\partial u^{\varepsilon,\,\sigma}}{\partial \boldnu} \partial_t u^{\varepsilon,\,\sigma} \, d\mathcal{H}^{n-1} \nonumber\\
	&=- \int _{\Omega}\varepsilon (\partial_t u^{\varepsilon,\,\sigma})^2\,dx- \int _{\partial \Omega} \frac{\varepsilon}{\sigma} (\partial_t u^{\varepsilon,\,\sigma})^2 \, d\mathcal{H}^{n-1}. \label{4.1.3}
	\end{align}
	Thus, for any $T> 0$, $\varepsilon>0$ and $\sigma>0$, we have
	\begin{align}
	&E^{\varepsilon,\,\sigma}[u^{\varepsilon,\,\sigma}(\cdot,\,T)]+\int_{0}^{T} \left(\int_{\Omega} \varepsilon (\partial_t u^{\varepsilon,\,\sigma}))^2\,dx +  \int_{\partial \Omega} \frac{\varepsilon}{\sigma} (\partial_t u^{\varepsilon,\,\sigma})^2\, d\mathcal{H}^{n-1} \right) \,dt \nonumber\\ 
	&= E^{\varepsilon,\,\sigma}[u^{\varepsilon,\,\sigma}_0] \leq D. \label{4.1.4}
	\end{align}
	Therefore \eqref{4.1.1} follows by taking supremum with respect to $\varepsilon>0$ and $\sigma>0$. From \eqref{3.1.6}, \eqref{4.1.2} also easily follows. 
\end{proof}

\subsection{The case $\sigma \in (0,\,1)$}\label{apr.diri}
In this subsection, we show the energy estimates of Allen-Cahn equations \eqref{1.1.1} on the boundary $\partial \Omega$ in the case $\sigma\in(0,\,1)$. This estimate plays an important role in considering the singular limit and formulate a Brakke flow especially with Dirichlet boundary conditions. Note that we only have the energy estimate in an integration form with respect to time $t>0$ so far. Note that we assume ``General assumptions" and ``Uniform upper bound on the boundary $\partial\Omega$" in Subsection \ref{exisdiri} in this case.

\begin{proposition}\label{prop.4.2}                        
	There exists $C_1=C_1(n,\partial \Omega,\,D)>0$ such that 
	\begin{equation}\label{4.1.5}
	\sup_{\varepsilon>0,\,\sigma\in(0,\,1)} \int_{t_1}^{t_2}\, \int_{\partial \Omega}  \left(\frac{\varepsilon |\nabla_{\partial \Omega} u^{\varepsilon,\,\sigma}| ^2}{2} +\frac{W (u^{\varepsilon,\,\sigma} )}{\varepsilon}\right) \, d\mathcal{H}^{n-1} dt \leq C_1(t_2-t_1+1)+C_0,
	\end{equation} 
	for any $0\leq t_1\leq t_2 <\infty$, where $C_0=C_0(t_1,\,t_2)>0$ is as in \eqref{3.1.9.2}. 
\end{proposition}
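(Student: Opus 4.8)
The plan is to prove a Rellich--Pohozaev-type identity on $\overline{\Omega}$ and read the boundary energy off it, with the interior contributions absorbed by Proposition \ref{prop.4.1}. Write $u:=u^{\varepsilon,\,\sigma}$ and $e_\varepsilon(u):=\tfrac{\varepsilon|\nabla u|^2}{2}+\tfrac{W(u)}{\varepsilon}$, and form the symmetric stress matrix $\boldA=(A_{ij})$ with $A_{ij}:=\varepsilon\,\partial_{x_i}u\,\partial_{x_j}u-e_\varepsilon(u)\,\delta_{ij}$. Multiplying \eqref{1.1.1} by $\varepsilon$ gives $\varepsilon\Delta u-\varepsilon^{-1}W'(u)=\varepsilon\,\partial_t u$, from which a direct computation yields $\sum_{j}\partial_{x_j}A_{ij}=\varepsilon\,\partial_t u\,\partial_{x_i}u$. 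Fix once and for all a vector field $\boldg\in (C^\infty(\overline{\Omega}))^n$ with $\boldg=\boldnu$ on $\partial\Omega$; such $\boldg$ exists because $\partial\Omega$ is smooth and compact, and $\|\boldg\|_{C^1(\overline{\Omega})}$ may be chosen to depend only on $n$ and $\partial\Omega$. Applying the divergence theorem to $\sum_{i,j}\partial_{x_j}(A_{ij}g_i)$, using $A_{ij}\nu_i\nu_j=\varepsilon(\nabla u\cdot\boldnu)^2-e_\varepsilon(u)$ on $\partial\Omega$ together with the orthogonal splitting $|\nabla u|^2=|\nabla_{\partial\Omega}u|^2+(\nabla u\cdot\boldnu)^2$ on $\partial\Omega$, one obtains, for a.e.\ $t>0$,
\begin{equation*}
\int_{\partial\Omega}\Big(\frac{\varepsilon|\nabla_{\partial\Omega}u|^2}{2}+\frac{W(u)}{\varepsilon}\Big)\,d\mathcal{H}^{n-1}=\int_{\partial\Omega}\frac{\varepsilon}{2}(\nabla u\cdot\boldnu)^2\,d\mathcal{H}^{n-1}-\varepsilon\int_{\Omega}\partial_t u\,(\boldg\cdot\nabla u)\,dx-\int_{\Omega}\boldA:\nabla\boldg\,dx .
\end{equation*}
All the integrations by parts are legitimate for a.e.\ $t$ by the regularity $u(\cdot,\,t)\in W^{2,2}(\Omega)$ assumed in ``General assumptions'', after a routine approximation.

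Next I would integrate this identity over $t\in[t_1,\,t_2]$ and bound the three terms on the right. The first is exactly $\int_{t_1}^{t_2}\!\int_{\partial\Omega}\tfrac{\varepsilon}{2}\big(\tfrac{\partial u}{\partial\boldnu}\big)^2\,d\mathcal{H}^{n-1}\,dt\le C_0(t_1,\,t_2)$ by the hypothesis ``Uniform upper bound for the solution of Allen-Cahn equations'' \eqref{3.1.9.2}. For the second term the key is the prefactor $\varepsilon$: by Young's inequality $\varepsilon|\partial_t u|\,|\nabla u|\le\tfrac12\varepsilon(\partial_t u)^2+\tfrac12\varepsilon|\nabla u|^2$, hence
\begin{equation*}
\varepsilon\Big|\int_{t_1}^{t_2}\!\!\int_{\Omega}\partial_t u\,(\boldg\cdot\nabla u)\,dx\,dt\Big|\le\frac{\|\boldg\|_{L^\infty(\overline{\Omega})}}{2}\Big(\int_{t_1}^{t_2}\!\!\int_{\Omega}\varepsilon(\partial_t u)^2\,dx\,dt+\int_{t_1}^{t_2}\!\!\int_{\Omega}\varepsilon|\nabla u|^2\,dx\,dt\Big)\le\frac{\|\boldg\|_{L^\infty(\overline{\Omega})}D}{2}\big(1+2(t_2-t_1)\big),
\end{equation*}
using $\int_{t_1}^{t_2}\!\int_{\Omega}\varepsilon(\partial_t u)^2\le D$ from \eqref{4.1.1} and $\int_{\Omega}\varepsilon|\nabla u|^2\,dx\le 2\mu^{\varepsilon,\,\sigma}_t(\overline{\Omega})\le 2D$ from \eqref{4.1.2}. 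For the third term, $|\boldA:\nabla\boldg|\le C(n)\,\|\nabla\boldg\|_{L^\infty(\overline{\Omega})}\,e_\varepsilon(u)$ since $\varepsilon|\nabla u|^2\le 2e_\varepsilon(u)$, so $\int_{t_1}^{t_2}\!\int_{\Omega}|\boldA:\nabla\boldg|\,dx\,dt\le C(n)\|\nabla\boldg\|_{L^\infty(\overline{\Omega})}\int_{t_1}^{t_2}\mu^{\varepsilon,\,\sigma}_t(\overline{\Omega})\,dt\le C(n)\|\nabla\boldg\|_{L^\infty(\overline{\Omega})}D(t_2-t_1)$ by \eqref{4.1.2}. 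Summing the three bounds, and recalling that $\boldg$ depends only on $n$ and $\partial\Omega$, yields \eqref{4.1.5} with $C_1=C_1(n,\,\partial\Omega,\,D)$ and $C_0$ as stated.

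The only genuinely delicate point is the second term, and in particular the recognition that the factor $\varepsilon$ in the identity $\sum_{j}\partial_{x_j}A_{ij}=\varepsilon\,\partial_t u\,\partial_{x_i}u$ — forced by the $\varepsilon^{-2}$-scaling of the potential in \eqref{1.1.1} — is exactly what lets it be absorbed by the dissipation bound $\int\varepsilon(\partial_t u)^2\le D$ and the mass bound $\mu^{\varepsilon,\,\sigma}_t(\overline{\Omega})\le D$; without it one would only be left with $\varepsilon^{-1}\int_\Omega\partial_t u\,(\boldg\cdot\nabla u)$, which diverges as $\varepsilon\downarrow0$. It is also worth recording why the hypothesis \eqref{3.1.9.2} cannot be avoided: the boundary condition in \eqref{1.1.1} gives $\nabla u\cdot\boldnu=-\sigma^{-1}\partial_t u$ on $\partial\Omega$, so a priori
\begin{equation*}
\int_{t_1}^{t_2}\!\!\int_{\partial\Omega}\frac{\varepsilon}{2}(\nabla u\cdot\boldnu)^2\,d\mathcal{H}^{n-1}\,dt=\frac{1}{2\sigma}\int_{t_1}^{t_2}\!\!\int_{\partial\Omega}\frac{\varepsilon}{\sigma}(\partial_t u)^2\,d\mathcal{H}^{n-1}\,dt\le\frac{D}{2\sigma},
\end{equation*}
which blows up as $\sigma\downarrow0$; assuming \eqref{3.1.9.2} is precisely what makes the first term, and hence the whole estimate, uniform in $\sigma\in(0,\,1)$. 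The remaining steps — the divergence-theorem computation leading to the displayed identity and the elementary inequalities above — are routine and I will not carry them out in detail here.
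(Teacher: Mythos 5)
Your argument is correct, and it reaches \eqref{4.1.5} by a genuinely different route than the paper. The paper's proof differentiates the weighted diffuse energy $\int_\Omega\phi\,d\mu^{\varepsilon,\,\sigma}_t$ in time with $\phi=d+1$ ($d$ the signed distance to $\partial\Omega$), integrates by parts twice to reach \eqref{4.1.8}, discards the nonpositive dissipation terms $-\varepsilon^{-1}\int_\Omega\phi(f^{\varepsilon,\,\sigma})^2$ and $-\int_{\partial\Omega}\varepsilon\sigma\phi(\partial_{\boldnu}u^{\varepsilon,\,\sigma})^2$ (this is where $\sigma\in(0,\,1)$ is used), and then integrates in time, bounding $\int_\Omega\phi\,d\mu^{\varepsilon,\,\sigma}_{t_i}$ by \eqref{4.1.2} and the normal-derivative boundary term by \eqref{3.1.9.2}; the boundary energy appears with a good sign because $\partial_{\boldnu}\phi=-1$. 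You instead never differentiate the energy: you use the per-time stress-tensor (Pohozaev/first-variation) identity obtained from $\sum_j\partial_{x_j}A_{ij}=\varepsilon\,\partial_tu\,\partial_{x_i}u$ tested against a $C^1$ extension $\boldg$ of $\boldnu$, which is essentially the identity \eqref{5.3.2} of Lemma \ref{lem.5.4} specialized to this $\boldg$, and you absorb the bulk term $\varepsilon\int\partial_tu\,(\boldg\cdot\nabla u)$ by Young's inequality together with \eqref{4.1.1} and \eqref{4.1.2}, the stress term by \eqref{4.1.2}, and the normal-derivative boundary term by \eqref{3.1.9.2}. Your computation of the signs checks out ($\boldA\boldnu\cdot\boldnu=\varepsilon(\partial_{\boldnu}u)^2-e_\varepsilon$ plus the splitting \eqref{4.1.9.1} reproduces exactly the coefficient $\tfrac{\varepsilon}{2}$ in front of $(\partial_{\boldnu}u)^2$, matching \eqref{3.1.9.2}), the regularity needed for the divergence theorem is the same $W^{2,2}$-in-space regularity the paper itself invokes for \eqref{4.1.6}--\eqref{4.1.8}, and all constants depend only on $n$, $\partial\Omega$ (through $\|\boldg\|_{C^1}$) and $D$. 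What each route buys: yours is slightly leaner (only a $C^1$ extension of $\boldnu$ rather than a $C^2$ extension of $d$, no time-derivative bookkeeping) and in fact never uses $\sigma\in(0,\,1)$, so the bound is uniform in all $\sigma>0$ once \eqref{3.1.9.2} is assumed; the paper's energy-dissipation computation, on the other hand, is recycled later (for Brakke's inequality and for the localized boundary estimates in the dynamic case, cf.\ Proposition \ref{prop.4.4} and \eqref{6.2.0.3.0}), and in the regime $\sigma\geq1$ it yields the estimate \emph{without} assuming \eqref{3.1.9.2} by trading $\varepsilon(\partial_{\boldnu}u)^2$ for $\tfrac{\varepsilon}{\sigma}(\partial_tu)^2$ via the boundary condition, a point your closing remark about the $D/(2\sigma)$ bound correctly identifies.
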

\begin{proof}
	For any $\phi \in C^2(\overline{\Omega})$ and by using integration by part and denoting $f^{\varepsilon,\,\sigma} \coloneqq -\varepsilon\Delta u^{\varepsilon,\,\sigma}+ \frac{W'(u^{\varepsilon,\,\sigma})}{\varepsilon}$, we may obtain 
	\begin{align}\label{4.1.6}
	\frac{d}{dt}\left(\int_{\Omega} \phi \,d\mu^{\varepsilon,\,\sigma}_t \right) &= \int _\Omega \phi\left(-\varepsilon \Delta u^{\varepsilon,\,\sigma} + \frac{W'(u^{\varepsilon,\,\sigma})}{\varepsilon} \right) \partial_t u^{\varepsilon,\,\sigma} \, dx -\int _{\Omega} \varepsilon (\nabla \phi \cdot \nabla u^{\varepsilon,\,\sigma}) \partial_t u^{\varepsilon,\,\sigma} \, dx \nonumber\\
	&\quad \qquad \quad + \int_{\partial \Omega} \varepsilon \phi \frac{\partial u^{\varepsilon,\,\sigma}}{\partial \boldnu} \partial_t u^{\varepsilon,\,\sigma} \, d\mathcal{H}^{n-1} \nonumber\\
	&= -\frac{1}{\varepsilon}\int_{\Omega} \phi (f^{\varepsilon,\,\sigma})^2 \, dx + \int_\Omega f^{\varepsilon,\,\sigma}(\nabla \phi \cdot \nabla u^{\varepsilon,\,\sigma}) \, dx \nonumber\\
	&\quad \qquad \quad- \int_{\partial \Omega} \frac{\varepsilon}{\sigma} \phi (\partial_t u^{\varepsilon,\,\sigma})^2 \, d\mathcal{H}^{n-1}.
	\end{align} 
	By integration by parts again, 
	\begin{align}
	\int_\Omega f^{\varepsilon,\,\sigma}(\nabla \phi \cdot \nabla u^{\varepsilon,\,\sigma}) \, dx &= - \int_{\Omega} \Delta \phi\, d\mu^{\varepsilon,\,\sigma}_t + \int _\Omega \varepsilon (\nabla u^{\varepsilon,\,\sigma} \otimes \nabla u^{\varepsilon,\,\sigma} : \nabla ^2 \phi) \, dx \nonumber\\
	&\qquad -\int_{\partial \Omega} \varepsilon (\nabla \phi \cdot \nabla u^{\varepsilon,\,\sigma}) \frac{\partial u^{\varepsilon,\,\sigma}}{\partial \boldnu} \, d\mathcal{H}^{n-1}\nonumber\\
	&\quad \qquad +\int _{\partial \Omega} \frac{\partial \phi}{\partial \boldnu} \left( \frac{\varepsilon |\nabla u^{\varepsilon,\,\sigma}| ^2}{2} +\frac{W (u^{\varepsilon,\,\sigma} )}{\varepsilon}\right)  \, d\mathcal{H}^{n-1}.\label{4.1.7} 
	\end{align}
	Therefore we can compute as follows.
	\begin{align}
	\frac{d}{dt}\left(\int_{\Omega} \phi \,d\mu^{\varepsilon,\,\sigma}_t \right) =& - \frac{1}{\varepsilon}\int_{\Omega} \phi (f^{\varepsilon,\,\sigma})^2\, dx -\int_{\Omega} \Delta \phi\, d\mu^{\varepsilon,\,\sigma}_t+\int_{\Omega}  \varepsilon (\nabla u^{\varepsilon,\,\sigma} \otimes \nabla u^{\varepsilon,\,\sigma} : \nabla ^2 \phi) \, dx \nonumber\\
	&\quad -\int_{\partial \Omega} \varepsilon (\nabla \phi \cdot \nabla u^{\varepsilon,\,\sigma}) \frac{\partial u^{\varepsilon,\,\sigma}}{\partial \boldnu} \, d\mathcal{H}^{n-1}- \int_{\partial \Omega} \varepsilon\,\sigma \phi \left( \frac{\partial u^{\varepsilon,\,\sigma}}{\partial \boldnu}\right)^2\,d\mathcal{H}^{n-1} \nonumber\\
	&\quad \qquad +\int _{\partial \Omega} \frac{\partial \phi}{\partial \boldnu} \left( \frac{\varepsilon |\nabla u^{\varepsilon,\,\sigma}| ^2}{2} +\frac{W (u^{\varepsilon,\,\sigma} )}{\varepsilon}\right)  \, d\mathcal{H}^{n-1}. \label{4.1.8}
	\end{align}
	Specifically, we can choose $\phi= d+1 $, where $d$ is a signed distance function from the boundary $\partial \Omega$ which is positive in the domain $\Omega$. However, since $\Omega$ is general open domain with smooth boundary, $d$ is smooth only in some open neighborhood of the boundary $\partial \Omega$. Thus we have to extend $d$ smoothly into $\mathbb{R}^n$ such that $|d|$ and $|\nabla^2 d|$ are uniformly bounded in $\Omega$. This extension can be done by a simple argument.\\
	Thus, by using  $\nabla \phi = -\boldnu$ and $\phi=1$ on $\partial \Omega$ and the fact $\sigma\in(0,\,1)$, we have 
	\begin{align}\label{4.1.9}
	\frac{d}{dt}\left(\int_{\Omega} \phi \,d\mu^{\varepsilon,\,\sigma}_t \right) &\leq  -\int_{\Omega} \Delta \phi\, d\mu^{\varepsilon,\,\sigma}_t+\int_{\Omega\cap \{|\nabla u^{\varepsilon,\,\sigma}|\neq 0\}} (a^{\varepsilon,\,\sigma} \otimes a^{\varepsilon,\,\sigma} : \nabla ^2 \phi) \varepsilon |\nabla u^{\varepsilon,\,\sigma}|^2 \, dx \nonumber\\
	&\quad + \int_{\partial \Omega} \frac{\varepsilon}{2} \left(\frac{\partial u^{\varepsilon,\,\sigma}}{\partial \boldnu}\right)^2\,d\mathcal{H}^{n-1} -\int _{\partial \Omega} \left( \frac{\varepsilon |\nabla_{\partial \Omega} u^{\varepsilon,\,\sigma}| ^2}{2} +\frac{W (u^{\varepsilon,\,\sigma} )}{\varepsilon}\right)  \, d\mathcal{H}^{n-1},
	\end{align}
	where $a^{\varepsilon,\,\sigma}$ is defined by $\frac{\nabla u^{\varepsilon,\,\sigma}}{|\nabla u^{\varepsilon,\,\sigma}|}$. Note that, in \eqref{4.1.9}, we used the fact that
	\begin{equation}\label{4.1.9.1}
	|\nabla u^{\varepsilon,\,\sigma}|^2= |\nabla_{\partial \Omega} u^{\varepsilon,\,\sigma}|^2+ \left(\frac{\partial u^{\varepsilon,\,\sigma}}{\partial \boldnu}\right)^2, \quad {\rm on}\ \overline{\Omega}\times [0,\,\infty).
	\end{equation}
	Recalling the estimate \eqref{4.1.1} and the assumption \eqref{3.1.9.2}, and integrating both members of the inequality \eqref{4.1.9} from time $t_1$ to $t_2$ , we obtain 
	\begin{align}
	\int_{\Omega}\phi\,d\mu^{\varepsilon,\,\sigma}_{t_2}-\int_{\Omega}\phi\,d\mu^{\varepsilon,\,\sigma}_{t_1} &\leq \sup_{\Omega}|\nabla^2\phi|\,\int_{t_1}^{t_2}\mu^{\varepsilon,\,\sigma}_t(\Omega)\,dt+ 2\sup_{\Omega}|\nabla^2\phi|\,\int_{t_1}^{t_2}\mu^{\varepsilon,\,\sigma}_t(\Omega\cap\{|\nabla u^{\varepsilon,\,\sigma}|\neq 0\})\,dt \nonumber\\
	&\qquad +\int_{t_1}^{t_2}\!\!\int_{\partial\Omega}\frac{\varepsilon}{2}\left(\frac{\partial u^{\varepsilon,\,\sigma}}{\partial \nu}\right)^2\,d\mathcal{H}^{n-1}\,dt \nonumber\\
	& \qquad \quad - \int_{t_1}^{t_2}\int_{\partial\Omega}\left( \frac{\varepsilon |\nabla_{\partial \Omega} u^{\varepsilon,\,\sigma}| ^2}{2} +\frac{W (u^{\varepsilon,\,\sigma} )}{\varepsilon}\right)  \, d\mathcal{H}^{n-1}\,dt \nonumber\\
	&\leq 3D\,\sup_{\Omega}|\nabla^2\phi|\,(t_2-t_1)+C_0(t_1,\,t_2) \nonumber\\
	&\qquad  -\int_{t_1}^{t_2}\!\!\int_{\partial\Omega}\left( \frac{\varepsilon |\nabla_{\partial \Omega} u^{\varepsilon,\,\sigma}| ^2}{2}+\frac{W (u^{\varepsilon,\,\sigma} )}{\varepsilon}\right)  \, d\mathcal{H}^{n-1}.
	\end{align}
	Since $\phi$ is bounded in $C^2$-norm and \eqref{4.1.2}, we have 
	\begin{equation}\label{4.1.10}
	\int_{t_1}^{t_2} \int _{\partial \Omega} \left(\frac{\varepsilon |\nabla_{\partial \Omega} u^{\varepsilon,\,\sigma}|^2}{2}+\frac{W(u^{\varepsilon,\,\sigma})}{\varepsilon}\right) \, d\mathcal{H}^{n-1}\,dt \leq 3D\,\|\phi \|_{C^2(\overline{\Omega})}(t_2-t_1+1)+C_0(t_1,\,t_2) < +\infty.
	\end{equation}
	Therefore \eqref{4.1.5} follows by taking the supremum with respect to $\varepsilon>0$ and $\sigma>0$ in \eqref{4.1.10}.
\end{proof}

\subsection{The case $\sigma \in [1,\,\infty)$}\label{apri.dyna}
In this subsection, we show the energy estimate of Allen-Cahn equations on the boundary $\partial \Omega$ in the case $\sigma\in[1,\,\infty)$. This estimate plays an important role in considering the singular limit and formulate a Brakke flow with dynamic boundary conditions in the case $\sigma\in[1,\,\infty)$. Note that, as same as the case $\sigma\in(0,\,1)$, we only have the energy estimate in an integration form with respect to time $t>0$. Note that we only assume ``General assumptions" in Subsection  \ref{exisdyna} in this case.

\begin{proposition}\label{prop.4.4}                        
	There exists $C_1=C_1(n,\partial \Omega,D)>0$ such that 
	\begin{equation}\label{4.2.5}
	\sup_{\varepsilon>0,\,\sigma\in [1,\,\infty)} \int_{t_1}^{t_2}\, \int_{\partial \Omega}  \left(\frac{\varepsilon |\nabla u^{\varepsilon,\,\sigma}| ^2}{2} +\frac{W (u^{\varepsilon,\,\sigma} )}{\varepsilon}\right) \, d\mathcal{H}^{n-1} dt \leq C_1(t_2-t_1+1),
	\end{equation} 
	for any $0\leq t_1\leq t_2 <\infty$.
\end{proposition}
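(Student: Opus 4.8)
The plan is to mimic the proof of Proposition \ref{prop.4.2}, but to observe that the boundary term which forced the assumption \eqref{3.1.9.2} there now carries a favorable sign once $\sigma\geq1$, so no extra hypothesis is needed. First I would take the identity \eqref{4.1.8} — which was derived from \eqref{1.1.1} by integration by parts for an arbitrary $\phi\in C^2(\overline{\Omega})$ and every $\varepsilon,\sigma>0$ — and specialize it to $\phi=d+1$, where $d$ is the signed distance to $\partial\Omega$, positive inside $\Omega$, extended smoothly to $\mathbb{R}^n$ with $|d|$, $|\nabla^2 d|$ bounded on $\Omega$ and with $\phi\geq1$ on $\overline{\Omega}$ (exactly the test function used for Proposition \ref{prop.4.2}). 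Then $\nabla\phi=-\boldnu$ and $\phi=1$ on $\partial\Omega$, so in \eqref{4.1.8} the term $-\int_{\partial\Omega}\varepsilon(\nabla\phi\cdot\nabla u^{\varepsilon,\,\sigma})\tfrac{\partial u^{\varepsilon,\,\sigma}}{\partial\boldnu}\,d\mathcal{H}^{n-1}$ becomes $\int_{\partial\Omega}\varepsilon(\tfrac{\partial u^{\varepsilon,\,\sigma}}{\partial\boldnu})^2\,d\mathcal{H}^{n-1}$, the term $-\int_{\partial\Omega}\varepsilon\sigma\phi(\tfrac{\partial u^{\varepsilon,\,\sigma}}{\partial\boldnu})^2\,d\mathcal{H}^{n-1}$ becomes $-\int_{\partial\Omega}\varepsilon\sigma(\tfrac{\partial u^{\varepsilon,\,\sigma}}{\partial\boldnu})^2\,d\mathcal{H}^{n-1}$, the last boundary term becomes $-\int_{\partial\Omega}\bigl(\tfrac{\varepsilon|\nabla u^{\varepsilon,\,\sigma}|^2}{2}+\tfrac{W(u^{\varepsilon,\,\sigma})}{\varepsilon}\bigr)\,d\mathcal{H}^{n-1}$, and the term $-\tfrac1\varepsilon\int_\Omega\phi(f^{\varepsilon,\,\sigma})^2\,dx$ is dropped using $\phi\geq0$.

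The key observation is that the two boundary integrands quadratic in $\tfrac{\partial u^{\varepsilon,\,\sigma}}{\partial\boldnu}$ combine into $\int_{\partial\Omega}\varepsilon(1-\sigma)(\tfrac{\partial u^{\varepsilon,\,\sigma}}{\partial\boldnu})^2\,d\mathcal{H}^{n-1}$, which is \emph{non-positive} for every $\sigma\geq1$ and may therefore simply be discarded; this is the structural reason why the dynamic range $\sigma\in[1,\infty)$ is cheaper than the Dirichlet range $\sigma\in(0,1)$, where the analogous term had to be absorbed by \eqref{3.1.9.2}. After dropping it, \eqref{4.1.8} gives
\[
\frac{d}{dt}\Big(\int_{\Omega}\phi\,d\mu^{\varepsilon,\,\sigma}_t\Big)\leq -\int_{\Omega}\Delta\phi\,d\mu^{\varepsilon,\,\sigma}_t+\int_{\Omega}\varepsilon(\nabla u^{\varepsilon,\,\sigma}\otimes\nabla u^{\varepsilon,\,\sigma}:\nabla^2\phi)\,dx-\int_{\partial\Omega}\Big(\frac{\varepsilon|\nabla u^{\varepsilon,\,\sigma}|^2}{2}+\frac{W(u^{\varepsilon,\,\sigma})}{\varepsilon}\Big)\,d\mathcal{H}^{n-1}.
\]
The two interior terms are then controlled exactly as in Proposition \ref{prop.4.2}: since $|\nabla u^{\varepsilon,\,\sigma}\otimes\nabla u^{\varepsilon,\,\sigma}:\nabla^2\phi|\leq|\nabla u^{\varepsilon,\,\sigma}|^2\,|\nabla^2\phi|$ and $\varepsilon|\nabla u^{\varepsilon,\,\sigma}|^2\leq 2\,d\mu^{\varepsilon,\,\sigma}_t/d\mathcal{L}^n$, both are bounded by a dimensional multiple of $\|\phi\|_{C^2(\overline{\Omega})}\,\mu^{\varepsilon,\,\sigma}_t(\Omega)$, which is at most $3D\,\|\phi\|_{C^2(\overline{\Omega})}$ by \eqref{4.1.2}.

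Finally I would integrate this differential inequality over $[t_1,t_2]$ and rearrange, obtaining
\[
\int_{t_1}^{t_2}\!\!\int_{\partial\Omega}\Big(\frac{\varepsilon|\nabla u^{\varepsilon,\,\sigma}|^2}{2}+\frac{W(u^{\varepsilon,\,\sigma})}{\varepsilon}\Big)\,d\mathcal{H}^{n-1}\,dt\leq 3D\,\|\phi\|_{C^2(\overline{\Omega})}(t_2-t_1)+\int_{\Omega}\phi\,d\mu^{\varepsilon,\,\sigma}_{t_1}-\int_{\Omega}\phi\,d\mu^{\varepsilon,\,\sigma}_{t_2};
\]
since $|\phi|\leq\|\phi\|_{C^0(\overline{\Omega})}$ and $\mu^{\varepsilon,\,\sigma}_t(\Omega)\leq D$ for all $t$ by \eqref{4.1.2}, the last two terms are at most $2D\,\|\phi\|_{C^0(\overline{\Omega})}$, and since $\phi=d+1$ has $C^2$-norm depending only on $n$ and $\partial\Omega$, taking the supremum over $\varepsilon>0$ and $\sigma\in[1,\infty)$ yields \eqref{4.2.5} with $C_1=C_1(n,\partial\Omega,D)$. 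I do not expect a genuine obstacle here: the only point requiring care is the bookkeeping of the boundary terms and checking that their combination has the right sign for $\sigma\geq1$, which is exactly what makes the uniform upper bound assumption dispensable in this case.
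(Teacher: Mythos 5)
Your proposal is correct and follows essentially the same route as the paper: specialize \eqref{4.1.8} to $\phi=d+1$, note that the two boundary terms quadratic in $\partial u^{\varepsilon,\,\sigma}/\partial\boldnu$ (equivalently in $\partial_t u^{\varepsilon,\,\sigma}$, via the boundary condition) combine with the factor $1-\sigma\leq0$ for $\sigma\in[1,\infty)$ and can be dropped, bound the interior terms by $3D\,\|\phi\|_{C^2(\overline{\Omega})}$ using \eqref{4.1.2}, and integrate in time. The only cosmetic difference is that the paper records the favorable boundary combination as $\frac{\varepsilon}{\sigma^2}-\frac{\varepsilon}{\sigma}$ times $(\partial_t u^{\varepsilon,\,\sigma})^2$ rather than $\varepsilon(1-\sigma)(\partial u^{\varepsilon,\,\sigma}/\partial\boldnu)^2$, which is the same observation.
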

\begin{proof}
	For any $\phi \in C^2(\overline{\Omega})$ and by applying the same argument in the proof of Proposition \ref{prop.4.2}, we may obtain  the identity \eqref{4.1.8}. Here, as we stated in Proposition \ref{prop.4.2}, we choose $\phi= d+1 $, where $d$ is a signed distance function from the boundary $\partial \Omega$ which is positive in the domain $\Omega$. Note that this $d$ is also smoothly extended into the function whose domain is $\mathbb{R}^n$. Thus, by using  the properties of the signed distance function, we have 
	\begin{align}\label{4.2.9}
	\frac{d}{dt}\left(\int_{\Omega} \phi \,d\mu^{\varepsilon,\,\sigma}_t \right) \leq&  -\int_{\Omega} \Delta \phi\, d\mu^{\varepsilon,\,\sigma}_t+\int_{\Omega\cap \{|\nabla u^{\varepsilon,\,\sigma}|\neq 0\}} (a^{\varepsilon,\,\sigma} \otimes a^{\varepsilon,\,\sigma} : \nabla ^2 \phi) \varepsilon |\nabla u^{\varepsilon,\,\sigma}|^2 \, dx \nonumber\\
	&\quad + \int_{\partial \Omega} \frac{\varepsilon}{\sigma^2} (\partial_t u^{\varepsilon,\,\sigma})^2\,d\mathcal{H}^{n-1}- \int_{\partial \Omega} \frac{\varepsilon}{\sigma} (\partial_t u^{\varepsilon,\,\sigma})^2\,d\mathcal{H}^{n-1}\nonumber\\
	&\quad \qquad -\int _{\partial \Omega} \left( \frac{\varepsilon |\nabla u^{\varepsilon,\,\sigma}| ^2}{2} +\frac{W (u^{\varepsilon,\,\sigma} )}{\varepsilon}\right)  \, d\mathcal{H}^{n-1},
	\end{align}
	where $a^{\varepsilon,\,\sigma}$ is defined by $\frac{\nabla u^{\varepsilon,\,\sigma}}{|\nabla u^{\varepsilon,\,\sigma}|}$.  Hence, by integrating both members of the inequality \eqref{4.2.9} from time $t_1$ to $t_2$ and using the estimate \eqref{4.1.1} and the fact that $\sigma$ is in $[1,\,\infty)$, we have
	\begin{align}
	\int_{\Omega}\phi\,d\mu^{\varepsilon,\,\sigma}_{t_2}-\int_{\Omega}\phi\,d\mu^{\varepsilon,\,\sigma}_{t_1} &\leq \sup_{\Omega}|\nabla^2\phi|\,\int_{t_1}^{t_2}\mu^{\varepsilon,\,\sigma}_t(\Omega)\,dt+ 2\sup_{\Omega}|\nabla^2\phi|\,\int_{t_1}^{t_2}\mu^{\varepsilon,\,\sigma}_t(\Omega\cap\{|\nabla u^{\varepsilon,\,\sigma}|\neq 0\})\,dt \nonumber\\
	&\quad \qquad \quad \qquad -\int_{t_1}^{t_2}\int_{\partial\Omega}\left( \frac{\varepsilon |\nabla u^{\varepsilon,\,\sigma}| ^2}{2} +\frac{W (u^{\varepsilon,\,\sigma} )}{\varepsilon}\right)  \, d\mathcal{H}^{n-1}\,dt\nonumber\\
	&\leq 3D\,\sup_{\Omega}|\nabla^2\phi|\,(t_2-t_1)-\int_{t_1}^{t_2}\!\!\int_{\partial\Omega}\left( \frac{\varepsilon |\nabla u^{\varepsilon,\,\sigma}| ^2}{2} +\frac{W (u^{\varepsilon,\,\sigma} )}{\varepsilon}\right)  \, d\mathcal{H}^{n-1}.\label{4.2.10.1}
	\end{align} 
	Thus, recalling the choice of $\phi$, we may obtain
	\begin{equation}\label{4.2.10}
	\int_{t_1}^{t_2} \int _{\partial \Omega} \left(\frac{\varepsilon |\nabla u^{\varepsilon,\,\sigma}|^2}{2}+\frac{W(u^{\varepsilon,\,\sigma})}{\varepsilon}\right) \, d\mathcal{H}^{n-1}\,dt \leq 3D\,\|\phi \|_{C^2(\overline{\Omega})}(t_2-t_1+1)< +\infty.
	\end{equation}
	Therefore \eqref{4.2.5} follows by taking the supremum with respect to $\varepsilon>0$ in \eqref{4.2.10}.
\end{proof}

\section{Characterization of the limits}\label{chara.lim}
In this section, we will show the proofs of a sequence of the main results in each case of Dirichlet or dynamic boundary conditions.
\subsection{Dirichlet boundary condition}\label{chara.diri}
In this section, we prove a sequence of the main results which we stated in Section \ref{mainresults.diri}. We note that the positive constants $C_1$ and $D$ are as in Proposition \ref{prop.4.1} and Proposition \ref{prop.4.2}.
\subsubsection{Convergence of the measures $\{\mu^{\varepsilon,\,\sigma}_t\}_{\varepsilon,\,\sigma>0,\,t\geq 0}$ (Dirichlet boundary conditions)}\label{chara.1}
First of all, in order to prove the convergence of $\{\mu^{\varepsilon,\,\sigma}_t\}_{\varepsilon>0,\,\sigma>0}$ for all $t\geq0$, we derive an estimate on the change of the diffuse surface area measures in time. The main idea in the following proof comes from Mugnai and R\"{o}ger \cite{MuRo}. In the following, we set
\begin{equation}\label{5.1.0}
\mu^{\varepsilon,\,\sigma}_t(\phi)\coloneqq \int_{\Omega} \phi\,d\mu^{\varepsilon,\,\sigma}_t
\end{equation}
for all $\phi \in C_c(\overline{\Omega})$.

Note that we assume that ``General assumptions" in Subsection \ref{exis.assmp.diri} holds through this subsection.
\begin{lemma}\label{lem.5.1}
	Let $T>0$ be arbitrary. Then we have, for all $\phi \in C^1_c(\overline{\Omega})$,
	\begin{equation}\label{5.1.1}
	\sup_{\varepsilon,\,\sigma>0}\int_{0}^{T} \left|\frac{d}{dt} \mu^{\varepsilon,\,\sigma}_t(\phi) \right| \,dt \leq D\left(T+\frac{3}{2}\right)\|\phi\|_{C^1(\overline{\Omega})} <\infty.
	\end{equation}
\end{lemma}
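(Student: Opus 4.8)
The plan is to differentiate $t\mapsto\mu^{\varepsilon,\sigma}_t(\phi)$ in time, rewrite the derivative by means of the equation \eqref{1.1.1} together with its boundary condition, and then bound its absolute value by dissipation-type quantities already controlled in Section \ref{apriori}. Writing $f^{\varepsilon,\sigma}:=-\varepsilon\Delta u^{\varepsilon,\sigma}+\varepsilon^{-1}W'(u^{\varepsilon,\sigma})$, the first line of \eqref{1.1.1} gives $f^{\varepsilon,\sigma}=-\varepsilon\,\partial_t u^{\varepsilon,\sigma}$ in $\Omega$, and the second line gives $\nabla u^{\varepsilon,\sigma}\cdot\boldnu=-\sigma^{-1}\partial_t u^{\varepsilon,\sigma}$ on $\partial\Omega$. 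Carrying out a single integration by parts exactly as in the first line of \eqref{4.1.6} --- note that only $\phi\in C^1$ is required for this, in contrast with the double integration by parts used in Proposition \ref{prop.4.2} --- and inserting the two relations above, one obtains, for a.e.\ $t\in(0,T)$,
\begin{equation*}
\frac{d}{dt}\mu^{\varepsilon,\sigma}_t(\phi)=-\int_{\Omega}\varepsilon\phi(\partial_t u^{\varepsilon,\sigma})^2\,dx-\int_{\Omega}\varepsilon(\partial_t u^{\varepsilon,\sigma})(\nabla\phi\cdot\nabla u^{\varepsilon,\sigma})\,dx-\int_{\partial\Omega}\frac{\varepsilon}{\sigma}\phi(\partial_t u^{\varepsilon,\sigma})^2\,d\mathcal{H}^{n-1}.
\end{equation*}
The absolute continuity of $t\mapsto\mu^{\varepsilon,\sigma}_t(\phi)$ and the validity of this identity for a.e.\ $t$ follow from the regularity of $u^{\varepsilon,\sigma}$ postulated in ``General assumptions'' (in particular $u^{\varepsilon,\sigma}(\cdot,t)\in W^{2,2}(\Omega)$ for a.e.\ $t$, which legitimizes the boundary integration by parts); this is essentially already contained in the derivation of \eqref{4.1.6}.

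Next I would take absolute values and estimate the three terms separately. The first and third are together bounded by $\|\phi\|_{C^1(\overline{\Omega})}\big(\int_{\Omega}\varepsilon(\partial_t u^{\varepsilon,\sigma})^2\,dx+\int_{\partial\Omega}\varepsilon\sigma^{-1}(\partial_t u^{\varepsilon,\sigma})^2\,d\mathcal{H}^{n-1}\big)$, so after integrating over $[0,T]$ the \emph{combined} bound in \eqref{4.1.1} --- it is essential here that Proposition \ref{prop.4.1} controls the \emph{sum} of the interior and boundary dissipation integrals, not merely each separately --- yields a total contribution $\le D\,\|\phi\|_{C^1(\overline{\Omega})}$. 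For the middle term, the Cauchy--Schwarz inequality in $L^2(\Omega)$ together with $\int_{\Omega}\varepsilon|\nabla u^{\varepsilon,\sigma}|^2\,dx\le 2\mu^{\varepsilon,\sigma}_t(\overline{\Omega})\le 2D$ (the first inequality because $W\ge0$, the second by \eqref{4.1.2}) gives the pointwise bound $\sqrt{2D}\,\|\phi\|_{C^1(\overline{\Omega})}\big(\int_{\Omega}\varepsilon(\partial_t u^{\varepsilon,\sigma})^2\,dx\big)^{1/2}$; integrating over $[0,T]$, a further Cauchy--Schwarz in $t$ and then \eqref{4.1.1} bound this by $\sqrt{2D}\,\|\phi\|_{C^1(\overline{\Omega})}\,T^{1/2}D^{1/2}=D\sqrt{2T}\,\|\phi\|_{C^1(\overline{\Omega})}$.

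Finally, combining the two contributions and using the elementary inequality $\sqrt{2T}\le T+\tfrac12$ (equivalent to $(T-\tfrac12)^2\ge0$) gives
\begin{equation*}
\int_0^T\Big|\frac{d}{dt}\mu^{\varepsilon,\sigma}_t(\phi)\Big|\,dt\le \|\phi\|_{C^1(\overline{\Omega})}\,D\,\big(1+T+\tfrac12\big)=D\big(T+\tfrac32\big)\|\phi\|_{C^1(\overline{\Omega})},
\end{equation*}
uniformly in $\varepsilon,\sigma>0$, which is \eqref{5.1.1}. I expect the only genuine obstacle to be the careful justification of the differentiation identity under the weak regularity available for $u^{\varepsilon,\sigma}$ (absolute continuity in $t$ of the diffuse area functional, and the legitimacy of the trace and integration-by-parts step on $\partial\Omega$); once that is granted, the estimate is a direct assembly of Proposition \ref{prop.4.1} and \eqref{4.1.2}.
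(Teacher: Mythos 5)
Your proposal is correct and takes essentially the same route as the paper: the same differentiation identity (the computation of \eqref{4.1.6} specialized via the equation and the boundary condition), the interior and boundary dissipation terms absorbed together through \eqref{4.1.1}, and the cross term controlled using the energy bound \eqref{4.1.2}. The only cosmetic difference is that you handle the cross term by Cauchy--Schwarz in space and then in time together with $\sqrt{2T}\le T+\tfrac12$, whereas the paper applies the pointwise Young inequality $ab\le\tfrac12(a^2+b^2)$ and then \eqref{4.1.2}; both yield the same constant $D\big(T+\tfrac32\big)\|\phi\|_{C^1(\overline{\Omega})}$.
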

\begin{proof}
	Let $\phi$ be in $C^1_c(\overline{\Omega})$ and $T>0$ be any time. From the calculation in the proof of Proposition \ref{prop.4.2}, we have
	\begin{align}
	\left|\frac{d}{dt} \mu^{\varepsilon,\,\sigma}_t(\phi) \right| &= \left|-\int_{\Omega} \varepsilon \phi (\partial_t u^{\varepsilon,\,\sigma})^2\,dx- \int_{\partial \Omega} \frac{\varepsilon}{\sigma} \phi (\partial_t u^{\varepsilon,\,\sigma})^2\, d\mathcal{H}^{n-1} - \int_{\Omega} \varepsilon \partial_t u^{\varepsilon,\,\sigma} \nabla u^{\varepsilon,\,\sigma}\cdot \nabla \phi \, dx\right| \nonumber\\
	&\leq \|\phi\|_{C^1(\overline{\Omega})} \left(\int_{\Omega} \varepsilon  (\partial_t u^{\varepsilon,\,\sigma})^2\,dx +  \int_{\partial \Omega} \frac{\varepsilon}{\sigma} \phi (\partial_t u^{\varepsilon,\,\sigma})^2\, d\mathcal{H}^{n-1}\right) \nonumber\\
	&\qquad + \frac{1}{2} \|\phi\|_{C^1(\overline{\Omega})} \int_{\Omega} \left(\varepsilon(\partial_t u^{\varepsilon,\,\sigma})^2+\varepsilon|\nabla u^{\varepsilon,\,\sigma}|^2\right)\,dx. \label{5.1.2}
	\end{align}
	Thus by integrating $0$ to $T$ and using the estimates \eqref{4.1.1}, we obtain the conclusion.
\end{proof}

\begin{proof}[Proof of a part of Lemma \ref{thm3.1}]
	Let $T>0$ be fixed. Choose the countable family $\{\phi_k\}_{k\in \mathbb{N}}$ of $C^1_c(\overline{\Omega})$ which is dense in $C_c(\overline{\Omega})$. Since we have $\sup_{i\in\mathbb{N}} \|\mu^{\varepsilon_i,\,\sigma_i}_t(\phi_k)\|_{BV(0,\,T)} <\infty$ from \eqref{4.1.2} and Lemma \ref{lem.5.1}, we may apply the compactness for BV functions $\{\mu^{\varepsilon_i,\,\sigma_i}_t(\phi_k)\}_{i}$ and thus, by the diagonal argument, there exist a subsequence independent of $k\in\mathbb{N}$ (labelled with the same index) and a family of BV functions $\{f_k\}_{k\in \mathbb{N}}$ such that
	\begin{align}
	\mu^{\varepsilon_i,\,\sigma_i}_t(\phi_k) &\xrightarrow[i\to\infty]{} f_k(t)\quad \text{for a.e. $t\in (0,\,T)$},\label{5.1.3} \\
	\left(\frac{d}{dt} \mu^{\varepsilon_i,\,\sigma_i}_t(\phi_k)\right)\mathcal{L}^1 &\xrightarrow[i\to\infty]{} Df_k\quad \text{as Radon measures on }(0,\,T) \label{5.1.4} 
	\end{align}
	Note that $Df_k$ is a signed measure on $(0,\,T)$ and a distributional derivative of $f_k$. Generally, the set of discontinuous points for functions of bounded variations is at most countable and thus we can choose a countable set $S$ such that, for all $k\in \mathbb{N}$, $f_k$ is continuous on $(0,\,T)\setminus S$.
	
	Next we claim that \eqref{5.1.3} holds on $(0,\,T)\setminus S$. To see this, we take an arbitrary $t\in (0,\,T)\setminus S$ and choose a sequence $\{t_l\}_{l\in \mathbb{N}}$ such that $t_l \to t$ and \eqref{5.1.3} holds for all $t_l$. Defining $[a,\,b]\subset\mathbb{R}$ by the closed interval between $a$ and $b$, then we have, by \eqref{5.1.4}
	\begin{equation}\label{5.1.6}
	\lim_{l\to \infty}\lim_{i\to\infty} \left(\frac{d}{dt} \mu^{\varepsilon_i,\,\sigma_i}_t(\phi_k) \mathcal{L}^1\right)([t_l,\,t]) = \lim_{l\to \infty}Df_k([t_l,\,t])= 0,
	\end{equation}
	for all $k\in\mathbb{N}$. Moreover, we have
	\begin{align}
	|f_k(t)- \mu^{\varepsilon_i,\,\sigma_i}_t(\phi_k)| &\leq |f_k(t)- f_k(t_l)|+|{f_k}(t_l)-\mu^{\varepsilon_i,\,\sigma_i}_{t_l}(\phi_k)|+|\mu^{\varepsilon_i,\,\sigma_i}_{t_l}(\phi_k)-\mu^{\varepsilon_i,\,\sigma_i}_t(\phi_k)| \nonumber\\
	&\leq |f_k(t)- f_k(t_l)|+|f_k(t_l)-\mu^{\varepsilon_i,\,\sigma_i}_{t_l}(\phi_k)|+\left|\left(\frac{d}{dt}\mu^{\varepsilon_i,\,\sigma_i}_t(\phi_k)\mathcal{L}^1\right)([t_l,\,t])\right|. \label{5.1.7}	
	\end{align}
	Then we first take $i\to\infty$ and, after that, take $l\to \infty$ to 
	conclude that \eqref{5.1.3} holds for all $t\in (0,\,T)\setminus S$.
	
	Now let $t$ be in $(0,\,T)\setminus S$. Since we have the estimate \eqref{4.1.1}, there exist a subsequence of $\{\varepsilon_i\}_{i\in\mathbb{N}}$ (labelled with the same index) and a Radon measure $\mu_t$ such that $\mu^{\varepsilon_i,\,\sigma_i}_t \rightharpoonup \mu_t$ as $i\to\infty$ on $\overline{\Omega}$. Hence we deduce that $\mu_t(\phi_k)=f_k(t)$ for all $k\in \mathbb{N}$. Since a family $\{\phi_k\}_{k\in \mathbb{N}} \subset C^1_c(\overline{\Omega})$ is dense in $C_c(\overline{\Omega})$, it holds that, for all $\phi \in C_c(\overline{\Omega})$ and all $t\in (0,\,T)\setminus S$,
	\begin{equation}\label{5.1.8}
	\mu^{\varepsilon_i,\,\sigma_i}_t(\phi) \to \mu_t(\phi) \quad \text{on $\overline{\Omega}$}.
	\end{equation}
	After taking another subsequence (labelled with the same index), we can also ensure that 
	\begin{equation}\label{5.1.9}
	\mu^{\varepsilon_i,\,\sigma_i}_0 \rightharpoonup \mu_0 \quad\text{as Radon measures on }\overline{\Omega}.
	\end{equation}
	Therefore we can deduce that there exist a subsequence $\{\mu^{\varepsilon_i,\,\sigma_i}_t\}_{i\in \mathbb{N}}$ and a Radon measure $\mu_t$ on $\overline{\Omega}$ such that, for all $t\in [0,\,T)\setminus S$, we have $\mu^{\varepsilon_i,\,\sigma_i}_t \rightharpoonup \mu_t$ as $i\to \infty$ on $\overline{\Omega}$. Note that the subsequence we obtain is same as the one selected in \eqref{5.1.3} and \eqref{5.1.4} because we can easily see that any subsequence of the sequence $\{\mu^{\varepsilon_i,\,\sigma_i}_t\}_{i}$ converges to the same limit $\mu_t$ for $t\in[0,\,T]\setminus S$. Since the set $S$ is countable, we may apply the further diagonal argument and then we can choose a further subsequence(labelled with the same index) such that  $\mu^{\varepsilon_i,\,\sigma_i}_t$ converges to some Radon measure $\mu_t$ as $i\to\infty$ on $\overline{\Omega}$ for all $t\in [0,\,T)$.
	
	Finally, we may conclude that there exist a subsequence and a Radon measure $\mu_t$ such that $\mu^{\varepsilon_i,\,\sigma_i}_t \rightharpoonup \mu_t$ on $\overline{\Omega}$ for all $t \in [0,\infty)$ from the fact $[0,\infty)= \bigcup_{n=1}^{\infty}[n-1,\,n)$ and by using the diagonal argument.
\end{proof}

\subsubsection{Convergence of the measures $\{\alpha^{\varepsilon,\,\sigma}\}_{\varepsilon>0,\,\sigma>0}$ and proof of Lemma \ref{thm3.2} and Lemma \ref{thm3.2.1} (Dirichlet boundary conditions)}\label{chara.2}
In this subsection, we show the existence of the convergent subsequence of a family of  $\{\alpha^{\varepsilon,\,\sigma}\}_{\varepsilon,\,\sigma>0}$ and we also prove Lemma \ref{thm3.2} and \ref{thm3.2.1}. Note that we assume that ``General assumptions" and ``Uniform upper bound on the boundary $\partial\Omega$" hold in this subsection.
\begin{proof}[Proof of Lemma \ref{thm3.2}]
	Let $\{\varepsilon'_j\}_{j\in\mathbb{N}}$ and $\{\sigma'_j\}_{j\in\mathbb{N}}$ be subsequences such that Lemma \ref{thm3.1} holds. First of all, we show that there exist a subsequence (labelled with the same index)  $\{\alpha^{\varepsilon'_j,\,\sigma'_j}\}_{j\in\mathbb{N}}$ and a Radon measure $\alpha$ on $\partial \Omega \times [0,\infty)$ such that $\alpha^{\varepsilon'_j,\,\sigma'_j} \rightharpoonup \alpha$ as $j\to\infty$ on $\partial \Omega \times [0,\infty)$.
	
	Let $T>0$ be a positive constant. Then, from Proposition \ref{prop.4.2}, we have for all $j\in\mathbb{N}$
	\begin{align}
	\alpha^{\varepsilon'_j,\,\sigma'_j}(\partial \Omega \times [0,\,T])&= \int_{0}^{T}\int _{\partial \Omega}\varepsilon'_j |\nabla_{\partial \Omega} u^{\varepsilon'_j,\,\sigma'_j}|^2\, d\mathcal{H}^{n-1}\,dt \nonumber\\
	&\leq 2\int_{0}^{T}\int _{\partial \Omega} \left(\frac{\varepsilon'_j |\nabla_{\partial \Omega} u^{\varepsilon'_j,\,\sigma'_j}|^2}{2} +\frac{W (u^{\varepsilon'_j,\,\sigma'_j} )}{\varepsilon'_j}\right) \, d\mathcal{H}^{n-1}\,dt \nonumber\\
	&\leq 2C_1\,T+C_0(T). \label{5.2.1}
	\end{align} 
	Thus we have the locally uniform boundedness, that is,  $\sup_{j\in\mathbb{N}}\alpha^{\varepsilon'_j,\,\sigma'_j}(K) <+\infty$ for any compact subset $K\subset \partial \Omega \times [0,\infty)$. Since this shows that we can apply the compactness theorem for Radon measures, we may conclude that there exist a subsequence (labelled with the same index) $\{\alpha^{\varepsilon'_j,\,\sigma'_j}\}_{j\in\mathbb{N}}$ and a Radon measure $\alpha$ on $\partial \Omega \times [0,\,\infty)$ such that $\alpha^{\varepsilon'_j,\,\sigma'_j} \rightharpoonup \alpha$ as $j\to \infty$. This completes the proof of the first claim.
	
	Secondly, we will prove the claims \eqref{3.1.10} and that $\boldv_b=0$ in $(L^2(\alpha))^n$ as in Lemma \ref{thm3.2}. We take any $j\in\mathbb{N}$. From \eqref{5.2.3}, \eqref{4.1.1}, and the fact that $\sigma'_j\in(0,\,1)$, we deduce that 
	\begin{equation}\label{5.2.4}
	\int_{0}^{\infty}\int_{\partial \Omega} |\boldv^{\varepsilon'_j,\,\sigma'_j}_b|^2\, d\alpha^{\varepsilon'_j,\,\sigma'_j} \leq \int_{0}^{\infty}\int_{\partial \Omega} \varepsilon'_j (\partial_t u^{\varepsilon'_j,\,\sigma'_j})^2 \,d\mathcal{H}^{n-1}\,dt \leq \sigma'_j\,D.
	\end{equation} 
	Therefore ($\alpha^{\varepsilon'_j,\,\sigma'_j}$, $\boldv^{\varepsilon'_j,\,\sigma'_j}_b$) is a measure-function pair which satisfies the $L^2$-uniform boundedness with respect to $j\in\mathbb{N}$. Since we have the convergence such that $\alpha^{\varepsilon'_j,\,\sigma'_j} \rightharpoonup \alpha$ as $j\to\infty$ on $\partial \Omega \times [0,\infty)$ and we can apply the theorem \cite[Theorem 4.4.2.]{Hutchinson}, we may conclude that there exist a subsequence (labelled with the same index) and a function ${\boldv}_b\in (L^2(\alpha,\,\partial \Omega \times [0,\infty)))^n$ such that
	\begin{align}\label{5.2.5}
	\lim_{j\to \infty} \int_{0}^{\infty}\int_{\partial \Omega} \boldg\cdot (\varepsilon'_j \partial_t u^{\varepsilon'_j,\,\sigma'_j} \nabla_{\partial \Omega} u^{\varepsilon'_j,\,\sigma'_j}) \, d\mathcal{H}^{n-1}\, dt &= -\lim_{j\to\infty} \int_{\partial \Omega \times [0,\infty)} \boldg\cdot \boldv^{\varepsilon'_j,\,\sigma'_j}_b \,d\alpha^{\varepsilon'_j,\,\sigma'_j}\nonumber\\
	&= - \int_{\partial \Omega \times [0,\infty)}\boldg\cdot {\boldv}_b\, d\alpha
	\end{align}  
	for all $\boldg\in (C_c(\partial \Omega \times [0,\infty)))^n$ and moreover, 
	\begin{equation}\label{5.2.6}
	\int_{\partial \Omega \times [0,\,\infty)} |{\boldv}_b|^2\, d\alpha \leq \liminf_{j\to\infty}\int_{0}^{\infty}\int_{\partial \Omega} \varepsilon'_j (\partial_t u^{\varepsilon'_j,\,\sigma'_j})^2 \,d\mathcal{H}^{n-1}\,dt
	\end{equation}
	holds. Here, from \eqref{4.1.1}, we have
	\begin{align}
	\limsup_{j\to\infty}\int_{0}^{\infty}\!\!\int_{\partial \Omega} \varepsilon'_j (\partial_t u^{\varepsilon'_j,\,\sigma'_j})^2 \,d\mathcal{H}^{n-1}\,dt&=\limsup_{j\to\infty}\left(\sigma'_j\,\int_{0}^{\infty}\!\!\int_{\partial\Omega}\frac{\varepsilon'_j}{\sigma'_j}\,\left(\partial_t u^{\varepsilon'_j,\,\sigma'_j}\right)^2\,d\mathcal{H}^{n-1}\,dt\right)\nonumber\\
	&\leq (\lim_{j\to\infty}\sigma'_j)\,D<\infty\nonumber\\
	&=0.\label{5.2.6.1}
	\end{align}
	Therefore, from \eqref{5.2.6} and \eqref{5.2.6.1}, we obtain that $\boldv_b=0$ in $(L^2(\alpha))^n$ on $\partial\Omega\times[0,\,\infty)$ and this completes the proof of Lemma \ref{thm3.2} and also the proof of the claim in Remark \ref{rem.thm3.2}.
	
	The only claim remaining to be proved is on the rectifiability of the limit measure $\mu_t$ and this can be done in Proposition \ref{prop.5.6}.
\end{proof}

Next we prove Lemma \ref{thm3.2.1} in the following.

\begin{proof}[Proof of Lemma \ref{thm3.2.1}]
	Since $C^\infty$ is dense in $W^{1,\,2}$ with respect to $W^{1,\,2}$-topology, it is sufficient to show the claim when $u^{\varepsilon'_j,\,\sigma'_j}$ is smooth on $\partial \Omega \times (0,\,T)$ for some $T>0$. From the assumption in Theorem \ref{thm3.2.1}, we can choose $t_1,\,t_2\in[0,\,T]$ and $\Gamma_1$ such that $0\leq t_1<t_2\leq T$ and $\Gamma_1$ is a non-empty connected component of $\partial\Omega$ and then we fix these quantities. First of all, from the estimate \eqref{3.1.8}, \eqref{4.1.1}, \eqref{4.1.5} and the definition of $w^{j}$, we have the following two inequalities;
	\begin{align}
	\sup_{j\in\mathbb{N}}\int_{t_1}^{t_2}\!\!\int_{\Gamma_1}|w^{j}|\,d\mathcal{H}^{n-1}\,dt & \leq \mathcal{H}^{n-1}(\Gamma_1)\,(t_2-t_1)<\infty, \label{5.2.7}\\
	\sup_{j\in\mathbb{N}}\int_{t_1}^{t_2}\!\!\int_{\Gamma_1}\left(|\nabla_{\partial \Omega}w^{j}|+|\partial_t w^{j}|\right)\,d\mathcal{H}^{n-1}\,dt& \leq \tilde{C}_1(t_1,\,t_2,\,D) <\infty, \label{5.2.8}
	\end{align}
	where $\tilde{C}_1(t_1,\,t_2,\,D)$ is some positive constant depending only on $\Omega,\,t_1,\,t_2$, and $D$. Here $D>0$ is as in Propositon \ref{prop.4.1}. Indeed, we can show  \eqref{5.2.8} in the following way; for any $j\in\mathbb{N}$ and $0<t_1<t_2< T$, we have, from Cauchy-Schwarz inequality,
	\begin{align}
	\int_{t_1}^{t_2}\!\!\int_{\Gamma_1}\left(|\nabla_{\partial \Omega}w^{j}|+|\partial_t w^{j}|\right)\,d\mathcal{H}^{n-1}\,dt& = \int_{t_1}^{t_2}\!\!\int_{\Gamma_1}\sqrt{2W(u^{\varepsilon'_j,\,\sigma'_j})}\left(|\nabla_{\partial \Omega} u^{\varepsilon'_j,\,\sigma'_j}|+ |\partial_t u^{\varepsilon'_j,\,\sigma'_j}|\right)\,d\mathcal{H}^{n-1}\,dt \nonumber\\
	&\leq 2\int_{t_1}^{t_2}\!\!\int_{\Gamma_1}\left(\frac{\varepsilon_j|\nabla_{\partial \Omega} u^{\varepsilon'_j,\,\sigma'_j}|^2}{2}+\frac{W(u^{\varepsilon'_j,\,\sigma'_j})}{\varepsilon_j}\right)\,d\mathcal{H}^{n-1}\,dt\nonumber\\
	&\quad \qquad \quad + \frac{1}{2}\int_{t_1}^{t_2}\!\!\int_{\Gamma_1}\varepsilon_j(\partial_t u^{\varepsilon'_j,\,\sigma'_j})^2\,d\mathcal{H}^{n-1}\,dt \nonumber\\
	&\leq 2C_1(t_2-t_1)+2C_1+\frac{1}{2}D, \label{5.2.9}
	\end{align} 
	where $C_1$ is as in Proposition \ref{prop.4.4}. Hence,  $\{w^{j}\}_{j\in\mathbb{N}}$ is a bounded sequence in $BV(\Gamma_1 \cap [t_1,\,t_2])$. 
	
	Since $\partial \Omega$ is a smooth $(n-1)$-dimensional embedded manifold in $\mathbb{R}^n$, we can choose one atlas $(V_{a},\,\varphi_a)$ of $\partial\Omega$ at $p\in \Gamma_1$ such that $\varphi_a: V_a\to \varphi_a(V_a)\subset\mathbb{R}^{n-1}$ is a smooth diffeomorphism. Replacing $\Gamma_1$ with one of the connected components of $\Gamma_1\cap V_a$, if necessary, we may assume that $\Gamma_1\subset V_a$ holds. Then, we can show that $\{w^j(\varphi^{-1}_a)\}_{j\in\mathbb{N}}$ is also a bounded sequence in $BV(\varphi_a(\Gamma_1)\times[t_1,\,t_2])$. Thus, we may apply the compactness theorem for $BV$ functions to $\{w^{j}(\varphi_a^{-1})\}_{j\in\mathbb{N}}$ and then we have that there exist a subsequence $\{w^{j_i}\}_{i\in\mathbb{N}}$ and $\tilde{w}^{\infty}\in BV(\varphi_a(\Gamma_1)\times[t_1,\,t_2])$ such that $w^{j_i}(\varphi^{-1}_a) \rightarrow \tilde{w}^{\infty}$ in $L^1$-topology as $i\to\infty$. Defining $w^{\infty}$ by $\tilde{w}^{\infty}(\varphi_a)$ and taking another subsequence (labelled with the same index), we have that $w^{j_i}(\varphi^{-1}_a) \rightarrow w^{\infty}(\varphi^{-1})$ $(\mathcal{L}^{n-1}\otimes\mathcal{L}^1)$-a.e. in $\varphi_a(\Gamma_1)\times[t_1,\,t_2]$ as $i\to\infty$. Then, setting $u^{j_i}\coloneqq u^{\varepsilon'_{j_i},\,\sigma'_{j_i}}$ and $u^{\infty}\coloneqq\Phi^{-1}\circ w^{\infty}$, we have that $u^{j_i}(\varphi^{-1}_a)\rightarrow u^{\infty}(\varphi^{-1}_a)$ $(\mathcal{L}^{n-1}\otimes\mathcal{L}^1)$-a.e. in $\varphi_a(\Gamma_1)\times[t_1,\,t_2]$. These imply that $w^{j_i}\to w^{\infty}$ and $u^{j_i} \to u^{\infty}$  $(\mathcal{H}^{n-1}\otimes\mathcal{L}^1_t)$-a.e. on $\Gamma_1\times [t_1,\,t_2]$. 
	
	Moreover, from a priori estimate \eqref{4.1.5} and the dominated convergence theorem, we have
	\begin{equation}\label{5.2.10}
	0\leq \int_{t_1}^{t_2}\!\!\!\!\int_{\Gamma_1}W(u^{\infty})\,d\mathcal{H}^{n-1}\,dt \xleftarrow[i\to\infty]{}  \int_{t_1}^{t_2}\!\!\!\!\int_{\Gamma_1}W(u^{j_i})\,d\mathcal{H}^{n-1}\,dt \leq \varepsilon_{j_i} \tilde{C}(t_1,\,t_2) \xrightarrow[i\to\infty]{} 0,
	\end{equation}
	and thus we conclude that $u^{\infty}=\pm 1$ $(\mathcal{H}^{n-1}\otimes\mathcal{L}^1_t)$-a.e. on $\Gamma_1\times [t_1,\,t_2]$ and this yields that $w^{\infty}=\pm \frac{2}{3}$ $(\mathcal{H}^{n-1}\otimes\mathcal{L}^1_t)$-a.e. on $\Gamma_1\times [t_1,\,t_2]$.
	
	Now, since $\Gamma_1 \neq \emptyset$ is bounded and connected, from Poincar\'e-Wirtinger inequality on manifolds (see Lemma \ref{app.lem} in Appendix B of Section \ref{appendixPoincareInequality}), it may follow that there exists a constant $C>0$ depending only on $n$ such that
	\begin{equation}\label{5.2.11}
	\|w^{j_i}(t)-w^{j_i}_{\Gamma_1}(t)\|_{L^{1}(\Gamma_1)} \leq C \|\nabla_{\partial \Omega} w^{j_i}(t)\|_{L^1(\Gamma_1)} 
	\end{equation}
	for any $i\in\mathbb{N}$ and any $t\in[t_1,\,t_2]$, where we set
	\begin{equation}\label{5.2.12}
	w^{j_i}_{\Gamma_1}(t)\coloneqq \frac{1}{\mathcal{H}^{n-1}(\Gamma_1)} \int_{\Gamma_1}w^{j_i}(t)\,d\mathcal{H}^{n-1}.
	\end{equation}
	From Cauchy-Schwarz inequality and \eqref{5.2.11}, we may obtain the following calculation;
	\begin{align}
	\int_{t_1}^{t_2}\!\!\!\!\int_{\Gamma_1}|w^{j_i}-w^{j_i}_{\Gamma_1}|\,d\mathcal{H}^{n-1}\,dt &\leq  C\int_{t_1}^{t_2}\!\!\!\!\int_{\Gamma_1} |\nabla_{\partial \Omega}w^{j_i}|\,d\mathcal{H}^{n-1}\,dt \nonumber\\
	&\leq \delta\,C\int_{t_1}^{t_2}\!\!\!\!\int_{\Gamma_1}\frac{W(u^{\varepsilon'_j,\,\sigma'_j})}{\varepsilon_j}\,d\mathcal{H}^{n-1}\,dt+ \frac{C}{2\delta} \alpha^{\varepsilon'_{j_i},\,\sigma'_{j_i}}(\Gamma_1\times[t_1,\,t_2]) \nonumber\\
	&\leq \delta\,C\,C_1(t_2-t_1+1)+ \frac{C}{2\delta} \alpha^{\varepsilon'_{j_i},\,\sigma'_{j_i}}(\Gamma_1\times[t_1,\,t_2]), \label{5.2.13}
	\end{align}
	where $\delta>0$ independent of $i\in\mathbb{N}$ will be chosen later. Hence, from \eqref{5.2.13} and the triangle inequality, we obtain
	\begin{equation}\label{5.2.14}
	\int_{t_1}^{t_2}\!\!\!\!\int_{\Gamma_1}|w^{j_i}|\,d\mathcal{H}^{n-1}\,dt- \int_{t_1}^{t_2}\!\!\left|\int_{\Gamma_1} w^{j_i}\,d\mathcal{H}^{n-1}\right|\,dt \leq \delta\,C^{\prime}+ \frac{C}{2\delta} \alpha^{\varepsilon'_{j_i},\,\sigma'_{j_i}}(\Gamma_1\times[t_1,\,t_2]),
	\end{equation}
	where we put $C^{\prime}\coloneqq C\,C_1(t_2-t_1+1)$. Here, from the assumption \eqref{3.1.11.2}, we can choose the constant $c_0>0$ such that
	\begin{equation}\label{5.2.15}
	\frac{2}{3}\mathcal{H}^{n-1}(\Gamma_1)\,(t_2-t_1)-\liminf_{i\to \infty} \int_{t_1}^{t_2}\!\!\left|\int_{\Gamma_1} w^{j_i}\,d\mathcal{H}^{n-1}\right|\,dt>c_0>0 .
	\end{equation}
	Therefore, from Lemma \ref{thm3.2} and \eqref{5.2.15}, taking the limit ($i\to \infty$) in \eqref{5.2.14}, we have
	\begin{align}
	\delta\,C^{\prime}+ \frac{C}{2\delta} \alpha(\Gamma_1\times[t_1,\,t_2]) &\geq  \delta\,C^{\prime}+ \frac{C}{2\delta} \limsup_{i\to\infty}\alpha^{\varepsilon'_{j_i},\,\sigma'_{j_i}}(\Gamma_1\times[t_1,\,t_2]) \nonumber\\
	&\geq \frac{2}{3}\mathcal{H}^{n-1}(\Gamma_1)\,(t_2-t_1)- \liminf_{i\to \infty} \int_{t_1}^{t_2}\!\!\left|\int_{\Gamma_1} w^{j_i}\,d\mathcal{H}^{n-1}\right|\,dt \nonumber\\
	&> c_0>0. \label{5.2.16}
	\end{align} 
	Taking $\delta$ such that $0<\delta<\frac{c_0}{2C^{\prime}}$, we may conclude that the limit measure $\alpha$ is positive on $\Gamma_1\times[t_1,\,t_2]$. 
\end{proof}

\subsubsection{First variations of associated varifolds and proof of Lemma \ref{thm3.3} (Dirichlet boundary conditions)}\label{chara.3}
In the previous subsection, we have already proved that there exists a convergent subsequence $\{\mu^{\varepsilon'_i,\,\sigma'_j}_t\}_{i,\,j\in\mathbb{N}}$ for all $t\geq 0$. Then, in this subsection, we mainly discuss the first variation of the associated varifold with $\mu^{\varepsilon'_i,\,\sigma'_j}_t$ and the proof of Lemma \ref{thm3.3}. Note that the first variation of a varifold plays a very important role to prove Lemma \ref{thm3.3}. Note that, through this subsection, we assume that ``General assumptions", ``Vanishing hypothesis for the discrepancy measure", and ``Uniform upper bound on the boundary $\partial\Omega$" in Subsection \ref{exis.assmp.diri} hold.

First of all, we associate a varifold with each $\mu^{\varepsilon'}_t$ as follows.
\begin{definition}\label{def.5.3}
	Let $\{u^{\varepsilon,\,\sigma}\}_{\varepsilon,\,\sigma>0}$ be the solutions to the equations \eqref{1.1.1} and $\mu^{\varepsilon
		,\,\sigma}_t$ be as in \eqref{1.1.5}. Then for $\psi \in C_c(G_{n-1}(\overline{\Omega}))$ and any $t\geq 0$, define
	\begin{equation}\label{5.3.1}
	V^{\varepsilon,\,\sigma}_t(\psi)\coloneqq \int_{\Omega \cap \{|\nabla u^{\varepsilon,\,\sigma}(\cdot,\,t)| \neq 0\}} \psi(x,\,\textbf{I}-\textbf{a}^{\varepsilon,\,\sigma} \otimes \textbf{a}^{\varepsilon,\,\sigma})\, d\mu^{\varepsilon,\,\sigma}_t(x),
	\end{equation}
	where $\textbf{a}^{\varepsilon,\,\sigma}\coloneqq\frac{\nabla u^{\varepsilon,\,\sigma}}{|\nabla u^{\varepsilon,\,\sigma
		}|}$.
\end{definition}
Note that from the definition, we can obtain  $\|V^{\varepsilon,\,\sigma}_t\|=\mu^{\varepsilon,\,\sigma}_t \lfloor_{\{|\nabla u^{\varepsilon,\,\sigma}(\cdot,\,t)|\neq 0\}}$, hence, by the definition of the first variation, we may derive a formula for the first variation of $V^{\varepsilon,\,\sigma}_t$ up to the boundary.
\begin{lemma}\label{lem.5.4}
	Let $\{u^{\varepsilon,\,\sigma}\}_{\varepsilon,\,\sigma>0}$ and $\mu^{\varepsilon,\,\sigma}_t$ be as in Definition \ref{def.5.3}. Then, for any $\varepsilon>0$, $\sigma>0$, $t\geq 0$ and all $\boldg\in (C^1_c(\overline{\Omega}))^n$, we have
	\begin{align}\label{5.3.2}
	\delta V^{\varepsilon,\,\sigma}_t(\boldg) &= \int _{\Omega} (\boldg\cdot \nabla u^{\varepsilon,\,\sigma}) \left(\varepsilon \Delta u^{\varepsilon,\,\sigma} -\frac{W'(u^{\varepsilon,\,\sigma})}{\varepsilon}\right) \, dx
	+  \int_{\Omega\cap\{ |\nabla u^{\varepsilon,\,\sigma}|\not=0 \}} \nabla \boldg : (\textbf{a}^{\varepsilon,\,\sigma}\otimes \textbf{a}^{\varepsilon,\,\sigma}) \,d\xi_t^{\varepsilon,\,\sigma} \nonumber\\
	&\quad + \int _{\partial \Omega} (\boldg\cdot \boldnu) \left( \frac{\varepsilon |\nabla u^{\varepsilon,\,\sigma}|^2}{2}+\frac{W(u^{\varepsilon,\,\sigma})}{\varepsilon}\right) \, d\mathcal{H}^{n-1}
	- \int _{\partial \Omega} \varepsilon (\boldg\cdot \nabla u^{\varepsilon,\,\sigma}) \frac{\partial u^{\varepsilon,\,\sigma}}{\partial \boldnu} \, d\mathcal{H}^{n-1} \nonumber\\
	&\qquad - \int_{\Omega\cap\{ |\nabla u^{\varepsilon,\,\sigma}|=0 \}} \nabla \boldg : \textbf{I} \frac{W(u^{\varepsilon,,\sigma})}{\varepsilon} \, dx.
	\end{align}	
\end{lemma}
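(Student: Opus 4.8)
The strategy is to compute the first variation $\delta V^{\varepsilon,\,\sigma}_t(\boldg)$ directly from its definition \eqref{2.1.7} using the explicit form of $V^{\varepsilon,\,\sigma}_t$ given in \eqref{5.3.1}, and then to rewrite the resulting integrand using integration by parts so that the Allen--Cahn operator $\varepsilon\Delta u^{\varepsilon,\,\sigma}-\varepsilon^{-1}W'(u^{\varepsilon,\,\sigma})$ appears and the discrepancy measure $\xi^{\varepsilon,\,\sigma}_t$ from \eqref{1.1.5.0} emerges on the region $\{|\nabla u^{\varepsilon,\,\sigma}|\neq0\}$. The key algebraic observation is that for $S=\mathbf{I}-\mathbf{a}^{\varepsilon,\,\sigma}\otimes\mathbf{a}^{\varepsilon,\,\sigma}$, the tangential divergence is $\divergence_S\boldg=\nabla\boldg:(\mathbf{I}-\mathbf{a}^{\varepsilon,\,\sigma}\otimes\mathbf{a}^{\varepsilon,\,\sigma})$, and that $\varepsilon|\nabla u^{\varepsilon,\,\sigma}|^2(\mathbf{I}-\mathbf{a}^{\varepsilon,\,\sigma}\otimes\mathbf{a}^{\varepsilon,\,\sigma})=\varepsilon|\nabla u^{\varepsilon,\,\sigma}|^2\mathbf{I}-\varepsilon\nabla u^{\varepsilon,\,\sigma}\otimes\nabla u^{\varepsilon,\,\sigma}$, so that one can split the density of $\mu^{\varepsilon,\,\sigma}_t$ into its $\frac{\varepsilon}{2}|\nabla u^{\varepsilon,\,\sigma}|^2$ and $\varepsilon^{-1}W(u^{\varepsilon,\,\sigma})$ parts and handle them separately.

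First I would write, on $\Omega\cap\{|\nabla u^{\varepsilon,\,\sigma}|\neq0\}$,
\[
\divergence_S\boldg\cdot d\mu^{\varepsilon,\,\sigma}_t = \Big(\tfrac{\varepsilon|\nabla u^{\varepsilon,\,\sigma}|^2}{2}+\tfrac{W(u^{\varepsilon,\,\sigma})}{\varepsilon}\Big)\nabla\boldg:\mathbf{I}\,dx - \varepsilon(\nabla\boldg:\mathbf{a}^{\varepsilon,\,\sigma}\otimes\mathbf{a}^{\varepsilon,\,\sigma})\tfrac{|\nabla u^{\varepsilon,\,\sigma}|^2}{1}\cdot\tfrac12\,dx,
\]
and then recognize the combination as $\nabla\boldg:\mathbf{I}\cdot(\tfrac{\varepsilon}{2}|\nabla u^{\varepsilon,\,\sigma}|^2+\varepsilon^{-1}W)dx-(\nabla\boldg:\mathbf{a}\otimes\mathbf{a})\tfrac{\varepsilon}{2}|\nabla u^{\varepsilon,\,\sigma}|^2dx$. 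Rearranging, the integrand equals $(\nabla\boldg:\mathbf{a}^{\varepsilon,\,\sigma}\otimes\mathbf{a}^{\varepsilon,\,\sigma})d\xi^{\varepsilon,\,\sigma}_t$ plus a term proportional to $\divergence\boldg$ times the full energy density, and the latter I would integrate by parts over $\Omega$: the bulk contribution produces $-\int_\Omega\boldg\cdot\nabla\big(\tfrac{\varepsilon}{2}|\nabla u^{\varepsilon,\,\sigma}|^2+\varepsilon^{-1}W\big)dx$, whose gradient expands (using $\nabla(\tfrac12|\nabla u|^2)=(\nabla^2u)\nabla u$ and $\nabla(W(u))=W'(u)\nabla u$) to give exactly $(\boldg\cdot\nabla u^{\varepsilon,\,\sigma})(\varepsilon\Delta u^{\varepsilon,\,\sigma}-\varepsilon^{-1}W'(u^{\varepsilon,\,\sigma}))$ up to a divergence term $\varepsilon\,\divergence((\nabla\boldg)\nabla u)$ type correction that I must track carefully, and the boundary contribution produces $\int_{\partial\Omega}(\boldg\cdot\boldnu)\big(\tfrac{\varepsilon}{2}|\nabla u^{\varepsilon,\,\sigma}|^2+\varepsilon^{-1}W\big)d\mathcal{H}^{n-1}$. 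The extra bulk divergence terms arising from integrating $\varepsilon(\nabla\boldg:\nabla u\otimes\nabla u)$ by parts recombine to yield the conormal boundary term $-\int_{\partial\Omega}\varepsilon(\boldg\cdot\nabla u^{\varepsilon,\,\sigma})\tfrac{\partial u^{\varepsilon,\,\sigma}}{\partial\boldnu}d\mathcal{H}^{n-1}$.

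Finally, on the complementary set $\Omega\cap\{|\nabla u^{\varepsilon,\,\sigma}|=0\}$, the projection in \eqref{5.3.1} is not defined by $\mathbf{a}^{\varepsilon,\,\sigma}$; here one should note that $V^{\varepsilon,\,\sigma}_t$ as defined excludes this set, but $\mu^{\varepsilon,\,\sigma}_t$ restricted to it still contributes through the integration-by-parts identity applied to all of $\Omega$, contributing the final term $-\int_{\Omega\cap\{|\nabla u^{\varepsilon,\,\sigma}|=0\}}\nabla\boldg:\mathbf{I}\,\varepsilon^{-1}W(u^{\varepsilon,\,\sigma})dx$ (on that set $|\nabla u^{\varepsilon,\,\sigma}|^2=0$ so only the potential part survives). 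The main obstacle is bookkeeping: one must carefully track which integration-by-parts operations are performed over which subdomain, ensure that the splitting into $\{|\nabla u^{\varepsilon,\,\sigma}|\neq0\}$ and $\{|\nabla u^{\varepsilon,\,\sigma}|=0\}$ is consistent (the energy-density integration by parts is done over all of $\Omega$ since $u^{\varepsilon,\,\sigma}\in W^{2,2}(\Omega)$, while the $\xi^{\varepsilon,\,\sigma}_t$ term is intrinsically supported on $\{|\nabla u^{\varepsilon,\,\sigma}|\neq0\}$), and verify that no interior boundary terms appear at $\partial\{|\nabla u^{\varepsilon,\,\sigma}|=0\}$ — this is legitimate because $\nabla u^{\varepsilon,\,\sigma}=0$ a.e. on $\{|\nabla u^{\varepsilon,\,\sigma}|=0\}$ in the Sobolev sense, so the relevant integrands vanish there and no distributional jump is created. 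Since $u^{\varepsilon,\,\sigma}\in W^{2,2}(\Omega)$ the integrations by parts are justified by density of smooth functions, which is why it suffices to prove the identity for smooth $u^{\varepsilon,\,\sigma}$ and pass to the limit.
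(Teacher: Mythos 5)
Your proposal is correct and follows essentially the same route as the paper's proof: starting from the definition of $\delta V^{\varepsilon,\,\sigma}_t$ via $\divergence_S\boldg=\nabla\boldg:(\mathbf{I}-\mathbf{a}^{\varepsilon,\,\sigma}\otimes\mathbf{a}^{\varepsilon,\,\sigma})$, integrating by parts the $\tfrac{\varepsilon}{2}|\nabla u^{\varepsilon,\,\sigma}|^2$ and $\varepsilon^{-1}W(u^{\varepsilon,\,\sigma})$ parts over all of $\Omega$ (legitimate by the $W^{2,2}$ regularity), recognizing the discrepancy term through $\xi^{\varepsilon,\,\sigma}_t=\varepsilon|\nabla u^{\varepsilon,\,\sigma}|^2\mathcal{L}^n-\mu^{\varepsilon,\,\sigma}_t$ on $\{|\nabla u^{\varepsilon,\,\sigma}|\neq0\}$, and keeping the leftover potential term on $\{|\nabla u^{\varepsilon,\,\sigma}|=0\}$. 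Only note the slip in your first display: the $\mathbf{a}^{\varepsilon,\,\sigma}\otimes\mathbf{a}^{\varepsilon,\,\sigma}$ term must be multiplied by the full density $d\mu^{\varepsilon,\,\sigma}_t=(\tfrac{\varepsilon}{2}|\nabla u^{\varepsilon,\,\sigma}|^2+\varepsilon^{-1}W(u^{\varepsilon,\,\sigma}))\,dx$ rather than by $\tfrac{\varepsilon}{2}|\nabla u^{\varepsilon,\,\sigma}|^2\,dx$; your subsequent bookkeeping (splitting off $(\nabla\boldg:\mathbf{a}^{\varepsilon,\,\sigma}\otimes\mathbf{a}^{\varepsilon,\,\sigma})\,d\xi^{\varepsilon,\,\sigma}_t$ and the $\varepsilon\,\nabla\boldg:(\nabla u^{\varepsilon,\,\sigma}\otimes\nabla u^{\varepsilon,\,\sigma})$ term) is consistent with the corrected identity, so this is only a typographical issue.
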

\begin{proof}
	From the definition of the first variation, we have
	\begin{equation}\label{5.3.3}
	\delta V^{\varepsilon,\,\sigma}_t (\boldg)= \int_{\Omega \cap \{|\nabla u^{\varepsilon,\,\sigma}(\cdot,\,t)|\neq 0 \}} \nabla \boldg(x):(\textbf{I}-\textbf{a}^{\varepsilon,\,\sigma}\otimes \textbf{a}^{\varepsilon,\,\sigma})\,d\mu^{\varepsilon,\,\sigma}_t.
	\end{equation}  
	Using integration by part, we have
	\begin{align}\label{5.3.4}
	\int_{\Omega} (\nabla \boldg:\textbf{I})\frac{\varepsilon|\nabla u^{\varepsilon,\,\sigma}|^2}{2} \,dx &= \int_{\partial \Omega} \left(\boldg\cdot \boldnu \frac{\varepsilon|\nabla u^{\varepsilon,\,\sigma}|^2}{2} -\varepsilon (\boldg\cdot \nabla u^{\varepsilon,\,\sigma}) \frac{\partial u^{\varepsilon,\,\sigma}}{\partial \boldnu}\right)\, d\mathcal{H}^{n-1} \nonumber \\ 
	&\quad + \int_{\Omega \cap \{|\nabla u^{\varepsilon,\,\sigma}(\cdot,\,t)|\neq 0\}} \nabla \boldg : (\textbf{a}^{\varepsilon,\,\sigma}\otimes \textbf{a}^{\varepsilon,\,\sigma})\varepsilon |\nabla u^{\varepsilon,\,\sigma}|^2 \,dx \nonumber\\
	&\quad \qquad +\int_{\Omega} (\boldg\cdot \nabla u^{\varepsilon,\,\sigma})\varepsilon \Delta u^{\varepsilon,\,\sigma}\,dx.
	\end{align}
	Similary by using integration by part, we get
	\begin{align}\label{5.3.5}
	\int_{\Omega \cap \{|\nabla u^{\varepsilon,\,\sigma}|\neq 0\}} \nabla \boldg : \textbf{I} \frac{W(u^{\varepsilon,\,\sigma})}{\varepsilon}\,dx &= \int_{\partial \Omega} (\boldg\cdot \boldnu)\frac{W(u^{\varepsilon,\,\sigma})}{\varepsilon}\,d\mathcal{H}^{n-1} - \int_{\Omega} (\boldg\cdot \nabla u^{\varepsilon,\,\sigma})\frac{W'(u^{\varepsilon,\,\sigma})}{\varepsilon}\, dx \nonumber\\
	&\quad -\int_{\Omega \cap \{|\nabla u^{\varepsilon,\,\sigma}|=0\}} \nabla \boldg\cdot I \frac{W(u^{\varepsilon,\,\sigma})}{\varepsilon}\,dx.
	\end{align}
	By substituting \eqref{5.3.4} and \eqref{5.3.5} into \eqref{5.3.3} and recalling the definition of $\xi^{\varepsilon,\,\sigma}_t$ which is equal to $\varepsilon|\nabla u^{\varepsilon,\,\sigma}|^2\,\mathcal{L}^n-\mu^{\varepsilon,\,\sigma}_t$ where $\mathcal{L}^n$ is the $n$-dimensional Lebesgue measure, we obtain \eqref{5.3.2}. 
\end{proof}

\begin{remark}\label{rem.5.5.0}
	By recalling again the definition of $\xi^{\varepsilon,\,\sigma}_t$, we have 
	\begin{equation}\label{5.3.5.1}
	\frac{W(u^{\varepsilon,\,\sigma})}{\varepsilon}\,\mathcal{L}^n=\frac{\varepsilon|\nabla u^{\varepsilon,\,\sigma}|^2}{2}\,\mathcal{L}^n-\xi^{\varepsilon,\,\sigma}_t.
	\end{equation}
	Thus, we can rewrite the last term in the left-hand side in \eqref{5.3.2} as follows;
	\begin{equation}\label{5.3.5.2}
	-\int_{\Omega \cap \{|\nabla u^{\varepsilon,\,\sigma}|=0\}} \nabla \boldg\cdot I \frac{W(u^{\varepsilon,\,\sigma})}{\varepsilon}\,dx=\int_{\Omega \cap \{|\nabla u^{\varepsilon,\,\sigma}|=0\}} \divergence \boldg\,d\xi^{\varepsilon,\,\sigma}_t.
	\end{equation}
\end{remark}

\begin{proposition}\label{prop.5.5}
	Let $\{\varepsilon'_j\}_{j\in\mathbb{N}}$ and $\{\sigma'_j\}_{j\in\mathbb{N}}$ be such that Lemma \ref{thm3.1} and \ref{thm3.2} hold and let $\{u^{\varepsilon'_j,\,\sigma'_j}\}_{j\in\mathbb{N}}$ be the solutions to the equations \eqref{1.1.1}. Then, defining $c(t)$ by 
	\begin{align}\label{5.3.6}
	c(t)\coloneqq\liminf_{j\to\infty} & \left( \int_\Omega |\nabla u ^{\varepsilon'_j,\,\sigma'_j}|\, \left|\varepsilon'_j \Delta u^{\varepsilon'_j,\,\sigma'_j} -\frac{W'(u^{\varepsilon'_j,\,\sigma'_j})}{\varepsilon'_j} \right| \, dx \right.+ \int _{\partial \Omega}  \frac{\varepsilon'_j |\nabla u^{\varepsilon'_j,\,\sigma'_j}|^2}{2}+\frac{W(u^{\varepsilon'_j,\,\sigma'_j})}{\varepsilon'}   \, d\mathcal{H}^{n-1} \nonumber\\
	&\quad \qquad \quad \qquad \left. +\int_{\partial \Omega} \varepsilon'_j |\nabla u^{\varepsilon'_j,\,\sigma'_j}| \left|\frac{\partial u^{\varepsilon'_j,\,\sigma'_j}}{\partial \boldnu}\right|\,d\mathcal{H}^{n-1} \right),
	\end{align}
	we have $c\in L^1_{loc}([0,\,\infty))$ and $c(t)<\infty$ for a.e. $t\in[0,\,\infty)$.
\end{proposition}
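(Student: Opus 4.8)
The plan is to prove the stronger assertion that $\int_{0}^{T}c(t)\,dt<\infty$ for every $T>0$, which immediately yields both $c\in L^1_{loc}([0,\infty))$ and $c(t)<\infty$ for a.e.\ $t$. Write $g_j(t)$ for the sum of the three integrals appearing inside the $\liminf$ in \eqref{5.3.6}. Each $g_j$ is a non-negative function of $t$, and it is $t$-measurable because the regularity class assumed in ``General assumptions'' makes $u^{\varepsilon'_j,\,\sigma'_j}(\cdot,t)$ and its traces on $\partial\Omega$ well defined and measurably dependent on $t$ for a.e.\ $t$. Hence Fatou's lemma gives $\int_{0}^{T}c(t)\,dt=\int_{0}^{T}\liminf_{j\to\infty}g_j(t)\,dt\le\liminf_{j\to\infty}\int_{0}^{T}g_j(t)\,dt$, so it suffices to bound $\int_{0}^{T}g_j(t)\,dt$ uniformly in $j$; this reduces to estimating the time integral of each of the three pieces separately.

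For the interior piece I would use the equation \eqref{1.1.1}: multiplying its first line by $\varepsilon'_j$ gives the identity $\varepsilon'_j\Delta u^{\varepsilon'_j,\,\sigma'_j}-\frac{W'(u^{\varepsilon'_j,\,\sigma'_j})}{\varepsilon'_j}=\varepsilon'_j\,\partial_t u^{\varepsilon'_j,\,\sigma'_j}$, so by the elementary bound $2ab\le a^2+b^2$ one has, pointwise in $\Omega$, $|\nabla u^{\varepsilon'_j,\,\sigma'_j}|\,\bigl|\varepsilon'_j\Delta u^{\varepsilon'_j,\,\sigma'_j}-\frac{W'(u^{\varepsilon'_j,\,\sigma'_j})}{\varepsilon'_j}\bigr|\le\frac{\varepsilon'_j}{2}|\nabla u^{\varepsilon'_j,\,\sigma'_j}|^2+\frac{\varepsilon'_j}{2}(\partial_t u^{\varepsilon'_j,\,\sigma'_j})^2$. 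Integrating over $\Omega\times[0,T]$ and invoking Proposition \ref{prop.4.1}, the first term is bounded by $\int_{0}^{T}\mu^{\varepsilon'_j,\,\sigma'_j}_t(\overline{\Omega})\,dt\le DT$ via \eqref{4.1.2}, and the second by $\frac12 D$ via the dissipation estimate in \eqref{4.1.1}; thus the interior piece is bounded by $DT+\frac12 D$, uniformly in $j$.

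For the two boundary pieces I would split $|\nabla u^{\varepsilon'_j,\,\sigma'_j}|^2=|\nabla_{\partial\Omega}u^{\varepsilon'_j,\,\sigma'_j}|^2+\bigl(\frac{\partial u^{\varepsilon'_j,\,\sigma'_j}}{\partial\boldnu}\bigr)^2$ on $\partial\Omega$, and for the cross term further estimate $\varepsilon'_j|\nabla u^{\varepsilon'_j,\,\sigma'_j}|\,\bigl|\frac{\partial u^{\varepsilon'_j,\,\sigma'_j}}{\partial\boldnu}\bigr|\le\frac{\varepsilon'_j}{2}|\nabla_{\partial\Omega}u^{\varepsilon'_j,\,\sigma'_j}|^2+\varepsilon'_j\bigl(\frac{\partial u^{\varepsilon'_j,\,\sigma'_j}}{\partial\boldnu}\bigr)^2$, again by $2ab\le a^2+b^2$. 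After this reduction the ``tangential'' contributions, i.e.\ the time integrals over $[0,T]$ of $\int_{\partial\Omega}\bigl(\frac{\varepsilon'_j}{2}|\nabla_{\partial\Omega}u^{\varepsilon'_j,\,\sigma'_j}|^2+\frac{W(u^{\varepsilon'_j,\,\sigma'_j})}{\varepsilon'_j}\bigr)\,d\mathcal{H}^{n-1}$, are bounded uniformly in $j$ by Proposition \ref{prop.4.2}, namely \eqref{4.1.5} applied with $t_1=0$, $t_2=T$ and $\sigma'_j\in(0,1)$; the ``normal'' contributions, i.e.\ the time integrals of $\int_{\partial\Omega}\varepsilon'_j\bigl(\frac{\partial u^{\varepsilon'_j,\,\sigma'_j}}{\partial\boldnu}\bigr)^2\,d\mathcal{H}^{n-1}$, are bounded uniformly in $j$ precisely by the hypothesis ``Uniform upper bound for the solution of Allen-Cahn equations'', i.e.\ \eqref{3.1.9.2}. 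Summing the three bounds gives $\sup_j\int_{0}^{T}g_j(t)\,dt<\infty$, and the conclusion follows.

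I expect no serious obstacle here: the estimate is routine once the a priori bounds of Section \ref{apriori} are in hand. The one substantive point is that the boundary parts of $c(t)$---those containing $\frac{\partial u^{\varepsilon'_j,\,\sigma'_j}}{\partial\boldnu}$ on $\partial\Omega$---cannot be controlled from the dissipation identity alone and genuinely require the extra assumption \eqref{3.1.9.2}, which is why Proposition \ref{prop.5.5}, and with it the whole Dirichlet-case analysis, is placed under that hypothesis. A minor technical matter to check is the $t$-measurability of $g_j$ and the existence of the $L^2(\partial\Omega)$-traces of $\nabla u^{\varepsilon'_j,\,\sigma'_j}(\cdot,t)$ for a.e.\ $t$, both immediate from $u^{\varepsilon'_j,\,\sigma'_j}\in L^2_{loc}([0,\infty);W^{2,2}(\Omega))$ assumed in ``General assumptions''.
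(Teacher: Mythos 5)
Your proposal is correct and follows essentially the same route as the paper: reduce via Fatou's lemma to a uniform-in-$j$ bound on $\int_0^T$ of the three pieces, then control the interior piece using the equation together with the a priori estimates \eqref{4.1.1}--\eqref{4.1.2}, and the boundary pieces using Proposition \ref{prop.4.2} (\eqref{4.1.5}) combined with the hypothesis \eqref{3.1.9.2}. The only cosmetic difference is that you use Young's inequality where the paper uses Cauchy--Schwarz for the interior and cross terms, which changes nothing substantive.
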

\begin{proof}
	It is sufficient to show that \eqref{5.3.6} holds for a.e. $t\in [0,\,T]$ for every $T>0$ because we have $\bigcup_{l=1}^{\infty}[0,\,l]=[0,\,\infty)$. Let $T>0$ be arbitrary. For simplicity, let $(\varepsilon,\,\sigma)$ denote the parameters $(\varepsilon'_j,\,\sigma'_j)$ in this proof. We set 
	\begin{align}
	I^{\varepsilon,\,\sigma}_1 &\coloneqq \int_\Omega |\nabla u ^{\varepsilon,\,\sigma}|\, \left|\varepsilon \Delta u^{\varepsilon,\,\sigma} -\frac{W'(u^{\varepsilon,\,\sigma})}{\varepsilon}\right| \, dx \label{5.3.7}\\
	I^{\varepsilon,\,\sigma}_2 &\coloneqq \int _{\partial \Omega} \left( \frac{\varepsilon |\nabla u^{\varepsilon,\,\sigma}|^2}{2}+\frac{W(u^{\varepsilon,\,\sigma})}{\varepsilon}  \right)  \, d\mathcal{H}^{n-1} \label{5.3.8}\\
	I^{\varepsilon,\,\sigma}_3 &\coloneqq \int_{\partial \Omega}  \varepsilon |\nabla u^{\varepsilon,\,\sigma}|\left|\frac{\partial u^{\varepsilon,\,\sigma}}{\partial \boldnu}\right|\,d\mathcal{H}^{n-1}. \label{5.3.9}
	\end{align}
	From \eqref{4.1.1} and \eqref{4.1.2} and by using Cauchy-Schwarz inequality, we have
	\begin{align}
	\left(\int_{0}^{T} I^{\varepsilon,\,\sigma}_1\,dt\right)^2 &\leq \left(\int_{0}^{T}\int_{\Omega} \varepsilon |\nabla u^{\varepsilon,\,\sigma}|^2 \,dx\,dt \right) \left(\int_{0}^{T} \int_{\Omega} \varepsilon\left(\Delta u^{\varepsilon,\,\sigma} -\frac{W'(u^{\varepsilon,\,\sigma})}{\varepsilon}\right)^2 \,dx\,dt \right) \nonumber\\
	&\leq 2D\,\int_{0}^{T}\mu^{\varepsilon,\,\sigma}_t(\Omega)\,dt \leq  2D^2\,T, \label{5.3.10}
	\end{align} 
	and, from the assumption of uniform upper bound \eqref{3.1.9.2} and \eqref{4.1.5},
	\begin{equation}\label{5.3.11}
	\int_{0}^{T} I^{\varepsilon,\,\sigma}_2\,dt\leq C_1T+C_0(T),
	\end{equation}
	and finally, from \eqref{3.1.9.2} and \eqref{4.1.5},
	\begin{align}
	\left(\int_{0}^{T}I^{\varepsilon,\,\sigma}_3\,dt\right)^2 &\leq \int_{0}^{T}\int_{\partial \Omega} \varepsilon |\nabla u^{\varepsilon,\,\sigma}|^2\,d\mathcal{H}^{n-1}\,dt \leq 2C_1T+2C_0(T) \label{5.3.12}
	\end{align}
	for any $\varepsilon,\,\sigma>0$. Then by using Fatou's lemma we have
	\begin{equation}
	\int_{0}^{T} c(t)\,dt \leq \liminf_{\varepsilon,\,\sigma \to 0} \int_{0}^{T} (I^{\varepsilon,\,\sigma}_1+I^{\varepsilon,\,\sigma}_2+I^{\varepsilon,\,\sigma}_3)\,dt \leq C_2(T,\,D)<\infty, \label{5.3.13}
	\end{equation}
	where $C_2(T,\,D)\coloneqq\sqrt{2}D\,\sqrt{T}+C_1\,T+C_0(T)+\sqrt{2}\sqrt{C_1\,T+C_0(T)}$. This shows that $c\in L^1_{loc}([0,\,\infty))$ and thus $c(t)<\infty$ holds for a.e. $t\in[0,\,T]$. This completes the proof. 
\end{proof}
Next we will show that $\mu_t$ is actually $(n-1)$-rectifiable measure on $\overline{\Omega}$ for a.e. $t\geq 0$ and a proper subsequence of the associated varifolds $\{V^{\varepsilon,\,\sigma}_t\}_{\varepsilon,\,\sigma>0}$ converges uniquely to the varifold $V_t$ associated with $\mu_t$.
\begin{proposition}\label{prop.5.6}
	For a.e. $t\geq 0$, $\mu_t$ is $(n-1)$-rectifiable on $\overline{\Omega}$ and any convergent subsequence $\{V^{\varepsilon''_j,\,\sigma'_j}\}_{j\in\mathbb{N}}$ of $\{V^{\varepsilon'_j,\,\sigma'_j}_t\}_{j\in\mathbb{N}}$, where $\{\varepsilon'_j\}_{j\in\mathbb{N}}$ and $\{\sigma'_j\}_{j\in\mathbb{N}}$ are such that Lemma \ref{thm3.1} and \ref{thm3.2} hold, converges to the unique $(n-1)$-rectifiable varifold $V_t$ associated with $\mu_t$. Moreover, we have
	\begin{equation}\label{5.3.14}
	\|\delta V_t\|(\overline{\Omega})<\infty,\quad
	\int_{0}^{T}\|\delta V_t\|(\overline{\Omega})\,dt<\infty
	\end{equation}
	for a.e. $t\geq0$ and any $T>0$ respectively.
\end{proposition}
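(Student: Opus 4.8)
The plan is to fix a time $t\ge 0$ in the full-measure set on which simultaneously $c(t)<\infty$ (Proposition~\ref{prop.5.5}) and the discrepancy vanishes, i.e.\ $|\xi^{\varepsilon'_j,\,\sigma'_j}_t|\rightharpoonup\xi_t\equiv 0$ on $\overline{\Omega}$ (``Vanishing hypothesis for the discrepancy measure''), and then to pass to the limit in the associated varifolds $V^{\varepsilon'_j,\,\sigma'_j}_t$ of Definition~\ref{def.5.3}. Since $G_{n-1}(\overline{\Omega})$ is compact and $\|V^{\varepsilon'_j,\,\sigma'_j}_t\|=\mu^{\varepsilon'_j,\,\sigma'_j}_t\lfloor_{\{|\nabla u^{\varepsilon'_j,\,\sigma'_j}|\neq 0\}}\le\mu^{\varepsilon'_j,\,\sigma'_j}_t(\overline{\Omega})\le D$ by \eqref{4.1.2}, any subsequence of $\{V^{\varepsilon'_j,\,\sigma'_j}_t\}_j$ admits a further subsequence converging to some $\tilde V_t\in\mathbb{V}_{n-1}(\overline{\Omega})$. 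On $\{|\nabla u^{\varepsilon,\,\sigma}|=0\}$ one has $\mu^{\varepsilon,\,\sigma}_t=(W(u^{\varepsilon,\,\sigma})/\varepsilon)\,\mathcal{L}^n=|\xi^{\varepsilon,\,\sigma}_t|$, hence $\mu^{\varepsilon'_j,\,\sigma'_j}_t\lfloor_{\{|\nabla u^{\varepsilon'_j,\,\sigma'_j}|=0\}}(\overline{\Omega})\le|\xi^{\varepsilon'_j,\,\sigma'_j}_t|(\overline{\Omega})\to\xi_t(\overline{\Omega})=0$; together with $\mu^{\varepsilon'_j,\,\sigma'_j}_t\rightharpoonup\mu_t$ from Lemma~\ref{thm3.1} this forces $\|\tilde V_t\|=\mu_t$.

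Next I would bound $\delta\tilde V_t$. For $\boldg\in(C^1_c(\overline{\Omega}))^n$ we have $\delta\tilde V_t(\boldg)=\lim_j\delta V^{\varepsilon'_j,\,\sigma'_j}_t(\boldg)$, because $(x,S)\mapsto\nabla\boldg(x):S$ lies in $C(G_{n-1}(\overline{\Omega}))$. I would then estimate $\delta V^{\varepsilon'_j,\,\sigma'_j}_t(\boldg)$ via the first-variation formula of Lemma~\ref{lem.5.4}, rewriting its last term through Remark~\ref{rem.5.5.0} as $\int_{\Omega\cap\{|\nabla u^{\varepsilon,\,\sigma}|=0\}}\divergence\boldg\,d\xi^{\varepsilon,\,\sigma}_t$: the three energy contributions are dominated by $(\sup_{\overline{\Omega}}|\boldg|)(I^{\varepsilon'_j,\,\sigma'_j}_1+I^{\varepsilon'_j,\,\sigma'_j}_2+I^{\varepsilon'_j,\,\sigma'_j}_3)$ in the notation of \eqref{5.3.7}--\eqref{5.3.9}, while the two discrepancy terms are dominated by $2(\sup_{\overline{\Omega}}|\nabla\boldg|)\,|\xi^{\varepsilon'_j,\,\sigma'_j}_t|(\overline{\Omega})\to 0$. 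Choosing the original subsequence so that, in addition, $I^{\varepsilon'_j,\,\sigma'_j}_1+I^{\varepsilon'_j,\,\sigma'_j}_2+I^{\varepsilon'_j,\,\sigma'_j}_3\to c(t)$ (possible since $c(t)$ is the $\liminf$ of this sum in \eqref{5.3.6}), we get $|\delta\tilde V_t(\boldg)|\le c(t)\sup_{\overline{\Omega}}|\boldg|$, so $\delta\tilde V_t$ extends to a bounded functional on $(C_c(\overline{\Omega}))^n$ with $\|\delta\tilde V_t\|(\overline{\Omega})\le c(t)<\infty$; integrating over $[0,T]$ and using $c\in L^1_{loc}([0,\infty))$ from Proposition~\ref{prop.5.5} gives $\int_0^T\|\delta\tilde V_t\|(\overline{\Omega})\,dt<\infty$.

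It then remains to show that $\mu_t=\|\tilde V_t\|$ is $(n-1)$-rectifiable on $\overline{\Omega}$ and that $\tilde V_t$ is the varifold $V_t$ of Definition~\ref{def.3.1}. Since $\tilde V_t$ has locally bounded first variation, Allard's rectifiability theorem reduces rectifiability to the bound $\Theta^{*(n-1)}(\mu_t,\cdot)>0$ for $\mu_t$-a.e.\ point of $\overline{\Omega}$. In $\Omega$ this is the classical interior lower density estimate for diffuse Allen--Cahn interfaces already quoted after Lemma~\ref{thm3.1} (whence $\sigma_0^{-1}\mu_t\lfloor_{\Omega}$ is $(n-1)$-integral, cf.\ \cite{tone,taka.tone}). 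On $\partial\Omega$, where the interior monotonicity formula is unavailable, I would work in a tubular neighborhood of $\partial\Omega$, reflect $\tilde V_t$ across the smooth hypersurface $\partial\Omega$ to produce a varifold with locally bounded first variation there, and run the density estimate for the reflected varifold; equivalently, one exploits that an $(n-1)$-varifold with locally bounded first variation whose mass is carried by the smooth hypersurface $\partial\Omega$ is absolutely continuous with respect to $\mathcal{H}^{n-1}\lfloor_{\partial\Omega}$. Granting that $\mu_t$ is rectifiable, any subsequential limit $\hat V_t$ satisfies $\|\hat V_t\|=\mu_t$ and, by the vanishing of the discrepancy which pins down the limiting varifold planes $\textbf{I}-\textbf{a}^{\varepsilon,\,\sigma}\otimes\textbf{a}^{\varepsilon,\,\sigma}$ (in the manner of R\"{o}ger and Sch\"{a}tzle \cite{rog.schz}), $\hat V_t$ is the rectifiable varifold associated with $\mu_t$; since such a varifold is uniquely determined by its mass measure, $\hat V_t=V_t$ for every convergent subsequence, hence the whole family converges to $V_t$ and the estimates above hold for $V_t$.

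The step I expect to be the main obstacle is the lower density estimate for $\mu_t$ on $\partial\Omega$, together with the resulting rectifiability and tangent-plane identification there: all the remaining ingredients are essentially passages to the limit in the a priori bounds of Section~\ref{apriori} combined with the discrepancy hypothesis, whereas the behavior of $\mu_t$ on $\partial\Omega$ is precisely where the geometry of the domain enters, and is also the reason the vanishing of the discrepancy up to the boundary is imposed as a hypothesis rather than derived.
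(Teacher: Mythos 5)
Most of your outline matches the paper's argument: the mass identification $\|\tilde V_t\|=\mu_t$ via the discrepancy on $\{|\nabla u^{\varepsilon,\sigma}|=0\}$, the bound $|\delta\tilde V_t(\boldg)|\le c(t)\sup|\boldg|$ from Lemma \ref{lem.5.4} and Proposition \ref{prop.5.5}, the extension by Riesz to get $\|\delta\tilde V_t\|(\overline\Omega)<\infty$ with $\int_0^T\|\delta\tilde V_t\|(\overline\Omega)\,dt<\infty$ from $c\in L^1_{loc}$, and the use of Allard's rectifiability theorem plus the fact that a rectifiable varifold is determined by its mass measure to get uniqueness of the limit. The genuine gap is exactly the step you flag as the ``main obstacle'': you leave unproved the density hypothesis of Allard's theorem on $\partial\Omega$, and the route you sketch for it (a reflection of $\tilde V_t$ across $\partial\Omega$ and a boundary ``lower density estimate'') is neither carried out nor needed. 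Allard's theorem only requires \emph{positive upper density} $\|V\|$-a.e., and the paper obtains this on all of $\overline\Omega$ by an elementary measure-theoretic argument with no reflection and no monotonicity formula: by the interior integrality ($\theta_t\ge 1$ on $M_t$) one has $\mathcal{H}^{n-1}(\spt\mu_t\cap\Omega)\le\mu_t(\Omega)\le D$, while $\spt\mu_t\cap\partial\Omega\subset\partial\Omega$ gives $\mathcal{H}^{n-1}(\spt\mu_t\cap\partial\Omega)\le\mathcal{H}^{n-1}(\partial\Omega)<\infty$; hence $\mathcal{H}^{n-1}(\spt\mu_t)<\infty$, and the standard comparison $\mu_t\bigl(\{x\in\spt\mu_t:\ \Theta^{*\,n-1}(\mu_t,x)\le s\}\bigr)\le 2^{n-1}s\,\mathcal{H}^{n-1}(\spt\mu_t)$, letting $s\downarrow 0$, shows $\mu_t$ is concentrated where the upper $(n-1)$-density is positive. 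Combined with the first-variation bound this lets Allard apply up to the boundary, which is precisely the missing ingredient in your proposal.

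A secondary weak point: to conclude that \emph{every} convergent subsequence has the same limit you appeal to the discrepancy ``pinning down the limiting planes in the manner of R\"oger--Sch\"atzle''; that mechanism is not justified here and is not what the paper does. The paper simply runs the bounded-first-variation and positive-upper-density argument for each subsequential limit $\hat V_t$, so that $\hat V_t$ itself is rectifiable with $\|\hat V_t\|=\mu_t$ and therefore coincides with the varifold $V_t$ of Definition \ref{def.3.1}. (Relatedly, fixing a subsequence along which $I_1+I_2+I_3\to c(t)$ proves the bound only for that subsequence; for arbitrary convergent subsequences one should argue as the paper does, bounding $\delta\hat V_t$ by the corresponding liminf, which is finite for a.e. $t$ by Proposition \ref{prop.5.5}.)
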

\begin{proof}
	Recalling the assumption in Subsection \ref{exis.assmp.diri}, that is, the vanishing of the discrepancy measure $\xi^{\varepsilon'_j,\,\sigma'_j}_t$ up to the boundary $\partial\Omega$, we now have that, for all $\phi \in C_c(\overline{\Omega})$,
	\begin{equation}\label{5.3.16}
	\lim_{j \to \infty}\int_{\Omega} \phi\,d|\xi^{\varepsilon'_j,\,\sigma'_j}_t|=0
	\end{equation}
	holds for a.e. $t\in[0,\,\infty)$. 
	
	First of all, we show that the limit measure $\sigma_0^{-1}\mu_t$ is $(n-1)$-integral in $\Omega$ for a.e. $t\geq0$. To do this, we refer to the result by Tonegawa in \cite{Tonegawa02}. The author proved the integrality of the measure $\mu_t$ under the essential conditions from (1) to (7) appearing in ``PROOF OF INTEGRALITY" in \cite{Tonegawa02}. Thus, it is sufficient to clarify that our ingredients satisfy the conditions from (1) to (7) written in that paper. 
	
	From the assumptions we impose in the present paper, we already have the conditions. Indeed, we can show, what we call, the monotonicity formula for the measure $\mu^{\varepsilon,\,\sigma}_t$ and its proof is given in Proposition \ref{monotonicityFormula} of Appendix B (see also \cite{Ilmanen01, MiTo}). From the assumption that there exists a constant $c_0>0$ such that
	\begin{equation}
		\sup_{\varepsilon,\,\sigma>0}\xi^{\varepsilon,\,\sigma}_t \leq c_0
	\end{equation}
	for any $t>0$ (see ``Vanishing hypothesis of the discrepancy measure" in Section \ref{exisdiri}),  Then with this monotonicity formula, as well as \cite[Corollary 6.1]{TaTo}, we obtain the estimate
	\begin{equation}
		\mathcal{H}^{n-1}(\spt\mu_t \cap \Omega_m) \leq C(m)\,\liminf_{s\to t}\mu_s(\Omega) <\infty
	\end{equation}
	for some constant $C(m)>0$ and every $m\in\mathbb{N}$. Therefore, we may confirm that all the conditions shown by Tonegawa in \cite{Tonegawa02} are satisfied, we may conclude that, for every $m\in\mathbb{N}$, there exist a $(n-1)$-rectifiable set $M_t\subset \Omega_m$ and a function $\theta_t \in L^1_{loc}(\mathcal{H}^{n-1};\,M_t)$ such that $\theta_t$ takes the values on $\mathbb{N}$ and, for a.e. $t\geq0$,
	\begin{equation}\label{5.3.17}
		\sigma_0^{-1} \mu_t(U)= \int_{M_t\cap U}\theta_t(x)\,d\mathcal{H}^{n-1}(x)
	\end{equation}
	for any measurable set $U \subset \Omega_m$. Moreover \eqref{5.3.17} implies that for a.e. $t\geq 0$ and each $m\in\mathbb{N}$, $\spt\mu_t \cap \Omega_m = \spt\mathcal{H}^{n-1} \lfloor_{M_t}$.
	
	Taking the above arguments into account, we may show that $\mathcal{H}^{n-1}(\spt\mu_t)<\infty$ as follows: From the definition of $\Omega_m$, we have $\Omega_m \subset \Omega_{m+1}$ for each $m\in\mathbb{N}$. Thus from the continuity of Radon measures, we have
	\begin{align}
		\mathcal{H}^{n-1}(\spt\mu_t) &= \mathcal{H}^{n-1}(\spt\mu_t \cap \Omega)+  \mathcal{H}^{n-1}(\spt\mu_t \cap \partial \Omega) \nonumber\\
		&\leq \limsup_{m\to\infty}\mathcal{H}^{n-1}(\spt\mu_t \cap \Omega_m) + \mathcal{H}^{n-1}(\partial \Omega) \label{5.3.18}
	\end{align}
	for any fixed $m\in\mathbb{N}$ and a.e. $t\geq0$. Since $\partial \Omega$ is compact, then it is sufficient to prove that $\limsup_{m\to\infty}\mathcal{H}^{n-1}(\spt\mu_t \cap \Omega_m) < \infty$. From the integrality of $\sigma_0^{-1}\mu_t\lfloor_{ \Omega_m}$ for each $m\in\mathbb{N}$ and \eqref{5.3.17}, we obtain
	\begin{equation}
		\mathcal{H}^{n-1}(\spt\mu_t \cap \Omega_m) = \mathcal{H}^{n-1}(M_t) \leq \int_{M_t}\theta_t(x)\,d\mathcal{H}^{n-1}(x) = \sigma_0^{-1}\mu_t(M_t \cap \Omega_m) \leq \sigma_0^{-1} \mu_t(\Omega) <\infty. \label{5.3.18.1}
	\end{equation}
	Note that, in \eqref{5.3.18.1}, we have used the property of $\theta_t\geq1$ for $\mathcal{H}^{n-1}$-a.e. in $M_t$. Thus, from \eqref{5.3.18}, \eqref{5.3.18.1}, and the arbitrariness of $m\in\mathbb{N}$, we obtain that
	\begin{equation}
		\limsup_{m\to\infty}\mathcal{H}^{n-1}(\spt\mu_t \cap \Omega_m) \leq \sigma_0^{-1} \mu_t(\Omega) <\infty
	\end{equation}
	and thus we conclude that $\mathcal{H}^{n-1}(\spt\mu_t) < \infty$.
	
	In addition, from a standard measure theory (see, for instance, \cite[Theore 3.2, Chapter 1]{Simon}), we have 
	\begin{equation}\label{5.3.19.1}
	\mu_t\left(\left\{x\in\spt\mu_t \mid \limsup_{r\downarrow 0}\frac{\mu_t(B_r(x))}{\omega_{n-1}r^{n-1}}\leq s\right\}\right)\leq 2^{n-1}s\mathcal{H}^{n-1}(\spt\mu_t)<\infty,
	\end{equation}
	for any $0<s<\infty$ and a.e. $t\geq 0$. Letting $s$ go to zero, we have
	\begin{equation}\label{5.3.19.2}
	\mu_t\left(\left\{x\in\spt\mu_t \mid \limsup_{r\downarrow 0}\frac{\mu_t(B_r(x))}{\omega_{n-1}r^{n-1}}=0\right\}\right)=0,
	\end{equation}
	and thus we obatin
	\begin{equation}\label{5.3.19.3}
	\mu_t\equiv\mu_t\lfloor_{\left\{x\mid\limsup_{r\downarrow 0}\frac{\mu_t(B_r(x))}{\omega_{n-1}r^{n-1}}>0\right\}}\quad\text{on $\overline{\Omega}$ for a.e. $t$.}
	\end{equation}
	Now we prove that, for a.e. $t\geq 0$, the total variation $\|\delta V_t\|$ of the first variation of $V_t$ is actually a Radon measure on $\overline{\Omega}$. Note that we only have to show this for any $T>0$ and a.e. $0\leq t \leq T$.
	
	Let $T>0$ be arbitrary and we fix $0\leq t \leq T$ such that \eqref{5.3.6}, \eqref{5.3.16}, and \eqref{5.3.19.3} hold and the boundedness $\mathcal{H}^{n-1}(\spt \mu_t) < \infty$ is valid. Let $\{V^{\varepsilon''_j,\,\sigma'_j}_t\}_{j\in\mathbb{N}}$ be any convergent subsequence of $\{V^{\varepsilon'_j,\,\sigma'_j}_t\}_{j\in\mathbb{N}}$ and let $\tilde{V}_t$ denote its limit. From \eqref{5.3.2}, \eqref{5.3.16} and the varifold convergence of $\{V^{\varepsilon''_j,\,\sigma'_j}_t\}_{j\in\mathbb{N}}$, we have for any $\boldg\in (C^1_c(\overline{\Omega}))^n$
	\begin{align}
	\delta \tilde{V}_t(\boldg) = \lim_{j\to\infty}\delta V^{\varepsilon''_j,\,\sigma'_j}_t(\boldg)
	&=\lim_{j\to\infty} \int_{\Omega} \nabla \boldg(x) : (I-a^{\varepsilon''_j,\,\sigma'_j}\otimes a^{\varepsilon''_j,\,\sigma'_j})\, d\mu^{\varepsilon''_j,\,\sigma'_j}_t(x) \nonumber\\
	&=\lim_{j\to\infty}\left(\int _{\Omega} (\boldg\cdot \nabla u^{\varepsilon''_j\,\sigma'_j}) \left(\varepsilon''_j \Delta u^{\varepsilon''_j,\,\sigma'_j} -\frac{W'(u^{\varepsilon''_j,\,\sigma'_j})}{\varepsilon''_j}\right) \, dx \right.\nonumber\\
	&\quad \qquad +\int _{\partial \Omega} (\boldg\cdot \boldnu) \left( \frac{\varepsilon''_j |\nabla u^{\varepsilon''_j,\,\sigma'_j}|^2}{2}+\frac{W(u^{\varepsilon''_j,\,\sigma'_j})}{\varepsilon''_j}\right) \, d\mathcal{H}^{n-1} \nonumber\\
	&\quad \qquad \quad \left.-\int _{\partial \Omega} \varepsilon''_j (\boldg\cdot \nabla u^{\varepsilon''_j,\,\sigma'_j}) \frac{\partial u^{\varepsilon''_j,\,\sigma'_j}}{\partial \boldnu} \, d\mathcal{H}^{n-1} \right). \label{5.3.20}
	\end{align}
	Then we have
	\begin{equation}\label{5.3.21}
	|\delta \tilde{V}_t(\boldg)| \leq c(t)\,\sup_{\overline{\Omega}}|\boldg| <\infty
	\end{equation}
	for a.e. $t\geq0$, where $c(t)$ is as in \eqref{5.3.6} in Proposition \ref{prop.5.5}. This shows that we can extend the domain of the functional $\delta \tilde{V}_t$ into $(C_c(\overline{\Omega}))^n$ and hence from Riesz representation theorem, we have that the total variation $\|\delta \tilde{V}_t\|$ is a Radon measure on $\overline{\Omega}$. Since we have  $\|V^{\varepsilon''_j,\,\sigma'_j}_t\|=\mu^{\varepsilon''_j,\,\sigma'_j}_t$ and the subsequence $\{\mu^{\varepsilon''_j,\,\sigma'_j}_t\}_{j\in\mathbb{N}}$ converges to $\mu_t$, we have $\|\tilde{V}_t\|=\mu_t$, which is uniquely determined.
	
	From the above argument, we can apply Allard's rectifiability theorem to $\tilde{V}_t$ and conclude that $\tilde{V}_t$ is $(n-1)$-rectifiable varifold on $\overline{\Omega}$. In particular, $\tilde{V}_t$ is determined uniquely by $\|\tilde{V}_t\|=\mu_t$, and therefore, this yields that the uniqueness of the limit $(n-1)$-rectifiable varifold $V_t$ associated with $\mu_t$ (as in Definition \ref{def.3.1}) follows. This also shows that $\mu_t$ is $(n-1)$-rectifiable on $\overline{\Omega}$ for a.e. $0\leq t \leq T$. Furthermore, the calculation in the above shows that the boundedness of the total variation of the first variation of varifolds shown in \eqref{5.3.14} holds for a.e. $t\geq 0$ and $T\geq 0$. Thus we have completed the proof. 
\end{proof}
Considering the all arguments in Lemma \ref{thm3.1}, and Proposition \ref{prop.5.5} and \ref{prop.5.6}, we may conclude  that Lemma \ref{thm3.3} is valid.

\subsubsection{Proof of Theorem \ref{thm3.4} (Dirichlet boundary conditions)}\label{chara.4}
In this section, we will prove Theorem \ref{thm3.4}, that is, the existence of the singular limits of the Allen-Cahn equations described in \eqref{1.1.1}. Before proving Theorem \ref{thm3.4}, as a preparation, we will show three propositions. First of all, we show that the first variation in integral form $\int_{0}^{\infty}\delta V^{\varepsilon'_j,\,\sigma'_j}_t\,dt$ converges to $\int_{0}^{\infty}\delta V_t\,dt$ locally in time as $j\to\infty$, where the subsequence $\{V^{\varepsilon'_j,\,\sigma'_j}_t\}_{j\in\mathbb{N}}$ has the limit varifold $V_t$.

Note that, through this subsection, we assume that ``General assumptions", ``Vanishing hypothesis for the discrepancy measure", and ``Uniform upper bound on the boundary $\partial\Omega$" in Subsection \ref{exis.assmp.diri} hold. Moreover, we only consider the subsequence $\{\varepsilon'_j\}_{j\in\mathbb{N}}$ and $\{\sigma'_j\}_{j\in\mathbb{N}}$ such that Lemma \ref{thm3.1} and \ref{thm3.2} hold.

First of all, we show the following proposition;
\begin{proposition}\label{thm3.5}
	Suppose that ``General assumptions" and ``Vanishing hypothesis for the discrepancy measure" in Subsection \ref{exis.assmp.diri} hold, and $\{\varepsilon'_j\}_{j\in\mathbb{N}}$ and $\{\sigma'_j\}_{j\in\mathbb{N}}$ are such that Lemma \ref{thm3.1} and \ref{thm3.2} hold. Define a Radon measure $\tilde{\mu}^{\varepsilon'_j,\,\sigma'_j}$ on $\Omega\times[0,\,\infty)$ by   
	\begin{equation}\label{3.1.14}
	\tilde{\mu}^{\varepsilon'_j,\,\sigma'_j}(U)\coloneqq\int\!\!\int_{U}\varepsilon'_j|\nabla u^{\varepsilon'_j,\,\sigma'_j}|^2\,dx\,dt\,\left(=\int\!\!\!\!\int_{U}\,d\mu^{\varepsilon'_j,\,\sigma'_j}_t\,dt+\int\!\!\!\!\int_{U}\,d\xi^{\varepsilon'_j,\,\sigma'_j}\,dt\right),
	\end{equation}
	for any open set $U\subset\Omega\times[0,\,\infty)$. Then there exist a subsequence $\{\tilde{\mu}^{\varepsilon'_j,\,\sigma'_j}\}$ (labelled with the same index)  and a function $\boldv\in(L^2(\mu_t\otimes\mathcal{L}^1_t,\,\Omega\times[0,\,\infty)))^n$ such that $\tilde{\mu}^{\varepsilon'_j,\,\sigma'_j}\rightharpoonup \mu_t\otimes\mathcal{L}^1_t$ in $\Omega\times[0,\,\infty)$, where $\mu_t$ is as in Lemma \ref{thm3.1}, and 
	\begin{equation}\label{3.1.14.1}
	 \lim_{j \to \infty}\int_{0}^{\infty}\!\!\int_{\Omega}\boldg\cdot(\varepsilon'_j\,\partial_t u^{\varepsilon'_j,\,\sigma'_j}\,\nabla u^{\varepsilon'_j,\,\sigma'_j})\,dx\,dt=-\int_{0}^{\infty}\int_{\Omega}\boldg\cdot \boldv\,d\mu_t\,dt,
	\end{equation}
	for any $\boldg\in(C_c(\Omega\times[0,\,\infty)))^n$.
\end{proposition}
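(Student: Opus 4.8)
The plan is to run, in the interior cylinder $\Omega\times[0,\infty)$, the same scheme used for the boundary measures in Lemma \ref{thm3.2}: first get weak-$*$ compactness of $\{\tilde\mu^{\varepsilon'_j,\,\sigma'_j}\}_j$, then identify its limit as the product $\mu_t\otimes\mathcal{L}^1_t$ by splitting into the energy part and the discrepancy part, and finally realize $\varepsilon'_j\,\partial_t u^{\varepsilon'_j,\,\sigma'_j}\nabla u^{\varepsilon'_j,\,\sigma'_j}$ as a measure–function pair over $\tilde\mu^{\varepsilon'_j,\,\sigma'_j}$ and pass to the limit via Hutchinson's theorem.

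First I would note that by Proposition \ref{prop.4.1}, for every $T>0$,
\[
\tilde\mu^{\varepsilon'_j,\,\sigma'_j}(\Omega\times[0,T])=\int_0^T\!\!\int_\Omega \varepsilon'_j|\nabla u^{\varepsilon'_j,\,\sigma'_j}|^2\,dx\,dt\le 2\int_0^T \mu^{\varepsilon'_j,\,\sigma'_j}_t(\Omega)\,dt\le 2DT,
\]
so $\{\tilde\mu^{\varepsilon'_j,\,\sigma'_j}\}_j$ is locally uniformly bounded on $\Omega\times[0,\infty)$, and the compactness theorem for Radon measures gives a subsequence (not relabeled) converging to some Radon measure $\tilde\mu$. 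To identify $\tilde\mu$, I would use $\varepsilon|\nabla u^{\varepsilon,\,\sigma}|^2\,\mathcal{L}^n=\mu^{\varepsilon,\,\sigma}_t+\xi^{\varepsilon,\,\sigma}_t$ to write $\tilde\mu^{\varepsilon'_j,\,\sigma'_j}$ as the sum of an ``energy'' part and a ``discrepancy'' part. For the energy part, fixing $\phi\in (C_c(\Omega\times[0,\infty)))$ with $\spt\phi\subset\Omega\times[0,T]$, Lemma \ref{thm3.1} gives $\int_\Omega\phi(\cdot,t)\,d\mu^{\varepsilon'_j,\,\sigma'_j}_t\to\int_\Omega\phi(\cdot,t)\,d\mu_t$ for every $t$, while $|\int_\Omega\phi(\cdot,t)\,d\mu^{\varepsilon'_j,\,\sigma'_j}_t|\le \|\phi\|_{L^\infty}D$ by \eqref{4.1.2}; dominated convergence then yields convergence of the time integrals, i.e.\ $\mu^{\varepsilon'_j,\,\sigma'_j}_t\otimes\mathcal{L}^1_t\rightharpoonup\mu_t\otimes\mathcal{L}^1_t$. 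For the discrepancy part I would use $|\xi^{\varepsilon,\,\sigma}_t|\le\mu^{\varepsilon,\,\sigma}_t$, so that $|\xi^{\varepsilon'_j,\,\sigma'_j}_t|(\overline\Omega)\le D$, together with the ``Vanishing hypothesis for the discrepancy measure'' \eqref{3.1.9.1} (passing, if necessary, to a further common subsequence along which $|\xi^{\varepsilon'_j,\,\sigma'_j}_t|\rightharpoonup\xi_t\equiv0$ for a.e.\ $t$); dominated convergence again forces $\int_0^\infty\!\int_\Omega\phi\,d|\xi^{\varepsilon'_j,\,\sigma'_j}_t|\,dt\to0$. Adding the two contributions gives $\tilde\mu=\mu_t\otimes\mathcal{L}^1_t$.

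For the second assertion I would set
\[
\boldv^{\varepsilon'_j,\,\sigma'_j}:=\begin{cases}\dfrac{\partial_t u^{\varepsilon'_j,\,\sigma'_j}\,\nabla u^{\varepsilon'_j,\,\sigma'_j}}{|\nabla u^{\varepsilon'_j,\,\sigma'_j}|^2}&\text{if }|\nabla u^{\varepsilon'_j,\,\sigma'_j}|\neq0,\\[2mm]0&\text{otherwise,}\end{cases}
\]
so that $\boldv^{\varepsilon'_j,\,\sigma'_j}\,\tilde\mu^{\varepsilon'_j,\,\sigma'_j}$ equals, as a vector-valued measure, $\varepsilon'_j\,\partial_t u^{\varepsilon'_j,\,\sigma'_j}\nabla u^{\varepsilon'_j,\,\sigma'_j}\,dx\,dt$ (both sides vanishing on $\{|\nabla u^{\varepsilon'_j,\,\sigma'_j}|=0\}$, where $\nabla u^{\varepsilon'_j,\,\sigma'_j}$ vanishes). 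Then
\[
\int\!\!\int_{\Omega\times[0,\infty)}|\boldv^{\varepsilon'_j,\,\sigma'_j}|^2\,d\tilde\mu^{\varepsilon'_j,\,\sigma'_j}=\int_0^\infty\!\!\int_\Omega \varepsilon'_j(\partial_t u^{\varepsilon'_j,\,\sigma'_j})^2\,dx\,dt\le D
\]
by letting $T\to\infty$ in Proposition \ref{prop.4.1}, so $(\tilde\mu^{\varepsilon'_j,\,\sigma'_j},\boldv^{\varepsilon'_j,\,\sigma'_j})$ is a measure–function pair with $L^2$-norms bounded uniformly in $j$. Since $\tilde\mu^{\varepsilon'_j,\,\sigma'_j}\rightharpoonup\mu_t\otimes\mathcal{L}^1_t$, I would apply the convergence theorem for measure–function pairs \cite[Theorem 4.4.2.]{hutchinson} to extract a further subsequence and a limit $\boldv\in(L^2(\mu_t\otimes\mathcal{L}^1_t,\,\Omega\times[0,\infty)))^n$ such that $\boldv^{\varepsilon'_j,\,\sigma'_j}\,\tilde\mu^{\varepsilon'_j,\,\sigma'_j}\rightharpoonup-\boldv\,(\mu_t\otimes\mathcal{L}^1_t)$ as vector Radon measures; testing against $\boldg\in (C_c(\Omega\times[0,\infty)))^n$ and recalling the identification of the left-hand side gives exactly \eqref{3.1.14.1}.

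The routine parts are the compactness extractions, which can all be organized by a diagonal argument so as to hold simultaneously. The two places that require care — and are the real content — are (i) the passage from the two pointwise-in-$t$ convergences (of $\mu^{\varepsilon'_j,\,\sigma'_j}_t$ and of $|\xi^{\varepsilon'_j,\,\sigma'_j}_t|$) to convergence of the corresponding space–time products, which hinges on the uniform mass bound \eqref{4.1.2} and on the Vanishing hypothesis \eqref{3.1.9.1} so that dominated convergence applies; and (ii) verifying the hypotheses of Hutchinson's theorem, namely the $j$-uniform $L^2$ bound of the pair, which is supplied by Proposition \ref{prop.4.1}.
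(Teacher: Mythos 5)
Your proof is correct and follows essentially the same route as the paper: weak-$*$ compactness of $\{\tilde{\mu}^{\varepsilon'_j,\,\sigma'_j}\}_j$, the decomposition $\tilde{\mu}^{\varepsilon'_j,\,\sigma'_j}=\mu^{\varepsilon'_j,\,\sigma'_j}_t\otimes\mathcal{L}^1_t+\xi^{\varepsilon'_j,\,\sigma'_j}_t\otimes\mathcal{L}^1_t$ together with the vanishing of the discrepancy to identify the limit as $\mu_t\otimes\mathcal{L}^1_t$, and then Hutchinson's measure--function pair theorem using the uniform bound $\int_0^\infty\!\!\int_\Omega\varepsilon'_j(\partial_t u^{\varepsilon'_j,\,\sigma'_j})^2\,dx\,dt\leq D$ from Proposition \ref{prop.4.1}. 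The only cosmetic difference is that you place no minus sign in the approximate velocity and instead absorb it into the definition of $\boldv$ after applying Hutchinson's theorem, whereas the paper builds the sign into $\boldv^{\varepsilon'_j,\,\sigma'_j}$; the argument is otherwise identical.
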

\begin{proof}
	First of all, we show the convergence of the family of Radon measures $\{\tilde{\mu}^{\varepsilon'_j,\,\sigma'_j}\}_{j\in\mathbb{N}}$ and also show that the limit of $\tilde{\mu}^{\varepsilon'_j,\,\sigma'_j}$ is equal to $\mu_t\otimes\mathcal{L}^1_t$ in $\Omega\times[0,\,\infty)$. The convergence of the measures is readily obtained from its definition and Lemma \ref{thm3.1}. Let $\tilde{\mu}$ denote its limit, that is, we obtain that there exists a subsequence (labelled with the same index) such that  $\tilde{\mu}^{\varepsilon'_j,\,\sigma'_j}\rightharpoonup\tilde{\mu}$ in $\Omega\times[0,\,\infty)$. Regarding to its limit, we should recall the vanishing of the discrepancy measure $\xi^{\varepsilon'_j,\,\sigma'_j}_t$ as $j\to\infty$ on $\overline{\Omega}$ for a.e. $t\geq0$. By the assumption ``Vanishing hypothesis of the discrepancy" in Section \ref{exisdiri}, we have that $\xi^{\varepsilon'_j,\,\sigma'_j}_t\otimes\mathcal{L}^1_t \rightharpoonup 0$ in $\Omega\times[0,\,\infty)$. Moreover, from the definitions, we may easily see that
	\begin{equation}\label{5.3.22}
	\tilde{\mu}^{\varepsilon'_j,\,\sigma'_j}=\mu^{\varepsilon'_j,\,\sigma'_j}_t\otimes\mathcal{L}^1_t +\xi^{\varepsilon'_j,\,\sigma'_j}_t\otimes\mathcal{L}^1_t
	\end{equation}
	holds for any $j\in\mathbb{N}$. Therefore, letting $j\to\infty$, we may conclude that $\tilde{\mu}=\lim_{j \to \infty}\tilde{\mu}^{\varepsilon'_j,\,\sigma'_j}=\mu_t\otimes\mathcal{L}^1_t$ in $\Omega\times[0,\,\infty)$ in the sense of Radon measure.
	
	Next, we prove that there exists a function $\boldv\in(L^2(\tilde{\mu},\,\Omega\times[0,\,\infty)))^n$ such that 
	\begin{equation}\label{5.3.23}
	\lim_{j \to \infty}\int_{0}^{\infty}\!\!\int_{\Omega}\boldg\cdot(\varepsilon'_j\,\partial_t u^{\varepsilon'_j,\,\sigma'_j}\,\nabla u^{\varepsilon'_j,\,\sigma'_j})\,dx\,dt=-\int_{0}^{\infty}\int_{\Omega}\boldg\cdot \boldv\,d\tilde{\mu}\,\left(=\int_{0}^{\infty}\delta V_t\lfloor_{\Omega}\,dt(\boldg)\right),
	\end{equation}
	for any $\boldg\in(C_c(\Omega\times[0,\,\infty)))^n$. To do this, we first define a function $\boldv^{\varepsilon'_j,\,\sigma'_j}$, which can be regarded as the approximation with the normal velocity vector of a seperating front, by
	\begin{align}\label{5.3.24}
	\boldv^{\varepsilon'_j,\,\sigma'_j}=
	\begin{cases}
	\displaystyle
	-\frac{\partial_t u^{\varepsilon'_j,\,\sigma'_j}}{|\nabla u^{\varepsilon'_j,\,\sigma'_j}|}\frac{\nabla u^{\varepsilon'_j,\,\sigma'_j}}{|\nabla u^{\varepsilon'_j,\,\sigma'_j}|}&\quad\text{if $|\nabla u^{\varepsilon'_j,\,\sigma'_j}|\neq 0$.}\\
	0&\quad\text{otherwise}.
	\end{cases}
	\end{align}
	Then, from the definitions, we may deduce that 
	\begin{equation}\label{5.3.25}
	\int_{0}^{\infty}\!\!\int_{\Omega}|\boldv^{\varepsilon'_j,\,\sigma'_j}|^2\,d\tilde{\mu}^{\varepsilon'_j,\,\sigma'_j}=\int_{0}^{\infty}\!\!\int_{\Omega}\varepsilon'_j\,(\partial_t u^{\varepsilon'_j,\,\sigma'_j})^2\,dx\,dt\leq D<\infty. 
	\end{equation}
	This implies that the pair $(\tilde{\mu}^{\varepsilon'_j,\,\sigma'_j},\,\boldv^{\varepsilon'_j,\,\sigma'_j})$ is the one which satisfies the $L^2$-uniform boundedness with respect to $i,\,j\in\mathbb{N}$. Since we have already had the convergence of $\tilde{\mu}^{\varepsilon'_j,\,\sigma'_j}$, we can again apply the theorem \cite[Theorem 4.4.2.]{Hutchinson} and thus we may conclude that there exist a subsequence (labelled with the same index) and a function $\boldv\in(L^2(\tilde{\mu},\,\Omega\times[0,\,\infty)))^n$ such that
	\begin{equation}
		\lim_{j\to\infty}\int_{0}^{\infty}\int_{\Omega}\boldg \cdot \boldv^{\varepsilon'_j,\,\sigma'_j}\,d\tilde{\mu}^{\varepsilon'_j,\,\sigma'_j} = \int_{0}^{\infty}\int_{\Omega}\boldg\cdot \boldv\,d\mu_t\,dt
	\end{equation}
	for any $\boldg\in(C_c(\Omega\times[0,\,\infty)))^n$ and thus, this completes the proof.
\end{proof}

Secondly, we will show that interchanging limit processes and integral signs of the first variation of varifolds is valid.

\begin{proposition}\label{prop.5.7}
	Let $\{V^{\varepsilon'_j,\,\sigma'_j}_t\}_{j\in\mathbb{N}}$ be a family of associated varifolds with $\mu^{\varepsilon'_j,\,\sigma'_j}_t$ satisfying Proposition \ref{prop.5.5} and \ref{prop.5.6} for a.e. $t\geq0$. Then we have 
	\begin{equation}\label{5.4.1}
	\lim_{j\to\infty}\int_{0}^{T}\delta V^{\varepsilon'_j,\,\sigma'_j}_t(\boldg)\,dt =  \int_{0}^{T} \delta V_t(\boldg)\,dt
	\end{equation}
	for all $T>0$ and all $\boldg\in (C^1_c(\overline{\Omega}\times[0,\,\infty)))^n$.
\end{proposition}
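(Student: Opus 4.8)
The plan is to reduce \eqref{5.4.1} to the dominated convergence theorem in the time variable: the varifold convergence for fixed $t$ (Proposition \ref{prop.5.6}) will supply pointwise convergence of the integrand, and the uniform mass bound \eqref{4.1.2} will supply the domination.

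First I would record the pointwise statement. Fix $\boldg\in(C^1_c(\overline{\Omega}\times[0,\infty)))^n$ and $T>0$. For each $t$ the slice $\boldg(\cdot,t)$ lies in $(C^1_c(\overline{\Omega}))^n$, so
\[
\delta V^{\varepsilon'_j,\,\sigma'_j}_t(\boldg)=\int_{G_{n-1}(\overline{\Omega})}\nabla\boldg(x,t):S\,dV^{\varepsilon'_j,\,\sigma'_j}_t(x,S)
\]
is well defined, and $(x,S)\mapsto\nabla\boldg(x,t):S$ is a fixed element of $C_c(G_{n-1}(\overline{\Omega}))$ (recall that $\overline{\Omega}$ is compact because $\Omega$ is bounded, and $\mathbb{G}(n,n-1)$ is compact). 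Since $\|V^{\varepsilon'_j,\,\sigma'_j}_t\|=\mu^{\varepsilon'_j,\,\sigma'_j}_t\lfloor_{\{|\nabla u^{\varepsilon'_j,\,\sigma'_j}|\neq0\}}\leq\mu^{\varepsilon'_j,\,\sigma'_j}_t$, the family $\{V^{\varepsilon'_j,\,\sigma'_j}_t\}_{j\in\mathbb{N}}$ has mass at most $D$ by \eqref{4.1.2}, hence is precompact in $\mathbb{V}_{n-1}(\overline{\Omega})$; combined with the uniqueness of the limit in Proposition \ref{prop.5.6}, this upgrades that statement to genuine varifold convergence $V^{\varepsilon'_j,\,\sigma'_j}_t\rightharpoonup V_t$ for a.e.\ $t\geq0$. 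Testing this convergence with the function above gives $\delta V^{\varepsilon'_j,\,\sigma'_j}_t(\boldg)\to\delta V_t(\boldg)$ as $j\to\infty$ for a.e.\ $t\in[0,T]$.

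Next I would supply the domination. Directly from the definition of the first variation and \eqref{4.1.2}, for every $j$ and every $t\in[0,T]$,
\[
\bigl|\delta V^{\varepsilon'_j,\,\sigma'_j}_t(\boldg)\bigr|\leq\Bigl(\sup_{\overline{\Omega}}|\nabla\boldg(\cdot,t)|\Bigr)\,\|V^{\varepsilon'_j,\,\sigma'_j}_t\|(\overline{\Omega})\leq D\,\|\boldg\|_{C^1(\overline{\Omega}\times[0,T])},
\]
so the integrands are bounded, uniformly in $j$ and $t$, by a constant that lies in $L^1([0,T])$. Each map $t\mapsto\delta V^{\varepsilon'_j,\,\sigma'_j}_t(\boldg)$ is measurable because the integrand defining it is jointly measurable in $(x,t)$ by the Sobolev regularity of $u^{\varepsilon'_j,\,\sigma'_j}$, and its a.e.\ pointwise limit $t\mapsto\delta V_t(\boldg)$ is therefore measurable as well. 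Applying the dominated convergence theorem over $[0,T]$ then yields $\int_0^T\delta V^{\varepsilon'_j,\,\sigma'_j}_t(\boldg)\,dt\to\int_0^T\delta V_t(\boldg)\,dt$, which is \eqref{5.4.1}.

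The point I would be most careful about is a pitfall rather than a genuine difficulty: one should \emph{not} expand $\delta V^{\varepsilon'_j,\,\sigma'_j}_t(\boldg)$ through the identity \eqref{5.3.2} of Lemma \ref{lem.5.4}. The quantities appearing there — the interior term $I^{\varepsilon,\,\sigma}_1$, the boundary energy $I^{\varepsilon,\,\sigma}_2$, and $I^{\varepsilon,\,\sigma}_3$ of Proposition \ref{prop.5.5} — carry only $L^1$-in-time bounds and may concentrate in $t$, so there is no uniform dominating function at that level and a termwise passage to the limit stalls. What rescues the argument is that these terms essentially cancel: $\delta V^{\varepsilon,\,\sigma}_t$ is controlled by the total mass $\|V^{\varepsilon,\,\sigma}_t\|(\overline{\Omega})\leq D$ alone, uniformly in $t$ and in $j$. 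Apart from this, the only items needing a line of justification are the precompactness remark promoting Proposition \ref{prop.5.6} to full-sequence varifold convergence for a.e.\ $t$, and the joint measurability used above; both are routine.
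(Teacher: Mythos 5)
Your proof is correct, and its skeleton is the same as the paper's: a.e.-in-$t$ pointwise convergence $\delta V^{\varepsilon'_j,\,\sigma'_j}_t(\boldg(\cdot,\,t))\to\delta V_t(\boldg(\cdot,\,t))$ (obtained, as you do, by upgrading the subsequential uniqueness of Proposition \ref{prop.5.6} to full-sequence varifold convergence via the mass bound) followed by the dominated convergence theorem in $t$. The difference is the dominating function. The paper dominates through \eqref{5.3.21}, i.e.\ $|\delta V_t(\boldg(\cdot,\,t))|\leq c(t)\sup|\boldg|$ with $c\in L^1_{loc}$ from Proposition \ref{prop.5.5}; this ties the argument to the boundary a priori estimates (hence to the assumption \eqref{3.1.9.2}) and, taken literally, bounds the limit rather than the terms of the sequence uniformly in $j$, so it needs a word of repair. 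Your domination is the elementary one coming straight from the definition \eqref{5.3.1}: $|\delta V^{\varepsilon'_j,\,\sigma'_j}_t(\boldg)|\leq C_n\,\bigl(\sup|\nabla\boldg|\bigr)\,\mu^{\varepsilon'_j,\,\sigma'_j}_t(\overline{\Omega})\leq C_n\,D\,\|\boldg\|_{C^1}$ by \eqref{4.1.2}, a constant in $t$ and $j$, which makes the DCT application immediate and uses nothing beyond the basic energy bound; your warning against expanding via \eqref{5.3.2} termwise is sound for the same reason. The only cosmetic point is that your displayed inequality should carry a dimensional constant, since $|\nabla\boldg:S|\leq\sqrt{n-1}\,|\nabla\boldg|$ rather than $|\nabla\boldg|$ itself; this does not affect the argument.
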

\begin{proof}
	Let $T>0$ be arbitrary. From the convergence of $\{\delta V_t^{\varepsilon'_j,\,\sigma'_j}\}_{j\in\mathbb{N}}$, Proposition \ref{prop.5.5} and \eqref{5.3.21} in Proposition \ref{prop.5.6}, we have
	\begin{equation}\label{5.4.3}
	\left|\limsup_{j\in\mathbb{N}} \delta V^{\varepsilon'_j,\,\sigma'_j}_t(\boldg(\cdot,\,t))\right|=|\delta V_t(\boldg(\cdot,\,t))|\leq \sup_{\overline{\Omega}\times[0,\,\infty)}|\boldg|\,c(t)
	\end{equation}
	for any $\boldg\in (C^1_c(\overline{\Omega}\times [0,\,\infty)))^n$ and, from Proposition \ref{prop.5.5}, we have $(\sup_{\overline{\Omega}\times[0,\,\infty)}|\boldg|)\,c\in L^1([0,\,T])$. Therefore, from dominated convergence theorem, we have the equality \eqref{5.4.1}.  
\end{proof}

Next we show the absolute continuities for total variation measures and $L^2$-estimate for the modified generalized mean curvature vector.
\begin{proposition}\label{prop.5.8}
	There exists the generalized mean curvature vector $\boldH_V^{\Omega}(\cdot,\,t)$ in $\Omega$ such that 
	\begin{equation}\label{5.4.5.1}
	\boldH_V^{\Omega}\equiv\boldv\quad\text{in $(L^2(\|V_t\|\otimes\mathcal{L}^1_t,\,\Omega\times[0,\,\infty)))^n$},\quad\delta V_t\lfloor_{\Omega} =-\boldH_V^{\Omega}(\cdot,\,t)\,\|V_t\|\quad\text{in $\Omega$ for a.e. $t\geq 0$}
	\end{equation}
	holds, where $\boldv$ is as in Proposition \ref{thm3.5}. In addition, assume that $\alpha$ and $\boldv_b$ are followed by Lemma \ref{thm3.2}. Let $\mathcal{S}_{\alpha,\,{\boldv}_b}$ be as in Definition \ref{def3.1.2} where $\tilde{\mu}\coloneqq\mu_t\otimes\mathcal{L}^1_t=\|V_t\|\otimes\mathcal{L}^1_t$. Then we have
	\begin{equation}\label{5.4.6}
		\left\|\mathcal{S}_{\alpha,\,{\boldv}_b}+\int_{0}^{\infty}\delta V_t\lfloor_{\Omega}\,dt\right\| \ll \|V_t\|\otimes \mathcal{L}^1_t \quad\text{on $\overline{\Omega} \times [0,\,\infty)$},
	\end{equation}
	and there exists the modified generalized mean curvature vector $\widetilde{\boldH}_V$ (see Definition \ref{def.4.1}) such that $\widetilde{\boldH}_V\lfloor_{\Omega}\equiv\boldH_V^{\Omega}$ in $(L^2(\|V_t\|\otimes\mathcal{L}^1_t,\,\Omega\times[0,\,\infty)))^n$, $\widetilde{\boldH}_V$ belongs to $(L^2(\|V_t\|\otimes \mathcal{L}^1_t,\,\overline{\Omega} \times [0,\infty)))^n$, and we have $\|\widetilde{\boldH}_V\|_{L^2}\leq D^{\frac{1}{2}}$. Besides, because of the assumption ``Uniform upper bound on the boundary $\partial\Omega$" in Subsection \ref{exis.assmp.diri}, we in fact obtain $\|\mathcal{S}_{\alpha,\,\boldv_b}\|\equiv0$ on $\partial\Omega\times[0,\,\infty)$.
	
	Moreover, for any $0<t_1\leq t_2<\infty$, we have
	\begin{equation}\label{5.4.7}
	\int_{t_1}^{t_2}\int_{\overline{\Omega}} \phi |\widetilde{\boldH}_V|^2 \,d\|V_t\|\,dt \leq\liminf_{j\to\infty} \int_{t_1}^{t_2}\int_{\Omega}\varepsilon'_j \phi (\partial_t u^{\varepsilon'_j,\,\sigma'_j})^2 \,dx\,dt
	\end{equation}
	for all $\phi\in C_c(\overline{\Omega}\times [0,\,\infty))$ with $\phi\geq 0$.
\end{proposition}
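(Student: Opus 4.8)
The plan is to assemble \eqref{5.4.5.1}--\eqref{5.4.7} from the first-variation formula of Lemma \ref{lem.5.4} together with Lemma \ref{thm3.2}, Proposition \ref{thm3.5}, Proposition \ref{prop.5.7} and the lower-semicontinuity part of \cite[Theorem 4.4.2.]{hutchinson}; no new estimate is needed, the work is in identifying the limit objects correctly. First I would fix $\boldg\in(C^1_c(\Omega\times[0,\,\infty)))^n$, so that the two boundary integrals in \eqref{5.3.2} drop out, and rewrite the leading term of $\delta V^{\varepsilon'_j,\,\sigma'_j}_t(\boldg)$ using the Allen--Cahn equation $\varepsilon'_j\Delta u^{\varepsilon'_j,\,\sigma'_j}-(\varepsilon'_j)^{-1}W'(u^{\varepsilon'_j,\,\sigma'_j})=\varepsilon'_j\partial_t u^{\varepsilon'_j,\,\sigma'_j}$ as $\int_{\Omega}(\boldg\cdot\nabla u^{\varepsilon'_j,\,\sigma'_j})\,\varepsilon'_j\partial_t u^{\varepsilon'_j,\,\sigma'_j}\,dx$. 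The two remaining terms of \eqref{5.3.2} (the $\xi^{\varepsilon'_j,\,\sigma'_j}_t$-term and, via Remark \ref{rem.5.5.0}, the term over $\{|\nabla u^{\varepsilon'_j,\,\sigma'_j}|=0\}$) are bounded in modulus by $\sup|\nabla\boldg|\cdot|\xi^{\varepsilon'_j,\,\sigma'_j}_t|(K)$ for a fixed compact $K\subset\Omega$ containing the spatial support of $\boldg$, and since the vanishing hypothesis gives $|\xi^{\varepsilon'_j,\,\sigma'_j}_t|\rightharpoonup0$ on $\overline{\Omega}$ for a.e.\ $t$, these vanish for a.e.\ $t$ as $j\to\infty$; integrating in $t$ and dominating by $\sup|\nabla\boldg|\,D$ (using \eqref{4.1.2}) removes them. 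Integrating over $t$, applying Proposition \ref{prop.5.7} on the left and \eqref{3.1.14.1} of Proposition \ref{thm3.5} on the right, I obtain
\begin{equation*}
\int_{0}^{\infty}\delta V_t\lfloor_{\Omega}(\boldg(\cdot,\,t))\,dt=-\int\!\!\!\!\int_{\Omega\times[0,\,\infty)}\boldg\cdot\boldv\,d(\mu_t\otimes\mathcal{L}^1_t)
\end{equation*}
for every $\boldg\in(C^1_c(\Omega\times[0,\,\infty)))^n$.

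Taking $\boldg(x,\,t)=\zeta(t)\boldh(x)$ with $\zeta\in C_c([0,\,\infty))$ and $\boldh$ ranging over a countable $\|\cdot\|_\infty$-dense subset of $(C^1_c(\Omega))^n$, using that $t\mapsto\delta V_t\lfloor_{\Omega}(\boldh)$ is measurable (it is the a.e.\ limit of $\delta V^{\varepsilon'_j,\,\sigma'_j}_t(\boldh)$ by Proposition \ref{prop.5.6}) and satisfies $|\delta V_t\lfloor_{\Omega}(\boldh)|\le c(t)\sup|\boldh|$ with $c\in L^1_{loc}$ by \eqref{5.3.21}, a Lebesgue-point argument in $t$ followed by density in $(C_c(\Omega))^n$ yields $\delta V_t\lfloor_{\Omega}=-\boldv(\cdot,\,t)\,\|V_t\|$ in $\Omega$ for a.e.\ $t$. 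As $\boldv\in(L^2(\mu_t\otimes\mathcal{L}^1_t,\,\Omega\times[0,\,\infty)))^n$, Fubini gives $\boldv(\cdot,\,t)\in(L^2(\|V_t\|,\,\Omega))^n$ for a.e.\ $t$, so setting $\boldH_V^{\Omega}:=\boldv$ proves \eqref{5.4.5.1}. For the absolute continuity I use that $\boldv_b=0$ in $(L^2(\alpha))^n$ by Lemma \ref{thm3.2} (which rests on Proposition \ref{prop.4.2}, hence on the uniform upper bound \eqref{3.1.9.2}), so $\mathcal{S}_{\alpha,\,\boldv_b}$ is the zero functional and in particular $\|\mathcal{S}_{\alpha,\,\boldv_b}\|\equiv0$ on $\partial\Omega\times[0,\,\infty)$ — the stronger conclusion stated at the end of the proposition. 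Since $\mathcal{S}^{\Omega}_{\tilde{\mu},\,\boldH_V^{\Omega}}$ is supported in $\Omega\times[0,\,\infty)$ and $\boldH_V^{\Omega}\in(L^2(\tilde{\mu}))^n\subset(L^1_{loc}(\tilde{\mu}))^n$, Riesz representation gives $\|\mathcal{S}^{\Omega}_{\tilde{\mu},\,\boldH_V^{\Omega}}\|=|\boldH_V^{\Omega}|\,\tilde{\mu}$, whence, with $\tilde{\mu}=\|V_t\|\otimes\mathcal{L}^1_t$,
\begin{equation*}
\big\|\mathcal{S}_{\alpha,\,\boldv_b}+\mathcal{S}^{\Omega}_{\tilde{\mu},\,\boldH_V^{\Omega}}\big\|=|\boldH_V^{\Omega}|\,\|V_t\|\otimes\mathcal{L}^1_t\ll\|V_t\|\otimes\mathcal{L}^1_t\quad\text{on }\overline{\Omega}\times[0,\,\infty),
\end{equation*}
which is \eqref{5.4.6}. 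I then take $\widetilde{\boldH}_V$ to be minus the vector-valued Radon--Nikodym derivative of $\mathcal{S}_{\alpha,\,\boldv_b}+\mathcal{S}^{\Omega}_{\tilde{\mu},\,\boldH_V^{\Omega}}$ with respect to $\|V_t\|\otimes\mathcal{L}^1_t$; because the first summand vanishes, $\widetilde{\boldH}_V$ equals $\boldH_V^{\Omega}$ on $\Omega\times[0,\,\infty)$ and is $0$ on $\partial\Omega\times[0,\,\infty)$, so $\widetilde{\boldH}_V\lfloor_{\Omega}\equiv\boldH_V^{\Omega}$ and $\|\widetilde{\boldH}_V\|^2_{L^2(\|V_t\|\otimes\mathcal{L}^1_t,\,\overline{\Omega}\times[0,\,\infty))}=\int\!\!\!\!\int_{\Omega\times[0,\,\infty)}|\boldv|^2\,d\tilde{\mu}$.

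Finally, for the lower-semicontinuity bound \eqref{5.4.7} (which then gives $\|\widetilde{\boldH}_V\|_{L^2}\le D^{1/2}$), I note that the measure--function pair $(\tilde{\mu}^{\varepsilon'_j,\,\sigma'_j},\,\boldv^{\varepsilon'_j,\,\sigma'_j})$ with $\boldv^{\varepsilon'_j,\,\sigma'_j}$ as in \eqref{5.3.24} converges in Hutchinson's sense to $(\tilde{\mu},\,\boldv)$ — this is exactly the convergence producing $\boldv$ in Proposition \ref{thm3.5} — and that $|\boldv^{\varepsilon'_j,\,\sigma'_j}|^2\,d\tilde{\mu}^{\varepsilon'_j,\,\sigma'_j}=\varepsilon'_j(\partial_t u^{\varepsilon'_j,\,\sigma'_j})^2\,dx\,dt$ on $\{|\nabla u^{\varepsilon'_j,\,\sigma'_j}|\ne0\}$, both sides being zero elsewhere. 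The lower-semicontinuity part of \cite[Theorem 4.4.2.]{hutchinson} then gives, for every $\psi\in C_c(\overline{\Omega}\times(t_1,\,t_2))$ with $\psi\ge0$,
\begin{equation*}
\int\psi|\boldv|^2\,d\tilde{\mu}\le\liminf_{j\to\infty}\int\psi|\boldv^{\varepsilon'_j,\,\sigma'_j}|^2\,d\tilde{\mu}^{\varepsilon'_j,\,\sigma'_j}\le\liminf_{j\to\infty}\int_{t_1}^{t_2}\!\!\int_{\Omega}\varepsilon'_j\,\psi\,(\partial_t u^{\varepsilon'_j,\,\sigma'_j})^2\,dx\,dt.
\end{equation*}
Approximating $\phi\,\chi_{(t_1,\,t_2)}$ monotonically from below by such $\psi$, applying monotone convergence on the left and bounding the right by $\int_{t_1}^{t_2}\int_{\Omega}\varepsilon'_j\phi(\partial_t u^{\varepsilon'_j,\,\sigma'_j})^2\,dx\,dt$, gives \eqref{5.4.7}; letting $\phi\uparrow1$, $t_1\downarrow0$, $t_2\uparrow\infty$ and invoking \eqref{4.1.1} (equivalently \eqref{5.3.25}) produces $\int_0^{\infty}\int_{\overline{\Omega}}|\widetilde{\boldH}_V|^2\,d\|V_t\|\,dt\le D$. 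The one genuinely delicate point is the passage, in the first two paragraphs, from the space--time identity for $\int_0^{\infty}\delta V_t\lfloor_{\Omega}\,dt$ to the slice-wise identity $\delta V_t\lfloor_{\Omega}=-\boldH_V^{\Omega}(\cdot,\,t)\|V_t\|$: it requires careful handling of the discrepancy terms and of measurability/integrability in $t$. Everything else is bookkeeping on top of Lemmas \ref{thm3.1}, \ref{thm3.2} and Propositions \ref{prop.5.5}--\ref{thm3.5}.
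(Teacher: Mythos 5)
Your proposal is correct and proves everything in Proposition \ref{prop.5.8}, but it is organized rather differently from the paper's proof, and the comparison is worth recording. (i) For \eqref{5.4.5.1} the paper works slice-wise: for a.e.\ fixed $t$ it bounds $|\delta V_t\lfloor_{\Omega}(\boldg)|$ as in \eqref{5.4.8}, gets $\boldH_V^{\Omega}(\cdot,\,t)$ by Riesz and Radon--Nikodym, and only afterwards identifies $\boldH_V^{\Omega}$ with $\boldv$ through the space-time computation \eqref{5.4.8.2.1} and the dual characterization \eqref{5.4.12}; you instead establish the space-time identity first and recover the slice-wise identity by a Lebesgue-point/countable-density argument, which sidesteps the slightly delicate fixed-$t$ estimate (in \eqref{5.4.8} the constant should really be a $t$-dependent $\liminf$, finite for a.e.\ $t$). (ii) For the boundary term, the paper re-derives $\mathcal{S}_{\alpha,\,\boldv_b}\equiv0$ inside the proof via \eqref{5.4.8.3}--\eqref{5.4.8.4}, i.e.\ from the boundary condition $\partial_t u+\sigma\nabla u\cdot\boldnu=0$, Cauchy--Schwarz, the uniform bound \eqref{3.1.9.2} and $\sigma'_j\to0$, and then obtains \eqref{5.4.6} from the resulting $D^{1/2}$-bound \eqref{5.4.9}; you import $\boldv_b=0$ in $(L^2(\alpha))^n$ directly from Lemma \ref{thm3.2}/Remark \ref{rem.thm3.2} (which the proposition's hypotheses allow, and which ultimately rests on the same ingredients), so that the sum reduces to $\mathcal{S}^{\Omega}_{\tilde{\mu},\,\boldH_V^{\Omega}}$ and the absolute continuity and the identification $\widetilde{\boldH}_V=\boldH_V^{\Omega}\chi_{\Omega\times[0,\,\infty)}$ become bookkeeping. (iii) For \eqref{5.4.7} and the bound $\|\widetilde{\boldH}_V\|_{L^2}\le D^{1/2}$ the paper cites Ilmanen's approximation argument and the duality \eqref{5.4.12}; your weighted lower semicontinuity for the measure--function pairs $(\tilde{\mu}^{\varepsilon'_j,\,\sigma'_j},\,\boldv^{\varepsilon'_j,\,\sigma'_j})$ is exactly that duality in Hutchinson's packaging, so the content is the same.

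One small repair in the last step: the weights should be taken in $C_c(\Omega\times(t_1,\,t_2))$ rather than $C_c(\overline{\Omega}\times(t_1,\,t_2))$, because the pair convergence of Proposition \ref{thm3.5} is only available against test functions compactly supported in $\Omega\times[0,\,\infty)$, and extending it up to $\partial\Omega$ is not automatic when $\|V_t\|$ charges the boundary. This costs nothing: as you yourself observe, $\widetilde{\boldH}_V=0$ $(\|V_t\|\otimes\mathcal{L}^1_t)$-a.e.\ on $\partial\Omega\times[0,\,\infty)$, so the left-hand side of \eqref{5.4.7} equals the corresponding integral over $\Omega$, and interior weights increasing to $\phi\,\chi_{\Omega\times(t_1,\,t_2)}$ suffice.
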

\begin{proof}
	Take any time $t\geq 0$ such that \eqref{5.3.6} and \eqref{5.3.16} hold. Let $\{V^{\varepsilon'_j,\,\sigma'_j}_t\}_{j\in\mathbb{N}}$ be a subsequence converging to $V_t$. From \eqref{4.1.1}, \eqref{5.3.2} and \eqref{5.3.16}, we have, for any $\boldg\in (C^1_c(\Omega))^n$ and any $t\geq0$ such that Proposition \ref{prop.5.5} holds,
	\begin{align}\label{5.4.8}
	\left|\delta V_t\lfloor_{\Omega}(\boldg)\right|&=\lim_{j\to\infty}\left|\delta V^{\varepsilon'_j,\,\sigma'_j}_t(\boldg)\right| \nonumber\\
	&\leq \liminf_{\varepsilon' \to 0}\left(\int_{\Omega} \varepsilon'_j (\partial_t u^{\varepsilon'_j,\,\sigma'_j})^2\,dx\right)^{\frac{1}{2}} \left(\int_{\Omega} |\boldg|^2\, d\|V_t\|\right)^{\frac{1}{2}} \nonumber\\
	&\leq D^{\frac{1}{2}} \left(\int_{\Omega} |\boldg|^2\, d\|V_t\|\right)^{\frac{1}{2}} <\infty.
	\end{align}
	This shows that $\left\|\delta V_t\lfloor_{\Omega}\right\| \ll \|V_t\|$ in $\Omega$ for a.e. $t\geq0$. Moreover, from Riesz representation theorem and Radon-Nikodym theorem, it holds that, for a.e. $t\geq0$ such that Proposition \ref{prop.5.5} holds, there exists a $(\|V_t\|\otimes\mathcal{L}^1_t)$-integrable vector-valued function $\boldH_V^{ \Omega}(\cdot,\,t)$ such that 
	\begin{equation}\label{5.4.8.2}
	\left(\delta V_t\lfloor_{\Omega}\right)(\boldg)=- \int_{\Omega}\boldH_V^{ \Omega}(\cdot,\,t)\cdot \boldg \,d\|V_t\|
	\end{equation}
	for all $\boldg\in (C_c(\Omega))^n$. Note that, from \eqref{5.4.8} and \eqref{5.4.8.2}, it also holds that $\boldH_V^{\Omega}\in(L^2(\|V_t\|\otimes \mathcal{L}^1_t,\,\Omega\times[0,\,\infty)))^n$. Moreover, from Proposition \ref{thm3.5}, Proposition \ref{prop.5.7}, \eqref{5.3.2}, and \eqref{5.4.8.2}, we may obtain the following calculation:
	\begin{align}
	\int_{0}^{\infty}\!\!\int_{\Omega}\boldg\cdot\boldH_V^{ \Omega}\,d\|V_t\|\,dt &=-\left(\int_{0}^{\infty}\delta V_t\lfloor_{\Omega}\,dt\right)(\boldg) \nonumber\\
	&=-\lim_{j\to\infty}\int_{0}^{\infty}\!\!\int_{\Omega}(\boldg\cdot\nabla u^{\varepsilon'_j,\,\sigma'_j})\,\varepsilon'_j\,(\partial_t u^{\varepsilon'_j,\,\sigma'_j})\,dx\,dt\nonumber\\
	&=\lim_{j\to\infty}\int_{0}^{\infty}\!\!\int_{\Omega}\boldg\cdot\left(-\frac{\partial_t u^{\varepsilon'_j,\,\sigma'_j}}{|\nabla u^{\varepsilon'_j,\,\sigma'_j}|}\frac{\nabla u^{\varepsilon'_j,\,\sigma'_j}}{
		|\nabla u^{\varepsilon'_j,\,\sigma'_j}|}\right)\,\varepsilon'_j|\nabla u^{\varepsilon'_j,\,\sigma'_j}|^2\,dx\,dt\nonumber\\
	&=\lim_{j\to\infty}\int_{0}^{\infty}\!\!\int_{\Omega}\boldg\cdot\boldv^{\varepsilon'_j,\,\sigma'_j}\,d\tilde{\mu}^{\varepsilon'_j,\,\sigma'_j}=\int_{\Omega\times[0,\,\infty)}\boldg\cdot\boldv\,d\tilde{\mu}.\label{5.4.8.2.1}
	\end{align}
	for any $\boldg\in(C^1_c(\Omega\times[0,\,\infty)))^n$. Here we note that the following identity holds; for any $f\in (L^2(\|V_t\|\otimes\mathcal{L}^1_t))^n$,
	\begin{align}\label{5.4.12}
	&\|f\|_{L^2(\|V_t \| \otimes \mathcal{L}^1_t, \,X\times[0,\,\infty))}\nonumber\\
	& = \sup \left\{\int_{0}^{\infty}\!\!\int_{X} f\cdot \boldg \,d\|V_t\|\, dt\ \Big|\ \boldg\in (C^1_c(X\times[0,\,\infty)))^n,\ \|\boldg\|_{L^2(\|V_t\|\otimes \mathcal{L}^1_t)}\leq 1 \right\}
	\end{align}
	holds where $X$ is either $\Omega$ or $\overline{\Omega}$. We refer to Ilmanen \cite{Ilmanen02} for more detail on this identity. Recalling that $\tilde{\mu}=\|V_t\|\otimes\mathcal{L}^1_t$, from \eqref{5.4.8.2.1}, and \eqref{5.4.12}, we have that $\boldH_V^{\Omega}\equiv\boldv$ in $(L^2(\tilde{\mu},\,\Omega\times[0,\,\infty)))^n$. Moreover, from \eqref{5.4.8.2.1}, we actually obtain $\|\int_{0}^{\infty}\delta V_t\lfloor_{\Omega}\,dt\|\equiv\|\int_{0}^{\infty}\delta V_t\lfloor_{\Omega}\,dt\|$ in $\Omega\times[0,\,\infty)$.
	
	Now let $T>0$ be arbitrary number and $\boldg\in (C^1_c(\overline{\Omega}\times[0,\,T)))^n$ be any test function such that $|\boldg|\leq 1$. In order to prove \eqref{5.4.6}, we need to compute the following: from \eqref{5.2.5}, \eqref{5.3.2}, and \eqref{5.3.23}, we have that
	\begin{align}
	\left(\mathcal{S}_{\alpha,\,{\boldv}_b}+\int_{0}^{\infty}\delta V_t\lfloor_{\Omega}\,dt\right)(\boldg) &= \int_{0}^{\infty}\!\!\int_{\partial \Omega } \boldg\cdot {\boldv}_b \,d\alpha - \int_{0}^{\infty}\!\!\int_{\Omega}\boldg\cdot\boldH_V^{\Omega}\,d\|V_t\|\,dt \nonumber\\
	&= -\lim_{j\to\infty}\int_{0}^{\infty}\!\!\int_{\partial \Omega } (\boldg\cdot \nabla_{\partial \Omega} u^{\varepsilon'_j,\,\sigma'_j}) \varepsilon'_j (\partial_t u^{\varepsilon'_j,\,\sigma'_j})\,d\mathcal{H}^{n-1}\,dt\nonumber\\
	&\qquad +\lim_{j \to \infty}\int_{0}^{\infty}\!\!\int_{\Omega}(\boldg\cdot\nabla u^{\varepsilon'_j,\,\sigma'_j})\,\varepsilon'_j(\partial_t u^{\varepsilon'_j,\,\sigma'_j})\,dx\,dt. \label{5.4.8.3}
	\end{align}
	Recalling the boundary condition $\partial_t u^{\varepsilon'_j,\,\sigma'_j}+\sigma'_j \nabla u^{\varepsilon'_j,\,\sigma'_j}\cdot \boldnu=0$, from Cauchy-Schwarz inequality and Lemma \ref{thm3.2}, we can show that
	\begin{align}\label{5.4.8.4}
	\left| -\lim_{j\to\infty} \right. & \left. \int_{0}^{\infty}\!\!\int_{\partial\Omega}(\boldg\cdot \nabla_{\partial \Omega} u^{\varepsilon'_j,\,\sigma'_j}) \varepsilon'_j (\partial_t u^{\varepsilon'_j,\,\sigma'_j})\,d\mathcal{H}^{n-1}\,dt \right| \nonumber\\
	&= \left| \lim_{j\to\infty} \int_{0}^{\infty}\!\!\int_{\partial\Omega}(\boldg\cdot \nabla_{\partial \Omega} u^{\varepsilon'_j,\,\sigma'_j}) \varepsilon'_j\, \sigma'_j\frac{\partial u^{\varepsilon'_j,\,\sigma'_j}}{\partial\nu}\,d\mathcal{H}^{n-1}\,dt \right| \nonumber\\
	&\leq\limsup_{j\to\infty}\sigma'_j\left(\int_{0}^{\infty}\!\!\int_{\partial\Omega}|\boldg|^2\varepsilon'_j|\nabla_{\partial\Omega} u^{\varepsilon'_j,\,\sigma'_j}|^2\,d\mathcal{H}^{n-1}\,dt\right)^{\frac{1}{2}}\nonumber\\
	&\qquad \times \sup_{j\in\mathbb{N}}\left(\int_{0}^{T}\!\!\int_{\partial\Omega}\varepsilon'_j\left(\frac{\partial u^{\varepsilon'_j,\,\sigma'_j}}{\partial \nu}\right)^2\,d\mathcal{H}^{n-1}\,dt\right)^{\frac{1}{2}} \nonumber\\
	&=(\limsup_{j\to \infty}\sigma'_j)\,\left(\int_{0}^{\infty}\!\!\int_{\partial\Omega}|\boldg|^2\,d\alpha\right)^{\frac{1}{2}}\,C_0(t_1,\,t_2) \nonumber\\
	&\leq (\limsup_{j\to \infty}\sigma'_j)\,(\alpha((\partial\Omega)\times[0,\,T]))^{\frac{1}{2}} \, C_0(t_1,\,t_2)<\infty.
	\end{align}
	Since $\alpha$ is a Radon measure on $\partial \Omega \times [0,\,\infty)$, we have that the left-hand side of \eqref{5.4.8.4} is equal to 0. Thus, we obtain, from \eqref{5.4.8.3}, \eqref{5.4.8.4}, and Cauchy-Schwarz inequality,  
	\begin{align}
	\left| \left(\mathcal{S}_{\alpha,\,{\boldv}_b} + \int_{0}^{\infty}\delta V_t\lfloor_{\Omega}\,dt \right)(\boldg) \right|&\leq 0+\liminf_{j\to\infty} \left(\int_{0}^{\infty}\!\!\int_{\Omega} \varepsilon'_j (\partial_t u^{\varepsilon'_j,\,\sigma'_j})^2\,dx\,dt\right)^{\frac{1}{2}}\,
	\left(\int_{0}^{\infty}\!\!\int_{\overline{\Omega}}|\boldg|^2 \,d\|V_t\|\,dt\right)^{\frac{1}{2}}\nonumber\\
	&\leq D^{\frac{1}{2}} \left(\|V_t\|\otimes\mathcal{L}^1_t(\overline{\Omega}\times[0,\,T))\right)^{\frac{1}{2}}. \label{5.4.9}
	\end{align}
	Of course, \eqref{5.4.8.4} also yields that $\mathcal{S}_{\alpha,\,\boldv_b}(\boldg)=0$ for any $\boldg\in (C_c(\partial\Omega\times[0,\,\infty)))^n$, and thus $\|\mathcal{S}_{\alpha,\,\boldv_b}\|\equiv0$ on $\partial\Omega\times[0,\,\infty)$.
	
	Therefore, taking the supremum of the both side of \eqref{5.4.9} with respect to $\boldg$, we obtain the absolute continuity \eqref{5.4.6} from the arbitrariness of $T>0$. Moreover, from \eqref{5.4.6} and Radon-Nikodym theorem, we can show that there exists a $(\|V_t\|\otimes\mathcal{L}^1_t)$-integrable vector-valued function $\widetilde{\boldH}_V$ such that
	\begin{equation}\label{5.4.12.1}
	\left(\mathcal{S}_{\alpha,\,\boldv_b}+\int_{0}^{\infty}\delta V_t\lfloor_{\Omega}\,dt \right)(\boldg)=-\int_{0}^{\infty}\!\!\int_{\overline{\Omega}}\boldg\cdot \widetilde{\boldH}_V\,d\|V_t\|\,dt,
	\end{equation}
	for any $\boldg\in(C_c(\overline{\Omega}\times[0,\,\infty)))^n$.
	
	Next we need to show the three claims, that is, $\widetilde{\boldH}_V\lfloor_{\Omega} \equiv \boldH_V^{\Omega}$ in $(L^2(\|V_t\|\otimes\mathcal{L}^1_t,\,\Omega\times[0,\,\infty)))^n$, $\widetilde{\boldH}_V$ belongs to $(L^2(\|V_t\|\otimes \mathcal{L}^1_t,\,\overline{\Omega} \times [0,\infty)))^n$, and \eqref{5.4.7}. First of all, we show that $\widetilde{\boldH}_V\lfloor_{\Omega}\equiv\boldH_V^{\Omega}$ in $(L^2(\|V_t\|\otimes\mathcal{L}^1_t,\,\Omega\times[0,\,\infty)))^n$. From the absolute continuity \eqref{5.4.6}, the fact that $\boldH_V^{\Omega}$ is equal to $\boldv$ in $\Omega\times[0,\,\infty)$ in $(L^2(\|V_t\|\otimes\mathcal{L}^1_t))^n$, \eqref{5.4.8}, \eqref{5.4.8.2}, and \eqref{5.4.12.1}, we have that, for any $\boldg\in(C_c(\Omega\times[0,\,\infty)))^n$,
	\begin{align}\label{5.4.12.2}
	\int_{0}^{\infty}\!\!\int_{\Omega}\widetilde{\boldH}_V\lfloor_{\Omega}
	\cdot\boldg\,d\|V_t\|\,dt&=-\int_{0}^{\infty}\delta V_t\lfloor_{\Omega}\,dt(\boldg)+\mathcal{S}_{\alpha,\,\boldv_b}(\boldg)\nonumber\\
	&=-\int_{0}^{\infty}\delta V_t\lfloor_{\Omega}\,dt(\boldg)+0\nonumber\\
	&=\lim_{j\to\infty}\int_{0}^{\infty}\!\!\int_{\Omega}(\boldg\cdot\nabla u^{\varepsilon'_j,\,\sigma'_j})\,\varepsilon'_j\,(\partial_t u^{\varepsilon'_j,\,\sigma'_j})\,dx\,dt\nonumber\\
	&=-\int_{0}^{\infty}\!\!\delta V_t\lfloor_{\Omega}\,dt(\boldg)\nonumber\\
	&=\int_{0}^{\infty}\!\!\int_{\Omega}\boldH_V^{\Omega}\cdot\boldg\,d\|V_t\|\,dt.
	\end{align}
	Note that since the support of $\boldg$ is in $\Omega$, we have that $\mathcal{S}_{\alpha,\,\boldv_b}(\boldg)=0$. Thus, from \eqref{5.4.12} and  \eqref{5.4.12.2} and the arbitrariness of $\boldg$, we have that $\widetilde{\boldH}_V\lfloor_{\Omega}\equiv\boldH_V^{\Omega}$ in $(L^2(\|V_t\|\otimes\mathcal{L}^1_t,\,\Omega\times[0,\,\infty)))^n$. 
	
	To prove the second and third claims, we will apply the identity \eqref{5.4.12}. Now take any $\boldg\in (C_c(\overline{\Omega}\times [0,\,\infty)))^n$ with $\|\boldg\|_{L^2(\|V_t\|\otimes \mathcal{L}^1_t,\,\overline{\Omega}\times[0,\,\infty))}\leq 1$. From \eqref{5.4.9} and \eqref{5.4.12.1}, we have the following estimate:
	\begin{align}
	\left|\int_{0}^{\infty}\!\!\int_{\Omega} \widetilde{\boldH}_V\cdot \boldg\,d\|V_t\|\, dt \right| &= \left| -\left(\mathcal{S}_{\alpha,\,\boldv_b}+\int_{0}^{\infty}\delta V_t\lfloor_{\Omega}\,dt\right)(\boldg) \right|\nonumber\\
	&\leq \liminf_{j\to\infty}\left(\int_{0}^{\infty}\!\!\int_{\Omega} \varepsilon'_j (\partial_t u^{\varepsilon'_j,\,\sigma'_j})^2\,dx\,dt\right)^{\frac{1}{2}}
	\,\left(\int_{0}^{\infty}\!\!\int_{\overline{\Omega}}|\boldg|^2\,d\|V_t\|\,dt\right)^{\frac{1}{2}} \nonumber\\
	&\leq D^{\frac{1}{2}} \label{5.4.13}
	\end{align}
	Therefore, carrying out the approximation \eqref{5.4.12}, we have
	\begin{equation}
	\int_{0}^{\infty}\int_{\overline{\Omega}} |\widetilde{\boldH}_V|^2 \,d\|V_t\|\,dt \leq D. \label{5.4.15}
	\end{equation}
	Finally, we prove \eqref{5.4.7}. To do this, we may carry out the approximation argument which is conducted by Ilmanen in \cite{Ilmanen01} and we are able to apply this method to our problem since the associated varifold $V_t$ is rectifiable for a.e. $t\geq0$. Therefore, considering all the above, we may conclude that Proposition \ref{prop.5.8} follows. 
\end{proof}

Finally, we prove the Brakke's inequality \eqref{4.1.1.5} and this completes the proof of Theorem \ref{thm3.4}. First of all, we show that the $(\varepsilon,\,\sigma)$-approximated velocity vector $\boldv^{\varepsilon,\,\sigma}$ converges to the modified generalized mean curvature vector $\widetilde{\boldH}_V$ on $\overline{\Omega}$.
\begin{proposition}\label{prop.5.9}
	Let $\{V^{\varepsilon'_j,\,\sigma'_j}_t\}_{j\in\mathbb{N}}$ be as in Proposition \ref{prop.5.6}, \ref{prop.5.7} and \ref{prop.5.8}. Let $\widetilde{\boldH}_V$ be as in Proposition \ref{prop.5.8}. Then we have
	\begin{equation}\label{5.4.16}
	-\lim_{j\to\infty}\int_{0}^{\infty}\!\!\int_{\Omega} (\boldg\cdot \nabla u^{\varepsilon'_j,\,\sigma'_j})\varepsilon'_j(\partial_t u^{\varepsilon'_j,\,\sigma'_j})\,dx\,dt= \int_{0}^{\infty}\!\!\int_{\overline{\Omega}}\boldg\cdot \widetilde{\boldH}_V\,d\|V_t\|\,dt
	\end{equation}
	for all $\boldg\in(C^1_c(\overline{\Omega}\times [0,\,\infty)))^n$.
\end{proposition}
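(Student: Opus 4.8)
The plan is to obtain \eqref{5.4.16} by combining the absolute–continuity identity \eqref{5.4.12.1} of Proposition \ref{prop.5.8} with a limit passage in the first–variation formula \eqref{5.3.2} for test vector fields allowed to touch $\partial\Omega$. Recall from \eqref{4.1.1.4} and Proposition \ref{prop.5.8} that $\int_{0}^{\infty}\int_{\overline{\Omega}}\boldg\cdot\widetilde{\boldH}_V\,d\|V_t\|\,dt=-\bigl(\mathcal{S}_{\alpha,\,\boldv_b}+\mathcal{S}^{\Omega}_{\tilde{\mu},\,\boldH_V^{\Omega}}\bigr)(\boldg)$ for every $\boldg\in(C_c(\overline{\Omega}\times[0,\infty)))^n$; since $\boldv_b=0$ in $(L^2(\alpha))^n$ by Lemma \ref{thm3.2}, the term $\mathcal{S}_{\alpha,\,\boldv_b}$ drops out, and $\mathcal{S}^{\Omega}_{\tilde{\mu},\,\boldH_V^{\Omega}}=\mathcal{S}^{\Omega}_{\tilde{\mu},\,\boldv}$ by the interior identification $\boldH_V^{\Omega}\equiv\boldv$ from Propositions \ref{thm3.5} and \ref{prop.5.8}. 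Thus \eqref{5.4.16} is equivalent to the assertion that $\lim_{j\to\infty}\int_{0}^{\infty}\int_{\Omega}(\boldg\cdot\nabla u^{\varepsilon'_j,\,\sigma'_j})\,\varepsilon'_j\,\partial_t u^{\varepsilon'_j,\,\sigma'_j}\,dx\,dt=\mathcal{S}^{\Omega}_{\tilde{\mu},\,\boldv}(\boldg)$, which for $\boldg$ supported in $\Omega\times[0,\infty)$ is exactly \eqref{5.3.23}; only the behaviour near $\partial\Omega$ remains to be controlled.

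For that, I would rewrite, using \eqref{1.1.1}, $\varepsilon'_j\Delta u^{\varepsilon'_j,\,\sigma'_j}-W'(u^{\varepsilon'_j,\,\sigma'_j})/\varepsilon'_j=\varepsilon'_j\,\partial_t u^{\varepsilon'_j,\,\sigma'_j}$ in \eqref{5.3.2}, integrate over $t$, and solve for the bulk term in terms of $\int_{0}^{\infty}\delta V^{\varepsilon'_j,\,\sigma'_j}_t(\boldg)\,dt$, the two discrepancy integrals (the one on $\{|\nabla u^{\varepsilon'_j,\,\sigma'_j}|=0\}$ rewritten via Remark \ref{rem.5.5.0}), and the surface integrals on $\partial\Omega$. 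Letting $j\to\infty$: the time–integrated first variation converges to $\int_{0}^{\infty}\delta V_t(\boldg)\,dt$ by Proposition \ref{prop.5.7}; the discrepancy integrals vanish because $|\xi^{\varepsilon'_j,\,\sigma'_j}_t|\le\mu^{\varepsilon'_j,\,\sigma'_j}_t$ has total mass $\le D$ by \eqref{4.1.2} and the vanishing hypothesis \eqref{3.1.9.1} gives $|\xi^{\varepsilon'_j,\,\sigma'_j}_t|\rightharpoonup0$ on $\overline{\Omega}$ for a.e.\ $t$, so $\int_{0}^{\infty}|\xi^{\varepsilon'_j,\,\sigma'_j}_t|(\overline{\Omega})\,dt\to0$ by dominated convergence; and the $\partial\Omega$–integrals are handled by splitting $\nabla u^{\varepsilon'_j,\,\sigma'_j}$ into tangential and normal parts, substituting the boundary condition $\partial u^{\varepsilon'_j,\,\sigma'_j}/\partial\boldnu=-(\sigma'_j)^{-1}\partial_t u^{\varepsilon'_j,\,\sigma'_j}$, and invoking \eqref{4.1.5}, \eqref{3.1.9.2}, $\sigma'_j\to0$, and the Cauchy–Schwarz argument of \eqref{5.4.8.4}. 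This reproduces the chain in \eqref{5.4.8.3}, now for $\boldg$ touching $\partial\Omega$, and identifies the bulk limit with $\mathcal{S}^{\Omega}_{\tilde{\mu},\,\boldv}(\boldg)$, which gives \eqref{5.4.16}.

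The step I expect to be the main obstacle is the control of the surface integrals on $\partial\Omega$. Under \eqref{3.1.9.2} and \eqref{4.1.5} the quantities $\int_{0}^{\infty}\int_{\partial\Omega}(\boldg\cdot\boldnu)\,\varepsilon'_j\bigl(\partial u^{\varepsilon'_j,\,\sigma'_j}/\partial\boldnu\bigr)^2\,d\mathcal{H}^{n-1}\,dt$ and $\int_{0}^{\infty}\int_{\partial\Omega}(\boldg\cdot\boldnu)\bigl(\tfrac{\varepsilon'_j|\nabla_{\partial\Omega}u^{\varepsilon'_j,\,\sigma'_j}|^2}{2}+\tfrac{W(u^{\varepsilon'_j,\,\sigma'_j})}{\varepsilon'_j}\bigr)\,d\mathcal{H}^{n-1}\,dt$ are a priori only bounded, not infinitesimal, so one has to show that the entire boundary contribution of \eqref{5.3.2} converges in the limit: the terms carrying a factor $\partial_t u^{\varepsilon'_j,\,\sigma'_j}$ disappear because of the additional factor $\sigma'_j$ produced by the Robin condition, while the remaining geometric terms must be matched with the boundary part of $\int_{0}^{\infty}\delta V_t(\boldg)\,dt$. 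It is exactly at this point that the uniform–upper–bound hypothesis \eqref{3.1.9.2}, together with $\sigma'_j\to0$, is indispensable — without it the limit of the bulk term need not even exist for fields touching $\partial\Omega$.
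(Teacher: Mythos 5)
Your opening reduction coincides with the paper's own proof: by \eqref{5.4.12.1} (equivalently \eqref{5.4.16.2}) together with the fact, established in Proposition \ref{prop.5.8}, that $\|\mathcal{S}_{\alpha,\,\boldv_b}\|\equiv0$ and $\boldH_V^{\Omega}\equiv\boldv$, the claim \eqref{5.4.16} reduces to showing $\lim_{j\to\infty}\int_{0}^{\infty}\int_{\Omega}(\boldg\cdot\nabla u^{\varepsilon'_j,\,\sigma'_j})\,\varepsilon'_j\,\partial_t u^{\varepsilon'_j,\,\sigma'_j}\,dx\,dt=-\int_{0}^{\infty}\int_{\Omega}\boldg\cdot\boldH_V^{\Omega}\,d\|V_t\|\,dt$ for $\boldg$ that may touch $\partial\Omega$. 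The paper stops exactly here: this identity is precisely \eqref{5.4.8.3} combined with \eqref{5.4.8.4}, already proved inside Proposition \ref{prop.5.8} from the measure--function pair convergences \eqref{5.2.5} and \eqref{5.3.23}, so Proposition \ref{prop.5.9} is a two-line citation. You instead attempt to re-derive this extension from the time-integrated first-variation formula \eqref{5.3.2}, and that route does not close as written.

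Concretely (writing $u=u^{\varepsilon'_j,\,\sigma'_j}$), after substituting the equation \eqref{1.1.1} into \eqref{5.3.2}, integrating in time, using Proposition \ref{prop.5.7} and the vanishing of the discrepancy, you are left with the boundary contribution $\int_{0}^{\infty}\int_{\partial\Omega}\bigl[(\boldg\cdot\boldnu)\bigl(\tfrac{\varepsilon'_j|\nabla u|^2}{2}+\tfrac{W(u)}{\varepsilon'_j}\bigr)-\varepsilon'_j(\boldg\cdot\nabla_{\partial\Omega}u)\tfrac{\partial u}{\partial\boldnu}-\varepsilon'_j(\boldg\cdot\boldnu)\bigl(\tfrac{\partial u}{\partial\boldnu}\bigr)^2\bigr]\,d\mathcal{H}^{n-1}\,dt$, and your scheme requires its limit to equal $\int_{0}^{\infty}\delta V_t\lfloor_{\partial\Omega}(\boldg)\,dt$. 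This ``matching'' is exactly what you leave open, and it is not a technicality: under \eqref{3.1.9.2} and \eqref{4.1.5} these integrals are only bounded, their limits need not even exist along the chosen subsequence, and no identification of them with the boundary part of the limiting first variation is available (the paper never proves, nor needs, such a statement). Moreover, your claim that the terms carrying $\partial_t u$ disappear ``because of the additional factor $\sigma'_j$'' is reversed for your decomposition: the cross term there is $\varepsilon'_j(\boldg\cdot\nabla_{\partial\Omega}u)\,\partial u/\partial\boldnu$, and the Robin condition replaces $\partial u/\partial\boldnu$ by $-(\sigma'_j)^{-1}\partial_t u$, producing the large factor $(\sigma'_j)^{-1}$; the small factor $\sigma'_j$ in \eqref{5.4.8.4} is gained by the different quantity $\varepsilon'_j(\partial_t u)\nabla_{\partial\Omega}u$, which arises from the boundary pair $(\alpha^{\varepsilon'_j,\,\sigma'_j},\,\boldv_b^{\varepsilon'_j,\,\sigma'_j})$, not from the boundary terms of \eqref{5.3.2}. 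So the proposal has a genuine gap at its key step; the correct fix is simply to invoke \eqref{5.4.8.3}--\eqref{5.4.8.4} from Proposition \ref{prop.5.8}, as the paper does.
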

\begin{proof}
	Let $\boldg$ be in $(C^1_c(\overline{\Omega}\times [0,\,\infty)))^n$. From Proposition \ref{prop.5.6}, we have already known that $V^{\varepsilon'_j,\,\sigma'_j}_t$ converges to $(n-1)$-rectifiable varifold $V_t$ associated with $\mu_t$ for a.e. $t\geq0$ in the sense of varifolds and thus we have $\lim_{j\to\infty}\delta V^{\varepsilon'_j,\,\sigma'_j}_t= \delta V_t$. From \eqref{5.4.6} in Proposition \ref{prop.5.8}, there exists the modified generalized mean curvature vector $\widetilde{\boldH}_V$ such that
	\begin{equation}
	\int_{0}^{\infty}\int_{\overline{\Omega}}\boldg\cdot\widetilde{\boldH}_V \,d\|V_t\|\,dt=-\left(\mathcal{S}_{\alpha,\,{\boldv}_b} + \int_{0}^{\infty}\delta V_t\lfloor_{\Omega}\,dt \right)(\boldg).\label{5.4.16.2}
	\end{equation}
	Thus, from \eqref{5.4.8.3} and \eqref{5.4.8.4}, we have
	\begin{align}
	\int_{0}^{\infty}\int_{\overline{\Omega}} \boldg\cdot\widetilde{\boldH}_V \,d\|V_t\|\,dt &=-\lim_{j\to\infty}\int_{0}^{\infty}\int_{\Omega} (\boldg\cdot \nabla u^{\varepsilon'_j,\,\sigma'_j})\varepsilon'_j(\partial_t u^{\varepsilon'_j,\,\sigma'_j})\,dx\,dt.\label{5.4.16.3}
	\end{align}
	This implies \eqref{5.4.16} and completes the proof of Proposition \ref{prop.5.9}. 
\end{proof}

Finally, considering Proposition \ref{prop.5.7}, \ref{prop.5.8}, and \ref{prop.5.9}, we are prepared to prove Theorem \ref{thm3.4}.

\begin{proof}[Proof of Theorem \ref{thm3.4}]
	Since we have already shown Lemma \ref{thm3.1}, \ref{thm3.2}, \ref{thm3.2.1}, \ref{thm3.3} and \ref{thm3.5}, it is sufficient to prove Brakke's inequality \eqref{4.1.1.5}. From Lemma \ref{thm3.1}, \ref{thm3.2}, \ref{thm3.2.1}, and \ref{thm3.5}, we can take the same subsequence $\varepsilon'_j \to 0$ and $\sigma'_j \to 0$ as $j\to \infty$ such that all the claims in Lemma \ref{thm3.1}, \ref{thm3.2}, \ref{thm3.2.1}, and \ref{thm3.5} hold. Thus, in the following, it is sufficient to consider such a subsequence. Let $\phi$ be in $C^1_c(\overline{\Omega}\times [0,\infty))$ such that $\phi\geq 0$. For any $0\leq t_1<t_2<\infty$, recalling \eqref{4.1.6} in Proposition \ref{prop.4.2} and the notation $f^{\varepsilon'_j,\,\sigma'_j}\coloneqq-\varepsilon'_j \Delta u^{\varepsilon'_j,\,\sigma'_j} +\frac{W'(u^{\varepsilon'_j,\,\sigma'_j})}{\varepsilon'_j}$, we have 
	\begin{align}\label{5.4.17}
	\mu^{\varepsilon'_j,\,\sigma'_j}_t(\phi) \Big|_{t=t_1}^{t_2}&= \int_{t_1}^{t_2}\int_{\Omega} \left(-\frac{1}{\varepsilon'_j}(f^{\varepsilon'_j,\,\sigma'_j})^2 \phi+ f^{\varepsilon'_j,\,\sigma'_j}\nabla \phi\cdot \nabla u^{\varepsilon'_j,\,\sigma'_j}\right) \,dx\,dt \nonumber\\
	& \qquad +\int_{t_1}^{t_2}\!\!\int_{\Omega} \partial_t \phi\,d\mu^{\varepsilon'_j,\,\sigma'_j}_t dt-\int_{t_1}^{t_2}\!\!\int_{\partial\Omega}\varepsilon'_j\,\sigma'_j\,\phi\,\left(\frac{\partial u^{\varepsilon'_j,\,\sigma'_j}}{\partial \nu}\right)^2\,d\mathcal{H}^{n-1}\,dt\nonumber\\
	&\leq\int_{t_1}^{t_2}\!\!\int_{\Omega} \left(-\frac{1}{\varepsilon'_j}(f^{\varepsilon'_j,\,\sigma'_j})^2 \phi + f^{\varepsilon'_j,\,\sigma'_j}\nabla \phi\cdot \nabla u^{\varepsilon'_j,\,\sigma'_j}\right) \,dx\,dt +\int_{t_1}^{t_2}\!\!\int_{\Omega} \partial_t \phi\,d\mu^{\varepsilon'_j,\,\sigma'_j}_t dt.
	\end{align}
	Since we have already proved $\mu^{\varepsilon'_j,\,\sigma'_j}_t \rightharpoonup \mu_t=\|V_t\|$ for all $t\geq 0$ on $\overline{\Omega}$, the left-hand side of \eqref{5.4.17} converges to that of \eqref{4.1.1.5} and so does the third term of the right hand side of \eqref{5.4.17}. Hence, combining \eqref{5.4.7} and \eqref{5.4.16} with \eqref{5.4.17} and taking $j\to\infty$, we may obtain
	\begin{align}
	\int_{\overline{\Omega}} \phi \,d\|V_t\| \Big|_{t=t_1}^{t_2}& \leq -\liminf_{j\to\infty} \left(\int_{t_1}^{t_2}\int_{\Omega}\varepsilon'_j \phi (\partial_t u^{\varepsilon'_j,\,\sigma'_j})^2 \,dx\,dt \right) \nonumber\\
	&\quad \qquad -\liminf_{j\to\infty}\left(\int_{t_1}^{t_2}\int_{\Omega} \nabla \phi \cdot \nabla u^{\varepsilon'_j,\,\sigma'_j} \varepsilon'_j(\partial_t u^{\varepsilon'_j,\,\sigma'_j}) \,dx\,dt\right) \nonumber\\
	&\quad \qquad \quad \qquad +\int_{t_1}^{t_2}\int_{\overline{\Omega}} \partial_t \phi\, d\|V_t\|\,dt \nonumber\\
	&\leq \int_{t_1}^{t_2}\int_{\overline{\Omega}}  \left(-\phi |\widetilde{\boldH}_V|^2 + \nabla \phi \cdot \widetilde{\boldH}_V\right) \,d\|V_t\|\,dt + \int_{t_1}^{t_2}\int_{\overline{\Omega}} \partial_t \phi\, d\|V_t\|\,dt \label{5.4.18}
	\end{align}
	This completes the proof of Brakke's inequality and thus we obtain Theorem \ref{thm3.4}. 
\end{proof}

\subsection{Dynamic boundary condition}\label{chara.dyna}
In this section, we prove a sequence of the main theorems in the case of dynamic boundary conditions, which we stated in Section \ref{mainresults.dyna}. We note that the positive constants $C_1$ and $D$ are as in Proposition \ref{prop.4.1} and Proposition \ref{prop.4.4} and also note that, for simplicity, we only consider the case that the parameter $\sigma$ is equal to 1, which is fixed in the following.
\subsubsection{Convergence of the measures $\{\mu^{\varepsilon,\,\sigma}\}_{\varepsilon>0}$ (dynamic boundary conditions)}\label{chara.6}
The convergence of a sequence of the measures $\{\mu^{\varepsilon,\,\sigma}_t\}_{\varepsilon>0}$ for all $t\geq 0$ can be proved in the similar manner shown in Subsection \ref{chara.1}. Indeed we can show essentially the same lemma as Lemma \ref{lem.5.1} by using the same arguments. Therefore, we do not repeat the proof of the convergence of $\{\mu^{\varepsilon,\,\sigma}_t\}_{\varepsilon>0}$ again here.

\subsubsection{Convergece of the measures $\{\alpha^{\varepsilon,\,\sigma}\}_{\varepsilon>0}$ and proof of Lemma \ref{thm3.7} and Lemma \ref{thm3.7.1} (dynamic boundary conditions)}\label{chara.7}
From Propostion \ref{prop.4.1} and \ref{prop.4.4}, we can also show, in the same manner described in Subsection \ref{chara.2}, the convergence of the measures $\{\alpha^{\varepsilon,\,\sigma}\}_{\varepsilon>0}$ and Lemma \ref{thm3.7}. Moreover, we can also prove Lemma \ref{thm3.7.1} in the case $\sigma\geq1$ in the similar manner to the proof of Lemma \ref{thm3.2.1}. However, as we impose the different asumptions from Lemma \ref{thm3.2.1} on Lemma \ref{thm3.7.1}, we will state the precise proof for clarification and thus show the proof of Lemma \ref{thm3.7.1} in the following. Note that we assume that ``General assumptions" is valid in this subsection.

\begin{proof}[Proof of Lemma \ref{thm3.7.1}]
	Let $\sigma$ be in $[1,\,\infty)$. First of all, we should say that we can apply the proof of Lemma \ref{thm3.2.1} if we have the following claim; there exist $0<s'<\infty$ and $\Gamma_2$ such that $\Gamma_2$ is a non-empty connected component of $\partial\Omega$, and the inequality
	\begin{equation}\label{6.2.0.1}
	\liminf_{j\to\infty}\int_{t_1}^{t_2}\!\left|\int_{\Gamma_2}w^{j}\,d\mathcal{H}^{n-1}\right|\,dt<\frac{2}{3}\mathcal{H}^{n-1}(\Gamma_2)\,(t_2-t_1)
	\end{equation}
	holds for any $0<t_1<t_2<s'$. Thus, it is sufficient to show this claim to prove Lemma \ref{thm3.7.1}.
	
	Now, from the first assumption of the initial data in Lemma \ref{thm3.7.1}, we can choose the constants $c_0>0$ and $\delta_0>0$ such that
	\begin{equation}\label{6.2.0.2}
	\liminf_{j\to\infty}\left|\int_{\Gamma_2}w^{j}_0\,d\mathcal{H}^{n-1}\right|<c_0<c_0+6\delta_0<\frac{2}{3}\mathcal{H}^{n-1}(\Gamma_2). 
	\end{equation}
	Then, from the second assumption, we can also choose the constant $\gamma_0>0$ such that
	\begin{equation}\label{6.2.0.1.1}
	\sup_{j\in\mathbb{N}}\mu^{j}_0(\Omega\cap\{\dist(x,\,\partial \Omega)<\gamma_0\})<\delta_0.
	\end{equation}
	Then, we may take $s_0>0$ such that
	\begin{equation}\label{6.2.0.1.2}
	0<s_0<\frac{\frac{2}{3}\mathcal{H}^{n-1}(\Gamma_2)-c_0-6\delta_0}{\frac{1}{2}D+C_3(\gamma_0)},
	\end{equation} 
	where $C_3(\gamma_0)$ is a positive constant depending only on $\gamma_0$, $\Gamma_2$, and $D$ in \eqref{4.1.2}, which we will choose later.
	
	Next, we will derive the local energy estimate on the boundary which is important to show \eqref{6.2.0.1}. First of all, we choose a bounded open set $U_2$ such that $\Gamma_2\subset U_2$ and $U_2\cap(\partial\Omega\setminus\Gamma_2)=\emptyset$ since $\mathbb{R}^n$ is Hausdorff and $\Gamma_2$ is a connected component of $\partial \Omega$. Moreover, we may choose $\rho>0$ such that $\{x\in\mathbb{R}^n\mid\dist(x,\,\Gamma_2)<\rho\}\subsetneq U_2$. Then, recalling the calculation of the a priori estimate in Proposition \ref{prop.4.2} or \ref{prop.4.4}, and taking the proper test function $\phi_{\gamma_0}\in C^2_c(\overline{\Omega})$ such that $0<\phi_{\gamma_0}\leq2$ in $\Omega$, $\phi_{\gamma_0}=1$ on $\Gamma_2$ and $\spt\phi_{\gamma_0}\subset U_2$, we may obtain, from \eqref{4.1.8} and \eqref{4.2.10.1}, 
	\begin{align}\label{6.2.0.3.0}
	\frac{d}{dt}\left(\int_{\Omega} \phi_{\gamma_0} \,d\mu^{j}_t \right) &\leq-\int_{\Omega} \Delta \phi_{\gamma_0}\, d\mu^{j}_t+\int_{\Omega}  \varepsilon_j (\nabla u^{j,\,\sigma} \otimes \nabla u^{j,\,\sigma} : \nabla ^2 \phi_{\gamma_0}) \, dx \nonumber\\
	&\qquad -\int_{\partial \Omega} \varepsilon_j (\nabla \phi_{\gamma_0} \cdot \nabla u^{j,\,\sigma}) \frac{\partial u^{j,\,\sigma}}{\partial \boldnu} \, d\mathcal{H}^{n-1}- \int_{\partial \Omega} \varepsilon\,\sigma \phi_{\gamma_0} \left( \frac{\partial u^{j,\,\sigma}}{\partial \boldnu}\right)^2\,d\mathcal{H}^{n-1} \nonumber\\
	&\quad \qquad +\int _{\partial \Omega} \frac{\partial \phi_{\gamma_0}}{\partial \boldnu} \left( \frac{\varepsilon_j |\nabla u^{j,\,\sigma}| ^2}{2} +\frac{W (u^{j,\,\sigma} )}{\varepsilon_j}\right)  \, d\mathcal{H}^{n-1}\nonumber\\
	&\leq 3D\sup_{\Omega}|\nabla^2\phi_{\gamma_0}|-\int_{\partial\Omega} \varepsilon_j (\nabla \phi_{\gamma_0} \cdot \nabla u^{j,\,\sigma}) \frac{\partial u^{j,\,\sigma}}{\partial \boldnu} \, d\mathcal{H}^{n-1} \nonumber\\
	&\qquad -\int_{\partial\Omega} \varepsilon_j\,\sigma \phi_{\gamma_0} \left( \frac{\partial u^{j,\,\sigma}}{\partial \boldnu}\right)^2\,d\mathcal{H}^{n-1} \nonumber\\
	&\quad \qquad +\int _{\partial\Omega} \frac{\partial \phi_{\gamma_0}}{\partial \boldnu} \left( \frac{\varepsilon_j |\nabla u^{j,\,\sigma}| ^2}{2} +\frac{W (u^{j,\,\sigma} )}{\varepsilon_j}\right)  \, d\mathcal{H}^{n-1}.
	\end{align}
	The way we construct the proper test function $\phi_{\gamma_0}$ is as follows; let $d_{\partial\Omega}$ be the signed distance function from $\partial \Omega$ which is positive in $\Omega$. Then, because of the smoothness of $\partial \Omega$, we can choose $\rho^{\prime}>0$ such that $d_{\partial\Omega}$ is smooth in $\{|d_{\partial\Omega}|<\rho^{\prime}\}$ and, moreover, setting $\tilde{\rho}\coloneqq\min\{1,\,\rho,\,2^{-1}\rho^{\prime}\}$ and $\tilde{\gamma}_0\coloneqq\min\{\gamma_0,\,\tilde{\rho}\}$, we can extend $d_{\partial\Omega}+1$ into the function $\tilde{d}_{\partial\Omega}$ such that $\tilde{d}_{\partial\Omega}$ is smooth on $\Omega\cup U_{\tilde{\gamma_0}}$ where $U_{\tilde{\gamma_0}}\coloneqq\{x\in\overline{\Omega} \mid |d_{\partial\Omega}|<\tilde{\gamma}_0\}$, $\tilde{d}_{\partial\Omega}$ is equal to $d_{\partial\Omega}+1$ in $\{x\in\overline{\Omega} \mid |d_{\partial\Omega}|<\tilde{\rho}\}$ and $|\tilde{d}_{\partial\Omega}|\leq2$ on $\overline{\Omega}$. Then, setting $\phi_{\gamma_0}\coloneqq\eta_{\tilde{\gamma}_0}\,\tilde{d}_{\partial\Omega}$ where $\eta=\eta_{\tilde{\gamma}_0}$ is the cut-off function such that $\spt\eta\subset U_{\tilde{\gamma}_0}\subsetneq U_2$, $0\leq\eta\leq1$, $\eta\equiv1$ in $U_{\frac{\tilde{\gamma}_0}{2}}\coloneqq\{x\in\overline{\Omega} \mid |d_{\partial\Omega}|<2^{-1}\tilde{\gamma}_0\}\subsetneq U_{\tilde{\gamma}_0}$, and $|\nabla\eta|<\infty$ on $\overline{\Omega}$, then we obtain the required test function satisfying $0<\phi_{\gamma_0}\leq2$ in $\Omega$, $\phi_{\gamma_0}=1$ on $\partial \Omega$, and $\spt\phi_{\gamma_0}\subset U_{\tilde{\gamma}_0}$. Then, from \eqref{6.2.0.3.0}, we can have the following calculation;
	\begin{align}
	\frac{d}{dt}\left(\int_{\Omega} \phi_{\gamma_0} \,d\mu^{j}_t \right) &\leq 3D\sup_{\Omega}|\nabla^2\phi_{\gamma_0}|-\int_{\partial \Omega\cap\spt\eta} \varepsilon_j\,\sigma\,\eta\left( \frac{\partial u^{j,\,\sigma}}{\partial \boldnu}\right)^2\,d\mathcal{H}^{n-1}\nonumber\\
	&\quad-\int_{\partial\Omega}\varepsilon_j(\nabla\eta\cdot\nabla u^{j,\,\sigma})(\nabla u^{j,\,\sigma}\cdot\boldnu)\,d\mathcal{H}^{n-1} \nonumber\\
	&\quad \qquad-\int_{\partial\Omega\cap\spt\eta}\varepsilon_j\,\eta(\nabla\tilde{d}_{\partial\Omega}\cdot \nabla u^{j,\,\sigma})(\nabla u^{j,\,\sigma}\cdot\boldnu)\,d\mathcal{H}^{n-1}\nonumber\\
	&\quad\qquad \quad+\int_{\partial\Omega}(\nabla\eta\cdot\boldnu)\left( \frac{\varepsilon_j |\nabla u^{j,\,\sigma}| ^2}{2} +\frac{W (u^{j,\,\sigma} )}{\varepsilon_j}\right)  \, d\mathcal{H}^{n-1}\nonumber\\
	&\quad\qquad\quad \qquad + \int_{\partial\Omega\cap\spt\eta}\eta(\nabla\tilde{d}_{\partial\Omega}\cdot\boldnu)\left( \frac{\varepsilon_j |\nabla u^{j,\,\sigma}| ^2}{2} +\frac{W (u^{j,\,\sigma} )}{\varepsilon_j}\right) \, d\mathcal{H}^{n-1}\nonumber\\
	&\leq C_3(\gamma_0)+\int_{\partial\Omega\cap U_{\tilde{\gamma}_0}}\varepsilon_j\,\eta\,\left(\frac{\partial u^{j,\,\sigma}}{\partial\nu}\right)^2(1-\sigma)\,d\mathcal{H}^{n-1} \nonumber\\
	&\quad-\int_{\partial\Omega\cap(U_2\setminus U_{\frac{\tilde{\gamma}_0}{2}})}\varepsilon_j(\nabla\eta\cdot\nabla u^{j,\,\sigma})(\nabla u^{j,\,\sigma}\cdot\boldnu)\,d\mathcal{H}^{n-1}\nonumber\\
	&\qquad +\int_{\partial\Omega\cap(U_2\setminus U_{\frac{\tilde{\gamma}_0}{2}})}(\nabla\eta\cdot\boldnu)\left( \frac{\varepsilon_j |\nabla u^{j,\,\sigma}| ^2}{2} +\frac{W (u^{j,\,\sigma} )}{\varepsilon_j}\right)\,d\mathcal{H}^{n-1}\nonumber\\
	&\quad \qquad-\int_{\partial\Omega\cap U_{\tilde{\gamma}_0}}\eta\,\left( \frac{\varepsilon_j |\nabla u^{j,\,\sigma}| ^2}{2} +\frac{W (u^{j,\,\sigma} )}{\varepsilon_j}\right) \, d\mathcal{H}^{n-1},\label{6.2.0.3.1}
	\end{align}
	where $C_3(\gamma_0)\coloneqq 3D\sup_{\Omega}|\nabla^2\phi_{\gamma_0}|<\infty$. Note that, from the definition of $U_{\tilde{\gamma}_0}$, we have $\partial\Omega\cap U_2\setminus U_{\tilde{\gamma}_0}=\emptyset$ and thus $\partial\Omega\cap\spt\eta=\partial\Omega\cap U_{\tilde{\gamma}_0}(=\Gamma_2)$. Therefore, from the choice of $\sigma$, we may obtain
	\begin{align}
	\frac{d}{dt}\left(\int_{\Omega} \phi_{\gamma_0} \,d\mu^{j}_t \right) &\leq C_3(\gamma_0) -\int_{\partial\Omega\cap U_{\frac{\tilde{\gamma}_0}{2}}}\left( \frac{\varepsilon_j |\nabla u^{j,\,\sigma}| ^2}{2} +\frac{W (u^{j,\,\sigma} )}{\varepsilon_j}\right) \, d\mathcal{H}^{n-1}\nonumber\\
	&\leq C_3(\gamma_0)-\int_{\Gamma_2}\left( \frac{\varepsilon_j |\nabla u^{j,\,\sigma}| ^2}{2} +\frac{W (u^{j,\,\sigma} )}{\varepsilon_j}\right) \, d\mathcal{H}^{n-1}.\label{6.2.0.3.3}
	\end{align}
	Therefore, integrating the both sides of \eqref{6.2.0.3.3} over $[0,\,s_0]$, we have that
	\begin{align}\label{6.2.0.3}
	\sup_{\varepsilon_j>0} \int_{0}^{s_0}\!\!\int_{\Gamma_2}  \left(\frac{\varepsilon_j |\nabla u^{j,\,\sigma}| ^2}{2} +\frac{W (u^{j,\,\sigma})}{\varepsilon_j}\right) \, d\mathcal{H}^{n-1} dt &\leq C_3(\gamma_0)\,s_0+2\sup_{j\in\mathbb{N}}\mu^{j}_0(\Omega\cap\spt\phi_{\gamma_0})\nonumber\\
	&\leq C_3(\gamma_0)\,s_0+2\sup_{j\in\mathbb{N}}\mu^{j}_0(\Omega\cap \{\dist(x,\,\partial\Omega)<\gamma_0\})\nonumber\\
	&< C_3(\gamma_0)\,s_0+2\delta_0,
	\end{align}
	Thirdly, we derive another important estimate on the boundary to show \eqref{6.2.0.1}. Taking the distance function $d_{\partial\Omega}$ as above, we may extend $-d_{\partial\Omega}+1$ into a smooth function $\tilde{d}^{-}_{\partial\Omega}$ on $\Omega\cup U_{\tilde{\gamma_0}}$ such that $\tilde{d}^{-}_{\partial\Omega} \equiv-d_{\partial\Omega}+1$ in $U_{\tilde{\gamma_0}}$. Note that, from the definition of $\tilde{\gamma_0}$, we have $\tilde{d}^{-}_{\partial\Omega}\geq0$ in $\Omega\cup U_{\tilde{\gamma_0}}$. Now defining a function $\phi_{\gamma_0}$ by $\eta_{\tilde{\gamma_0}}\tilde{d}^{-}_{\partial\Omega}+1$ where $\eta_{\tilde{\gamma_0}}$ is in the above, we obtain that $\phi_{\gamma_0}\geq 1$ in $\Omega$ and $|\nabla\phi_{\gamma_0}|\leq 1$. Then calculating the time derivative of $\int_{\Omega}\phi_{\gamma_0}\,d\mu^{j}_t(x)$, we have the following:
	\begin{align}
	\frac{d}{dt}\int_{\Omega}\phi_{\gamma_0}\,d\mu^{j}_t(x)&= 
	-\int_{\Omega} \varepsilon_j \phi_{\gamma_0} (\partial_t u^{j,\,\sigma})^2\,dx- \int_{\partial \Omega} \frac{\varepsilon_j}{\sigma} \phi_{\gamma_0} (\partial_t u^{j,\,\sigma})^2\, d\mathcal{H}^{n-1} \nonumber\\
	&\quad \qquad \quad \qquad- \int_{\Omega} \varepsilon_j \partial_t u^{j,\,\sigma} \nabla u^{j,\,\sigma}\cdot \nabla \phi_{\gamma_0} \, dx\nonumber\\
	&\leq -\int_{\Omega}\varepsilon_j\,\phi_{\gamma_0}\left(\partial_tu^{j,\,\sigma}+\frac{\nabla u^{j,\,\sigma}\cdot\nabla\phi_{\gamma_0}}{2\phi_{\gamma_0}} \right)^2\,dx+ \int_{\Omega}\frac{|\nabla \phi_{\gamma_0}|^2}{2\phi_{\gamma_0}}\,\frac{\varepsilon_j|\nabla u^{j,\,\sigma}|^2}{2}\,dx \nonumber\\
	&\quad \qquad \quad \qquad - \int_{\partial\Omega\cap U_{\tilde{\gamma}_0}} \frac{\varepsilon_j}{\sigma}\,\eta_{\tilde{\gamma_0}}\, (\partial_t u^{j,\,\sigma})^2\, d\mathcal{H}^{n-1} \nonumber\\
	&\leq \frac{1}{2}\mu^{j}_t(\Omega)- \int_{\Gamma_2} \frac{\varepsilon_j}{\sigma} (\partial_t u^{j,\,\sigma})^2\, d\mathcal{H}^{n-1}, \label{6.2.0.4}
	\end{align}
	Thus, integrating over $[0,\,s_0]$ in the both sides of \eqref{6.2.0.4}, we have
	\begin{equation}\label{6.2.0.6}
	\int_{0}^{s_0}\!\!\int_{\Gamma_2} \frac{\varepsilon_j}{\sigma} (\partial_t u^{j,\,\sigma})^2\, d\mathcal{H}^{n-1}\,dt\leq \frac{1}{2}D\,s_0+3\sup_{j\in\mathbb{N}}\mu^{j}_0(\Omega\cap \spt\phi_{\gamma_0}) \leq \frac{1}{2}D\,s_0+3\delta_0.
	\end{equation}
	Now, we will calculate the time derivative of $\int_{\Gamma_2}w^{j}\,d\mathcal{H}^{n-1}$ as follows; from Schwarz inequality, we have, for any $t\in[0,\,s_0]$,
	\begin{align}
	\frac{d}{dt}\int_{\Gamma_2}w^{j}\,d\mathcal{H}^{n-1}&= \int_{\Gamma_2}\sqrt{2W(u^{j})}\,(\partial_t u^{j})\,d\mathcal{H}^{n-1} \nonumber\\
	&\leq \left(\int_{\Gamma_2}2\frac{W(u^{j})}{\varepsilon_j}\,d\mathcal{H}^{n-1}\right)^{\frac{1}{2}}\left(\int_{\Gamma_2}\varepsilon_j(\partial_t u^{j})^2\,\mathcal{H}^{n-1} \right)^{\frac{1}{2}}. \label{6.2.0.7}
	\end{align}
	Then, integrating over $[0,\,s]$, where $s\in[0,\,s_0]$ is arbitrary, in the both sides of \eqref{6.2.0.7}, we obtain
	\begin{align}
	\left|\int_{\Gamma_2}w^{j}(s)\,d\mathcal{H}^{n-1}-\int_{\Gamma_2}w^{j}_0\,d\mathcal{H}^{n-1} \right|&\leq \int_{0}^{s_0}\!\!\int_{\Gamma_2}\frac{W(u^{j})}{\varepsilon_j}\,d\mathcal{H}^{n-1}\,dt+\int_{0}^{s_0}\!\!\int_{\Gamma_2}\varepsilon_j(\partial_t u^{j})^2\,d\mathcal{H}^{n-1}\,dt \nonumber\\
	&< \left(\frac{1}{2}D+C_3(\gamma_0)\right)\,s_0+5\delta_0, \label{6.2.0.8}
	\end{align}
	and thus, for any $s\in[0,\,s_0]$,
	\begin{align}
	\left|\int_{\Gamma_2} w^{j}(s)\,\mathcal{H}^{n-1}\right|&< \left(\frac{1}{2}D+C_3(\gamma_0)\right)\,s_0+5\delta_0+\left|\int_{\Gamma_2} w^{j}_0\,\mathcal{H}^{n-1}\right|, \label{6.2.0.8.1}
	\end{align}
	where we used \eqref{6.2.0.3} and \eqref{6.2.0.6} in the above. Therefore, for any $<t_1<t_2<s_0$, integrating over $s\in[t_1,\,t_2]$ in \eqref{6.2.0.8.1} and using the first assumption \eqref{6.2.0.2}, we conclude that
	\begin{align}\label{6.2.0.9}
	\liminf_{j\to\infty}\int_{t_1}^{t_2}\!\!\left|\int_{\Gamma_2} w^{j}(s)\,\mathcal{H}^{n-1}\right|\,ds &\leq \left(\left(\frac{1}{2}D+C_3(\gamma_0)\right)\,s_0+5\delta_0+\liminf_{j\to\infty}\left|\int_{\Gamma_2} w^{j}_0\,\mathcal{H}^{n-1}\right|\right)\,(t_2-t_1) \nonumber\\
	&\leq \left(\left(\frac{1}{2}D+C_3(\gamma_0)\right)\,s_0+5\delta_0+c_0\right)\,(t_2-t_1) \nonumber\\
	&\leq \left(\frac{2}{3}\mathcal{H}^{n-1}(\Gamma_2)-c_0-6\delta_0+5\delta_0+c_0\right)\,(t_2-t_1)\nonumber\\
	&=\left(\frac{2}{3}\mathcal{H}^{n-1}(\Gamma_2)-\delta_0\right)\,(t_2-t_1) < \frac{2}{3}\mathcal{H}^{n-1}(\Gamma_2)\,(t_2-t_1).
	\end{align}
	This completes the proof of \eqref{6.2.0.1}. 
\end{proof}

\subsubsection{First variations of associated varifolds and proof of Lemma \ref{thm3.8} (dynamic boundary conditions)}\label{chara.8}
In Subsection \ref{chara.6}, we have already proved the existence of the convergent subsequence $\{\mu^{\varepsilon'_j,\,\sigma}_t\}_{\varepsilon>0}$ such that it is independent of $t\in[0,\,\infty)$. Then, in this subsection, we mainly discuss the first variation of the varifold associated with $\mu^{\varepsilon'_j,\,\sigma}_t$ and we will show the proof of Lemma \ref{thm3.8} as we discussed the similar topics in Subsection \ref{chara.3}. Note that we assume that ``General assumptions" and ``Vanishing hypothesis for the discrepancy measure" in this subsection are valid.

First of all, we associate a varifold for each $\mu^{\varepsilon'_j,\,\sigma}_t$ as follows:
\begin{definition}\label{def.5.4}
	Let $\{u^{\varepsilon,\,\sigma}\}_{\varepsilon>0}$ be a family of the solutions of the equation \eqref{1.1.1} and $\mu^{\varepsilon,\,\sigma}_t$ be as in \eqref{1.1.5}. Then for $\psi\in C_c(G_{n-1}(\overline{\Omega}))$ and any $t\geq 0$, define
	\begin{equation}\label{6.2.1}
	V^{\varepsilon,\,\sigma}_t(\psi)\coloneqq\int_{\Omega \cap \{|\nabla u^{\varepsilon,\,\sigma}(\cdot,\,t)|\neq 0 \}} \psi(x,\,\textbf{I}-\textbf{a}^{\varepsilon,\,\sigma}\otimes \textbf{a}^{\varepsilon,\,\sigma})\,d\mu^{\varepsilon,\,\sigma}_t(x),
	\end{equation}
	where $\textbf{a}^{\varepsilon,\,\sigma}\coloneqq\frac{\nabla u^{\varepsilon,\,\sigma}}{|\nabla u^{\varepsilon,\,\sigma}|}$.
\end{definition}
From the definition, we may obtain $\|V^{\varepsilon,\,\sigma}_t\|=\mu^{\varepsilon,\,\sigma}_t\lfloor_{\{|\nabla u^{\varepsilon,\,\sigma}(\cdot,\,t)|\neq 0 \}}$, hence, by considering the first variation of $V^{\varepsilon,\,\sigma}_t$, we may derive the same formula as \eqref{5.3.2} in Lemma \ref{lem.5.4}.
\begin{lemma}\label{lem.5.5}
	Let $\{u^{\varepsilon,\,\sigma}\}_{\varepsilon,\,\sigma>0}$ and $\mu^{\varepsilon,\,\sigma}_t$ be as in Definition \ref{def.5.4}. Then, for any $\varepsilon>0$, $\sigma>0$, $t\geq 0$ and all $\boldg\in (C^1_c(\overline{\Omega}))^n$, we have 
	\begin{align}\label{6.2.2.1}
	\delta V^{\varepsilon,\,\sigma}_t(\boldg) &= \int _{\Omega} (\boldg\cdot \nabla u^{\varepsilon,\,\sigma}) \left(\varepsilon \Delta u^{\varepsilon,\,\sigma} -\frac{W'(u^{\varepsilon,\,\sigma})}{\varepsilon}\right) \, dx
	+  \int_{\Omega\cap\{ |\nabla u^{\varepsilon,\,\sigma}|\not=0 \}} \nabla \boldg : (\textbf{a}^{\varepsilon,\,\sigma}\otimes \textbf{a}^{\varepsilon,\,\sigma}) \,d\xi_t^{\varepsilon,\,\sigma} \nonumber\\
	&\quad + \int _{\partial \Omega} (\boldg\cdot \boldnu) \left( \frac{\varepsilon |\nabla u^{\varepsilon,\,\sigma}|^2}{2}+\frac{W(u^{\varepsilon,\,\sigma})}{\varepsilon}\right) \, d\mathcal{H}^{n-1}
	- \int _{\partial \Omega} \varepsilon (\boldg\cdot \nabla u^{\varepsilon,\,\sigma}) \frac{\partial u^{\varepsilon,\,\sigma}}{\partial \boldnu} \, d\mathcal{H}^{n-1} \nonumber\\
	&\qquad - \int_{\Omega\cap\{ |\nabla u^{\varepsilon,\,\sigma}|=0 \}} \nabla \boldg : \textbf{I} \frac{W(u^{\varepsilon,,\sigma})}{\varepsilon} \, dx.
	\end{align}
\end{lemma}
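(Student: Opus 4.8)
The statement of Lemma \ref{lem.5.5}, including the defining formula \eqref{6.2.1} for $V^{\varepsilon,\sigma}_t$, is word for word the same as that of Lemma \ref{lem.5.4}, and the derivation carried out there does not use the boundary condition in \eqref{1.1.1} at any point. The plan is therefore to reproduce that argument. First I would start from the definition of the first variation, which together with $\|V^{\varepsilon,\sigma}_t\| = \mu^{\varepsilon,\sigma}_t\lfloor_{\{|\nabla u^{\varepsilon,\sigma}(\cdot,t)|\neq 0\}}$ gives
\[
\delta V^{\varepsilon,\sigma}_t(\boldg) = \int_{\Omega\cap\{|\nabla u^{\varepsilon,\sigma}(\cdot,t)|\neq 0\}} \nabla\boldg(x):(\textbf{I}-\textbf{a}^{\varepsilon,\sigma}\otimes\textbf{a}^{\varepsilon,\sigma})\,d\mu^{\varepsilon,\sigma}_t,
\]
and then split the integrand into the $\textbf{I}$-part and the $\textbf{a}^{\varepsilon,\sigma}\otimes\textbf{a}^{\varepsilon,\sigma}$-part, recalling $d\mu^{\varepsilon,\sigma}_t = \bigl(\tfrac{\varepsilon|\nabla u^{\varepsilon,\sigma}|^2}{2}+\tfrac{W(u^{\varepsilon,\sigma})}{\varepsilon}\bigr)\,dx$.

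Next I would integrate by parts in the two bulk integrals. On the gradient-energy part this produces exactly \eqref{5.3.4}: the boundary terms $\int_{\partial\Omega}(\boldg\cdot\boldnu)\tfrac{\varepsilon|\nabla u^{\varepsilon,\sigma}|^2}{2}\,d\mathcal{H}^{n-1}$ and $-\int_{\partial\Omega}\varepsilon(\boldg\cdot\nabla u^{\varepsilon,\sigma})\tfrac{\partial u^{\varepsilon,\sigma}}{\partial\boldnu}\,d\mathcal{H}^{n-1}$, the Hessian term $\int_{\Omega\cap\{|\nabla u^{\varepsilon,\sigma}|\neq 0\}}\nabla\boldg:(\textbf{a}^{\varepsilon,\sigma}\otimes\textbf{a}^{\varepsilon,\sigma})\varepsilon|\nabla u^{\varepsilon,\sigma}|^2\,dx$, and the bulk term $\int_\Omega(\boldg\cdot\nabla u^{\varepsilon,\sigma})\varepsilon\Delta u^{\varepsilon,\sigma}\,dx$. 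Symmetrically, integrating by parts in the potential part over the set $\{|\nabla u^{\varepsilon,\sigma}|\neq 0\}$ gives \eqref{5.3.5}, namely $\int_{\partial\Omega}(\boldg\cdot\boldnu)\tfrac{W(u^{\varepsilon,\sigma})}{\varepsilon}\,d\mathcal{H}^{n-1}$, the bulk term $-\int_\Omega(\boldg\cdot\nabla u^{\varepsilon,\sigma})\tfrac{W'(u^{\varepsilon,\sigma})}{\varepsilon}\,dx$, and the correction term $-\int_{\Omega\cap\{|\nabla u^{\varepsilon,\sigma}|=0\}}\nabla\boldg:\textbf{I}\,\tfrac{W(u^{\varepsilon,\sigma})}{\varepsilon}\,dx$. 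Substituting \eqref{5.3.4} and \eqref{5.3.5} into the split expression for $\delta V^{\varepsilon,\sigma}_t(\boldg)$ and using $\xi^{\varepsilon,\sigma}_t = \varepsilon|\nabla u^{\varepsilon,\sigma}|^2\mathcal{L}^n-\mu^{\varepsilon,\sigma}_t$ to merge the two $\textbf{a}^{\varepsilon,\sigma}\otimes\textbf{a}^{\varepsilon,\sigma}$-contributions into a single integral against $\xi^{\varepsilon,\sigma}_t$ would then yield \eqref{6.2.2.1}.

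The only technical points to check are those of Lemma \ref{lem.5.4}. The integration by parts is legitimate because, under ``General assumptions'', $u^{\varepsilon,\sigma}(\cdot,t)\in W^{2,2}(\Omega)$ for a.e.\ $t$, so $\Delta u^{\varepsilon,\sigma}$ lies in $L^2(\Omega)$ and the traces on the smooth bounded boundary $\partial\Omega$ are well defined; the test field $\boldg\in(C^1_c(\overline{\Omega}))^n$ is admissible since $\Omega$ is bounded. The one mildly delicate issue, which is exactly why the set $\{|\nabla u^{\varepsilon,\sigma}|=0\}$ is carried along explicitly, is that $\textbf{a}^{\varepsilon,\sigma}$ is undefined there; splitting the potential integral over $\{|\nabla u^{\varepsilon,\sigma}|\neq 0\}$ and its complement isolates this set into the last term of \eqref{6.2.2.1}. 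I do not expect any genuine obstacle: since the boundary condition of \eqref{1.1.1} is never used, the formula holds identically whether $\sigma\in(0,1)$ or $\sigma\in[1,\infty)$, so the proof is completed simply by quoting the computation of Lemma \ref{lem.5.4}.
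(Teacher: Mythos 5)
Your proposal is correct and coincides with what the paper does: the paper proves the formula in Lemma \ref{lem.5.4} by exactly the computation you describe (definition of $\delta V^{\varepsilon,\,\sigma}_t$, splitting off the $\textbf{I}$ and $\textbf{a}^{\varepsilon,\,\sigma}\otimes\textbf{a}^{\varepsilon,\,\sigma}$ parts, the two integrations by parts \eqref{5.3.4}--\eqref{5.3.5}, and the identity $\xi^{\varepsilon,\,\sigma}_t=\varepsilon|\nabla u^{\varepsilon,\,\sigma}|^2\mathcal{L}^n-\mu^{\varepsilon,\,\sigma}_t$), and then states that Lemma \ref{lem.5.5} follows by the same argument since the boundary condition of \eqref{1.1.1} is never invoked. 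Your observation about the set $\{|\nabla u^{\varepsilon,\,\sigma}|=0\}$ and the independence of the value of $\sigma$ matches the paper's treatment.
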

The proof of this lemma is the same as that of Lemma \ref{lem.5.4}, and thus we do not repeat it again.
\begin{proposition}\label{prop.6.1}
	Let $\{\varepsilon'_j\}_{j\in\mathbb{N}}$ be such that Lemma \ref{thm.3.6} and \ref{thm3.7} hold and let $\{u^{\varepsilon'_j,\,\sigma}\}_{j\in\mathbb{N}}$ satisfy the equations \eqref{1.1.1}. Then for a.e. $t\geq 0$, 
	\begin{align}\label{6.2.3}
	\liminf_{j\to\infty} & \left( \int_\Omega |\nabla u ^{\varepsilon'_j,\,\sigma}|\, \left|\varepsilon'_j \Delta u^{\varepsilon'_j,\,\sigma} -\frac{W'(u^{\varepsilon'_j,\,\sigma})}{\varepsilon'_j}\right| \, dx\right.+ \int _{\partial \Omega} \left( \frac{\varepsilon'_j |\nabla u^{\varepsilon'_j,\,\sigma}|^2}{2}+\frac{W(u^{\varepsilon'_j,\,\sigma})}{\varepsilon'}  \right)  \, d\mathcal{H}^{n-1} \nonumber\\
	&\quad \qquad \quad \qquad \left.+\int_{\partial \Omega} \varepsilon'_j |\nabla u^{\varepsilon'_j,\,\sigma}|\left|\frac{\partial u^{\varepsilon'_j,\,\sigma}}{\partial \boldnu}\right|\,d\mathcal{H}^{n-1}\right) <\infty.
	\end{align}
\end{proposition}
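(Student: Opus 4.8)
The plan is to follow the strategy of Proposition \ref{prop.5.5}, but to replace the a priori bound \eqref{4.1.5} (which in the Dirichlet setting carried the extra constant $C_0$ coming from the ``Uniform upper bound'' assumption) by the boundary energy estimate \eqref{4.2.5} of Proposition \ref{prop.4.4}, which holds for $\sigma\geq 1$ with no further hypothesis. Since $[0,\infty)=\bigcup_{l\in\mathbb{N}}[0,l]$, it suffices to prove the finiteness of \eqref{6.2.3} for a.e.\ $t\in[0,T]$ with $T>0$ arbitrary but fixed. I will do this by showing that the time integral over $[0,T]$ of the three quantities
\begin{align*}
I_1^{\varepsilon,\sigma} &:= \int_\Omega |\nabla u^{\varepsilon,\sigma}|\,\Big|\varepsilon\Delta u^{\varepsilon,\sigma}-\tfrac{W'(u^{\varepsilon,\sigma})}{\varepsilon}\Big|\,dx, \\
I_2^{\varepsilon,\sigma} &:= \int_{\partial\Omega}\Big(\tfrac{\varepsilon|\nabla u^{\varepsilon,\sigma}|^2}{2}+\tfrac{W(u^{\varepsilon,\sigma})}{\varepsilon}\Big)\,d\mathcal{H}^{n-1}, \\
I_3^{\varepsilon,\sigma} &:= \int_{\partial\Omega}\varepsilon|\nabla u^{\varepsilon,\sigma}|\,\Big|\tfrac{\partial u^{\varepsilon,\sigma}}{\partial\boldnu}\Big|\,d\mathcal{H}^{n-1}
\end{align*}
is bounded uniformly in $\varepsilon$ (and in $\sigma\geq 1$), and then invoking Fatou's lemma along the subsequence $\varepsilon'_j\to 0$.

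For $I_1^{\varepsilon,\sigma}$ I would use the Allen--Cahn equation in the form $\varepsilon\Delta u^{\varepsilon,\sigma}-\varepsilon^{-1}W'(u^{\varepsilon,\sigma})=\varepsilon\,\partial_t u^{\varepsilon,\sigma}$, Cauchy--Schwarz, and the elementary bound $\varepsilon|\nabla u^{\varepsilon,\sigma}|^2\,dx\leq 2\,d\mu^{\varepsilon,\sigma}_t$ together with $\mu^{\varepsilon,\sigma}_t(\overline{\Omega})\leq D$ from \eqref{4.1.2} and $\int_0^T\!\int_\Omega\varepsilon(\partial_t u^{\varepsilon,\sigma})^2\,dx\,dt\leq D$ from \eqref{4.1.1}; this yields $\big(\int_0^T I_1^{\varepsilon,\sigma}\,dt\big)^2\leq 2D^2T$. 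For $I_2^{\varepsilon,\sigma}$, the bound $\int_0^T I_2^{\varepsilon,\sigma}\,dt\leq C_1(T+1)$ is exactly \eqref{4.2.5}. For $I_3^{\varepsilon,\sigma}$ I would estimate $|\partial u^{\varepsilon,\sigma}/\partial\boldnu|\leq|\nabla u^{\varepsilon,\sigma}|$ and then apply Cauchy--Schwarz on $\partial\Omega$ and in time to reduce it to $\int_0^T\!\int_{\partial\Omega}\varepsilon|\nabla u^{\varepsilon,\sigma}|^2\,d\mathcal{H}^{n-1}\,dt$, which is in turn controlled by $2C_1(T+1)$ via \eqref{4.2.5}.

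Combining the three estimates gives $\int_0^T\big(I_1^{\varepsilon'_j,\sigma}+I_2^{\varepsilon'_j,\sigma}+I_3^{\varepsilon'_j,\sigma}\big)\,dt\leq C(T,D)$ uniformly in $j$, so Fatou's lemma gives $\int_0^T\liminf_{j\to\infty}\big(I_1^{\varepsilon'_j,\sigma}+I_2^{\varepsilon'_j,\sigma}+I_3^{\varepsilon'_j,\sigma}\big)\,dt<\infty$; hence the integrand, which is precisely the quantity appearing in \eqref{6.2.3}, is finite for a.e.\ $t\in[0,T]$, and letting $T\to\infty$ finishes the proof. I do not expect a genuine obstacle here: the only point needing care, compared with Proposition \ref{prop.5.5}, is that $I_2^{\varepsilon,\sigma}$ and $I_3^{\varepsilon,\sigma}$ involve the \emph{full} gradient $|\nabla u^{\varepsilon,\sigma}|$ restricted to $\partial\Omega$ rather than the tangential gradient, which is exactly why the argument must rely on the full boundary energy estimate \eqref{4.2.5} (available for $\sigma\geq 1$) rather than on the weaker tangential estimate used in the Dirichlet case, and consequently no ``Uniform upper bound for the solution of Allen--Cahn equations'' assumption is needed.
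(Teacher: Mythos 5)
Your proposal is correct and follows essentially the same route as the paper: the paper states that Proposition \ref{prop.6.1} is proved exactly as Proposition \ref{prop.5.5}, i.e.\ by splitting into $I_1^{\varepsilon,\sigma}$, $I_2^{\varepsilon,\sigma}$, $I_3^{\varepsilon,\sigma}$, bounding their time integrals via \eqref{4.1.1}, \eqref{4.1.2} and the boundary energy estimate, and concluding with Fatou's lemma. Your observation that in the dynamic case \eqref{4.2.5} controls the full boundary gradient, so that the role of \eqref{4.1.5} together with the assumption \eqref{3.1.9.2} is taken over by Proposition \ref{prop.4.4} alone, is precisely the modification the paper intends.
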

We should remark that it is not necessary to impose the "\textit{Uniform upper bound on the boundary}" \eqref{3.1.9.2} in Section \ref{exisdiri} as we do in the case of Dirichlet boundary conditions. In addition, recalling that the parameter $\sigma$ is positive and fixed, the proof of this proposition is basically the same as that of Proposition \ref{prop.5.5}. Hence we do not write the proof here again.

Next we show that the limit measure $\mu^{\sigma}_t$ is actually $(n-1)$-rectifiable on $\overline{\Omega}$ for a.e. $t\geq 0$ and a proper subsequence od the associated varifolds $\{V^{\varepsilon,\,\sigma}_t\}_{\varepsilon>0}$ converges uniquely to the varifold $V_t$ associated with $\mu^{\sigma}_t$ as $\varepsilon \downarrow 0$.
\begin{proposition}\label{prop.6.2}
	For a.e. $t\geq 0$, $\mu^{\sigma}_t$ is $(n-1)$-rectifiable on $\overline{\Omega}$ and any convergent subsequence $\{V^{\varepsilon''_j,\,\sigma}\}_{j\in\mathbb{N}}$ of $\{V^{\varepsilon'_j,\,\sigma}_t\}_{j\in\mathbb{N}}$, where $\{\varepsilon'_j\}_{j\in\mathbb{N}}$ is such that Lemma \ref{thm.3.6} and \ref{thm3.7} hold, converges to the unique $(n-1)$-rectifiable varifold $V_t$ associated with $\mu_t$. Moreover, we have
	\begin{equation}\label{6.2.4}
	\|\delta V^{\sigma}_t\|(\overline{\Omega})<\infty,\quad
	\int_{0}^{T}\|\delta V^{\sigma}_t\|(\overline{\Omega})\,dt<\infty
	\end{equation}
	for a.e. $t\geq0$ and any $T>0$ respectively.
\end{proposition}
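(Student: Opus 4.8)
The plan is to mirror, essentially verbatim, the argument already carried out in the Dirichlet case as Proposition \ref{prop.5.6}, since the only structural difference is that there is now a single parameter $\varepsilon$ (with $\sigma$ fixed equal to $1$) rather than two parameters $\varepsilon$ and $\sigma$ tending to zero together. First I would fix $T>0$ and restrict attention to a.e.\ $t\in[0,\,T]$ for which several things simultaneously hold: the bound \eqref{6.2.3} of Proposition \ref{prop.6.1} (so the putative limit first variation is bounded), the vanishing of the discrepancy measure $|\xi^{\varepsilon'_j,\,\sigma}_t|$ up to $\partial\Omega$ guaranteed by the ``Vanishing hypothesis for the discrepancy measure'' in Subsection \ref{exis.assmp.dyna}, the interior integrality of $\mu^{\sigma}_t$ coming from Tonegawa \cite{tone} or Takasao--Tonegawa \cite{taka.tone}, and the upper density exclusion analogous to \eqref{5.3.19.3}. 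Exactly as in \eqref{5.3.18}, the boundedness of $\Omega$ together with $\theta_t\geq 1$ $\mathcal{H}^{n-1}$-a.e.\ on the interior rectifiable set and the a priori bound \eqref{4.1.2} yields $\mathcal{H}^{n-1}(\spt\mu^{\sigma}_t)\leq D+\mathcal{H}^{n-1}(\partial\Omega)<\infty$, and the standard density estimate (see \cite[Chapter 3]{simon}) then gives $\mu^{\sigma}_t\equiv\mu^{\sigma}_t\lfloor_{\{\Theta^{*}>0\}}$ where $\Theta^{*}(x):=\limsup_{r\downarrow 0}\mu^{\sigma}_t(B_r(x))/(\omega_{n-1}r^{n-1})$.

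Next I would take any convergent subsequence $\{V^{\varepsilon''_j,\,\sigma}_t\}_{j\in\mathbb{N}}$ of $\{V^{\varepsilon'_j,\,\sigma}_t\}_{j\in\mathbb{N}}$ with varifold limit $\tilde{V}_t$. Passing to the limit in the first-variation formula \eqref{6.2.2.1} of Lemma \ref{lem.5.5}, using that $|\xi^{\varepsilon''_j,\,\sigma}_t|\rightharpoonup 0$ on $\overline{\Omega}$ kills the two discrepancy terms (including the one on $\Omega\cap\{|\nabla u^{\varepsilon''_j,\,\sigma}|=0\}$ rewritten via Remark \ref{rem.5.5.0}), and invoking Proposition \ref{prop.6.1}, I obtain $|\delta\tilde{V}_t(\boldg)|\leq \tilde{c}(t)\,\sup_{\overline{\Omega}}|\boldg|$ with $\tilde{c}(t)<\infty$ for a.e.\ $t$. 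By the Riesz representation theorem $\|\delta\tilde{V}_t\|$ is a Radon measure on $\overline{\Omega}$, and since $\|V^{\varepsilon''_j,\,\sigma}_t\|=\mu^{\varepsilon''_j,\,\sigma}_t\lfloor_{\{|\nabla u^{\varepsilon''_j,\,\sigma}|\neq 0\}}$ converges to $\mu^{\sigma}_t$, we have $\|\tilde{V}_t\|=\mu^{\sigma}_t$. Then Allard's rectifiability theorem applies to $\tilde{V}_t$ thanks to the density exclusion above, so $\tilde{V}_t$ is $(n-1)$-rectifiable and hence uniquely determined by its weight $\mu^{\sigma}_t$; this both shows $\mu^{\sigma}_t$ is $(n-1)$-rectifiable and forces the full sequence $V^{\varepsilon'_j,\,\sigma}_t\rightharpoonup V_t$ (the varifold of Definition \ref{def.3.1}) without passing to a further subsequence. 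The bounds \eqref{6.2.4} follow: the pointwise-in-$t$ finiteness is the estimate just derived, and the time-integrated bound $\int_0^T\|\delta V^{\sigma}_t\|(\overline{\Omega})\,dt<\infty$ comes from integrating $\tilde{c}(t)$ over $[0,\,T]$ using that $\tilde{c}\in L^1_{loc}([0,\,\infty))$, which is the analogue of \eqref{5.3.13} and is proved by the same Cauchy--Schwarz and Fatou argument from the a priori estimates \eqref{4.1.1}, \eqref{4.1.2} and \eqref{4.2.5}.

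The only genuine point to check, rather than merely transcribe, is that Proposition \ref{prop.6.1} (the analogue of Proposition \ref{prop.5.5}) actually holds in the dynamic setting without the ``Uniform upper bound'' assumption \eqref{3.1.9.2}: in the Dirichlet case the terms $I_2^{\varepsilon,\,\sigma}$ and $I_3^{\varepsilon,\,\sigma}$ were controlled using \eqref{3.1.9.2} together with \eqref{4.1.5}, whereas here one must instead use the boundary energy estimate \eqref{4.2.5} of Proposition \ref{prop.4.4}, which holds for $\sigma\in[1,\,\infty)$ with no extra hypothesis, to bound $\int_0^T I_2^{\varepsilon'_j,\,\sigma}\,dt$ and, via Cauchy--Schwarz, $\int_0^T I_3^{\varepsilon'_j,\,\sigma}\,dt$; the term $I_1^{\varepsilon'_j,\,\sigma}$ is handled exactly as in \eqref{5.3.10}. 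I expect this bookkeeping — replacing every appeal to $C_0(t_1,\,t_2)$ and \eqref{4.1.5} by the cleaner bound $C_1(t_2-t_1+1)$ from \eqref{4.2.5}, and checking the fixed value $\sigma=1$ causes no trouble in Lemma \ref{lem.5.5} — to be the main (though still routine) obstacle; everything downstream is a line-by-line repetition of the proof of Proposition \ref{prop.5.6}.
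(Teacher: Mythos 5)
Your proposal is correct and follows essentially the same route as the paper, which itself omits the proof of Proposition \ref{prop.6.2} precisely because it is a line-by-line repetition of Proposition \ref{prop.5.6} (with Proposition \ref{prop.6.1} playing the role of Proposition \ref{prop.5.5}). Your one substantive observation — that the boundary terms $I_2^{\varepsilon,\,\sigma}$, $I_3^{\varepsilon,\,\sigma}$ must now be controlled by the estimate \eqref{4.2.5} of Proposition \ref{prop.4.4} instead of the assumption \eqref{3.1.9.2} together with \eqref{4.1.5} — is exactly the adjustment the paper relies on in the dynamic case.
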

We should remark that it is not necessary to impose the "\textit{Uniform upper bound on the boundary}" \eqref{3.1.9.2} in Section \ref{exisdiri} as we do in the case of Dirichlet boundary conditions. In addition, we may prove this proposition in the same manner as we show in the proof of Propostion \ref{prop.5.6}. Thus, we do not write the proof again here.

Finally, considering all the claims shown in Proposition \ref{prop.6.1} and \ref{prop.6.2}, we may conclude that Lemma \ref{thm3.8} is valid.

\subsubsection{Proof of Theorem \ref{thm3.9} (dynamic boundary conditions)}\label{chara.9}
In this section, we will prove Theorem \ref{thm3.9}, that is, the existence of the singular limits of the Allen-Cahn equations described in \eqref{1.1.1} by taking $\varepsilon\to0$ with fixed $\sigma\in[1,\,\infty)$. Before proving Theorem \ref{thm3.9}, as a preparation, we will show three propositions. First of all, we show that the first variation in an integral form $\int_{0}^{\infty}\delta V^{\varepsilon'_j,\,\sigma}_t\,dt$ converges to $\int_{0}^{\infty}\delta V^{\sigma}_t\,dt$ locally in time as $\varepsilon \to 0$, where the subsequence $\{V^{\varepsilon'_j,\,\sigma}_t\}_{j\in\mathbb{N}}$ has the limit varifold $V^{\sigma}_t$.

Note that, through this subsection, we assume that ``Generalized assumptions" and ``Vanishing hypothesis for the discrepancy measure" in Subsection \ref{exis.assmp.dyna} hold. Moreover, we only consider the subsequence $\{\varepsilon'_j\}_{j\in\mathbb{N}}$ such that Lemma \ref{thm.3.6} and \ref{thm3.7} hold.
\begin{proposition}\label{prop.6.3}
	Let $\{V^{\varepsilon'_j,\,\sigma}_t\}_{j\in\mathbb{N}}$ be a family of associated varifolds with $\mu^{\varepsilon'_j,\,\sigma}_t$ satisfying Proposition \ref{prop.6.1} and \ref{prop.6.2}. Then we have 
	\begin{equation}\label{6.2.11}
	\lim_{j\to\infty}\int_{0}^{T}\delta V^{\varepsilon'_j,\,\sigma}_t(\boldg)\,dt =  \int_{0}^{T} \delta V^{\sigma}_t(\boldg)\,dt
	\end{equation}
	for all $T>0$ and all $\boldg\in (C^1(\overline{\Omega}\times[0,\,\infty)))^n$ with $g(\cdot,\,t)\in (C^1_c(\overline{\Omega}))^n$.
\end{proposition}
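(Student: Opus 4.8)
The statement asserts an exchange of limit and time-integral for the first variation, i.e. $\lim_{j\to\infty}\int_0^T\delta V_t^{\varepsilon'_j,\sigma}(\boldg)\,dt=\int_0^T\delta V_t^\sigma(\boldg)\,dt$. The strategy is exactly the one used in Proposition \ref{prop.5.7} (the Dirichlet counterpart): invoke a pointwise-in-$t$ convergence of $\delta V_t^{\varepsilon'_j,\sigma}(\boldg(\cdot,t))\to\delta V_t^\sigma(\boldg(\cdot,t))$ together with a uniform, time-integrable dominating function, and conclude by the dominated convergence theorem. The pointwise convergence comes from Proposition \ref{prop.6.2}, which tells us that for a.e.\ $t\geq0$ any convergent subsequence of $\{V_t^{\varepsilon'_j,\sigma}\}_j$ converges to the unique rectifiable varifold $V_t^\sigma$ associated with $\mu_t^\sigma$, hence $\delta V_t^{\varepsilon'_j,\sigma}\to\delta V_t^\sigma$ as functionals on $(C^1_c(\overline\Omega))^n$ for a.e.\ $t$. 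The domination comes from Proposition \ref{prop.6.1}: defining
\begin{equation}\label{plan.c}
c(t):=\liminf_{j\to\infty}\left(\int_\Omega|\nabla u^{\varepsilon'_j,\sigma}|\,\Bigl|\varepsilon'_j\Delta u^{\varepsilon'_j,\sigma}-\tfrac{W'(u^{\varepsilon'_j,\sigma})}{\varepsilon'_j}\Bigr|\,dx+\int_{\partial\Omega}\Bigl(\tfrac{\varepsilon'_j|\nabla u^{\varepsilon'_j,\sigma}|^2}{2}+\tfrac{W(u^{\varepsilon'_j,\sigma})}{\varepsilon'_j}\Bigr)d\mathcal{H}^{n-1}+\int_{\partial\Omega}\varepsilon'_j|\nabla u^{\varepsilon'_j,\sigma}|\Bigl|\tfrac{\partial u^{\varepsilon'_j,\sigma}}{\partial\boldnu}\Bigr|d\mathcal{H}^{n-1}\right),
\end{equation}
Proposition \ref{prop.6.1} (proved exactly as Proposition \ref{prop.5.5}, using the a priori estimates \eqref{4.1.1}, \eqref{4.1.2} and, for $\sigma\in[1,\infty)$, the boundary estimate \eqref{4.2.5} of Proposition \ref{prop.4.4}) gives $c\in L^1_{loc}([0,\infty))$ and $c(t)<\infty$ for a.e.\ $t$.

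\textbf{Key steps, in order.} First I would fix $T>0$ and $\boldg\in(C^1(\overline\Omega\times[0,\infty)))^n$ with $\boldg(\cdot,t)\in(C^1_c(\overline\Omega))^n$, and record the pointwise bound: from the representation \eqref{6.2.2.1} of $\delta V_t^{\varepsilon'_j,\sigma}$ in Lemma \ref{lem.5.5}, together with the vanishing of the discrepancy measure on $\overline\Omega$ (the working hypothesis \eqref{3.2.0}, which gives $\lim_j\int_\Omega\phi\,d|\xi_t^{\varepsilon'_j,\sigma}|=0$ for all $\phi\in C_c(\overline\Omega)$ and a.e.\ $t$), one has for a.e.\ $t$
\begin{equation}\label{plan.bound}
|\delta V_t^\sigma(\boldg(\cdot,t))|=\lim_{j\to\infty}|\delta V_t^{\varepsilon'_j,\sigma}(\boldg(\cdot,t))|\leq\Bigl(\sup_{\overline\Omega\times[0,\infty)}|\boldg|\Bigr)\,c(t),
\end{equation}
where the discrepancy term drops in the limit. (Strictly, along each $t$ one passes to the $\liminf$-realizing subsequence for $c(t)$, and uses that every convergent sub-subsequence of $\{V_t^{\varepsilon'_j,\sigma}\}_j$ has the same limit $V_t^\sigma$ by Proposition \ref{prop.6.2}, so the full sequence $\delta V_t^{\varepsilon'_j,\sigma}(\boldg(\cdot,t))$ converges.) Second, I would note that the integrand $t\mapsto\delta V_t^{\varepsilon'_j,\sigma}(\boldg(\cdot,t))$ is dominated uniformly in $j$: for a.e.\ $t$ each $|\delta V_t^{\varepsilon'_j,\sigma}(\boldg(\cdot,t))|$ is bounded by the $j$-th term inside the $\liminf$ in \eqref{plan.c} times $\sup|\boldg|$, and these terms are bounded in $L^1(0,T)$ by the estimates feeding Proposition \ref{prop.6.1}; hence by Fatou's lemma (as in \eqref{5.3.13}) $\bigl(\sup|\boldg|\bigr)c\in L^1([0,T])$ provides an integrable majorant for the limit integrand. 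Third, apply the dominated convergence theorem to conclude $\int_0^T\delta V_t^{\varepsilon'_j,\sigma}(\boldg(\cdot,t))\,dt\to\int_0^T\delta V_t^\sigma(\boldg(\cdot,t))\,dt$, which is \eqref{6.2.11}.

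\textbf{Main obstacle.} The delicate point is not the dominated-convergence machinery but justifying the pointwise-in-$t$ convergence $\delta V_t^{\varepsilon'_j,\sigma}\to\delta V_t^\sigma$ cleanly for a.e.\ $t$: one needs the varifold convergence $V_t^{\varepsilon'_j,\sigma}\to V_t^\sigma$ (not merely convergence of the mass measures $\mu_t^{\varepsilon'_j,\sigma}\rightharpoonup\mu_t^\sigma$) and the uniqueness of the limit rectifiable varifold, both of which are supplied by Proposition \ref{prop.6.2} and ultimately rest on Allard's rectifiability theorem applied under the discrepancy-vanishing hypothesis. Care is also needed because a priori one only has convergence of $\mu_t^{\varepsilon'_j,\sigma}$ for the chosen subsequence and only a $\liminf$-characterization of $c(t)$, so the passage to the limit in \eqref{plan.bound} must be organized so that the bound holds for the full sequence indexed by $j$; once Proposition \ref{prop.6.2} guarantees uniqueness of the limit along every sub-subsequence, this is routine. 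Since $\sigma\in[1,\infty)$ is fixed and the boundary energy estimate \eqref{4.2.5} holds without the ``Uniform upper bound'' assumption, no new analytic input beyond the Dirichlet case is required, and the proof is genuinely a transcription of that of Proposition \ref{prop.5.7}, which is why the paper can omit the details.
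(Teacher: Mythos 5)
Your proposal is correct and takes essentially the same route as the paper, which disposes of Proposition \ref{prop.6.3} by transcribing the proof of Proposition \ref{prop.5.7}: pointwise-in-$t$ convergence $\delta V_t^{\varepsilon'_j,\,\sigma}(\boldg(\cdot,\,t))\to\delta V_t^{\sigma}(\boldg(\cdot,\,t))$ for a.e.\ $t$ (via Proposition \ref{prop.6.2}, Lemma \ref{lem.5.5} and the vanishing-discrepancy hypothesis), the $L^1_{loc}$ majorant $(\sup|\boldg|)\,c(t)$ supplied by Proposition \ref{prop.6.1} (resting on \eqref{4.1.1} and \eqref{4.2.5} instead of the Dirichlet estimates), and dominated convergence. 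Your additional bookkeeping about sub-subsequences and the $\liminf$ characterization of $c(t)$ only elaborates what the paper leaves implicit, so no new ideas are needed.
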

The proof of this proposition can be done in the same way as Proposition \ref{prop.5.7} and hence we do not write the proof here.

\begin{proposition}\label{prop.6.4}
	Let $\{V_t^{\sigma}\}_{t\geq0}$ be as in Lemma \ref{thm3.8} and Suppose that $\alpha^{\sigma}$ and $\boldv_b^{\sigma}$ are followed from Lemma \ref{thm3.7} and $\delta V^{\sigma}_t\lfloor_{\partial \Omega}^T$ and $\mathcal{S}_{\alpha^{\sigma},\,{\boldv}^{\sigma}_b}$ are as in Definition \ref{def.3.3}. Then we obtain
	\begin{equation}\label{6.2.13}
	\left\|\int_{0}^{\infty}\delta V^{\sigma}_t\lfloor_{\Omega}\,dt+\int_{0}^{\infty} \delta V^{\sigma}_t\lfloor_{\partial \Omega}^T\,dt+ \sigma^{-1}\mathcal{S}_{\alpha^{\sigma},\,\boldv^{\sigma}_b}\right\|\ll \|V^{\sigma}_t\|\otimes \mathcal{L}^1_t\quad \text{on $\overline{\Omega}\times[0,\,\infty)$}.
	\end{equation}
	Thus, we may obtain the existence of the modified generalized mean curvature vector $\widetilde{\boldH}^{\sigma}_V$ (see Definition \ref{def.4.2}) and, moreover, ${\boldH}^{\sigma}_V$ belongs to $(L^2(\|V^{\sigma}_t\|\otimes\mathcal{L}^1_t,\,\overline{\Omega} \times [0,\,\infty)))^n$ and we also obtain \eqref{3.2.10} and the inequality
	\begin{equation}\label{6.2.14}
	\int_{t_1}^{t_2}\int_{\overline{\Omega}}\phi|{\boldH}^{\sigma}_V|^2\,d\|V^{\sigma}_t\|\,dt \leq \liminf_{j\to\infty}\int_{t_1}^{t_2}\int_{\Omega} \varepsilon'_j \phi (\partial_t u^{\varepsilon'_j,\,\sigma})^2\,dx\,dt
	\end{equation}
	for all $\phi \in C_c(\overline{\Omega}\times[0,\,\infty))$ with $\phi \geq 0$ and any $0<t_1\leq t_2<\infty$.
\end{proposition}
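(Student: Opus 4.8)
The plan is to mimic, in the time-integrated/boundary setting, the argument already carried out for Dirichlet conditions in Proposition \ref{prop.5.8}, but now keeping the tangential boundary first variation $\delta V_t\lfloor_{\partial\Omega}^T$ rather than discarding it. First I would fix a relatively open set $U\subset\subset\overline{\Omega}$, a time $T>0$, and a test vector field $\boldg\in(C^1_c(U\times[0,\,T)))^n$ with $|\boldg|\leq 1$, and compute the action of the functional $\int_0^\infty\delta V^\sigma_t\lfloor_\Omega\,dt+\int_0^\infty\delta V^\sigma_t\lfloor_{\partial\Omega}^T\,dt+\sigma^{-1}\mathcal{S}_{\alpha^\sigma,\,\boldv^\sigma_b}$ on $\boldg$ by passing to the limit in the $\varepsilon'_j$-approximation. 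The key identity is Lemma \ref{lem.5.5} (equation \eqref{6.2.2.1}): the interior term $\int_\Omega(\boldg\cdot\nabla u^{\varepsilon'_j,\,\sigma})(\varepsilon'_j\Delta u^{\varepsilon'_j,\,\sigma}-W'(u^{\varepsilon'_j,\,\sigma})/\varepsilon'_j)\,dx$ converges, after integrating in time and using \eqref{1.1.1}, to $-\int_0^\infty\!\!\int_\Omega\boldg\cdot\boldv^{\varepsilon'_j,\,\sigma}\,\varepsilon'_j|\nabla u^{\varepsilon'_j,\,\sigma}|^2\,dx\,dt$, i.e. to $\mathcal{S}^\Omega_{\tilde\mu^\sigma,\,\boldv}(\boldg)$ where $\boldv$ is the Hutchinson limit of the measure–function pairs (as in Proposition \ref{thm3.5}); the discrepancy term vanishes in the limit by the ``Vanishing hypothesis for the discrepancy measure'' in Subsection \ref{exis.assmp.dyna}; and the two boundary integrals $\int_{\partial\Omega}(\boldg\cdot\boldnu)(\varepsilon'_j|\nabla u^{\varepsilon'_j,\,\sigma}|^2/2+W(u^{\varepsilon'_j,\,\sigma})/\varepsilon'_j)\,d\mathcal{H}^{n-1}-\int_{\partial\Omega}\varepsilon'_j(\boldg\cdot\nabla u^{\varepsilon'_j,\,\sigma})\,\partial u^{\varepsilon'_j,\,\sigma}/\partial\boldnu\,d\mathcal{H}^{n-1}$ must be recombined, using the boundary condition $\partial_t u^{\varepsilon'_j,\,\sigma}+\sigma\nabla u^{\varepsilon'_j,\,\sigma}\cdot\boldnu=0$, the vanishing of $\xi_t^\sigma$ restricted to $\partial\Omega$, and the definitions of $\alpha^{\varepsilon'_j,\,\sigma}$ and $\boldv^{\varepsilon'_j,\,\sigma}_b$, so that their limit is exactly $\int_0^\infty\delta V^\sigma_t\lfloor_{\partial\Omega}^T(\boldg)\,dt+\sigma^{-1}\mathcal{S}_{\alpha^\sigma,\,\boldv^\sigma_b}(\boldg)$ (this is the analogue of the geometric computation in Remark \ref{rem.3}, now at the level of the diffuse energy).

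Having matched the functional with a limit involving only $\int_0^\infty\!\!\int_\Omega\boldg\cdot\boldv^{\varepsilon'_j,\,\sigma}\,d\tilde\mu^{\varepsilon'_j,\,\sigma}$ plus boundary contributions controlled by $\alpha^{\varepsilon'_j,\,\sigma}$ and $\varepsilon'_j(\partial_tu^{\varepsilon'_j,\,\sigma})^2$, I would then estimate its modulus by Cauchy--Schwarz: the interior part is bounded by $(\int_0^\infty\!\!\int_\Omega\varepsilon'_j(\partial_t u^{\varepsilon'_j,\,\sigma})^2\,dx\,dt)^{1/2}(\int_0^\infty\!\!\int_\Omega|\boldg|^2\,d\mu^{\varepsilon'_j,\,\sigma}_t\,dt)^{1/2}\leq D^{1/2}(\|V^\sigma_t\|\otimes\mathcal{L}^1_t(U\times[0,\,T)))^{1/2}$ by Proposition \ref{prop.4.1}, and the boundary part by $\sigma^{-1}(\int\!\!\int_{\partial\Omega\times[0,\infty)}|\boldv^\sigma_b|^2\,d\alpha^\sigma)^{1/2}(\alpha^\sigma((\partial\Omega\cap U)\times[0,T)))^{1/2}$, which by \eqref{3.2.2} and the absolute continuity one is proving is again dominated by $(\|V^\sigma_t\|\otimes\mathcal{L}^1_t)$-mass. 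Taking the supremum over admissible $\boldg$ and then over $U\subset\subset\overline{\Omega}$, $T>0$ yields \eqref{6.2.13}. The existence of $\widetilde{\boldH}^\sigma_V$ is then immediate from Radon--Nikodym applied to \eqref{6.2.13}; its $L^2$-membership and the bound \eqref{3.2.10} follow by testing \eqref{4.1.1.9} against $\boldg$ with $\|\boldg\|_{L^2(\|V^\sigma_t\|\otimes\mathcal{L}^1_t)}\leq 1$, using the identity \eqref{5.4.12} for the $L^2$-norm of an $L^2$ vector field against $C^1_c$ test fields, and bounding by $D^{1/2}$ exactly as in \eqref{5.4.13}; the coincidence $\widetilde{\boldH}^\sigma_V\lfloor_\Omega=\boldH^\sigma_V$ in the interior follows by taking $\spt\boldg\subset\Omega$ (so the boundary terms drop) and comparing with $\delta V^\sigma_t\lfloor_\Omega$ through Proposition \ref{prop.6.3}.

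For the localized inequality \eqref{6.2.14} I would use the approximation argument of Ilmanen \cite{ilmanen0}, valid because $V^\sigma_t$ is $(n-1)$-rectifiable for a.e. $t$ by Proposition \ref{prop.6.2}: for $\phi\in C_c(\overline{\Omega}\times[0,\infty))$ with $\phi\geq 0$ one replaces $\boldg$ by $\phi\,\widetilde{\boldH}^\sigma_V$ regularized so that it is an admissible $C^1_c$ test field, applies the computation of the preceding paragraph with the weight $\phi$ inserted, and obtains $\int_{t_1}^{t_2}\!\!\int_{\overline{\Omega}}\phi|\widetilde{\boldH}^\sigma_V|^2\,d\|V^\sigma_t\|\,dt\leq\liminf_{j\to\infty}(\int_{t_1}^{t_2}\!\!\int_\Omega\varepsilon'_j\phi(\partial_t u^{\varepsilon'_j,\,\sigma})^2\,dx\,dt)^{1/2}(\int_{t_1}^{t_2}\!\!\int_{\overline{\Omega}}\phi|\widetilde{\boldH}^\sigma_V|^2\,d\|V^\sigma_t\|\,dt)^{1/2}$, and cancels one factor. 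I expect the main obstacle to be the careful bookkeeping in the boundary term: showing that, in the limit $\varepsilon'_j\to 0$, the two surface integrals in \eqref{6.2.2.1} reassemble precisely into the tangential first variation $\delta V^\sigma_t\lfloor_{\partial\Omega}^T$ plus $\sigma^{-1}\mathcal{S}_{\alpha^\sigma,\,\boldv^\sigma_b}$ — this requires the vanishing of the discrepancy \emph{up to} $\partial\Omega$ (to turn $\varepsilon'_j|\nabla u^{\varepsilon'_j,\,\sigma}|^2/2+W/\varepsilon'_j$ into $\varepsilon'_j|\nabla u^{\varepsilon'_j,\,\sigma}|^2$ on $\partial\Omega$), the decomposition $|\nabla u^{\varepsilon'_j,\,\sigma}|^2=|\nabla_{\partial\Omega}u^{\varepsilon'_j,\,\sigma}|^2+(\partial u^{\varepsilon'_j,\,\sigma}/\partial\boldnu)^2$, the boundary PDE to exchange $\partial u^{\varepsilon'_j,\,\sigma}/\partial\boldnu$ for $\sigma^{-1}\partial_t u^{\varepsilon'_j,\,\sigma}$, and then the Hutchinson measure–function-pair convergence of $(\alpha^{\varepsilon'_j,\,\sigma},\boldv^{\varepsilon'_j,\,\sigma}_b)$ established in Lemma \ref{thm3.7}; unlike the Dirichlet case, here none of these boundary contributions is negligible, so the matching with the geometric formula of Remark \ref{rem.3} must be exact rather than merely an upper bound.
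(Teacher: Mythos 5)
Your overall skeleton (the first-variation identity \eqref{6.2.2.1}, Cauchy--Schwarz against the dissipation bound $D$ from \eqref{4.1.1}, Radon--Nikodym, the duality identity \eqref{5.4.12} for the $L^2$ bound \eqref{3.2.10}, and Ilmanen's approximation for \eqref{6.2.14}) agrees with the paper, but the central step --- the treatment of the boundary terms --- has a genuine gap. First, your claim that the two surface integrals in \eqref{6.2.2.1} converge \emph{exactly} to $\int_0^\infty\delta V^\sigma_t\lfloor_{\partial\Omega}^T(\boldg)\,dt+\sigma^{-1}\mathcal{S}_{\alpha^\sigma,\,\boldv^\sigma_b}(\boldg)$ fails for test fields whose normal component does not vanish on $\partial\Omega$: after using the discrepancy hypothesis, the splitting $|\nabla u|^2=|\nabla_{\partial\Omega}u|^2+(\partial u/\partial\boldnu)^2$ and the boundary condition in \eqref{1.1.1}, a term of the form $\int_{\partial\Omega}(\boldg\cdot\boldnu)\,\varepsilon'_j|\nabla_{\partial\Omega}u^{\varepsilon'_j,\,\sigma}|^2\,d\mathcal{H}^{n-1}$ survives and converges (after time integration) to $\int(\boldg\cdot\boldnu)\,d\alpha^\sigma$; this contribution belongs to the normal part of $\delta V^\sigma_t\lfloor_{\partial\Omega}$, which is deliberately excluded from $\delta V^\sigma_t\lfloor_{\partial\Omega}^T$, so the "exact matching" you rely on is not available. (Also, splitting $\lim_j\int\delta V^{\varepsilon'_j,\,\sigma}_t\,dt$ into an interior limit equal to $\int\delta V^\sigma_t\lfloor_{\Omega}\,dt$ and a boundary limit equal to $\int\delta V^\sigma_t\lfloor_{\partial\Omega}\,dt$ is itself not justified, since interior energy may concentrate on $\partial\Omega$ in the limit.) Second, and decisively, your estimate of the boundary part by $\sigma^{-1}\bigl(\int|\boldv^\sigma_b|^2\,d\alpha^\sigma\bigr)^{1/2}\bigl(\alpha^\sigma((\partial\Omega\cap U)\times[0,T))\bigr)^{1/2}$ cannot yield \eqref{6.2.13}: $\alpha^\sigma$ charges $\partial\Omega\times[0,\infty)$, where $\|V^\sigma_t\|\otimes\mathcal{L}^1_t$ may vanish, so a bound by $\alpha^\sigma$-mass is not a bound by $(\|V^\sigma_t\|\otimes\mathcal{L}^1_t)$-mass, and your appeal to "the absolute continuity one is proving" is circular.

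The paper closes exactly this gap by never estimating $\sigma^{-1}\mathcal{S}_{\alpha^\sigma,\,\boldv^\sigma_b}$ separately. It first establishes $\delta V^\sigma_t\lfloor_{\Omega}=-\boldH^\sigma_V\lfloor_{\Omega}\,\|V^\sigma_t\|$ with $\boldH^\sigma_V\lfloor_{\Omega}$ in $L^2$, and then, for a given $\boldg$, introduces the tangential projection $\boldg^{\delta}:=\boldg-(\boldg\cdot\boldnu^{\delta})\boldnu^{\delta}$, where $\boldnu^{\delta}$ extends $\boldnu$ and is supported in a $\delta$-neighborhood of $\partial\Omega$. Since $\boldg^{\delta}\cdot\boldnu=0$ on $\partial\Omega$, one has $\delta V^\sigma_t\lfloor_{\partial\Omega}^T(\boldg)=\delta V^\sigma_t\lfloor_{\partial\Omega}(\boldg^{\delta})$, so the functional becomes $\int\delta V^\sigma_t(\boldg^{\delta})\,dt+\int\delta V^\sigma_t\lfloor_{\Omega}(\boldg-\boldg^{\delta})\,dt+\sigma^{-1}\mathcal{S}_{\alpha^\sigma,\,\boldv^\sigma_b}(\boldg)$; at the $\varepsilon'_j$-level the normal boundary term of $\delta V^{\varepsilon'_j,\,\sigma}_t(\boldg^{\delta})$ vanishes identically, the remaining boundary integral cancels against the approximating boundary functional via $\partial_t u^{\varepsilon'_j,\,\sigma}+\sigma\nabla u^{\varepsilon'_j,\,\sigma}\cdot\boldnu=0$, and what is left is only $\int\!\!\int_{\Omega}(\boldg^{\delta}\cdot\nabla u^{\varepsilon'_j,\,\sigma})\,\varepsilon'_j\partial_t u^{\varepsilon'_j,\,\sigma}\,dx\,dt$, which Cauchy--Schwarz bounds by $D^{1/2}\|\boldg\|_{L^2(\|V^\sigma_t\|\otimes\mathcal{L}^1_t)}$; finally $\int\delta V^\sigma_t\lfloor_{\Omega}(\boldg-\boldg^{\delta})\,dt\to0$ as $\delta\to0$ by dominated convergence, using the interior $L^2$ representation already obtained. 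Without this (or an equivalent) projection-and-cancellation device your argument does not close; the remaining parts of your proposal (the $L^2$ membership, \eqref{3.2.10}, the interior coincidence $\widetilde{\boldH}^\sigma_V\lfloor_{\Omega}=\boldH^\sigma_V\lfloor_{\Omega}$, and \eqref{6.2.14} via Ilmanen's approximation using the rectifiability of $V^\sigma_t$) are in line with the paper.
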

\begin{proof}
	First of all, we show the absolute continuity
	\begin{equation}\label{6.2.14.1}
	\left\|\delta V^{\sigma}_t\lfloor_{\Omega}\right\| \ll \|V^{\sigma}_t\|\quad\text{in $\Omega$ for a.e. $t\geq 0$.}
	\end{equation}
	To do this, we take any time $t\geq 0$ such that \eqref{6.2.3} and  the vanishing of the discrepancy measure $\{\xi^{\varepsilon'_j,\,\sigma}_t\}_{j\in \mathbb{N}}$ hold. Let $\{V^{\varepsilon'_j,\,\sigma}_t\}_{j\in\mathbb{N}}$ be a  subsequence converging to $V_t^{\sigma}$. Then, from \eqref{4.1.1}, \eqref{5.3.2} and the vanishing of $\xi^{\varepsilon'_j,\,\sigma}_t$, we have \eqref{5.4.8.2} in Propostition \ref{prop.5.8} for all $\boldg\in (C_c(\Omega))^n$. Thus, by taking the supremum with respect to $\boldg$, we have \eqref{6.2.14.1}. From Riesz theorem and Radon-Nikodym theorem, we have that, for a.e. $t\geq0$, there exists ${\boldH}^{\sigma}_V\lfloor_{ \Omega}(\cdot,\,t)\in (L^1_{loc}(\|V_t\|,\,\Omega))^n$ such that
	\begin{equation}\label{6.2.14.1.2}
	\left(\delta V^{\sigma}_t\lfloor_{\Omega}\right)(\boldg)= -\int_{\Omega}{\boldH}^{\sigma}_V\lfloor_{\Omega}(\cdot,\,t)\cdot \boldg \,d\|V^{\sigma}_t\|
	\end{equation}
	for all $\boldg\in (C_c(\Omega))^n$ and, moreover, we have that ${\boldH}^{\sigma}_V\lfloor_{\Omega}\in (L^2(\|V_t\|\otimes[0,\,\infty),\,\Omega\times[0,\,\infty)))^n$ is valid. 
	
	Now, given arbitrary $\delta>0$, we can take the function $\boldnu^{\delta}\in (C^1(\overline{\Omega}))^n$ such that $\boldnu^{\delta}\lfloor_{\partial \Omega} =\boldnu$, $|\boldnu^{\delta}|\leq 1$ and $\spt\boldnu^{\delta} \subset \Omega_{\delta}\coloneqq\{x \in\overline{\Omega}\mid\dist(x,\,\partial \Omega)<\delta\}$. This function can be simply constructed by using the signed distance function $d$ from $\partial \Omega$ and extending $d$ smoothly onto $\overline{\Omega}$. Then for any $\boldg\in (C^1_c(\overline{\Omega}\times [0,\,T])^n$, setting $\boldg^{\delta}\coloneqq\boldg-(\boldg\cdot \boldnu^{\delta})\boldnu^{\delta}$, we have $\boldg^{\delta}(\cdot,\,t)\cdot \boldnu=0$ on $\partial \Omega$ and $\delta V^{\sigma}_t \lfloor_{\partial \Omega}^T(\boldg)= \delta V^{\sigma}_t \lfloor_{\partial \Omega}(\boldg^{\delta})$. Now let $U\subset\subset\overline{\Omega} \times[0,\,\infty)$ be an open set and $\boldg\in (C^1_c(U))^n$ be any test function such that $|\boldg| \leq 1$. In order to prove \eqref{6.2.13}, we need to compute the following: 
	\begin{align}
	&\left(\int_{0}^{\infty}\delta V^{\sigma}_t \lfloor_{\partial \Omega}^T\,dt + \int_{0}^{\infty}\delta V^{\sigma}_t \lfloor_{\Omega}\,dt + \sigma^{-1}\mathcal{S}_{\alpha^{\sigma},\,{\boldv}^{\sigma}_b}\right)(\boldg) \nonumber\\
	&=\int_{0}^{\infty} \delta V^{\sigma}_t\lfloor_{\partial \Omega} (\boldg^{\delta})\,dt+\int_{0}^{\infty}\delta V^{\sigma}_t\lfloor_{\Omega} (\boldg^{\delta})\,dt+ \int_{0}^{\infty}\delta V^{\sigma}_t \lfloor_{\Omega} (\boldg-\boldg^{\delta})\,dt \nonumber\\
	&\quad \qquad \quad + \sigma^{-1}\int _{\partial \Omega \times[0,\,\infty)} \boldg\cdot {\boldv}^{\sigma}_b \, d\alpha^{\sigma} \nonumber\\
	&=\lim_{j\to\infty}\left(\int_0^{\infty}\delta V^{\varepsilon'_j,\,\sigma}_t (\boldg^{\delta})\,dt - \sigma^{-1} \int_0^{\infty} \int_{\partial \Omega} \boldg^{\delta} \cdot (\varepsilon'_j \partial_t u^{\varepsilon'_j,\,\sigma}\nabla u^{\varepsilon'_j,\,\sigma})\,d\mathcal{H}^{n-1}\,dt \right) \nonumber\\
	&\quad \qquad \quad+\int_0^{\infty} \delta V^{\sigma}_t\lfloor_{\Omega} (\boldg-\boldg^{\delta})\,dt \nonumber\\
	&= \lim_{j\to\infty} \int_0^{\infty}\int_{\Omega} (\boldg^{\delta} \cdot \nabla u^{\varepsilon'_j,\,\sigma} ) \varepsilon'_j(\partial_t u^{\varepsilon'_j,\,\sigma})\,dx\,dt+\int_0^{\infty}\delta V^{\sigma}_t \lfloor _{\Omega} (\boldg-\boldg^{\delta})\,dt. \nonumber\\
	&\leq  D^{\frac{1}{2}} \left(\int_{0}^{\infty}\int_{\overline{\Omega}} |\boldg|^2\, d\|V^{\sigma}_t\|\,dt\right)^{\frac{1}{2}} + \int_0^{\infty} \delta V^{\sigma}_t \lfloor _{\Omega} (\boldg-\boldg^{\delta}) \, dt. \label{6.2.14.2}
	\end{align} 
	Note that, in \eqref{6.2.14.2}, we used Cauchy-Schwarz inequality and \eqref{4.1.1}. From the definitions of $\boldnu^{\delta}$ and $\boldg^{\delta}$, we have
	\begin{align}\label{6.2.14.4}
	\left|\int_{0}^{\infty}\delta V^{\sigma}_t\lfloor_{\Omega}(\boldg-\boldg^{\delta})\,dt\right|&=\left|\int_{0}^{\infty}
	\int_{\Omega} {\boldH}^{\sigma}_V\lfloor_{\Omega}\cdot (\boldg-\boldg^{\delta})\,d\|V_t\|\,dt \right| \nonumber\\
	&\leq \int_{0}^{\infty}\int_{\Omega_\delta\setminus\partial\Omega} \left|{\boldH}^{\sigma}_V\lfloor_{\Omega}\right|\,d\|V_t\|\,dt \to 0
	\end{align}
	as $\delta \to 0$. From dominated convergence theorem, we obtain
	\begin{align}
	\left(\int_{0}^{\infty}\delta V^{\sigma}_t \lfloor^T_{\partial \Omega}\,dt+ \int_{0}^{\infty}\delta V^{\sigma}_t\lfloor_{\Omega}\,dt + \sigma^{-1}\mathcal{S}_{\alpha^{\sigma},\,{\boldv}^{\sigma}_b}\right)(\boldg) &\leq  D^{\frac{1}{2}}\,\left(\int_{0}^{\infty}\int_{\overline{\Omega}} |\boldg|^2\,d\|V^{\sigma}_t\|\,dt\right)^{\frac{1}{2}} \nonumber\\
	&\leq  D^{\frac{1}{2}}\,(\|V^{\sigma}_t\|\otimes \mathcal{L}^1_t(U))^{\frac{1}{2}} \label{6.2.14.5}
	\end{align}
	Therefore, taking the supremum with respect to $\boldg$, we obtain the absolute continuity \eqref{6.2.13} from the arbitrariness of $U \subset \overline{\Omega} \times [0,\,\infty)$ is arbitrary. From this, it follows that there exists a $\|V_t\|\otimes\mathcal{L}^1_t$-integrable vector-valued function $\widetilde{\boldH}_V^{\sigma}$ such that 
	\begin{equation}\label{6.2.14.6}
	\int_{0}^{\infty}\delta V^{\sigma}_t \lfloor^T_{\partial \Omega}\,dt+ \int_{0}^{\infty}\delta V^{\sigma}_t\lfloor_{\Omega}\,dt + \sigma^{-1}\mathcal{S}_{\alpha^{\sigma},\,{\boldv}^{\sigma}_b}=-\widetilde{\boldH}_V^{\sigma}\,\|V_t\|\otimes\mathcal{L}^1_t\quad\text{on $\overline{\Omega}\times[0,\,\infty)$},
	\end{equation}
	
	Next we will show that the Radon-Nikodym derivative $\widetilde{\boldH}^{\sigma}_V$ belongs to $(L^2(\|V^{\sigma}_t\|\otimes \mathcal{L}^1_t,\,\overline{\Omega} \times [0,\infty)))^n$. To prove this, we again use the approximation shownin \eqref{5.4.12} in the proof of Proposition \ref{prop.5.8} as follows: let $\boldg\in (C^1_c(\overline{\Omega}\times [0,\,\infty))^n$ with $\|\boldg\|_{L^2(\|V^{\sigma}_t\|\otimes \mathcal{L}^1_t,\,\overline{\Omega}\times[0,\,\infty))}\leq 1$. Then from \eqref{6.2.14.2} and \eqref{6.2.14.4}, we may compute as follows:
	\begin{align}
	\left|\int_{0}^{\infty}\int_{\overline{\Omega}} \widetilde{\boldH}^{\sigma}_V\cdot \boldg \,d\|V^{\sigma}_t\|\,dt \right| &\leq  \liminf_{j\to\infty} \left(\int_0^{\infty} \int_\Omega \varepsilon'_j (\partial_t u^{\varepsilon'_j,\,\sigma}))^2\,dx\,dt \right)^\frac{1}{2}\,\left(\int_{0}^{\infty}\int_{\overline{\Omega}} |\boldg|^2\, d\|V^{\sigma}_t\|\,dt\right)^{\frac{1}{2}} \nonumber\\
	&\quad \qquad  + \left|\int _0 ^T \delta V^{\sigma}_t \lfloor _{\Omega} (\boldg-\boldg^{\delta}) \, dt \right| \nonumber\\
	&\leq D^{\frac{1}{2}} + \left|\int _0 ^T \delta V^{\sigma}_t \lfloor _{\Omega} (\boldg-\boldg^{\delta})\,dt\right| \nonumber\\
	&\rightarrow D^{\frac{1}{2}}\quad\text{as $\delta\to0$}. \label{6.2.14.8}
	\end{align}
	Here we used the estimate \eqref{4.1.1}. Hence, from \eqref{5.4.12}, we have	
	\begin{equation}\label{6.2.14.9}
	\int_{0}^{\infty}\int_{\overline{\Omega}} |\widetilde{\boldH}^{\sigma}_V|^2\,d\|V^{\sigma}_t\|\,dt \leq D.
	\end{equation}
	Therefore we may conclude that $\widetilde{\boldH}^{\sigma}_V$ belongs to $(L^2(\|V^{\sigma}_t\|\otimes \mathcal{L}^1_t,\,\overline{\Omega} \times [0,\infty)))^n$ and thus we conclude that $\widetilde{\boldH}_V^{\sigma}$ is actually the modified generalized mean curvature vector. 
	
	Finally we need to prove \eqref{6.2.14} for all $\phi \in C_c(\overline{\Omega} \times [0,\,\infty))$ with $\phi\geq 0$. To prove this, we may carry out the approximation argument which is stated by Ilmanen \cite{Ilmanen01}. We can apply it to our problem because the associated varifold $V^{\sigma}_t$ is $(n-1)$-rectifiable on $\overline{\Omega}$ for a.e. $t\geq0$. Therefore, Proposition \ref{prop.6.4} follows.  
\end{proof}

Considering all the claims in Proposition \ref{prop.6.3} and \ref{prop.6.4}, we obtain the absolute continuity \eqref{4.1.1.6} and the estimate \eqref{3.2.10}.

Now we prove Brakke's inequality \eqref{4.1.1.10} and this completes the proof of Theorem \ref{thm3.9}. First of all, we show that the $\varepsilon$-approximated velocity vector converges to the modified generalized mean curvature vector $\widetilde{\boldH}^{\sigma}_V$ up to the boundary.
\begin{proposition}\label{prop.6.5}
	Let $\{V^{\varepsilon'_j,\,\sigma}_t\}_{j\in\mathbb{N}}$ be as in Proposition \ref{prop.6.1} and \ref{prop.6.2} for a.e. $t\geq0$ and let $\widetilde{\boldH}^{\sigma}_V$ be as in Proposition \ref{prop.6.4}. Then we have 
	\begin{equation}\label{6.2.15}
	\lim_{j\to\infty} \int_{0}^{\infty}\int_{\Omega} (\boldg\cdot \nabla u^{\varepsilon'_j,\,\sigma})\varepsilon'_j(\partial_t u^{\varepsilon'_j,\,\sigma})\,dx\,dt= -\int_{0}^{\infty}\int_{\overline{\Omega}}\boldg\cdot \widetilde{\boldH}^{\sigma}_V\,d\|V^{\sigma}_t\|\,dt
	\end{equation}
	for all $\boldg\in(C^1_c(\overline{\Omega}\times [0,\,\infty)))^n$ with $\boldg(\cdot,\,t)\cdot \boldnu=0$ on $\partial \Omega$.
\end{proposition}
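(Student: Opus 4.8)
The plan is to follow the pattern of Proposition~\ref{prop.5.9} from the Dirichlet case, now retaining the extra boundary contribution that survives when $\boldg\cdot\boldnu=0$ on $\partial\Omega$. Fix $\boldg\in(C^1_c(\overline{\Omega}\times[0,\infty)))^n$ with $\boldg(\cdot,t)\cdot\boldnu=0$ on $\partial\Omega$, and choose $T>0$ with $\spt\boldg\subset\overline{\Omega}\times[0,T)$. Starting from the first variation identity \eqref{6.2.2.1} of Lemma~\ref{lem.5.5} and substituting the Allen-Cahn equation \eqref{1.1.1}, which gives $\varepsilon'_j\,\partial_t u^{\varepsilon'_j,\sigma}=\varepsilon'_j\Delta u^{\varepsilon'_j,\sigma}-(\varepsilon'_j)^{-1}W'(u^{\varepsilon'_j,\sigma})$, I would solve for $\int_\Omega(\boldg\cdot\nabla u^{\varepsilon'_j,\sigma})\,\varepsilon'_j\,\partial_t u^{\varepsilon'_j,\sigma}\,dx$ in terms of $\delta V^{\varepsilon'_j,\sigma}_t(\boldg)$, the two discrepancy contributions in \eqref{6.2.2.1} (the one over $\{|\nabla u^{\varepsilon'_j,\sigma}|=0\}$ rewritten against $\xi^{\varepsilon'_j,\sigma}_t$ via Remark~\ref{rem.5.5.0}), and the two boundary integrals. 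The hypothesis $\boldg\cdot\boldnu=0$ on $\partial\Omega$ makes the integral carrying the energy density $\tfrac{\varepsilon'_j|\nabla u^{\varepsilon'_j,\sigma}|^2}{2}+\tfrac{W(u^{\varepsilon'_j,\sigma})}{\varepsilon'_j}$ vanish, and reduces $\boldg\cdot\nabla u^{\varepsilon'_j,\sigma}$ to $\boldg\cdot\nabla_{\partial\Omega}u^{\varepsilon'_j,\sigma}$ in the remaining boundary integral $-\int_{\partial\Omega}\varepsilon'_j(\boldg\cdot\nabla u^{\varepsilon'_j,\sigma})\tfrac{\partial u^{\varepsilon'_j,\sigma}}{\partial\boldnu}\,d\mathcal{H}^{n-1}$, which via the Allen-Cahn boundary condition $\tfrac{\partial u^{\varepsilon'_j,\sigma}}{\partial\boldnu}=-\sigma^{-1}\partial_t u^{\varepsilon'_j,\sigma}$ becomes $\sigma^{-1}\int_{\partial\Omega}\varepsilon'_j(\boldg\cdot\nabla_{\partial\Omega}u^{\varepsilon'_j,\sigma})\partial_t u^{\varepsilon'_j,\sigma}\,d\mathcal{H}^{n-1}$.

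Next I would integrate over $t\in[0,T)$ and pass to the limit $j\to\infty$ term by term. By Proposition~\ref{prop.6.3}, $\int_0^T\delta V^{\varepsilon'_j,\sigma}_t(\boldg)\,dt\to\int_0^T\delta V^\sigma_t(\boldg)\,dt$. The two discrepancy terms go to $0$: for a.e.\ $t$ the Vanishing hypothesis for the discrepancy measure of Subsection~\ref{exis.assmp.dyna} forces $|\xi^{\varepsilon'_j,\sigma}_t|(\overline{\Omega})\to0$, while $|\xi^{\varepsilon'_j,\sigma}_t|(\overline{\Omega})\le\mu^{\varepsilon'_j,\sigma}_t(\overline{\Omega})\le D$ by \eqref{4.1.2} furnishes a time-integrable majorant, so dominated convergence applies. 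For the boundary term, recalling the definitions \eqref{2.1.13} and \eqref{5.2.3} one has $\int_{\partial\Omega}\varepsilon'_j(\boldg\cdot\nabla_{\partial\Omega}u^{\varepsilon'_j,\sigma})\partial_t u^{\varepsilon'_j,\sigma}\,d\mathcal{H}^{n-1}\,dt=-\int\boldg\cdot\boldv^{\varepsilon'_j,\sigma}_b\,d\alpha^{\varepsilon'_j,\sigma}$, which by \eqref{3.2.1} of Lemma~\ref{thm3.7} converges so that the whole boundary term tends to $\sigma^{-1}\mathcal{S}_{\alpha^\sigma,\boldv^\sigma_b}(\boldg)$ (Definition~\ref{def3.1.2}). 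Collecting these limits and using $\boldg\cdot\boldnu=0$ on $\partial\Omega$ to write $\delta V^\sigma_t(\boldg)=\delta V^\sigma_t\lfloor_\Omega(\boldg)+\delta V^\sigma_t\lfloor^T_{\partial\Omega}(\boldg)$ (Definition~\ref{def.3.3}) yields
\[
\lim_{j\to\infty}\int_0^\infty\!\!\int_\Omega(\boldg\cdot\nabla u^{\varepsilon'_j,\sigma})\,\varepsilon'_j\,\partial_t u^{\varepsilon'_j,\sigma}\,dx\,dt=\left(\int_0^\infty\!\!\delta V^\sigma_t\lfloor_\Omega\,dt+\int_0^\infty\!\!\delta V^\sigma_t\lfloor^T_{\partial\Omega}\,dt+\sigma^{-1}\mathcal{S}_{\alpha^\sigma,\boldv^\sigma_b}\right)(\boldg),
\]
and the right-hand side equals $-\int_0^\infty\int_{\overline{\Omega}}\boldg\cdot\widetilde{\boldH}^\sigma_V\,d\|V^\sigma_t\|\,dt$ by \eqref{6.2.14.6} in Proposition~\ref{prop.6.4}; this is exactly \eqref{6.2.15}.

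In fact this computation is nothing more than \eqref{6.2.14.2} in the proof of Proposition~\ref{prop.6.4}, specialized to test fields with vanishing normal component: for such $\boldg$ one already has $\delta V^\sigma_t\lfloor^T_{\partial\Omega}(\boldg)=\delta V^\sigma_t\lfloor_{\partial\Omega}(\boldg)$ without the cutoff approximation $\boldnu^\delta$, so the derivation there simplifies and no $\delta\to0$ passage is needed. The only genuinely delicate points are the careful bookkeeping of signs through the boundary condition and the measure-function pair convergence \eqref{3.2.1}, and the justification that the two discrepancy terms vanish in the time-integrated form — this last step is where one must combine the a.e.-in-$t$ Vanishing hypothesis with the uniform bound \eqref{4.1.2}; everything else is routine.
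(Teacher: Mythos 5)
Your proposal is correct and follows essentially the same route as the paper: specialize the first variation identity \eqref{6.2.2.1} to tangential test fields (so the energy-density boundary term drops, $\delta V^{\sigma}_t\lfloor^T_{\partial\Omega}(\boldg)=\delta V^{\sigma}_t\lfloor_{\partial\Omega}(\boldg)$, and no $\boldnu^{\delta}$-approximation is needed), convert the remaining boundary flux via the dynamic boundary condition into the $(\alpha^{\varepsilon'_j,\sigma},\boldv_b^{\varepsilon'_j,\sigma})$ pairing, pass to the limit with Proposition \ref{prop.6.3}, the vanishing hypothesis for the discrepancy, and Lemma \ref{thm3.7}, and then identify the limit through \eqref{6.2.14.2} and \eqref{6.2.14.6}. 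This is exactly the computation the paper's proof invokes, with the limit passages (dominated convergence for the discrepancy terms, sign bookkeeping in \eqref{3.2.1}) spelled out more explicitly than in the original.
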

\begin{proof}
	Let $\boldg\in(C^1_c(\overline{\Omega}\times [0,\,\infty)))^n$ be such that $\boldg(\cdot,\,t) \cdot \boldnu=0$ on $\partial \Omega$. We have already shown that $V^{\varepsilon'_j,\,\sigma}_t$ converges to $V^{\sigma}_t$ as $j\to\infty$ for a.e. $t\geq0$ and 
	\begin{equation}\label{6.2.16.1}
	\lim_{j\to\infty}\int_{0}^{\infty}\delta V^{\varepsilon'_j,\,\sigma}_t\,dt(\boldg)=\int_{0}^{\infty}\delta V_t\,dt(\boldg)
	\end{equation}
	for any $\boldg\in(C^1_c(\overline{\Omega}\times[0,\,\infty)))^n$. Furthermore, from the choice of $\boldg$, the third term of the left-hand side in \eqref{6.2.2.1} vanishes if we substitute $\boldg$ in \eqref{6.2.2.1}, and we have $\boldg=\boldg-(\boldg\cdot \boldnu)\boldnu$ on $\partial \Omega \times [0,\,\infty)$. Therefore, from \eqref{6.2.14.2}, \eqref{6.2.14.6}, Proposition \ref{prop.6.2}, and \ref{prop.6.4}, we obtain \eqref{6.2.15}. 
\end{proof}
Second, we will show the following inequality.
\begin{proposition}\label{prop.6.6}
	Let ${\boldv}^{\sigma}_b$ and $\alpha^{\sigma}$ be as in Lemma \ref{thm3.7}. Then we have
	\begin{equation}\label{6.2.16}
	\int_{0}^{\infty}\int_{\partial \Omega}\phi |{\boldv}^{\sigma}_b|^2\,d\alpha^{\sigma} \leq \liminf_{j\to\infty} \int_{0}^{\infty}\int_{\partial \Omega} \varepsilon'_j  \phi(\partial_t u^{\varepsilon'_j,\,\sigma})^2 \,d\mathcal{H}^{n-1}\,dt
	\end{equation}
	for all $\phi \in C^1_c(\overline{\Omega} \times [0,\infty))$ with $\phi \geq 0$.
\end{proposition}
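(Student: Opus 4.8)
The plan is to reduce Proposition \ref{prop.6.6} to the lower semicontinuity property of measure--function pairs, which is essentially the same mechanism already used to obtain \eqref{3.2.2} in Lemma \ref{thm3.7}, now carried out with the extra nonnegative weight $\phi$ inserted.

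\textbf{Step 1: a pointwise reduction.} Since $d\alpha^{\varepsilon'_j,\,\sigma}=\varepsilon'_j|\nabla_{\partial\Omega}u^{\varepsilon'_j,\,\sigma}|^2\,\mathcal{H}^{n-1}\lfloor_{\partial\Omega}\otimes\mathcal{L}^1_t$ and, on $\{|\nabla_{\partial\Omega}u^{\varepsilon'_j,\,\sigma}|\neq0\}$, $|\boldv^{\varepsilon'_j,\,\sigma}_b|^2=(\partial_t u^{\varepsilon'_j,\,\sigma})^2/|\nabla_{\partial\Omega}u^{\varepsilon'_j,\,\sigma}|^2$, we get $|\boldv^{\varepsilon'_j,\,\sigma}_b|^2\,d\alpha^{\varepsilon'_j,\,\sigma}=\varepsilon'_j(\partial_t u^{\varepsilon'_j,\,\sigma})^2\,d\mathcal{H}^{n-1}\,dt$ on that set and $=0$ off it, so for $\phi\geq0$ one has $\int_{\partial\Omega\times[0,\infty)}\phi\,|\boldv^{\varepsilon'_j,\,\sigma}_b|^2\,d\alpha^{\varepsilon'_j,\,\sigma}\leq\int_{0}^{\infty}\int_{\partial\Omega}\varepsilon'_j\,\phi\,(\partial_t u^{\varepsilon'_j,\,\sigma})^2\,d\mathcal{H}^{n-1}\,dt$. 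Hence it suffices to prove
\[
\int_{\partial\Omega\times[0,\infty)}\phi\,|\boldv^{\sigma}_b|^2\,d\alpha^{\sigma}\ \leq\ \liminf_{j\to\infty}\int_{\partial\Omega\times[0,\infty)}\phi\,|\boldv^{\varepsilon'_j,\,\sigma}_b|^2\,d\alpha^{\varepsilon'_j,\,\sigma},
\]
and then chain the two inequalities together.

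\textbf{Step 2: convexity duality.} From Lemma \ref{thm3.7} we already have $\alpha^{\varepsilon'_j,\,\sigma}\rightharpoonup\alpha^{\sigma}$ on $\partial\Omega\times[0,\infty)$, the uniform bound $\sup_j\int|\boldv^{\varepsilon'_j,\,\sigma}_b|^2\,d\alpha^{\varepsilon'_j,\,\sigma}\leq\sigma D<\infty$ (by \eqref{4.1.1}, $\sigma$ being fixed, together with the estimate of Step 1), and the weak convergence of the vector measures $\boldv^{\varepsilon'_j,\,\sigma}_b\,\alpha^{\varepsilon'_j,\,\sigma}$ to $-\boldv^{\sigma}_b\,\alpha^{\sigma}$ tested against $(C_c(\partial\Omega\times[0,\infty)))^n$, which is \eqref{3.2.1}. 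For any $\boldg\in(C_c(\partial\Omega\times[0,\infty)))^n$ the elementary pointwise inequality $2\,\boldg\cdot\boldv^{\varepsilon'_j,\,\sigma}_b-|\boldg|^2\leq|\boldv^{\varepsilon'_j,\,\sigma}_b|^2$ (i.e. $|\boldg-\boldv^{\varepsilon'_j,\,\sigma}_b|^2\geq0$), multiplied by $\phi\geq0$ (restricted to $\partial\Omega\times[0,\infty)$, still $C^1_c$) and integrated, gives $\int\phi\,(2\,\boldg\cdot\boldv^{\varepsilon'_j,\,\sigma}_b-|\boldg|^2)\,d\alpha^{\varepsilon'_j,\,\sigma}\leq\int\phi\,|\boldv^{\varepsilon'_j,\,\sigma}_b|^2\,d\alpha^{\varepsilon'_j,\,\sigma}$. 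Letting $j\to\infty$, the first term on the left converges (apply \eqref{3.2.1} with the admissible test field $\phi\boldg$) and the second converges ($\alpha^{\varepsilon'_j,\,\sigma}\rightharpoonup\alpha^{\sigma}$, with $\phi|\boldg|^2\in C_c$), so
\[
\int\phi\,(-2\,\boldg\cdot\boldv^{\sigma}_b-|\boldg|^2)\,d\alpha^{\sigma}\ \leq\ \liminf_{j\to\infty}\int\phi\,|\boldv^{\varepsilon'_j,\,\sigma}_b|^2\,d\alpha^{\varepsilon'_j,\,\sigma}.
\]
Taking the supremum over $\boldg$ running through a countable family of continuous vector fields whose restrictions are dense in $(L^2(\phi\,\alpha^{\sigma},\partial\Omega\times[0,\infty)))^n$ --- in particular approximating $-\boldv^{\sigma}_b$ in this space, which is legitimate because $\phi\,\alpha^{\sigma}$ is a finite Radon measure --- the left-hand side converges up to $\int\phi\,|\boldv^{\sigma}_b|^2\,d\alpha^{\sigma}$ by continuity of the quadratic form in $L^2$. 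This is the inequality of Step 1, and combining with the reduction of Step 1 proves \eqref{6.2.16}.

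\textbf{Main obstacle.} The only delicate points are the bookkeeping of signs (the $-\boldv^{\sigma}_b$ in \eqref{3.2.1}, harmless since only $|\boldv^{\sigma}_b|^2$ enters) and the justification that $(C_c(\partial\Omega\times[0,\infty)))^n$ is dense in $(L^2(\phi\,\alpha^{\sigma}))^n$ on $\spt\phi$ while the weight $\phi$ does not disturb the measure--function pair convergence; alternatively one may invoke directly the lower semicontinuity part of Hutchinson's theorem \cite[Theorem 4.4.2]{hutchinson} applied to the pairs $(\phi\,\alpha^{\varepsilon'_j,\,\sigma},\boldv^{\varepsilon'_j,\,\sigma}_b)$, for which $\phi\,\alpha^{\varepsilon'_j,\,\sigma}\rightharpoonup\phi\,\alpha^{\sigma}$ and the uniform $L^2$ bound have already been established, which makes the argument immediate.
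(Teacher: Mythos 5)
Your proposal is correct and follows essentially the same route as the paper's proof: both reduce the claim to weak lower semicontinuity of the weighted $L^2$-quantity by testing the measure--function pair convergence \eqref{3.2.1} against fields of the form $\phi\boldg$, using the weak convergence $\alpha^{\varepsilon'_j,\,\sigma}\rightharpoonup\alpha^{\sigma}$, the pointwise identity $|\boldv^{\varepsilon'_j,\,\sigma}_b|^2\,d\alpha^{\varepsilon'_j,\,\sigma}\leq\varepsilon'_j(\partial_t u^{\varepsilon'_j,\,\sigma})^2\,d\mathcal{H}^{n-1}\,dt$, and density of continuous fields in the relevant weighted $L^2$ space. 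The only cosmetic difference is that the paper runs the duality through Cauchy--Schwarz with approximants $\boldg^{\sigma}_m\to\boldv^{\sigma}_b$ in $(L^2(\alpha^{\sigma}))^n$, whereas you complete the square via $2\boldg\cdot\boldv-|\boldg|^2\leq|\boldv|^2$ and approximate $-\boldv^{\sigma}_b$ in $(L^2(\phi\,\alpha^{\sigma}))^n$; these are interchangeable.
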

\begin{proof}
	Let $\phi$ be in $C^1_c(\overline{\Omega} \times [0,\infty))$ with $\phi \geq 0$. Since $\alpha^{\sigma}$ is a Radon measure on $\partial \Omega \times [0,\infty)$, $(C_c(\partial \Omega \times [0,\infty)))^n$ is dense in $(L^2(\alpha,\,\partial \Omega \times [0,\infty)))^n$ and thus we can choose the sequence $\{\boldg^{\sigma}_m\}_{m\in \mathbb{N}}$ of $(C_c(\partial \Omega \times [0,\infty)))^n$ approximating ${\boldv}^{\sigma}_b$ in $L^2$-norm. Here the sequence $\{\phi \boldg^{\sigma}_m\}_{m\in \mathbb{N}}$ is also a subset of $(C_c(\partial \Omega \times [0,\infty)))^n$ and then, from \eqref{4.1.1}, $\sigma=1$ and Cauchy-Schwarz inequality, we can compute as follows:
	\begin{align}
	\left(-\int_{\partial \Omega\times [0,\,\infty)} \phi \boldg^{\sigma}_m\cdot {\boldv}^{\sigma}_b\, d\alpha^{\sigma} \right)^2 &=  \lim_{j\to\infty} \left( \int_{\partial \Omega \times [0,\,\infty)} \phi \boldg^{\sigma}_m\cdot \boldv^{\varepsilon'_j,\,\sigma}_b\, d\alpha^{\varepsilon'_j,\,\sigma} \right)^2 \nonumber\\
	&\leq \left(\int_{\partial \Omega \times [0,\,\infty)} \phi |\boldg^{\sigma}_m|^2\, d\alpha^{\sigma}\right) \nonumber\\
	&\quad \qquad  \times\left( \liminf_{j\to\infty} \int_{\partial \Omega \times [0,\,\infty)} \varepsilon'_j  \phi (\partial_t u^{\varepsilon'_j,\,\sigma})^2\, d\mathcal{H}^{n-1}\,dt \right) \label{6.2.17}
	\end{align}
	for all $m\in \mathbb{N}$. In \eqref{6.2.17}, we used the convergence of $\{\alpha^{\varepsilon'_j,\,\sigma}\}_{j\in\mathbb{N}}$ in the sense of Radon measures. Then, from the definition of $\{\boldg^{\sigma}_m\}_{m\in \mathbb{N}}$, we can show 
	\begin{align}
	\left(-\int_{\partial \Omega\times [0,\,\infty)} \phi \boldg^{\sigma}_m\cdot {\boldv}^{\sigma}_b\, d\alpha^{\sigma} \right)^2 & \xrightarrow[m\to \infty]{} \left(\int_{\partial \Omega\times [0,\,\infty)} \phi |{\boldv}^{\sigma}_b|^2\, d\alpha^{\sigma} \right)^2  \label{6.2.18}\\
	\int_{\partial \Omega \times [0,\,\infty)} \phi |\boldg^{\sigma}_m|^2\, d\alpha^{\sigma} & \xrightarrow[m \to \infty]{} \int_{\partial \Omega\times [0,\,\infty)} \phi |{\boldv}^{\sigma}_b|^2\, d\alpha^{\sigma}. \label{6.2.19}
	\end{align}
	Therefore by substituting \eqref{6.2.18} and \eqref{6.2.19} into \eqref{6.2.17}, we obtain \eqref{6.2.16}. 
\end{proof}
Finally, considering all of the above arguments, we can prove Brakke's inequality \eqref{4.1.1.10} and Theorem \ref{thm3.9}.

\begin{proof}[Proof of Brakke's inequality]
	In Lemma \ref{thm.3.6} and \ref{thm3.7}, we can take the same subsequence $\{\varepsilon'_j\}_{j\in\mathbb{N}}$ such that $\mu^{\varepsilon'_j,\,\sigma}_t$ converges to $\mu^{\sigma}_t$ on $\overline{\Omega}$ for all $t\geq 0$ and $\alpha^{\varepsilon'_j,\,\sigma}$ converges to $\alpha^{\sigma}$ on $\partial \Omega \times [0,\,\infty)$, and thus it is sufficient to consider such a subsequence in the following. Let $\phi \in C^1_c(\overline{\Omega}\times[0,\,\infty))$ be such that $\phi \geq 0$ and $\nabla \phi(\cdot,\,t)\cdot\boldnu=0$ for any $t\geq0$. For any $0\leq t_1< t_2<\infty$, from \eqref{4.1.6} in Proposition \ref{prop.4.2} and the notation $f^{\varepsilon'_j,\,\sigma}\coloneqq-\varepsilon'_j\Delta u^{\varepsilon'_j,\,\sigma}+\frac{W'(u^{\varepsilon'_j,\,\sigma})}{\varepsilon'_j}$, we have that
	\begin{align}
	\mu^{\varepsilon'_j,\,\sigma}(\phi)\Big|_{t=t_1}^{t_2}&= \int_{t_1}^{t_2}\left(\int_{\Omega} -\frac{1}{\varepsilon'_j} \phi (f^{\varepsilon'_j,\,\sigma})^2  + f^{\varepsilon'_j,\,\sigma}\nabla \phi\cdot \nabla u^{\varepsilon'_j,\,\sigma} \,dx +\int_{\Omega} \partial_t \phi\,d\mu^{\varepsilon'_j,\,\sigma}_t \right)dt \nonumber\\
	&\quad \qquad - \frac{1}{\sigma}\int_{t_1}^{t_2}\int_{\partial\Omega} \phi \varepsilon'_j(\partial_t u^{\varepsilon'_j,\,\sigma})^2\,d\mathcal{H}^{n-1}\,dt \label{6.2.20} 
	\end{align}
	Since $\mu^{\varepsilon'_j,\,\sigma}_t$ converges to $\mu^{\sigma}_t=\|V_t\|$ on $\overline{\Omega}$ for all $t\geq 0$, the left hand side of \eqref{6.2.20} converges to that of \eqref{4.1.1.10} and so does the third term of the right hand side of \eqref{6.2.20}. Hence, combinig \eqref{6.2.14}, \eqref{6.2.15} and \eqref{6.2.16} with \eqref{6.2.20} and taking $j\to\infty$, we obtain
	\begin{align}
	\int_{\overline{\Omega}} \phi \,d\|V^{\sigma}_t\| \Big|_{t=t_1}^{t_2} &\leq - \liminf_{j\to\infty} \left(\int_{t_1}^{t_2}\int_{\Omega} \varepsilon'_j (\partial_t u^{\varepsilon'_j,\,\sigma})^2\,dx\,dt \right) + \int_{t_1}^{t_2}\int_{\overline{\Omega}} \nabla \phi \cdot \widetilde{\boldH}^{\sigma}_V\,d\|V^{\sigma}_t\|\,dt \nonumber\\
	&\quad \qquad +\int_{t_1}^{t_2}\int_{\overline{\Omega}} \partial_t \phi\,d\|V_t\|\,dt - \liminf_{j\to\infty} \frac{1}{\sigma}\int_{t_1}^{t_2}\int_{\partial \Omega} \varepsilon'_j\,\phi\,(\partial_t u^{\varepsilon'_j,\,\sigma'_j})^2\,d\mathcal{H}^{n-1}\,dt \nonumber\\
	&\leq \int_{t_1}^{t_2}\int_{\overline{\Omega}} (-\phi|\widetilde{\boldH}^{\sigma}_V|^2+ \nabla \phi \cdot \widetilde{\boldH}^{\sigma}_V+ \partial_t \phi)\,d\|V^{\sigma}_t\|\,dt-\frac{1}{\sigma}\int_{\partial \Omega\times[t_1,\,t_2]} \phi |{\boldv}^{\sigma}_b|^2\,d\alpha^{\sigma}. \label{6.2.21}
	\end{align}  
	This completes the proof of Brakke's inequality. Hence we obtain Theorem \ref{thm3.9}. 
\end{proof}

\section{Appendix}\label{appendix}
\subsection{Appendix A}\label{appendixMaximalPrinciple}
In this appendix, we show two results whose proofs are based on the maximal principle; one is that the absolute value of a solution of the Allen-Cahn equation \eqref{1.1.1} with $\sigma>0$ is bounded by 1 in $\Omega\times(0,\,T]$ for given $T>0$, namely, we show
\begin{equation}
	\sup_{\Omega\times(0,\,T]}|u^{\varepsilon,\,\sigma}|\leq 1
\end{equation}
for any $\varepsilon,\,\sigma>0$ (see Proposition \ref{appendAProp} for more detail). The other one is that the discrepancy measure $\xi_t^{\varepsilon,\,\sigma}$ associated with the solution is bounded from above in $\Omega \times (0,\,T]$ for each $T>0$ uniformly in $\varepsilon$ and $\sigma$ under several technical assumptions (see Proposition \ref{appendAProp2} for more detail). Note that the proof of the second result is inspired by \cite[Proposition 4.1]{MiTo}. Before stating and proving the propositions, we fix some notations as follows;
\begin{description}
	\item $n\in\mathbb{N}$ with $n\geq1$, $T>0$ 
	\item $\Omega\subset\mathbb{R}^n$ : a bounded domain with smooth boundary
	\item $Q_T\coloneqq\Omega\times(0,\,T)$, $Q_T^{*}\coloneqq\Omega\times(0,\,T]$, and $\partial_pQ_T\coloneqq\overline{Q_T}\setminus Q_T^{*}$
	\item $\kappa_1,\cdots,\,\kappa_{n-1}$ : the principal curvatures of $\partial \Omega$
	\item $\Pi_{\nu}$ : the second fundamental form of $\partial \Omega$ with the outer unit normal $\nu$ 
\end{description}
Now we are prepared for the claim. 
\begin{proposition}\label{appendAProp}
	Assume that the solution $u^{\varepsilon,\,\sigma}$ of the Allen-Cahn equation \eqref{1.1.1} is in $C^2(\overline{\Omega}\times(0,\,T])$ for each $T>0$ and $\sup_{\Omega}|u^{\varepsilon,\,\sigma}_0|\leq1$ for any $\varepsilon,\,\sigma>0$. Then it holds that, for any $\varepsilon,\,\sigma>0$, $\sup_{\Omega\times(0,\,\infty)}|u^{\varepsilon,\,\sigma}|\leq1$.
\end{proposition} 

\begin{proof}
	It is sufficient to prove the estimate for the time interval $(0,\,T]$ for any $T>0$. Defining a function $w$ by $w(x,\,t)\coloneqq((u^{\varepsilon,\,\sigma}(x,\,t))^2-1)\,e^{-\gamma t}$ for any $(x,\,t)\in Q_T$ where we set $\gamma\coloneqq\frac{4}{\varepsilon^2}$, we may have 
	\begin{align}
	\partial_t w-\Delta w&=-\gamma\,e^{-\gamma t}((u^{\varepsilon,\,\sigma})^2-1)+2u^{\varepsilon,\,\sigma}
	\,\partial_t u^{\varepsilon,\,\sigma}\,e^{-\gamma t}- 2u^{\varepsilon\,\sigma}\,\Delta u^{\varepsilon,,\sigma}\,e^{\gamma t}- 2e^{-\gamma t}|\nabla u^{\varepsilon,\,\sigma}|^2 \nonumber\\
	&=-\gamma w +2u^{\varepsilon,\,\sigma}\,e^{-\gamma t}(\partial_tu^{\varepsilon,\,\sigma}-\Delta u^{\varepsilon,\,\sigma})- 2e^{-\gamma t}|\nabla u^{\varepsilon,\,\sigma}|^2 \nonumber\\
	&=-\gamma w +2u^{\varepsilon,\,\sigma}\,e^{-\gamma t}\left(-\frac{1}{\varepsilon^2}W^{\prime}(u^{\varepsilon,\,\sigma})\right)- 2e^{-\gamma t}|\nabla u^{\varepsilon,\,\sigma}|^2\nonumber\\
	&=-\gamma w +\frac{4}{\varepsilon^2}e^{-\gamma t}(u^{\varepsilon,\,\sigma})^2(1-(u^{\varepsilon,\,\sigma})^2)- 2e^{-\gamma t}|\nabla u^{\varepsilon,\,\sigma}|^2\nonumber\\
	&=-\gamma w - \gamma w (1+e^{\gamma t}w)- 2e^{-\gamma t}|\nabla u^{\varepsilon,\,\sigma}|^2\nonumber\\
	&=-2\gamma w-\gamma e^{\gamma t}w^2- 2e^{-\gamma t}|\nabla u^{\varepsilon,\,\sigma}|^2\leq -2\gamma w\quad \text{in $\Omega\times(0,\,T]$.} \label{app.b.1}
	\end{align}
	Setting $Q_T^{+}\coloneqq\{(x,\,t)\in\overline{Q_T} \mid w(x,\,t)>0\}$, we have, from the assumption in the above, that $Q_T^{+}\subset\overline{\Omega}\times(0,\,T]$. Here, if either $u^{\varepsilon,,\sigma}\equiv+1$ or $u^{\varepsilon,,\sigma}\equiv-1$, then there is nothing to prove. Hence, we can assume, in the following, that $u^{\varepsilon,,\sigma}\not\equiv\pm1$ and this implies $Q_T^{+}\neq\emptyset$. Now, since $\Omega$ is bounded, there exists a point $(x_0,\,t_0)\in\overline{Q_T}$ such that $\max_{\overline{Q_T}}w=w(x_0,\,t_0)$. Assume, for a contradiction, that $(x_0,\,t_0)\in Q_T^{+}$. From the assumption that $|u^{\varepsilon,\,\sigma}_0|\leq1$, we have that $w(\cdot,\,0)\leq0$ and thus, $t_0\neq0$. If $x_0\in\Omega$, then, from the maximality of $w$ and $t_0\neq0$, it holds that $\partial_t w(x_0,\,t_0)\geq0$ and $\Delta w(x_0,\,t_0)\leq0$. Thus we have that
	\begin{equation}\label{app.b.2}
	0\leq\partial_t w(x_0,\,t_0)-\Delta w(x_0,\,t_0)\leq-2\gamma w <0\quad \text{at $(x_0,\,t_0)$},
	\end{equation}
	which is a contradiction. If $x_0\in\partial\Omega$, then,
	from the maximality of $w$ at $(x_0,\,t_0)$, we have that $\nabla w\cdot\nu\geq0$ at $(x_0,\,t_0)$, where $\nu$ is the outer unit normal of $\partial \Omega$. From the boundary condition in the equations \eqref{1.1.1}, we have that
	\begin{align}
	0\leq\partial_t w+ \sigma \nabla w \cdot \nu&= -\gamma e^{-\gamma t}((u^{\varepsilon,\,\sigma})^2-1)+ 2u^{\varepsilon,\,\sigma}\partial_t u^{\varepsilon,\,\sigma}\,e^{-\gamma t}+ 2\sigma u^{\varepsilon,\,\sigma}\,e^{-\gamma t}\nabla u^{\varepsilon,\,\sigma}\cdot\nu\nonumber\\
	&= -\gamma w + 2u^{\varepsilon,\,\sigma}\,e^{-\gamma t}(\partial_t u^{\varepsilon,\,\sigma}+\sigma\nabla u^{\varepsilon,\,\sigma}\cdot\nu)\nonumber\\
	&=-\gamma w<0\quad \text{at $(x_0,\,t_0)$,}\label{app.b.3}
	\end{align}
	which is also a contradiction. Therefore, both cases lead us to get a contradiction. Hence, for any $(x,\,t)\in \overline{Q_T}$, $w(x,\,t)\leq \max_{\overline{Q_T}}w= w(x_0,\,t_0)\leq 0$, in other words, $|u^{\varepsilon,\,\sigma}(x,\,t)|\leq1$ holds.
\end{proof}

Next, we will show the upper bound of the discrepancy function $\xi^{\varepsilon,\,\sigma}$ in $\Omega \times (0,\,T]$ for given $T>0$. Recall that $\xi^{\varepsilon,\,\sigma}(x,\,t)$ is defined by
\begin{equation}
	\frac{\varepsilon}{2}|\nabla u^{\varepsilon,\,\sigma}|^2(x,\,t) - \frac{1}{\varepsilon} W(u^{\varepsilon,\,\sigma})(x,\,t)
\end{equation}
for any $x \in \Omega$ and $t > 0$ (see also \ref{discrepancyFunc}). Precisely, we will prove 
\begin{proposition}\label{appendAProp2}
	Let $\Omega \subset \mathbb{R}^n$ be a convex set and let $T>0$ be any number. Assume that the solution $u^{\varepsilon,\,\sigma}$ is in $C^3(\Omega\times(0,\,T]) \cap C^2(\overline{\Omega} \times(0,\,T])$ is of the Allen-Cahn equation \eqref{1.1.1} and that there exists a constant $c_0>0$ independent of $\varepsilon$, $\sigma$, and $T$ such that $\sup_{\Omega} \xi^{\varepsilon,\,\sigma}_0 \leq c_0$ for any $\varepsilon,\,\sigma>0$. Then, if $u^{\varepsilon,\,\sigma}$ satisfies the condition 
	\begin{equation}\label{assumptionOnBoundary}
		\frac{\partial}{\partial t} (\nabla u^{\varepsilon,\,\sigma} \cdot \nu)^2(t) \leq  c_0\,\sigma\varepsilon^{-1}e^{\sigma(t-T)} \quad \text{on } \partial \Omega
	\end{equation}
	at $t\in(0,\,T]$ for any $\varepsilon,\,\sigma>0$, then it holds that
	\begin{equation}\label{boundednessDiscrepancy}
		\sup_{\Omega\times(0,\,T]}\xi^{\varepsilon,\,\sigma} \leq c_0
	\end{equation}
	for any $\varepsilon,\,\sigma>0$.
\end{proposition}

\begin{remark}[Comparison between the assumptions \eqref{3.1.9.2} and \eqref{assumptionOnBoundary}]
	Let us recall that, in Section \ref{exisdiri}, we impose the similar assumption to \eqref{assumptionOnBoundary} on the gradient of the solutions $u^{\varepsilon,\,\sigma}$ in direction to the outer unit normal of $\partial \Omega$ (see \eqref{3.1.9.2} for the precise assumption)and we can actually see by simple computation that the assumption \eqref{assumptionOnBoundary} implies the assumption \eqref{3.1.9.2} in Section \ref{exisdiri}. Indeed, from \eqref{assumptionOnBoundary} and solving the differential inequality, we obtain
	\begin{equation}\label{remarkAssumpBoundary}
		(\nabla u^{\varepsilon,\,\sigma} \cdot \nu)^2(t) \leq c_0\,\varepsilon^{-1}e^{\sigma(t-T)} + (\nabla u^{\varepsilon,\,\sigma}_0 \cdot \nu)^2 \quad \text{on } \partial \Omega
	\end{equation} 
	for any $t\in(0,\,T]$. Thus, integrating over $\partial \Omega \times [0,\,T]$ on the both sides of \eqref{remarkAssumpBoundary}, we have
	\begin{equation}\label{integrateAssumpBoundary}
		\int_{0}^{T}\int_{\partial\Omega}\frac{\varepsilon}{2}\left(\frac{\partial u^{\varepsilon,\,\sigma}}{\partial \nu}\right)^2\,d\mathcal{H}^{n-1}\,dt \leq \frac{c_0}{2\sigma}(1 - e^{-\sigma T})\mathcal{H}^{n-1}(\partial\Omega) + T\int_{\partial\Omega}\frac{\varepsilon}{2}\left(\frac{\partial u^{\varepsilon,\,\sigma}_0}{\partial \nu}\right)^2\,d\mathcal{H}^{n-1}.
	\end{equation}
	We observe that the right-hand side in \eqref{integrateAssumpBoundary} is uniformly finite with respect to both $\varepsilon>0$ and $\sigma>0$ since $\sigma^{-1}(1- e^{-\sigma\,T}) \rightarrow T$ as $\sigma \to 0$. On the other hand, we note that the condition \eqref{3.1.9.2} does not necessarily imply the condition \eqref{assumptionOnBoundary}.
\end{remark}

\begin{proof}
	Defining the function $\tilde{\xi}^{\varepsilon,\,\sigma}$ by $\tilde{\xi}^{\varepsilon,\,\sigma}(x,\,t) \coloneqq e^{-\sigma\,t}\,\xi^{\varepsilon,\,\sigma}(x,\,t)$ and employing the argument in \cite[Proposition 4.1]{MiTo}, we may compute as follows; for $(x,\,t) \in \Omega \times (0,\,T]$ with $|\nabla u^{\varepsilon,\,\sigma}(x,\,t)| \neq 0$, it holds that
	\begin{equation}\label{heatEquationDiscrepancy}
		\partial_t \tilde{\xi}^{\varepsilon,\,\sigma} - \Delta \tilde{\xi}^{\varepsilon,\,\sigma} \leq -\sigma\,\tilde{\xi}^{\varepsilon,\,\sigma} - \frac{2W'}{\varepsilon^3|\nabla u^{\varepsilon,\,\sigma}|^2} \nabla \tilde{\xi}^{\varepsilon,\,\sigma} \cdot \nabla u^{\varepsilon,\,\sigma}.
	\end{equation}
	Since $\Omega$ is bounded, there exists a maximum point $(x_0,\,t_0) \in \overline{\Omega} \times [0,\,T]$ of $\tilde{\xi}^{\varepsilon,\,\sigma}$. Suppose by contradiction that $\tilde{\xi}^{\varepsilon,\,\sigma}(x_0,\,t_0) > c_0\,e^{-\sigma\,T}$. If we are able to derive a contradiction, then we obtain $\max_{\overline{\Omega} \times [0,\,T]}\tilde{\xi}^{\varepsilon,\,\sigma} \leq c_0\,e^{-\sigma\,T}$ and thus it holds that
	\begin{equation}
		\max_{\overline{\Omega} \times [0,\,T]}\xi^{\varepsilon,\,\sigma} = \max_{\overline{\Omega} \times [0,\,T]} e^{\sigma\,t}\,e^{-\sigma\,t}\xi^{\varepsilon,\,\sigma} \leq e^{\sigma\,T} c_0\,e^{-\sigma\,T} = c_0. 
	\end{equation}
	Thus to derive a contradiction leads us to complete the proof of Proposition \ref{appendAProp2}. First of all, if $t_0 = 0$, then from the assumption on $\xi^{\varepsilon,\,\sigma}_0$ we have
	\begin{equation}
		c_0 = c_0\,e^{-\sigma\,t_0} < \max_{\overline{\Omega}} \tilde{\xi}^{\varepsilon,\,\sigma} = e^{-\sigma\,t_0}\,\xi^{\varepsilon,\,\sigma}_0(x_0) \leq c_0
	\end{equation}
	which is a contradiction. Thus we may assume that $t_0 \neq  0$ and $\partial_t \tilde{\xi}^{\varepsilon,\,\sigma} \geq 0$ at $(x_0,\,t_0)$. Moreover, if $|\nabla u^{\varepsilon,\,\sigma}|(x_0,\,t_0) = 0$, then from the definition of and the assumption on $\tilde{\xi}^{\varepsilon,\,\sigma}$, we obtain
	\begin{equation}
		0 < c_0\,e^{-\sigma \, t_0} \leq  \max_{\overline{\Omega}} \tilde{\xi}^{\varepsilon,\,\sigma} \leq - e^{-\sigma \, t_0}\frac{W(u^{\varepsilon,\,\sigma})}{\varepsilon} \leq 0
	\end{equation}
	which is a contradiction. Hence we may also assume that $|\nabla u^{\varepsilon,\,\sigma}|(x_0,\,t_0) \neq 0$ in the sequel.
	
	Now we consider the case that $x_0 \in \Omega$. In this case, by the maximality of $\tilde{\xi}^{\varepsilon,\,\sigma}$ we have
	\begin{equation}\label{maximalityInterior}
	\nabla \tilde{\xi}^{\varepsilon,\,\sigma}(x_0,\,t_0) = 0,\quad \Delta \tilde{\xi}^{\varepsilon,\,\sigma}(x_0,\,t_0) \leq 0
	\end{equation}
	and thus from \eqref{heatEquationDiscrepancy}, \eqref{maximalityInterior}, and the assumption that $\tilde{\xi}^{\varepsilon,\,\sigma}(x_0,\,t_0)> c_0\,e^{-\sigma\,T} >0$, it holds that
	\begin{align}
		0 \leq \partial_t \tilde{\xi}^{\varepsilon,\,\sigma} - \Delta \tilde{\xi}^{\varepsilon,\,\sigma}  \leq -\sigma\,\tilde{\xi}^{\varepsilon,\,\sigma}  < 0  \label{maxPointInOmega(1)}
	\end{align}
	at $x = x_0$ and $t = t_0$ and this is a contradiction.
	
	Next we consider the case that $x_0 \in \partial\Omega$. From the assumption \eqref{assumptionOnBoundary} and by recalling the boundary condition of the Allen-Cahn equation \eqref{1.1.1} and using the maximality of $\tilde{\xi}^{\varepsilon,\,\sigma}$, we obtain
	\begin{align}\label{maximalityBoundary(1)}
		0 \leq \partial_t \tilde{\xi}^{\varepsilon,\,\sigma} + \sigma  \nabla \tilde{\xi}^{\varepsilon,\,\sigma} \cdot \nu = -\sigma\, \tilde{\xi}^{\varepsilon,\,\sigma} +  \frac{\varepsilon}{2} e^{-\sigma\,t} \left( \frac{\partial}{\partial t}|\nabla u^{\varepsilon,\,\sigma}|^2 + \sigma\frac{\partial}{\partial \nu} |\nabla u^{\varepsilon,\,\sigma}|^2 \right)
	\end{align}
	at $x = x_0$ and $t = t_0$. Now let us recall the following identity:
	\begin{equation}\label{boundaryCurvatureEquality(1)}
		\frac{1}{2}\frac{\partial}{\partial t}|\nabla v|^2 + \frac{\sigma}{2}\frac{\partial}{\partial \nu} |\nabla v|^2 = \frac{1}{2}\frac{\partial}{\partial t}(\nabla v \cdot \nu)^2 + \sigma\,\Pi_{\nu}(\nabla_{\partial \Omega} v,\,\nabla_{\partial \Omega} v) \quad \text{on } \partial\Omega \times (0,\,T]
	\end{equation} 
	for any smooth function $v$ which satisfies $\partial_t v + \sigma \nabla v \cdot \nu = 0$ on $\partial \Omega \times (0,\,T]$. We prove \eqref{boundaryCurvatureEquality(1)} later. By applying \eqref{boundaryCurvatureEquality(1)} to $u^{\varepsilon,\,\sigma}$ and from the assumption \eqref{assumptionOnBoundary}, \eqref{maximalityBoundary(1)}, we have that
	\begin{align}
		0 &\leq -\sigma\,\tilde{\xi}^{\varepsilon,\,\sigma} + \frac{\varepsilon}{2} e^{-\sigma\,t_0} \left( \frac{\partial}{\partial t}(\nabla u^{\varepsilon,\,\sigma} \cdot \nu)^2 + 2 \sigma\,\Pi_{\nu}(\nabla_{\partial \Omega} u^{\varepsilon,\,\sigma},\,\nabla_{\partial \Omega} u^{\varepsilon,\,\sigma}) \right) \nonumber\\
		&\leq -\sigma\,\tilde{\xi}^{\varepsilon,\,\sigma} + \frac{\varepsilon}{2} e^{-\sigma\,t_0}\,c_0\,\,\sigma\varepsilon^{-1}e^{\sigma(t_0-T)}  + \sigma\,\varepsilon\,e^{-\sigma\,t_0} \Pi_{\nu}(\nabla_{\partial \Omega} u^{\varepsilon,\,\sigma},\,\nabla_{\partial \Omega} u^{\varepsilon,\,\sigma})  \nonumber\\
		&\leq -\sigma\,\tilde{\xi}^{\varepsilon,\,\sigma} + c_0\frac{\sigma}{2}e^{-\sigma\,T} + 0 
	\end{align}
	where we have used the convexity of $\Omega$, namely, the fact that if $\Omega$ is convex, then $\Pi_{\nu}(X,\,X) \leq 0$ for any tangential vector $X$ to $\partial \Omega$. Thus, by recalling the assumption that $\tilde{\xi}^{\varepsilon,\,\sigma}(x_0,\,t_0) > c_0\,e^{-\sigma\,T}$, we obtain
	\begin{equation}
		c_0\,\sigma\,e^{-\sigma\,T} < \sigma\,\tilde{\xi}^{\varepsilon,\,\sigma}(x_0,\,t_0) \leq c_0\frac{\sigma}{2}e^{-\sigma\,T}, \label{maximalityBoundaryContradiction(1)}
	\end{equation}
	and since $\sigma>0$ and $c_0>0$, this is a contradiction.
	
	Now we show the identity \eqref{boundaryCurvatureEquality(1)}. This sort of identity on the boundary is treated by many authors, for instance, \cite{CaHo}, \cite{MiTo}, or \cite{StZu}. Let $(x,\,t)\in\partial\Omega\times(0,\,T]$ be any point and $v$ be any smooth function satisfying the equation $\partial_t v + \sigma \nabla v \cdot \nu = 0$ on $\partial \Omega \times (0,\,T]$. Without loss of generality, by rotation and translation we may assume that $\partial\Omega$ is a graph of some function near $x=0\in\partial\Omega$, namely, there exist open sets $U'\subset\mathbb{R}^{n-1}$ and $U\subset \mathbb{R}^n$ with $0\in U'$ and $0\in U$ and a smooth function $\phi:U'\to \mathbb{R}$ such that $\partial \Omega \cap U \coloneqq \{(x',\,\phi(x'))\mid x'\in U' \}$ and 
	\begin{equation}\label{appendAProp2.8}
	\phi(0)=0,\quad \nabla'\phi(0)=0,\quad \frac{\partial^2 \phi}{\partial x_i \partial x_j}(0)= \kappa_i \delta_{ij}
	\end{equation}
	where $\nabla'=(\partial_1,\,\cdots,\,\partial_{n-1})$ and $\{\kappa_i\}_{i=1,\cdots,\,n-1}$ are the principal curvatures of $\partial\Omega$ at $x=0$. For convenience, we set $\kappa_n=0$. In this setting, the outer unit normal vector $\nu$ on the boundary $\partial\Omega \cap U$ is given by
	\begin{equation}\label{appendAProp2.9}
	\nu(x',\,\phi(x')) = \frac{1}{\sqrt{1+|\nabla' \phi(x')|^2}}(-\nabla' \phi(x'),\,1).
	\end{equation}
	Then from the boundary condition of $v$ and \eqref{appendAProp2.9}, we obtain
	\begin{equation}\label{appendAProp2.11}
	0= \partial_t v + \frac{\sigma}{\sqrt{1+|\nabla' \phi|^2}}\left(-\sum_{i=1}^{n-1}\frac{\partial \phi}{\partial x_i} \frac{\partial v}{\partial x_i} + \frac{\partial v}{\partial x_n}\right).
	\end{equation}
	Differentiating the both sides of \eqref{appendAProp2.11} at $x=0$ with respect to $x_j$ and using \eqref{appendAProp2.8}, we obtain
	\begin{equation}\label{appendAProp2.12}
		\frac{\partial}{\partial x_j}(\partial_t v) - \sigma \, \sum_{i=1}^{n-1}\frac{\partial^2 \phi}{\partial x_i \partial x_j}\frac{\partial v}{\partial x_i} + \frac{\partial}{\partial x_j}\left(\frac{\partial v}{\partial x_n}\right) = 0
	\end{equation}
	at $x'=0$. Then, multiplying $\frac{\partial v}{\partial x_j}$ to both sides of \eqref{appendAProp2.12} and summing over $j=1,\,\cdots,\,n-1$, we have from \eqref{appendAProp2.8} that
	\begin{align}
	0&= \sum_{j=1}^{n-1}\frac{\partial v}{\partial x_j}\,\frac{\partial}{\partial x_j}(\partial_t v) - \sigma \sum_{j=1}^{n-1}\frac{\partial v}{\partial x_j}\,\sum_{i=1}^{n-1}\frac{\partial^2 \phi}{\partial x_i \partial x_j} \frac{\partial v}{\partial x_i}  + \sigma \sum_{j=1}^{n-1}\frac{\partial v}{\partial x_j} \frac{\partial^2 v}{\partial x_n \partial x_j} \nonumber\\
	&= \sum_{j=1}^{n-1}\frac{\partial v}{\partial x_j}\,\frac{\partial}{\partial t}\left(\frac{\partial v}{\partial x_j}\right) - \sigma \sum_{j=1}^{n-1}\frac{\partial v}{\partial x_j} \,\sum_{i=1}^{n-1}\kappa_i \,\delta_{ij} \frac{\partial v}{\partial x_i}  + \sigma \sum_{j=1}^{n-1}\frac{\partial v}{\partial x_j}\frac{\partial}{\partial x_n}\left(\frac{\partial v}{\partial x_j}\right) \nonumber\\
	&= \frac{1}{2}\frac{\partial}{\partial t}|\nabla' v|^2 -\sigma \sum_{i=1}^{n-1} \kappa_i\, \frac{\partial v}{\partial x_i}\frac{\partial v}{\partial x_i} + \sigma \frac{1}{2}\nabla(|\nabla v|^2)\cdot \nu \nonumber\\
	&= \frac{1}{2}\frac{\partial}{\partial t}|\nabla' v|^2 -\sigma \sum_{i=1}^{n-1} \kappa_i\, \left(\frac{\partial v}{\partial x_i}\right)^2 + \sigma \frac{1}{2}\frac{\partial}{\partial \nu} |\nabla v|^2 \label{appendAProp2.13}
	\end{align}
	at $x' = 0$. Moreover from the choice of $\phi$, we have
	\begin{equation}\label{appendAProp2.14}
		\frac{\partial}{\partial t}\left( \frac{\partial v}{\partial x_n} \right)^2 = \frac{\partial}{\partial t}\left( \frac{\partial v}{\partial \nu} \right)^2
	\end{equation}
	at $x' = 0$. Therefore from the assumption, \eqref{appendAProp2.13}, and \eqref{appendAProp2.14} , we conclude that
	\begin{equation}\label{identityBoundary}
		\frac{1}{2}\frac{\partial}{\partial t}|\nabla v|^2 +  \frac{\sigma}{2}\frac{\partial}{\partial \nu} |\nabla v|^2 = \frac{1}{2}\frac{\partial}{\partial t}\left( \frac{\partial v}{\partial \nu} \right)^2 + \sigma \sum_{i=1}^{n-1} \kappa_i\, \left(\frac{\partial v}{\partial x_i}\right)^2
	\end{equation}
	holds at $x' = 0$. Therefore, recalling from the definition of $\Pi_{\nu}$ that 
	\begin{equation}
		\Pi_{\nu}(\nabla_{\partial \Omega} v,\,\nabla_{\partial \Omega} v) = \sum_{i=1}^{n-1} \kappa_i\, \left(\frac{\partial v}{\partial x_i}\right)^2,
	\end{equation}
	then we conclude that \eqref{boundaryCurvatureEquality(1)} is valid.
\end{proof} 

\begin{remark}[Necessity of the assumption \eqref{assumptionOnBoundary}]
	In the case of Neumann boundary conditions, we can show the upper bound of the discrepancy measure via the maximum principle. To see this, we refer to the work \cite{Ilmanen01}. Indeed, if we have the initial data $u^{\varepsilon,\,\sigma}_0$ satisfying $|u^{\varepsilon,\,\sigma}_0| <1$ in $\Omega$, then $|u^{\varepsilon,\,\sigma}| < 1$ in $\Omega \times (0,\,T]$ from the maximal principle (see also Proposition \ref{appendAProp}). Hence we may define a function $d^{\varepsilon,\sigma}$ by
	\begin{equation}
		d^{\varepsilon,\sigma}(x,\,t) \coloneqq \left(q^{\varepsilon}\right)^{-1}(u^{\varepsilon,\,\sigma}(x,\,t))
	\end{equation}
	for any $(x,\,t) \in \Omega \times (0,\,T]$ where $q^{\varepsilon}(s) \coloneqq \tanh(\varepsilon^{-1}s)$ for $s\in\mathbb{R}$. This function $d^{\varepsilon,\sigma}(\cdot,\,t)$ can be regarded as the approximation of the signed distance function from the zero level set of $u^{\varepsilon,\,\sigma}(\cdot,\,t)$. Then we have that 
	\begin{equation}
		\xi^{\varepsilon,\,\sigma}(x,t) = \frac{\varepsilon}{2} \left|(q^{\varepsilon})^{\prime}(d^{\varepsilon,\sigma})\right|^2 |\nabla d^{\varepsilon,\sigma}|^2 -\frac{1}{\varepsilon} W(q^{\varepsilon}(d^{\varepsilon,\sigma}))	= \frac{1}{\varepsilon} W(q^\varepsilon(d^{\varepsilon,\sigma}))(|\nabla d^{\varepsilon,\sigma}|^2 -1),
	\end{equation}
	where we used the fact that $\frac{\varepsilon}{2} |(q^{\varepsilon})^{\prime}(d^{\varepsilon,\sigma})|^2= \frac{1}{\varepsilon} W(q^\varepsilon(d^{\varepsilon,\sigma}))$. Thus it is sufficient to show that $w(x,t):=|\nabla d(x,t)|^2 -1 \leq 0$ for any $(x,\,t) \in \Omega \times (0,\,T]$. In order to use the maximum principle, we assume that $w(x,0) \leq 0$ for any $x\in \Omega$. Then in the same way as the computation shown in \cite[p. 432]{Ilmanen01}, we obtain
	\begin{equation}
		\partial_t w \leq  \Delta w 
		- \frac{2 q^{\varepsilon} (d^{\varepsilon,\sigma})} {\varepsilon} \nabla d^{\varepsilon,\sigma} \cdot \nabla w - \frac{2 (q^{\varepsilon})^{\prime}(d^{\varepsilon,\sigma})}{\varepsilon} |\nabla d^{\varepsilon,\sigma}|^2 w
	\end{equation}
	in $\Omega \times(0,\,T]$.
	
	In addition, the Neumann boundary condition for $d^{\varepsilon,\sigma}$, the convexity of $\Omega$, and \eqref{boundaryCurvatureEquality(1)} imply that $\nabla w \cdot \nu \leq 0$ on $\partial \Omega$ for $t>0$. Therefore, the maximum principle yields $w\leq 0$ for any $x\in \Omega$ and $t>0$.
	
	On the other hand, in the case of dynamic boundary conditions, we are unfortunately unable to obtain such an estimate as  
	\begin{equation}
		\frac{\partial w}{\partial t} + \sigma \frac{\partial w}{\partial \nu} \leq 0
	\end{equation}
	on $\partial \Omega \times (0,\,T]$. This is because we might fail to control the gradient in the direction of the unit normal vector of the boundary in time without any conditions. Hence so far we need such assumptions as \eqref{assumptionOnBoundary} present in Proposition \ref{appendAProp2} in our situation.
\end{remark}

\subsection{Appendix B}\label{vanishingDiscrepancy}
In this appendix, we prove the vanishing of the discrepancy measure only in the interior of a given domain under some constraints, namely, we show that there exists a subsequence $\{\varepsilon_j\}_{j\in\mathbb{N}}$, independent of $t\geq 0$, such that $|\xi^{\varepsilon_j,\,\sigma}_t| \rightarrow 0$ as $j\to\infty$ in measures on $\Omega$ for a.e. $t \in [0,\,T]$ and any $\sigma>0$. Before stating the claim, we give some constraints to prove the vanishing of the discrepancy measure. Precisely, we assume that the domain $\Omega$ is convex and the solution $u^{\varepsilon,\,\sigma}$ of the Allen-Cahn equation \eqref{1.1.1} is smooth and satisfies the following assumptions:
\begin{enumerate}
	\item $\sup_{\Omega}|u^{\varepsilon,\,\sigma}_0| \leq 1$ where $u^{\varepsilon,\,\sigma}_0(\cdot) \coloneqq u^{\varepsilon,\,\sigma}(\cdot,\,0)$ in $\Omega$.
	\item There exists a constant $c_0>0$, independent of $\varepsilon$, $\sigma$, and $T$, such that $\sup_{\Omega}\xi^{\varepsilon,\,\sigma}_0 \leq c_0$ where $\xi^{\varepsilon,\,\sigma}_0$ is defined by
	\begin{equation}
		\frac{\varepsilon}{2}|\nabla u^{\varepsilon,\,\sigma}_0|^2 - \frac{W(u^{\varepsilon,\,\sigma}_0)}{\varepsilon}.
	\end{equation}
	\item For any $T>0$, $\varepsilon,\,\sigma>0$, and $t \in (0,\,T]$, it holds that
	\begin{equation}
		\frac{\partial}{\partial t}(\nabla u^{\varepsilon,\,\sigma} \cdot \nu)^2(t) \leq c_0\,\sigma\varepsilon^{-1}e^{\sigma\,(t-T)} \quad \text{ on } \partial\Omega.
	\end{equation}
	Note that this condition is also present in Proposition \ref{appendAProp2} in Appendix A.
\end{enumerate}
As we prove Proposition \ref{appendAProp} and \ref{appendAProp2} in Appendix A of Section \ref{appendix}, from these assumptions in the above, the two estimates
\begin{equation}
	\sup_{\Omega\times(0,\,T]}|u^{\varepsilon,\,\sigma}| \leq 1, \quad \sup_{\Omega\times(0,\,T]}\xi^{\varepsilon,\,\sigma} \leq c_0
\end{equation}
holds where $c_0$ is as in the last assumption above.

Now we are prepared to show the main claim of this appendix. Before stating the claim, we give some notations and facts. Let $\xi^{\varepsilon,\,\sigma}_t$ be the discrepancy measure for any $t>0$, which is defined by $\xi^{\varepsilon,\,\sigma}(x,\,t)\mathcal{L}^n(x)$ for each $t>0$ where $\xi^{\varepsilon,\,\sigma}(x,\,t)$ is shown in \eqref{discrepancyFunc}. Recalling the estimate that $\xi^{\varepsilon,\,\sigma}_t \leq \mu^{\varepsilon,\,\sigma}_t$ on $\Omega \times (0,\,\infty)$, we have that $\sup_{\varepsilon,\,\sigma>0} \xi^{\varepsilon,\,\sigma}_t(\Omega) \leq D$ where $D$ is as in Proposition \ref{prop.4.1}. Then, setting $\xi^{\varepsilon,\,\sigma} \coloneqq |\xi^{\varepsilon,\,\sigma}_t| \otimes \mathcal{L}^1_t$ with a little abuse of notation, we can choose a subsequence $\{\xi^{\varepsilon_j,\,\sigma}\}_{j\in\mathbb{N}}$ converging to some Radon measure $\xi^{\sigma}$ in the sense of Radon measures in $\Omega \times (T_0,\,T)$ for any $0 < T_0 < T$. In this setting, we prove 
\begin{proposition}\label{vanishingProp}
	Let $\Omega\subset\mathbb{R}^n$ be a bounded and convex domain with a smooth boundary and $\sigma>0$ be given. Assume that $\{u^{\varepsilon,\,\sigma}\}_{\varepsilon>0}$ is a family of the solutions to the Allen-Cahn equation \eqref{1.1.1} such that $u^{\varepsilon,\,\sigma}$ satisfies the three conditions shown in the above. Then it holds that $\xi^{\sigma} \equiv 0$ in $\Omega \times (T_0,\,T)$.
\end{proposition}
\begin{remark}
	For simplicity, we will show Proposition \ref{vanishingProp} without indicating the parameter $\sigma>0$ because the parameter $\sigma>0$ does not affect the estimates in the interior $\Omega$ but only on the boundary $\partial \Omega$.
\end{remark}
The proof of Proposition \ref{vanishingProp} is based on the argument shown in \cite{MiTo}.

First of all, we will derive the monotonicity formula in the interior $\Omega$. The monotonicity formula is the key estimate to prove the vanishing of the discrepancy measure. The proof is based on the results by Ilmanen in \cite[Chapter 3]{Ilmanen01} and Mizuno and Tonegawa in \cite[Porposition 3.1]{MiTo}. Before stating the detail, we will give several notations; first we define $\Omega_m \subset \mathbb{R}^n$ for any $m\in\mathbb{N}$ by
\begin{equation}
	\Omega_m \coloneqq \left\{x\in\Omega \mid \dist(x,\,\partial\Omega) > \frac{1}{m}\right\}
\end{equation}
and define $d_0>0$ by
\begin{equation}
	d_0 \coloneqq \|\text{principal curvatures of }\partial \Omega\|_{L^\infty}^{-1}.
\end{equation}
Here if $\partial \Omega$ is flat and thus the principal curvatures are all zero, then we set $d_0$ as 1 instead of $\infty$. Secondly, for any $m\in\mathbb{N}$, we choose a smooth and radially symmetric cut-off function $\eta_m$ in such a way that $\eta_m$ satisfies
\begin{equation}
	0\leq \eta_m \leq 1,\quad \frac{\partial \eta_m}{\partial |x|}(x)\leq 0 \quad \text{for } x\in\mathbb{R}^n\setminus\{0\},\quad \spt\eta \subset B_{\frac{1}{m}}(0),\quad \eta_m \equiv 1 \text{ on } B_{\frac{1}{2m}}(0).
\end{equation}
Lastly, for any $y\in\mathbb{R}^n$ and $s>0$, we define the backward heat kernel $\rho_{(y,\,s)}$ by
\begin{equation}
	\rho_{(y,\,s)}(x,\,t) = \frac{1}{(4\pi(s-t))^{\frac{n-1}{2}}} \exp\left(-\frac{|x-y|^2}{4(s-t)}\right)
\end{equation}
for any $x\in\mathbb{R}^n$ and $s > t > 0$. 
	
Now we are going to prove the following proposition which gives us a version of the monotonicity formula shown in \cite{Ilmanen01, MiTo}: 
\begin{proposition}\label{monotonicityFormula}
	Let $m\in\mathbb{N}$ be such that $m > \max\{d_0,\,3\}$. Then there exists a constant $C_1>0$ depending on $m$ and $\Omega$ such that, for any $y\in\Omega_m$, $s>t>0$, and $\varepsilon>0$, it holds that
	\begin{align}\label{claimMonotoForm}
	\frac{d}{dt}\left( e^{2(s-t)^{\frac{1}{4}}} \int_{\Omega}\tilde{\rho}^m_{(y,\,s)}\,d\mu^{\varepsilon,\,\sigma}_t(x) \right) \leq e^{2(s-t)^{\frac{1}{4}}} \left( C_1 + \int_{\Omega} \frac{\tilde{\rho}^m_{(y,\,s)}}{2(s-t)} \,d\xi^{\varepsilon,\,\sigma}_t(x) \right) 
	\end{align}
	where $\tilde{\rho}^m_{(y,\,s)}$ is defined by
	\begin{equation}
	\tilde{\rho}^m_{(y,\,s)}(x,\,t) \coloneqq \eta_m(x-y)\,\rho_{(y,\,s)}(x,\,t) 
	\end{equation}
	for any $x,\,y\in\mathbb{R}^n$ and $s>t>0$.
\end{proposition}
\begin{proof}[Proof of Proposition \ref{monotonicityFormula}]
	The proof of this proposition is based on \cite[Proposition 3.1]{MiTo}. Let $m\in\mathbb{N}$ with $m > \max\{d_0,\,3\}$ and $\varepsilon>0$ be any and fixed. Moreover, we take any $y\in\Omega_m$ and $s>t>0$. Then by definition, we can easily see that $\eta_m(x-y) = 0$ for any $x\in\partial\Omega$ and $y\in\Omega_m$ and thus it holds that $\spt\tilde{\rho}^m_{(y,\,s)}(\cdot,\,t) \subset \Omega$ for any $s>t>0$ and $y \in \Omega_m$. Hence, by employing the computations and estimates except on the integrals over $\partial \Omega$ obtained in \cite[Proposition 3.1]{MiTo}, we may have
	\begin{align}
	\frac{d}{dt}\int_{\Omega}\tilde{\rho}^m_{(y,\,s)}\,d\mu^{\varepsilon}_t &\leq  \int_{\Omega}\left(\partial_t \tilde{\rho}^m_{(y,\,s)} + (I- a^{\varepsilon}\otimes a^{\varepsilon}): \nabla^2 \tilde{\rho}^m_{(y,\,s)} + \frac{(a^{\varepsilon}\cdot \nabla \tilde{\rho}^m_{(y,\,s)})^2}{\tilde{\rho}^m_{(y,\,s)}} \right) \,\varepsilon|\nabla u^{\varepsilon,\,\sigma}|^2dx \nonumber\\
	&\qquad -\int_{\Omega}\varepsilon\tilde{\rho}^m_{(y,\,s)}\,\left(\nu^{\varepsilon}+ \frac{\nabla u^{\varepsilon,\,\sigma} \cdot \nabla \tilde{\rho}^m_{(y,\,s)}}{\tilde{\rho}^m_{(y,\,s)}}\right)^2 \,dx \nonumber\\
	&\qquad \quad - \int_{\Omega} (\partial_t \tilde{\rho}^m_{(y,\,s)} + \Delta \tilde{\rho}^m_{(y,\,s)}) \,d\xi^{\varepsilon,\,\sigma}_t  \label{monotonicityForm01}
	\end{align}
	where $a^{\varepsilon}\coloneqq |\nabla u^{\varepsilon,\,\sigma}|^{-1} \nabla u^{\varepsilon,\,\sigma}$. Now we consider the first term of \eqref{monotonicityForm01}. By simple computation, we have the fact that
	\begin{equation} \label{monotonicityForm02}
		\partial_t \rho_{(y,\,s)}(x,\,t) + (I- a^{\varepsilon}\otimes a^{\varepsilon}): \nabla^2\rho_{(y,\,s)}(x,\,t) + \frac{(a^{\varepsilon}\cdot \nabla \rho_{(y,\,s)}(x,\,t) )^2}{\rho_{(y,\,s)}(x,\,t)} = 0 
	\end{equation}
	for any $x,\,y\in\mathbb{R}^n$ and $s>t>0$. From the definition of $\rho_{(y,\,s)}$, we have
	\begin{equation}
		\rho_{(y,\,s)}(x,\,t) \leq \frac{1}{(2\pi(s-t))^{\frac{n-1}{2}}}\exp\left(-\frac{1}{64m^2(s-t)}\right) < \tilde{c}\,m^{n-1}
	\end{equation}
	for any $x,\,y\in\mathbb{R}^n$ with $|x-y| \geq \frac{1}{2m}$ and $s>t>0$ where $\tilde{c}>0$ is a constant independent of $m$. Thus, from \eqref{monotonicityForm02}, we may compute as follows:
	\begin{align}
		&\int_{\Omega} \left(\partial_t \tilde{\rho}^m_{(y,\,s)} + (I- a^{\varepsilon}\otimes a^{\varepsilon}): \nabla^2 \tilde{\rho}^m_{(y,\,s)} + \frac{(a^{\varepsilon}\cdot \nabla \tilde{\rho}^m_{(y,\,s)})^2}{\tilde{\rho}^m_{(y,\,s)}} \right) \,\varepsilon|\nabla u^{\varepsilon}|^2\,dx \nonumber\\
		&= \int_{\Omega} \left( (I-a^{\varepsilon} \otimes a^{\varepsilon}):\nabla^2\eta_m\,\rho_{(y,\,s)} + 2\nabla \eta_m \cdot \nabla \rho_{(y,\,s)} + \frac{(a^{\varepsilon}\cdot\nabla \eta_m)^2}{\eta_m} \rho_{(y,\,s)}  \right) \,\varepsilon|\nabla u^{\varepsilon}|^2\,dx \nonumber  \\
		&\leq \int_{B_{\frac{1}{m}}\setminus B_{\frac{1}{2m}}(y)} \left(c_1\,\rho_{(y,\,s)} + c_2\frac{|x-y|}{2(s-t)}\rho_{(y,\,s)} + \sup_{\Omega}\|\nabla^2\eta_m\|_{C^2} \,\rho_{(y,\,s)} \right) \,2\,d\mu^{\varepsilon}_t(x) \nonumber\\
		&\leq \int_{B_{\frac{1}{m}}\setminus B_{\frac{1}{2m}}(y)}d\mu^{\varepsilon}_t(x) \,\frac{c_3}{(s-t)^{\frac{n+1}{2}}}\,\exp(-\frac{1}{64m^2(s-t)}) \nonumber\\
		&\leq c_4\,m^{n-1} \,\sup_{\varepsilon,\,t >0}\mu^{\varepsilon}_t(\Omega) \leq c_4\,m^{n-1}\,D \label{monotonicityForm03} 
	\end{align}
	where $c_i=c_{i}(m,\,\Omega)$ for $i\in\{1,\,2,\,3,\,4\}$ is a constant depending only on $m$ and $\Omega$. Next we consider the third term in \eqref{monotonicityForm01}. From the similar computations done in \eqref{monotonicityForm03}, we may also have
	\begin{equation}
		\int_{\Omega} \left(\partial_t \tilde{\rho}^m_{(y,\,s)} + \Delta \tilde{\rho}^m_{(y,\,s)} \right)\,d\xi^{\varepsilon}_t(x) \leq  \int_{\Omega} \left(\frac{|x-y|}{2(s-t)} \tilde{\rho}^m_{(y,\,s)} - \frac{\tilde{\rho}^m_{(y,\,s)}}{2(s-t)}\right) \,d\xi^{\varepsilon}_t(x) \label{monotonicityForm04}
	\end{equation}
	for any $y \in \Omega_m$ and $s>t>0$. As for the first term in \eqref{monotonicityForm04}, we have
	\begin{align}
		\int_{\Omega} \frac{|x-y|}{2(s-t)} \tilde{\rho}^m_{(y,\,s)} \,d\xi^{\varepsilon}_t(x) &\leq  \int_{\Omega\cap\{|x-y|<(s-t)^{\frac{1}{4}}\}} \frac{|x-y|}{2(s-t)} \tilde{\rho}^m_{(y,\,s)} \,d\mu^{\varepsilon}_t(x) \nonumber\\
		& \qquad \quad + \int_{\Omega \cap \{|x-y|\geq (s-t)^{\frac{1}{4}}\}} \frac{|x-y|}{2(s-t)} \tilde{\rho}^m_{(y,\,s)} \,d\mu^{\varepsilon}_t(x) \nonumber\\
		&\leq \frac{1}{2(s-t)^{\frac{3}{4}}} \int_{\Omega} \tilde{\rho}^m_{(y,\,s)}\,d\mu^{\varepsilon}_t(x) + d_1\,D \label{monotonicityForm05}
	\end{align}
	where $d_1>0$ is a constant depending only on $\Omega$. Thus from \eqref{monotonicityForm01}, \eqref{monotonicityForm04}, and \eqref{monotonicityForm05}, we obtain
	\begin{equation}
		\frac{d}{dt}\int_{\Omega}\tilde{\rho}^m_{(y,\,s)}\,d\mu^{\varepsilon}_t(x) \leq c_5 \,D +  \int_{\Omega}\frac{\tilde{\rho}^m_{(y,\,s)}}{2(s-t)} \,d\xi^{\varepsilon}_t(x) + \frac{1}{2(s-t)^{\frac{3}{4}}} \int_{\Omega} \tilde{\rho}^m_{(y,\,s)}\,d\mu^{\varepsilon}_t(x)
	\end{equation}
	for any $y\in\Omega_m$ and $s>t>0$ where $c_5$ is a constant depending on $m$ and $\Omega$. Therefore we obtain
	\begin{align}
		\frac{d}{dt}\left(e^{2(s-t)^{\frac{1}{4}}} \int_{\Omega}\tilde{\rho}^m_{(y,\,s)}\,d\mu^{\varepsilon}_t(x) \right) &= -\frac{e^{2(s-t)^{\frac{1}{4}}}}{2(s-t)^{\frac{3}{4}}} \int_{\Omega} \tilde{\rho}^m_{(y,\,s)}\,d\mu^{\varepsilon}_t(x) +  e^{2(s-t)^{\frac{1}{4}}} \frac{d}{dt}\int_{\Omega}\tilde{\rho}^m_{(y,\,s)}\,d\mu^{\varepsilon}_t(x) \nonumber\\
		&\leq e^{2(s-t)^{\frac{1}{4}}} \left(c_5 \,D +  \int_{\Omega}\frac{\tilde{\rho}^m_{(y,\,s)}}{2(s-t)} \,d\xi^{\varepsilon}_t(x)\right)
	\end{align}
	for any $y\in\Omega_m$ and $s>t>0$.
\end{proof}

By applying the monotonicity formula shown in Proposition \ref{monotonicityFormula}, we may prove the density estimate of the Radon measure $\mu_t$ only in the interior $\Omega$.
\begin{proposition}\label{densityEstimateLimitMu}
	Let $T>0$ be fixed and $m\in\mathbb{N}$ be any such that $m > \max\{d_0,\,3\}$. Then there exists a constant $D_0=D_0(T,\,m)$ independent of $\varepsilon$ and $\sigma$ such that
	\begin{equation}\label{densityEstimate}
		\mu^{\varepsilon,\,\sigma}_t(B_{r}(y) \cap \Omega) \leq D_0\,r^{n-1}
	\end{equation}
	for any $y \in \Omega_m$, $0<r<\frac{1}{m}$, $\varepsilon>0$, $\sigma>0$, and $t \geq T$.
\end{proposition}
\begin{proof}
	For $\hat{t} \geq T$ and $0<r<\frac{1}{m}$, we set $s \coloneqq \hat{t} + r^2$. Integrating the both sides of \eqref{claimMonotoForm} over $t \in [\hat{t}-\frac{T}{2},\,\hat{t}]$, we obtain
	\begin{align}
		e^{2r^{\frac{1}{2}}}\int_{\Omega}\tilde{\rho}^{m}_{(y,\,s)}(x,\,\hat{t}) \,d\mu^{\varepsilon,\,\sigma}_{\hat{t}}(x) &\leq e^{2(r^2+\frac{T}{2})^{\frac{1}{4}}}\int_{\Omega}\tilde{\rho}^{m}_{(y,\,s)}\left(x,\,\hat{t}-\frac{T}{2}\right) ,d\mu^{\varepsilon,\,\sigma}_{\hat{t}-\frac{T}{2}}(x) \nonumber\\
		&\qquad + \int_{\hat{t}-\frac{T}{2}}^{\hat{t}} e^{2(s-t)^\frac{1}{4}} \left(C_1 + \frac{\sqrt{4\pi}\,c_0}{(s-t)^{\frac{1}{2}}}  \right) \,dt  \nonumber\\
		&\leq e^{2(r^2+\frac{T}{2})^{\frac{1}{4}}} \int_{\Omega}\tilde{\rho}^{m}_{(y,\,s)}\left(x,\,\hat{t}-\frac{T}{2}\right) ,d\mu^{\varepsilon,\,\sigma}_{\hat{t}-\frac{T}{2}}(x) \nonumber\\
		&\qquad + e^{2(1+ \frac{T}{2})^{\frac{1}{4}}}\left(\frac{C_1\,T}{2} + \int_{\hat{t}-\frac{T}{2}}^{\hat{t}} \frac{\sqrt{4\pi}c_0}{(s-t)^{\frac{1}{2}}}\,dt \right) \label{densityEsti01}
	\end{align}
	where we have used \eqref{boundednessDiscrepancy} and the fact that $\int_{\mathbb{R}^n} (4\pi(s-t))^{-\frac{1}{2}}\rho\,dx \leq 1$. The last term on the right-hand side of \eqref{densityEsti01} can be estimated from above in terms of the constant which depends only on $T$ and $m$. Recalling the definition of $\eta_m$, we have that $\eta_m(x-y) = 1$ for any $x\in B_{\frac{1}{2m}}(y)$ and thus, from the fact that $s = \hat{t} + r^2$ and \eqref{densityEsti01}, we can compute as follows:
	\begin{align}
		\frac{e^{-\frac{1}{4}}}{(4\pi r^2)^{\frac{n-1}{2}}}\mu_{\hat{t}}^{\varepsilon,\,\sigma}(B_r(y) \cap \Omega) &\leq \int_{B_{r}(y) \cap \Omega} \frac{\eta_m(x-y)\,e^{-\frac{|x-y|^2}{4r^2}}}{(4\pi r^2)^{\frac{n-1}{2}}} \,d\mu_{\hat{t}}^{\varepsilon,\,\sigma}(x) \nonumber\\
		&\leq e^{2r^{\frac{1}{2}}} \int_{\Omega} \tilde{\rho}^{m}_{(y,\,s)}(x,\,\hat{t}) \,d\mu^{\varepsilon,\,\sigma}_{\hat{t}}(x). \label{densityEsti02}
	\end{align}
	On the other hand, we can compute the second term in \eqref{densityEsti01} as follows:
	\begin{align}
		e^{2(r^2+\frac{T}{2})^{\frac{1}{4}}} \int_{\Omega}\tilde{\rho}^{m}_{(y,\,s)}\left(x,\,\hat{t}-\frac{T}{2}\right) ,d\mu^{\varepsilon,\,\sigma}_{\hat{t}-\frac{T}{2}}(x) & \leq e^{2(r^2+\frac{T}{2})^{\frac{1}{4}}} \int_{\Omega} \frac{1}{(4\pi(r^2+\frac{T}{2}))^{\frac{n-1}{2}}} \,d\mu^{\varepsilon,\,\sigma}_{\hat{t}-\frac{T}{2}}(x) \nonumber\\
		&\leq e^{2(1+\frac{T}{2})^{\frac{1}{4}}} \int_{\Omega} \frac{1}{(2\pi\,T)^{\frac{n-1}{2}}} \,d\mu^{\varepsilon,\,\sigma}_{\hat{t}-\frac{T}{2}}(x) \leq   \frac{e^{2(1+\frac{T}{2})^{\frac{1}{4}}}}{(2\pi\,T)^{\frac{n-1}{2}}}\,D. \label{densityEsti03}
	\end{align}
	Therefore from \eqref{densityEsti01}, \eqref{densityEsti02}, and \eqref{densityEsti03}, we obtain
	\begin{equation}
		\frac{e^{-\frac{1}{4}}}{(4\pi r^2)^{\frac{n-1}{2}}}\mu_{\hat{t}}^{\varepsilon,\,\sigma}(B_r(y) \cap \Omega) \leq \frac{e^{2(1+\frac{T}{2})^{\frac{1}{4}}}}{(2\pi\,T)^{\frac{n-1}{2}}}\,D + e^{2(1+ \frac{T}{2})^{\frac{1}{4}}}\left(\frac{C_1(m)\,T}{2} + \sqrt{4\pi}c_0\,c_1(T) \right)
	\end{equation}
	where $c_1>0$ is a constant depending on $T$. Choosing an appropriate constant $D_0$ which depends on $T$, $m$, and $\Omega$, we may conclude that \eqref{densityEstimate} is valid.
\end{proof}

Next we consider the lemma stating that, for any point both in the domain and in the support of the measure $\mu \coloneqq \mu_t \otimes \mathcal{L}^1_t$ where $\mu_t$ is as in Lemma \ref{thm3.1} or Lemma \ref{thm.3.6}, the solution of the Allen-Cahn equation \eqref{1.1.1} should be small at that point. Note that in the sequel we fix $\sigma>0$.
\begin{lemma}\label{estiSolInSpt}
	Let $m\in\mathbb{N}$ be any such that $m > \max\{d_0,\,3\}$. Then for any $(x',\,t') \in \spt\mu$ with $x'\in\Omega_m$ and $t'>0$, there exists a subsequence $\{\varepsilon_i\}_{i\in\mathbb{N}}$ and a sequence $\{(x_i,\,t_i)\}_{i\in\mathbb{N}}$ such that $x_i \in \Omega_m$, $t_i>0$, and $|u^{\varepsilon_i,\,\sigma}(x_i,\,t_i)| < \frac{1}{2\sqrt{3}}$ for any $i\in\mathbb{N}$. 
\end{lemma}
We remark that this lemma can be proved in the same way as in \cite[Lemma 6.1]{MiTo} because we consider the estimates of the solution $u^{\varepsilon,\,\sigma}$ of \eqref{1.1.1} only in the interior $\Omega$ by using a proper cut-off function whose support is in $\Omega$. Thus we do not show this claim here again.

The next lemma can be derived from the fact that $\sup_{\Omega\times(0,\,T]}\xi^{\varepsilon,\,\sigma} \leq c_0$ for any $\varepsilon,\,\sigma>0$ and this is also stated in \cite[Lemma 4.4]{MiTo}.
\begin{lemma}\label{lemmaGradientEsti}
	There exists a constant $c_1>0$ such that
	\begin{equation}
	\sup_{\Omega\times(0,\,T]} \varepsilon\,|\nabla u^{\varepsilon,\,\sigma}| \leq c_1
	\end{equation}
	for any $\varepsilon\in(0,\,1)$ and $\sigma>0$.
\end{lemma}
\begin{proof}
	From Proposition \ref{appendAProp} and \ref{appendAProp2} in Appendix A, we have that 
	\begin{equation}
		\sup_{\Omega\times(0,\,T]} |u^{\varepsilon,\,\sigma}| \leq 1, \quad \sup_{\Omega\times(0,\,T]} \xi^{\varepsilon,\,\sigma} \leq c_0
	\end{equation}
	and thus we obtain
	\begin{equation}
	\sup_{\Omega\times(0,\,T]} \varepsilon^2|\nabla u^{\varepsilon,\,\sigma}|^2 \leq 2\varepsilon \sup_{\Omega\times(0,\,T]}\left( \frac{\varepsilon}{2}|\nabla u^{\varepsilon,\,\sigma}|^2 -\frac{W(u^{\varepsilon,\,\sigma})}{\varepsilon} \right) + 2 \sup_{\Omega\times(0,\,T]}W(u^{\varepsilon,\,\sigma}) \leq 2\varepsilon\,c_0 + 2.
	\end{equation}
	Hence setting $c_1 \coloneqq \sqrt{2c_0+2}$ which is independent of $\varepsilon$, $\sigma$, and $T$, we obtain
	\begin{equation}
	\sup_{\Omega\times(0,\,T]} \varepsilon|\nabla u^{\varepsilon,\,\sigma}| \leq c_1
	\end{equation}
	for any $\varepsilon\in(0,\,1)$ and $\sigma>0$.
\end{proof}

Next we prove, what we call, ``\textit{clearing-out lemma}" saying that, if there exists a point outside of the support of the measure $\mu = \mu_t \otimes \mathcal{L}^1_t$, then a proper neighbourhood of the point is not contained in the support of $\mu$ either. This lemma is based on \cite[6.1]{Ilmanen01} and \cite[Lemma 6.2]{MiTo}.
\begin{lemma}\label{clearingOutLemma}
	Let $T>T_0>0$ and $m\in\mathbb{N}$ be fixed such that $m > \max\{d_0,\,2\}$. Then there exist a constant  $\gamma_0(T,T_0,\Omega)>0$ independent of $m$ and constants $\delta_0(T,T_0,\Omega,m), r_0(T,T_0,\Omega,m)>0$ depending on $m$, with $\gamma_0\,r_0 < \frac{1}{m}$, and $m$ such that, if 
	\begin{equation}\label{assumpClearingLemma}
		\int_{\overline{\Omega}_m} \eta_m(x-y)\,\rho_{(y,\,s)}(x,\,t) \,d\mu_s(y) < \delta_0
	\end{equation}
	for some $s,\,t\in(T_0,\,T)$ with $t < s < t+\frac{r_0^2}{2} < T - \frac{r_0^2}{2}$ and $x \in \Omega_{[\frac{m}{2}]}$ where $[a]$ is the largest integer equal to or less than $a\in\mathbb{R}$, then it holds $(x',\,t') \not\in \spt\mu$ for any $x' \in B_{\gamma_0\,r}(x)$ and $t' = 2s - t$ with $r = \sqrt{2(s-t)}$. 
\end{lemma}

\begin{proof}[Proof of Lemma \ref{clearingOutLemma}]
	We assume that, for some $s,\,t\in(T_0,\,T)$ with $t < s < t+\frac{r_0^2}{2} < T - \frac{r_0^2}{2}$ and $x \in \Omega_{[\frac{m}{2}]}$,\eqref{assumpClearingLemma} holds where the constants $\delta_0$ and $r_0$ are determined later. Set $t' \coloneqq 2s -t < t + r^2_0 < T$. Suppose by contradiction that $(x',\,t') \in \spt \mu$ for some $x' \in B_{\gamma_0\,r}(x)$ and $t' \coloneqq 2s-t$ where $\gamma_0$ is determined later. Then from the choice that $x' \in B_{\gamma_0\,r}(x)$ and $x \in \Omega_{[\frac{m}{2}]}$, we have that $x'\in \Omega_m$. Thus from Lemma \ref{estiSolInSpt}, we can choose sequences $\{(x_j,\,t_j)\}_{j\in\mathbb{N}}$ and $\{\varepsilon_j\}_{j\in\mathbb{N}}$ such that $x_j \in \Omega_m$, $t_j <T$, $(x_j,\,t_j) \rightarrow (x',\,t')$ as $j\to \infty$, and $|u^{\varepsilon_j}(x_j,\,t_j)| < \frac{1}{2\sqrt{3}}$ for all $j$. Setting $r_j \coloneqq \gamma_0\,\varepsilon_j$ and $T_j \coloneqq t_j + r^2_j$, then we have
	\begin{align}
		\int_{B_{r_j}(x_j)}	&\eta_m(y-x_j)\,\rho_{(x_j,\,T_j)}(y,\,t_j)\,d\mu^{\varepsilon_j}_{t_j}(y) \nonumber\\
		&\geq \frac{1}{(4\pi)^{\frac{n-1}{2}} r^{n-1}_j } \int_{B_{r_j}(x_j)} \eta_m(y-x_j)\,\exp\left(-\frac{|y-x_j|^2}{4r^2_j} \right)\frac{W(u^{\varepsilon_j}(y,\,t_j))}{\varepsilon_j}\,dy. \label{clearOut01}
	\end{align}
	From Lemma \ref{lemmaGradientEsti}, it holds that for any $y\in B_{r_j}(x_j)$
	\begin{equation}\label{clearOut02}
		|u^{\varepsilon_j}(y,\,t_j)| \leq |x_j-y|\,\sup_{\Omega \times (T_0,\,T]}|\nabla u^{\varepsilon_j}| + |u^{\varepsilon_j}(x_j,\,t_j)| \leq c_1\,\gamma_0 + \frac{1}{2\sqrt{3}}
	\end{equation} 
	where $c_1>0$ is the constant in Lemma \ref{lemmaGradientEsti} independent of $m$ and thus by choosing $\gamma_0>0$ such that $c_1\,\gamma_0 + \frac{1}{2\sqrt{3}} < \frac{3}{4\sqrt{3}}$, we obtain from \eqref{clearOut01} that
	\begin{equation}\label{clearOut03}
		\int_{B_{r_j}(x_j)}	\eta_m(y-x_j)\,\rho_{(x_j,\,T_j)}(y,\,t_j)\,d\mu^{\varepsilon_j}_{t_j}(y) \geq \frac{c'_1\,\gamma_0}{(4\pi)^{\frac{n-1}{2}} r^{n}_j }\int_{B_{r_j}(x_j)}e^{-\frac{1}{4}}\,dy = \frac{c'_1\,\gamma_0\,e^{-\frac{1}{4}}\,\omega_n}{(4\pi)^{\frac{n-1}{2}} }
	\end{equation}	
	where $c^{\prime}_1>0$ is a constant independent of $m$. Now let $\tilde{c}_1$ denote the last term in \eqref{clearOut03}, which depends on $\gamma_0$. Then from the monotonicity formula \eqref{claimMonotoForm} in Proposition \ref{monotonicityFormula}, the choice of $x_j$, and the fact that $\sup_{\Omega\times(T_0,\,T]}\xi^{\varepsilon_j} \leq c_0$, we may compute as follows:
	\begin{align}
		\tilde{c}_1 & \leq \int_{\Omega} \eta_m(y-x_j)\,\rho_{(x_j,\,T_j)}(y,\,t_j)\,d\mu^{\varepsilon_j}_{t_j}(y) \nonumber\\
		&\leq e^{2(T_i - s)^{\frac{1}{4}}} \int_{\Omega} \eta_m(y-x_j)\,\rho_{(x_j,\,T_j)}(y,\,s)\,d\mu^{\varepsilon_j}_s(y) + \int_{s}^{t_j}e^{2(T_j-t)^{\frac{1}{4}}} \left(C_1(m) + \frac{\sqrt{4\pi}c_0}{(T_j-t)^{\frac{1}{2}}}\right)\,dt \label{clearOut04}
	\end{align}
	Letting $j \rightarrow \infty$ and from the convergence of the measure $\{\mu^{\varepsilon_j}\}_{j\in\mathbb{N}}$, we have
	\begin{align}
		\tilde{c}_1 &\leq e^{2(t' - s)^{\frac{1}{4}}} \int_{\Omega} \eta_m(y-x')\,\rho_{(x',\,t')}(y,\,s)\,d\mu_s(y) + \int_{s}^{t'}e^{2(t'-t)^{\frac{1}{4}}} \left(C_1(m) + \frac{\sqrt{4\pi}c_0}{(t'-t)^{\frac{1}{2}}}\right)\,dt \nonumber\\
		&\leq e^2 \int_{\Omega} \eta_m(y-x')\,\rho_{(x',\,t')}(y,\,s)\,d\mu_s(y) + e^2\,C_1(m)\,(t'-s) + e^2\,2\sqrt{4\pi}\,c_0\,(t'-s)^{\frac{1}{2}} \nonumber\\
		&\leq e^2 \int_{\Omega} \eta_m(y-x')\,\rho_{(x',\,t')}(y,\,s)\,d\mu_s(y) + e^2\left(C_1(m)r_0^2 + 2\sqrt{4\pi}\,c_0\,r_0 \right) . \label{clearOut05}
	\end{align}
	Choosing $r_0>0$ in such a way that $e^2\left(C_1(m)r_0^2 + 2\sqrt{4\pi}\,c_0\,r_0 \right) < \frac{1}{4}\tilde{c}_1$ and setting $\delta_0 \coloneqq \frac{3e^{-2}\tilde{c}_1}{16}$, we obtain 
	\begin{equation}\label{clearOut06}
		4\delta_0 \leq \int_{\Omega} \eta_m(y-x')\,\rho_{(x',\,t')}(y,\,s)\,d\mu_s(y).
	\end{equation}
	Let $\delta>0$ be any number given later. Recall Proposition \ref{densityEstimateLimitMu} and the assumption \eqref{assumpClearingLemma} with $\delta_0 = \frac{3e^{-2}\tilde{c}_1}{16}$. Then from the results in \cite[3.4. Lemma]{Ilmanen01} and \cite[Lemma A.1]{MiTo}, we can choose a constant $\gamma_1= \gamma_1(\delta)>0$ such that, by choosing $\gamma_0$ such that $\gamma_0 < \gamma_1$, we have
	\begin{align}
		\int_{\Omega} \eta_m(y-x')\,\rho_{(x',\,t')}(y,\,s)\,d\mu_s(y) &= \int_{\mathbb{R}^n} \eta_m(y-x') \rho^r_{x'}(y)\,d\mu_s(y) \nonumber\\
		&\leq (1+\delta)\int_{\mathbb{R}^n} \eta_m(y-x) \rho^r_{x}(y)\,d\mu_s(y) + \delta \, D_0 \nonumber\\
		&= (1+\delta) \int_{\Omega} \eta_m(y-x)\,\rho_{(x,\,t)}(y,\,s)\,d\mu_s(y) +  \delta \, D_0  \label{clearOut07}
	\end{align}
	where $D_0$ is the constant as in Proposition \ref{densityEstimateLimitMu} and $\rho^r_x(y)$ is defined by
	\begin{equation}
		\frac{1}{(2\pi r^2)^{\frac{n-1}{2}}} \exp\left( -\frac{|x-y|^2}{2r^2} \right), \quad r \coloneqq \sqrt{2(s-t)}.
	\end{equation}
	Since we have that $x\in \Omega_{[\frac{m}{2}]}$ and from the assumption \eqref{assumpClearingLemma}, we have $\eta_m(y-x) = 0$ for any $y \in \Omega\setminus\overline{\Omega_m}$ and thus it holds that
	\begin{equation}
		\int_{\Omega} \eta_m(y-x)\,\rho_{(x,\,t)}(y,\,s)\,d\mu_s(y) = \int_{\overline{\Omega}_m} \eta_m(y-x)\,\rho_{(x,\,t)}(y,\,s)\,d\mu_s(y) < \delta_0
	\end{equation} 
	and thus from \eqref{clearOut07} we can conclude
	\begin{equation}\label{clearOut07'}
		\int_{\Omega} \eta_m(y-x')\,\rho_{(x',\,t')}(y,\,s)\,d\mu_s(y) \leq (1 + \delta) \delta_0 + \delta\,D_0.
	\end{equation}
	If we choose $\delta>0$ such that $(1+\delta) \delta_0 + \delta \,D_0 < 2\delta_0$, then from \eqref{clearOut06} and \eqref{clearOut07'} we can derive the contradiction that
	\begin{equation}
		0< 4\delta_0 \leq \int_{\Omega} \eta_m(y-x')\,\rho_{(x',\,t')}(y,\,s)\,d\mu_s(y) < 2\delta_0.
	\end{equation}
	Notice that $\delta$ depends on $m$ because $D_0$ does. Thus $\gamma_0$ and $\gamma_1$ depend on $m$ and from the definition of $\delta_0$, $\delta_0$ also depends on $m$. 
\end{proof}

By employing the same argument as in \cite[Lemma 6.2]{MiTo} and combining Proposition \ref{monotonicityFormula}, Lemma \ref{estiSolInSpt}, and Lemma \ref{lemmaGradientEsti}, we can prove the next lemma (Lemma \ref{clearingOutLemma}) even in our case. This is because we restrict ourselves to only consider the interior $\Omega$ with a proper cut-off function $\eta_m$ for each $m\in\mathbb{N}$.
\begin{lemma}\label{forwardDensityLowerBound}
	For any $T_0,\,T>0$ and $m\in\mathbb{N}$ with $m > \max\{d_0,\,2\}$, we choose $\delta_0=\delta_0(T_0,T,m)>0$ as the constant as in Lemma \ref{clearingOutLemma}. Then it holds
	\begin{equation}
		\mu( Z_{m}(T_0,T) ) := (\mu_t \otimes \mathcal{L}^1_t) (Z_{m}(T_0,T)) = 0
	\end{equation}
	where we set
	\begin{align}
		&Z_{m}(T_0,T) \nonumber\\
		&\qquad \coloneqq \left\{ (x,\,t) \in \spt\mu \mid x \in \Omega_{[\frac{m}{2}]}, \,t\in(T_0,\,T), \, \overline{\lim_{s \downarrow t}} \int_{\overline{\Omega}_m}\eta_m(x-y)\,\rho_{(y,\,s)}(x,\,t) \,d\mu_{s}(y) < \delta_0  \right\}
	\end{align}  
\end{lemma}
\begin{proof}
	The proof is basically same as the one conducted in \cite[Lemma 6.3]{MiTo} because we restrict ourselves to deal with the interior estimates of the solution $u^{\varepsilon,\,\sigma}$ and thus there is no need to look at the effects from the boundary $\partial \Omega$. 
	
	We take any $T_0,\,T>0$ and $m \in \mathbb{N}$ with $m > \max\{d_0,\,2\}$. Corresponding to $T_0$ and $T$, let $\delta_0,\,r_0,\,\gamma_0>0$ be the constants as in Lemma \ref{clearingOutLemma}. For any $\tau\in(0,\,\frac{r_0^2}{2})$, we define $Z^{\tau}_{m}(T_0,T)$ by 
	\begin{equation}\label{fordensi01}
		\left\{ (x,t) \in \spt\mu \cap (\Omega_{[\frac{m}{2}]} \times (T_0,T)) \mid  \int_{\overline{\Omega}_m}\eta_m(x-y)\,\rho_{(y,\,s)}(x,\,t) \,d\mu_{s}(y) < \delta_0 \text{ for } s \in (t,t+\tau)  \right\}.
	\end{equation} 
	Then by choosing a sequence $\{\tau_j\}_{j\in\mathbb{N}}$ such that $\tau_j \to 0$ as $j\to\infty$, we have that $\cup_{j=1}^{\infty} Z^{\tau_j}_m(T_0,T) = Z_m(T_0,T)$ and thus we only need to show that $\mu(Z^{\tau_j}_m(T_0,T)) = 0$ for any $\tau_j$.
	
	Let $j\in\mathbb{N}$ be any and $(x,\,t) \in Z^{\tau_j}_m(T_0, T)$ with $\tau_j \ll T- T_0$ be a fixed point. We define the set $P_m(x,\,t)$ by 
	\begin{equation}\label{fordensi02}
		\left\{(x',\,t')\in\overline{\Omega}_m \times (T_0,\,T) \mid \gamma_0^{-2}\,|x'-x|^2 < |t'-t| < \tau_j \right\}.
	\end{equation}
	and in the following we claim $P(x,\,t) \cap Z^{\tau_j}_m(T_0, T) = \emptyset$. Suppose by contradiction that there exists a point $(x',\,t') \in P(x,\,t) \cap Z^{\tau_j}_m(T_0,T)$. Assume that $t'>t$ and set $s=\frac{1}{2}(t+t')$. Then we have that $t < s < t + \frac{\tau_j}{2}$, $|x' - x| < \gamma_0\sqrt{2(s-t)}$, and   
	\begin{equation}\label{fordensi03}
		\int_{\overline{\Omega}_m} \eta_m(x-y)\,\rho_{(y,\,s)}(x,\,t)\,d\mu_s(y) < \delta_0(m).
	\end{equation}
	From the choice of $x$ and Lemma \ref{clearingOutLemma}, we can derive that $(x',\,t') \not\in \spt\mu$, which contradicts $(x',\,t') \in Z^{\tau_j}_m(T_0, T)$.
	
	Next we set $Z^{j, x_0, t_0}_m(T_0, T)$ as the set 
	\begin{equation}
		Z^{\tau_j}_m(T_0, T) \cap \left( B_{\frac{\gamma_0}{2}\sqrt{\tau}}(x_0) \times \left(t_0 - \frac{\tau}{2},\,t_0 + \frac{\tau}{2}\right) \right)
	\end{equation}
	for any $(x_0,\,t_0) \in \overline{\Omega}_{[\frac{m}{2}]} \times (T_0,\,T)$. Then $Z^{\tau_j}_m(T_0, T)$ is the countable union of a family $\{Z^{j, x_{\ell}, t_{\ell}}_m(T_0, T)\}_{\ell\in\mathbb{N}}$ where $\{(x_{\ell},\,t_{\ell})\}_{\ell}$ is a suitable sequence distributed in $\overline{\Omega}_{[\frac{m}{2}]} \times (T_0,\,T)$. Therefore it is sufficient to prove that $\mu(Z^{j, x_{\ell}, t_{\ell}}_m(T_0, T)) = 0$ for each $\ell$. Let $Z^{j,\ell}_m$ denote $Z^{j, x_{\ell}, t_{\ell}}_m(T_0, T)$ for simplicity. Take any $\rho\in(0,\,\frac{1}{m})$. Then setting the projection $\pi_{\Omega}(Z^{j,\ell}_m) \coloneqq \{x \in \Omega_{[\frac{m}{2}]} \mid (x,\,t) \in Z^{j,\ell}_m \}$, we can choose a family of balls $\{B_{r_h}(x_h)\}_{h\in\mathbb{N}}$ and a sequence $\{t_h\}_{h\in\mathbb{N}}$ such that $r_h \leq \rho$, $(x_h,\,t_h) \in Z^{j,\ell}_m$, and the family covers $\pi_{\Omega}(Z^{j,\ell}_m)$. Thus we have
	\begin{equation}\label{fordensi04}
		\sum_{h = 1}^{\infty} \omega_n\,r_h^{n} \leq 2 \mathcal{L}^n \left( B_{\frac{\gamma_0}{2}\sqrt{\tau_j}}(x_{\ell}) \right).
	\end{equation}
	For such a family, we can obtain
	\begin{equation}\label{fordensi05}
		Z^{j,\ell}_m \subset \bigcup_{h=1}^{\infty} \left( B_{r_h}(x_h) \times (t_h - \gamma_0^{-2}r_h^2,\,t_h + \gamma_0^{-2}r_h^2)  \right).
	\end{equation}
	Indeed, for any $(x,\,t) \in Z^{j,\ell}_m$, we can choose $h \in \mathbb{N}$ such that $x \in B_{r_h}(x_h)$ and $(x_h,\,t_h) \in Z^{j,\ell}_m$. Then from the claim that $P(x,\,t) \cap Z^{\tau_j}_m(T_0, T) = \emptyset$, we have
	\begin{equation}
		|t - t_h| \leq \gamma_0^{-2} |x - x_h|^2 < \gamma_0^{-2}r_h^2.
	\end{equation}
	Therefore, from \eqref{fordensi04} and \eqref{fordensi05} and by applying Proposition \ref{densityEstimate}, we have
	\begin{align}
		\mu^{\varepsilon}(Z^{j,\ell}_m) \leq \sum_{h = 1}^{\infty} \int_{t_h - \gamma_0^{-2}r_h^2}^{t_h + \gamma_0^{-2}r_h^2} \mu^{\varepsilon}_t(B_{r_h}(x_h)) \,dt &\leq \sum_{h = 1}^{\infty} D_0\,r_h^{n-1} \, 2\gamma_0^{-2}r_h^2 \nonumber\\
		&\leq 4D_0 \,\gamma_0^{-2}\,\rho \,  \mathcal{L}^n \left( B_{\frac{\gamma_0}{2}\sqrt{\tau_j}}(x_{\ell}) \right)
	\end{align}
	for any $\varepsilon>0$ and thus letting $\varepsilon \to 0$ and using the lower semi-continuity for the Radon measure $\mu$, we obtain
	\begin{equation}
		\mu(Z^{j,\ell}_m) \leq 4D_0 \,\gamma_0^{-2}\,\rho \,  \mathcal{L}^n \left( B_{\frac{\gamma_0}{2}\sqrt{\tau_j}}(x_{\ell}) \right)
	\end{equation}
	and since $\rho\in(0,\,\frac{1}{m})$ is arbitrary, we can conclude $\mu(Z^{j,\ell}_m) = 0$.
\end{proof}

Now we prove the main claim of this appendix, namely, the vanishing of the discrepancy in the interior of the domain.
\begin{proof}[Proof of Proposition \ref{vanishingProp}]
	Now we are ready to prove the vanishing of the discrepancy measures. First of all, since we have the uniform estimate $\sup_{\varepsilon>0}\mu^{\varepsilon,\,\sigma}_t(\Omega) \leq D$ for all $t>0$ and $\sigma>0$ from Proposition \ref{prop.4.1}, we have that
	\begin{equation}
		\sup_{\varepsilon>0} \xi^{\varepsilon,\,\sigma}(\Omega \times (T_0,\,T))  \leq \sup_{\varepsilon>0} \mu^{\varepsilon,\,\sigma}(\Omega \times (T_0,\,T)) \leq D(T - T_0) <\infty
	\end{equation}
	for $0 < T_0 < T$. Thus from the compactness of Radon measures we may choose a convergent subsequence $\{\xi^{\varepsilon_j,\,\sigma}\}_{j\in\mathbb{N}}$ on $\Omega \times (T_0,\,T)$. Therefore we only need to show that the limit denoted by $\xi^{\sigma}$ is identically zero on $\Omega \times (T_0,\,T)$. However, from the construction of $\eta_m$ for each $m\in\mathbb{N}$, it is sufficient to prove that the discrepancy measure vanishes in $\overline{\Omega}_m \times (T_0,\,T)$ for each fixed $m \in \mathbb{N}$ with $m > \max\{d_0,\,3\}$.
	
	In the sequel, we omit the parameter $\sigma>0$ for simplicity. For any $m \in \mathbb{N}$, $y \in \overline{\Omega}_m$, and $s,\,t \in (0,\,T)$ with $T_0 < t < s$, we have from the monotonicity formula of Proposition \ref{monotonicityFormula} and the estimate of the discrepancy measure of Proposition \ref{appendAProp2},
	\begin{align}
		&\frac{d}{dt}\left( e^{2(s-t)^{\frac{1}{4}}} \int_{\Omega}\tilde{\rho}^m_{(y,\,s)}\,d\mu^{\varepsilon_j}_t(x) \right) + e^{2(s-t)^{\frac{1}{4}}} \int_{\Omega} \frac{\tilde{\rho}^m_{(y,\,s)}}{2(s-t)} \,d|\xi^{\varepsilon_j}_t|(x) \nonumber\\
		&\qquad \quad \leq e^{2(s-t)^{\frac{1}{4}}} \left( C_1 + \int_{\Omega} \frac{\tilde{\rho}^m_{(y,\,s)}}{2(s-t)} c_0\,dx \right) \leq e^{2(s-t)^{\frac{1}{4}}} \left( C_1 + \frac{\sqrt{4\pi}c_0}{\sqrt{s-t}}  \right)
	\end{align}
	Integrating over $t\in(T_0,\,s)$ and letting $j \to \infty$, we obtain
	\begin{align}
		 \int\int_{\Omega \times [T_0,\,s]} & \frac{\tilde{\rho}^m_{(y,\,s)}}{2(s-t)} \,d\xi(x,\,t) \nonumber\\
		 &\qquad  \leq e^{ 2 (s - T_0)^{\frac{1}{4}} } \int_{\Omega}\tilde{\rho}^m_{(y,\,s)}(x,\,T_0)\,d\mu_{T_0}(x) + \int_{T_0}^{s} e^{2(s-t)^{\frac{1}{4}}} \left( C_1 + \frac{\sqrt{4\pi}c_0}{\sqrt{s-t}}  \right)\,dt. \label{vanishingDiscrepancy01}
	\end{align}
	Note that we do not need to consider the effects from $\partial \Omega$ in the integrals in \eqref{vanishingDiscrepancy01} due to the construction of $\eta_m$. Since the right-hand side of \eqref{vanishingDiscrepancy01} is uniformly bounded in $y$ and $s$, integrating the both sides in \eqref{vanishingDiscrepancy01} over $(y,\,s) \in \overline{\Omega}_m \times (T_0,\,T)$ with respect to $d\mu_s(y) \otimes ds$ leads us to the fact that
	\begin{equation}\label{vanishingDiscrepancy02}
		\int_{T_0}^{T}\,ds\int_{\overline{\Omega}_m}\,d\mu_s(y)\int\int_{\Omega \times [T_0,\,s]} \frac{\tilde{\rho}^m_{(y,\,s)}}{2(s-t)} \,d\xi(x,\,t) < \infty.
	\end{equation}
	Thus, by Fubini's theorem \eqref{vanishingDiscrepancy02} turns out to be
	\begin{equation}
		\int_{T_0}^{T}\int_{\Omega}\,d\xi(x,\,t)\int_{t}^{T}\int_{\overline{\Omega}_m} \frac{\tilde{\rho}^m_{(y,\,s)}}{2(s-t)} \,d\mu_s(y)\,ds < \infty
	\end{equation}
	and we obtain
	\begin{equation}\label{vanishingDiscrepancy03}
		\int_{t}^{T}\int_{\overline{\Omega}_m} \frac{\tilde{\rho}^{m}_{(y,\,s)}}{2(s-t)}\,d\mu_s(y)\,ds < \infty
	\end{equation}
	for $\xi$-a.e. $(x,\,t) \in \Omega \times (T_0,\,T)$. 
	
	We next prove that, for each $m\in\mathbb{N}$ with $m > \max\{d_0,\,2\}$,
	\begin{equation}\label{keyestiProofDiscre}
		\lim_{s \downarrow t}\int_{\overline{\Omega}_m}\tilde{\rho}^{m}_{(y,\,s)}(x,\,t) \,d\mu_s(y) = 0
	\end{equation}
	for $\xi$-a.e. $(x,\,t) \in \Omega_k \times (T_0,\,T)$ and any $k\in\mathbb{N}$ such that $m \geq 2k$. For $t<s$, we define $\beta \coloneqq \log(s-t)$ and
	\begin{equation}\label{vanishingDiscrepancy04}
		h_m(s) \coloneqq \int_{\overline{\Omega}_m} \tilde{\rho}^{m}_{(y,\,s)}(x,\,t) \,d\mu_s(y).
	\end{equation}
	Then from \eqref{vanishingDiscrepancy04}, \eqref{vanishingDiscrepancy03} turns into
	\begin{equation}\label{vanishingDiscrepancy05}
		\int_{-\infty}^{\log(T-t)}h_m(t + e^{\beta}) \,d\beta < \infty.
	\end{equation}
	Let $\theta \in (0,\,1)$ be any number chosen later. From \eqref{vanishingDiscrepancy05} we may choose a sequence $\{\beta_i\}_{i\in\mathbb{N}}$ such that $0< \beta_{i} - \beta_{i+1} < \theta$, $\beta_i \to -\infty$ as $i \to \infty$, and $h_m(t+e^{\beta_i}) < \theta$ for all $i\in\mathbb{N}$. Then taking any $\beta \in (-\infty,\,\beta_0)$ and fixing it, we can choose $i_0\in\mathbb{N}$ with $i_0 \geq 1$ such that $\beta_{i} \leq \beta $ for any $i \geq i_0$ and $\beta_{i_0} \leq \beta < \beta_{i_0-1}$. By using the fact that $\rho_{(y,\,t+e^{\beta})}(x,\,t) = \rho_{(y,\,t+2e^{\beta})}(x,\,t+e^{\beta})$ and from Proposition \ref{appendAProp2} and  \ref{monotonicityFormula}, we have that  
	\begin{align}
		h_m(t+e^{\beta}) &= \int_{\overline{\Omega}_m} \tilde{\rho}^m_{(y,\,t+e^{\beta})}(x,\,t) \,d\mu_{t+e^{\beta}}(y) \nonumber\\
		&\leq e^{2e^{\frac{\beta}{4}}}\int_{\Omega} \tilde{\rho}^m_{(x,\,t+2e^{\beta})}(y,\,t+e^{\beta}) \,d\mu_{t+e^{\beta}}(y)  \nonumber\\
		&\leq e^{2(2e^{\beta} - e^{\beta_i})^{\frac{1}{4}}} \int_{\Omega} \tilde{\rho}^m_{(x,\,t+2e^{\beta_i})}(y,\,t+e^{\beta_i}) \,d\mu_{t+e^{\beta_i}}(y) \nonumber\\
		&\qquad + \int_{t+e^{\beta_i}}^{t+e^{\beta}} e^{2(t+2e^{\beta} - \tau)^{\frac{1}{4}}} \left( C_1 + \frac{\sqrt{4\pi}c_0}{\sqrt{t+2e^{\beta} - \tau}}  \right)\,d\tau  \label{vanishingDiscrepancy06}
	\end{align}
	 for any $i \geq i_0$ and $(x,\,t) \in \Omega_k \times (T_0,\,T)$ such that \eqref{vanishingDiscrepancy03} holds. Hence, setting $R^2_i \coloneqq 2e^{\beta} - e^{\beta_i}$ and 
	 \begin{equation}
	 	\rho^{R}_{y}(x) \coloneqq \frac{1}{(4\pi R^2)^{\frac{n-1}{2}}} \exp\left( -\frac{|x-y|^2}{4R^2}\right)
	 \end{equation}
	 for any $x,\,y \in \mathbb{R}^n$ and $R>0$, we obtain
	 \begin{align}
	 	h_m(t+e^{\beta}) &\leq e^{2\sqrt{R_i}} \int_{\Omega} \eta_m(x-y)\,\rho^{R_i}_{x}(y) \,d\mu_{t+e^{\beta_i}}(y) + c_i(\beta)  \label{vanishingDiscrepancy07}
	 \end{align}
	 where $c_i(\beta)$ is defined by the last term in \eqref{vanishingDiscrepancy06}. By the change of variables $\tau \mapsto t+e^{\beta}-\tau$, we have 
	 \begin{equation}
	 	c_i(\beta) = \int_{0}^{e^{\beta}-e^{\beta_i}} e^{2(\tau + e^{\beta})^{\frac{1}{4}}} \left( C_1 + \frac{\sqrt{4\pi}c_0}{\sqrt{\tau + e^{\beta}}}  \right)\,d\tau
	 \end{equation}
	 and thus we obtain
	\begin{equation}
		c(\beta) \coloneqq \lim_{i \to \infty}c_i(\beta) = \int_{0}^{e^{\beta}} e^{2(\tau + e^{\beta})^{\frac{1}{4}}} \left( C_1 + \frac{\sqrt{4\pi}c_0}{\sqrt{\tau + e^{\beta}}}  \right)\,d\tau.
	\end{equation}
	From the definition of $\beta$, we can see that $\lim_{\beta \to -\infty}c(\beta) = 0$. From the assumption of $\beta_i$,  
	\begin{align}
		\theta > h_m(t+e^{\beta_i})  &= \int_{\overline{\Omega}_m} \eta_m(x-y)\,\tilde{\rho}^{m}_{(y,\,t+e^{\beta_i})}(x,\,t) \,d\mu_{t+e^{\beta_i}}(y) \nonumber\\
		&= \int_{\overline{\Omega}_m} \eta_m(x-y)\,\rho^{r_i}_{x}(y) \,d\mu_{t+e^{\beta_i}}(y), \label{vanishingDiscrepancy08}
	\end{align}
	where $r^2_{i} = e^{\beta_i}$. Since $\beta \geq \beta_i$ and $\beta_{i-1} - \beta_{i} < \theta$ for any $i\geq i_0$, we have $R_i>r_i$, $\beta_{i_0} \leq \beta < \beta_{i_0-1}$, and $R^2_i\,r^{-2}_i \leq 2e^{\theta}-1$. By arbitrariness of $\theta$, we can make $R^2_i\,r^{-2}_i$ as close to 1 as possible if we let $\theta$ be small and $i$ be sufficiently large if necessary. From Proposition \ref{densityEstimateLimitMu} and by the result in \cite[3.4 Lemma]{Ilmanen01} or \cite[Lemma A.1]{MiTo}, for any $\delta\in(0,\,1)$ and $i\geq i_0$, there exists a constant $\gamma_1=\gamma_1(\delta,\,i_0)>0$ such that $1\leq R_i\,r^{-1}_i \leq 1+ \gamma_1$ and
	\begin{equation}
		\int_{\Omega} \eta_m(x-y)\,\rho^{R_i}_{x}(y) \,d\mu_{t+e^{\beta_i}}(y) \leq (1+\delta)\int_{\Omega} \eta_m(x-y)\,\rho^{r_i}_{x}(y) \,d\mu_{t+e^{\beta_i}}(y) + \delta\,D_0. \label{vanishingDiscrepancy09}
	\end{equation}
	Moreover, from the choice of $k$ and $x$, we have that $\Omega \cap B_{1/m}(x) \subset \overline{\Omega}_m$ and thus
	\begin{align}\label{vanishingDiscrepancy10}
		\int_{\Omega} \eta_m(x-y)\,\rho^{r_i}_{x}(y) \,d\mu_{t+e^{\beta_i}}(y) &= \int_{\Omega \cap B_{\frac{1}{m}}(x)} \eta_m(x-y)\,\rho^{r_i}_{x}(y) \,d\mu_{t+e^{\beta_i}}(y) \nonumber\\
		&\leq \int_{\overline{\Omega}_m} \eta_m(x-y)\,\rho^{r_i}_{x}(y) \,d\mu_{t+e^{\beta_i}}(y) = h_m(t+e^{\beta_i})
	\end{align}
	for $\xi$-a.e. $(x,\,t) \in \Omega_k \times (T_0,\,T)$. Hence from \eqref{vanishingDiscrepancy07}, \eqref{vanishingDiscrepancy08}, \eqref{vanishingDiscrepancy09}, and \eqref{vanishingDiscrepancy10}, we obtain
	\begin{align}
		h_m(t+e^{\beta}) &\leq e^{2\sqrt{R_i}}(1+\delta)\int_{\Omega} \eta_m(x-y)\,\rho^{r_i}_{x}(y) \,d\mu_{t+e^{\beta_i}}(y) + e^{2\sqrt{R_i}}\delta\,D_0  + c_i(\beta) \nonumber\\
		&\leq e^{2\sqrt{R_i}}(1+\delta)\,h_m(t+e^{\beta_i})  + e^{2\sqrt{R_i}}\delta\,D_0  + c_i(\beta) \nonumber\\
		&\leq e^{2\sqrt{R_i}}(1+\delta)\,\theta + e^{2\sqrt{R_i}}\delta\,D_0  + c_i(\beta)
	\end{align}
	and thus, letting first $i \to \infty$ and $\delta \downarrow 0$ and then $\beta \to -\infty$, we conclude that $\lim_{s \downarrow t} h_m(s) = 0$ for $\xi$-a.e. $(x,\,t) \in \Omega_k \times (T_0,\,T)$ and any $k \in \mathbb{N}$ such that $m \geq 2k$.
	
	Therefore, setting $k \coloneqq [\frac{m}{2}]$ for each $m$, we obtain $\xi(\overline{\Omega}_k \times (T_0,\,T) \setminus Z_m(T_0,T)) = 0$; otherwise from \eqref{keyestiProofDiscre} we can choose a point $(x,\,t) \in \Omega_k \times (T_0,\,T) \setminus Z_m(T_0,T)$ such that
	\begin{equation}
		0= \lim_{s \downarrow t}h_m(s) = \overline{\lim_{s \downarrow t}}\int_{\overline{\Omega}_m} \rho^m_{(y,\,s)}(x,\,t) \,d\mu_s(y) \geq \delta_0(T_0,T,m) >0,
	\end{equation}
	which is a contradiction. Combining Lemma \ref{forwardDensityLowerBound} with the fact that $\xi \leq \mu$ on $\Omega \times [0,\,\infty)$, we have that
	\begin{align}
		\xi(\Omega_{[\frac{m}{2}]} \times (T_0,\,T)) &= \xi(\Omega_{[\frac{m}{2}]} \times (T_0,\,T) \setminus Z_m(T_0,T)) + \xi(Z_m(T_0,T)) \nonumber\\
		&\leq 0 + \mu(Z_m(T_0,T)) = 0
	\end{align}
	for any $m\in\mathbb{N}$ with $m > \max\{d_0,\,2\}$ and thus letting $m \to \infty$ and from the fact that $\cup_{m>\max\{d_0,\,2\}}\Omega_{[\frac{m}{2}]} = \Omega$, we can conclude that $\xi(\Omega \times (T_0,\,T)) = 0$ for any $0<T_0<T$.
\end{proof}

\subsection{Appendix C}\label{appendixPoincareInequality}
In this appendix, we prove Poincar\'e-Wirtinger inequality on hypersurfaces. This claim was applied to prove the positivity of the limit measure $\alpha$ on $\Gamma_2\times [t_1,\,t_2]$ for some non-empty connected component $\Gamma_2$ of $\partial\Omega$ and some $0<t_1<t_2<\infty$ in Subsection \ref{chara.2}. 

\begin{lemma}\label{app.lem}
	Let $N\geq3$ and $M\subset \mathbb{R}^N$ be a smooth, bounded, and connected hypersurface embedded in $\mathbb{R}^N$ without boundaries. Then there exists a constant $C(N,\,M)>0$ such that, for any $u \in W^{1,\,1}(M)$,
	\begin{equation}\label{7.1.1}
	\|u-u_M\|_{L^{1}(M)}\leq C(N,\,M)\|\nabla_{M}u\|_{L^1(M)}
	\end{equation}
	and $u_M\coloneqq(\mathcal{H}^{N-1}(M))^{-1}\int_{M}u\,d\mathcal{H}^{N-1}$.
\end{lemma}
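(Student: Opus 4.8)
The plan is to argue by contradiction, using the compact Sobolev embedding on the compact manifold $M$. Suppose \eqref{7.1.1} fails for every constant. Then for each $k\in\mathbb{N}$ there is $u_k\in W^{1,1}(M)$ with $\|u_k-(u_k)_M\|_{L^1(M)}>k\,\|\nabla_{M}u_k\|_{L^1(M)}$; in particular $u_k$ is not constant, so we may set
\[
v_k:=\frac{u_k-(u_k)_M}{\|u_k-(u_k)_M\|_{L^1(M)}}.
\]
Then $(v_k)_M=0$, $\|v_k\|_{L^1(M)}=1$, and $\|\nabla_{M}v_k\|_{L^1(M)}<1/k\to0$, so $\{v_k\}_{k\in\mathbb{N}}$ is bounded in $W^{1,1}(M)$.

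First I would record that the embedding $W^{1,1}(M)\hookrightarrow L^1(M)$ is compact. Since $M$ is a smooth compact embedded hypersurface of dimension $N-1$, cover it by finitely many charts $(U_a,\varphi_a)$ with $\varphi_a(U_a)\subset\mathbb{R}^{N-1}$ a bounded smooth domain, fix a subordinate partition of unity $\{\chi_a\}$, and apply the Euclidean Rellich--Kondrachov theorem to each $(\chi_a v_k)\circ\varphi_a^{-1}$; this yields compactness of $W^{1,1}\hookrightarrow L^1$ because in dimension $N-1$ the relevant Sobolev exponent for $p=1$ is $(N-1)/(N-2)>1$ precisely when $N\geq3$. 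Passing to a subsequence (not relabeled), $v_k\to v$ strongly in $L^1(M)$ for some $v\in L^1(M)$, whence $\|v\|_{L^1(M)}=1$ and $v_M=0$.

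Next I would identify $v$ as a constant. For every smooth tangent vector field $X$ on $M$, integration by parts on the closed manifold $M$ gives
\[
\int_M v\,\mathrm{div}_{M}X\,d\mathcal{H}^{N-1}=\lim_{k\to\infty}\int_M v_k\,\mathrm{div}_{M}X\,d\mathcal{H}^{N-1}=-\lim_{k\to\infty}\int_M \nabla_{M}v_k\cdot X\,d\mathcal{H}^{N-1}=0,
\]
so $v$ has vanishing distributional tangential gradient. Working in each chart, $v\circ\varphi_a^{-1}$ then has zero weak gradient on the connected open set $\varphi_a(U_a)$, hence is a.e.\ equal to a constant there; since $M$ is connected and the charts can be taken with overlapping images, these constants agree and $v$ equals a single constant $c$ a.e.\ on $M$. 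But $v_M=0$ forces $c=0$, contradicting $\|v\|_{L^1(M)}=1$. This establishes \eqref{7.1.1} with some $C(N,M)>0$.

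The main obstacle is making the two localization steps rigorous for $W^{1,1}$ functions, where reflexivity is unavailable: both the compact embedding and the implication ``vanishing weak gradient on a connected set $\Rightarrow$ constant'' are reduced, via the partition of unity, to the Euclidean case and there handled by mollification together with density of $C^\infty$ in $W^{1,1}$ of a smooth bounded domain, so no new ideas beyond standard Sobolev theory are needed; one only checks that the smooth chart maps preserve $W^{1,1}$ membership and control the $L^1$ gradient norms up to constants depending on $M$. An alternative, fully constructive route avoids compactness entirely: cover $M$ by finitely many charts bi-Lipschitz to balls, prove a local Poincaré inequality in each by the standard segment argument, and chain the local estimates along a finite overlapping chain of charts joining any two points (using connectedness of $M$); this gives the same conclusion at the cost of more involved bookkeeping of the constants.
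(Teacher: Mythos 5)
Your proof is correct and follows essentially the same route as the paper's: a contradiction argument with a normalized mean-zero sequence, the compact embedding $W^{1,\,1}(M)\hookrightarrow L^1(M)$ (valid since $\dim M=N-1\geq2$), and the identification of the $L^1$-limit as a function with vanishing tangential gradient, hence constant, hence zero by the mean-zero condition, contradicting unit norm. The only cosmetic difference is the final step: the paper deduces constancy of the limit via $\Delta_M u_{\infty}=0$ and Weyl's lemma, while you localize in charts and use that a vanishing weak gradient forces local constancy, an equally valid and slightly more elementary finish.
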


\begin{proof}
	Setting $X\coloneqq\{u\in W^{1,\,1}(M) \mid u_M=0 \}$, we have that $X$ is a closed subspace in $W^{1,\,1}(M)$. Then, it is sufficient to prove that there exists $C(N,\,M)>0$ such that $\|u\|_{L^{1}}\leq C(N,\,M)\|\nabla_{M}u\|_{L^1}$ for each $u\in X$. This is because we can easily have that, if $u\in W^{1,\,1}(M)$, then $u-u_{M}\in X$ and $\nabla_{M}(u-u_{M})=\nabla_{M}u$. 
	
	We assume, for a contradiction, that, for each $n\in\mathbb{N}$, there exists $u_n\in X$ such that $\|u_n\|_{L^{1}} > n \|\nabla_{M}u_n\|_{L^1}$ holds. From the assumption, we may consider that $\{u_n\}_{n\in\mathbb{N}}$ is a bounded sequence in $W^{1,\,1}$ and $\|u_n\|_{W^{1,\,1}(M)}\geq1$ for any $n\in\mathbb{N}$. Since $N\geq3$ and thus $X\subset W^{1,\,1}(M) \hookrightarrow L^1(M)$ is the compact embedding (see, for instance, \cite{Aubin}) and $X$ is closed in $W^{1,\,1}(M)$, there exist a subsequence $\{u_{n_i}\}_{i\in\mathbb{N}}$ and $u_{\infty}\in X$ such that $u_{n_i} \xrightarrow[i\to\infty]{} u_{\infty}$ in $L^1(M)$. Then, from the assumption, it follows that
	\begin{equation}\label{7.1.2}
	1\leq\|u_{n_i}\|_{W^{1,\,1}}=\|u_{n_i}\|_{L^1}+\|\nabla_{M}u_{n_i}\|_{L^1} \xrightarrow[i\to\infty]{} \|u_{\infty}\|_{L^1}+0,
	\end{equation}
	and thus $\|u_{\infty}\|_{L^1}\geq1$. On the other hand, for any $\phi \in C^1_c(M)$, from the divergence theorem on hypersurfaces and recalling the fact that $u_{n_i}\xrightarrow[i\to\infty]{} u_{\infty}$ in $L^{1}(M)$, we have that
	\begin{align}
	0 \xleftarrow[i\to\infty]{} \|\phi\|_{C^1(M)}\,\|\nabla_{M} u_{n_i}\|_{L^1(M)} &\geq  \left|\int_{M} \phi\nabla_{M}u_{n_i}\,d\mathcal{H}^{N-1} \right|\nonumber\\
	&=\left| \int_{M} u_{n_i} (\nabla_{M}\phi-\phi\boldH_M)\,d\mathcal{H}^{N-1} \right| \nonumber\\
	&\xrightarrow[i\to\infty]{} \left|\int_{M} u_{\infty} (\nabla_{M}\phi-\phi\boldH_M)\,d\mathcal{H}^{N-1}\right| \nonumber\\
	&=\left|\int_{M} \phi \nabla_{M} u_{\infty}\,d\mathcal{H}^{N-1} \right|, \label{7.1.3}
	\end{align}
	where $\boldH_M$ is the mean curvature vector of $M$ which is bounded because $M$ is smooth and compact. This implies $\nabla_{M}u_{\infty}=0$ $\mathcal{H}^{N-1}$-a.e. on $M$ and thus it follows that $\Delta_{M}u_{\infty}=0$ on $M$ in distribution. Then, from Weyl's lemma, we have $u_{\infty}\in C^{\infty}(M)$. Since $M$ is connected, $u_{\infty}$ is in $X$, and $\nabla_M u_{\infty}=0$ $\mathcal{H}^{N-1}$-a.e. on $M$, we may conclude that $u_{\infty}=0$ on $M$ and this contradicts the fact that $\|u_{\infty}\|_{L^1}\geq1$. 
\end{proof}

\section*{\footnotesize{Acknowledgements}}
\footnotesize{
	The first author is partly supported by JSPS KAKENHI Kiban A (No. 19H00639), Kaitaku (No. 18H05323), Kiban S (No. 26220702), Kiban A (No. 17H01091), and Kiban B (No. 16H03948). The second author is partly supported by Leading Graduate Course for Frontiers of Mathematical Science and Physics (FMSP), which is a program for Leading Graduate Schools, MEXT Japan. The third author is partly supported by JSPS KAKENHI WAKATE B (No. 16K17622), Wakate (No. 20K14343), Kiban A (No. 18H03670),
	and JSPS Leading Initiative for Excellent Young Researchers (LEADER) operated by Funds
	for the Development of Human Resources in Science and Technology.
}

\end{document}